\numberwithin{equation}{section}
\newcommand{\CC}{\mathbb{C}}
\newcommand{\PP}{\mathbb{P}}
\newcommand{\QQ}{\mathbb{Q}}
\newcommand{\ZZ}{\mathbb{Z}}
\def\cO{{\cal O}}
\def\and{\quad{\rm and}\quad}
\newtheorem{prop}{Proposition}[section]
\newtheorem{theo}[prop]{Theorem}
\newtheorem{lemm}[prop]{Lemma}
\newtheorem{coro}[prop]{Corollary}
\newtheorem{rema}[prop]{Remark}
\newtheorem{exam}[prop]{Example}
\newtheorem{ques}[prop]{Question}
\newtheorem{defi}[prop]{Definition}
\def\beq{\begin{equation}}
\def\eeq{\end{equation}}
\def\PP{\mathbb{P}}
\def\CC{\mathbb{C}}
\def\lra{\longrightarrow}
\def\cO{\mathcal{O}}
\title[Fano visitors, Fano dimension and orbifold Fano hosts]{Fano visitors, Fano dimension and \\ orbifold Fano hosts}
\author{Young-Hoon Kiem and Kyoung-Seog Lee}
\address{Department of Mathematics and Research Institute
of Mathematics, Seoul National University, Seoul 151-747, Korea}
\email{kiem@math.snu.ac.kr}
\address{Center for Geometry and Physics, Institute for Basic Science (IBS), Pohang 37673, Republic of Korea}
\email{kyoungseog02@gmail.com}
\thanks{YHK was partially supported by NRF grant 2011-0027969; KSL was partially supported by IBS-R003-Y1.}
\begin{document}

\begin{abstract}
In \cite{KKLL}, the authors proved that every complete intersection smooth projective variety $Y$ is a Fano visitor, i.e. its derived category $D^b(Y)$ is equivalent to a full triangulated subcategory of the derived category $D^b(X)$ of a smooth Fano variety $X$, called a Fano host of $Y$. They also introduced the notion of Fano dimension of $Y$ as the smallest dimension of a Fano host $X$ and obtained an upper bound for the Fano dimension of each complete intersection variety. 

 In this paper, we provide a Hodge-theoretic criterion for the existence of a Fano host which enables us to determine the Fano dimensions precisely for many interesting examples, such as low genus curves, quintic Calabi-Yau 3-folds and general complete intersection Calabi-Yau varieties. 

Next we initiate a systematic search for more Fano visitors. We generalize the methods of \cite{KKLL} to prove that smooth curves of genus at most 4 are all Fano visitors and general curves of genus at most 9 are Fano visitors.
For surfaces and higher dimensional varieties, we find more examples of Fano visitors and raise natural questions. 

We also generalize Bondal's question and study triangulated subcategories of derived categories of Fano orbifolds. We proved that there are Fano orbifolds whose derived categories contain derived categories of orbifolds associated to quasi-smooth complete intersections in weighted projective spaces, Jacobians of curves, generic Enriques surfaces, some families of Kummer surfaces, bielliptic surfaces, surfaces with $\kappa=1,$ classical Godeaux surfaces, product-quotient surfaces, holomorphic symplectic varieties, etc.

An interesting recent discovery is the existence of quasi-phantom subcategories in derived categories of some surfaces of general type with $p_g=q=0$ (\cite{BBKS, BBS, GS, KKL, Lee1, Lee2, LS}).  But no examples of Fano with quasi-phantom have been found.
From the above constructions, we found Fano orbifolds whose derived categories contain quasi-phantom categories or phantom categories. 
\end{abstract}
\maketitle

\section{Introduction}

If one were to write up a list of keywords that describe recent developments in algebraic geometry, it would be hard to miss the words like ``derived category" or ``categorification" on the top part. 
The derived category $D^b(X)$ of bounded complexes of coherent sheaves of a projective variety $X$ was found to be a sophisticated invariant 
which categorifies geometric invariants such as  Hochschild homology, Hochschild cohomology and Grothendieck groups of algebraic varieties (cf. \cite{Kuz3}). Many geometric statements were categorified which means that a deep categorical origin or explanation was discovered.

One basic problem in algebraic geometry is to study 
how information of a given variety can be encoded in information of the other varieties. 
%
In 2011, Bondal 
raised the following question  (cf. \cite{BBF}).
\begin{ques}\label{q1.1} \emph{(Fano visitor problem)}\\
Let $Y$ be a smooth projective variety. Is there a Fano variety $X$ equipped with a fully faithful embedding $D^b(Y) \to D^b(X)$? 
\end{ques}
If the answer is yes, we call $Y$ a \emph{Fano visitor} and $X$ a \emph{Fano host} of $Y$. 

From the categorical point of view, Fano varieties are of particular interest because they admit natural semiorthogonal decompositions and many examples have been explicitly calculated  (cf. \cite{BBF, BO, Kuz1, Kuz2, ST}). They are one of the main objects in birational geometry and mirror symmetry. If the answer to Question \ref{q1.1} is yes for all smooth projective varieties, some problems about derived categories may be effectively reduced to those of Fano varieties. Moreover the geometry and invariants of $X$ are closely related to those of $Y.$ Especially, it turns out that moduli spaces of rational curves or vector bundles on $X$ are closely related to the geometry of $Y.$ See \cite{CKL, LMS1, LMS2, Reid} for such examples.

Bondal and Orlov in \cite{BO} proved that the derived category of a hyperelliptic curve $Y$ of genus $g$ is embedded into the derived category of the intersection of two quadrics in $\PP^{2g+1}$. Kuznetsov in \cite{Kuz1} proved that the derived categories of some K3 surfaces are embedded into special cubic 4-folds. He also discovered some Fano 3-folds that contain the derived categories of certain smooth projective curves  (cf \cite{Kuz2}).
Bernardara, Bolognesi and Faenzi in \cite{BBF} proved that every smooth plane curve is a Fano visitor. Segal and Thomas in \cite{ST} proved that a general quintic 3-fold is a Fano visitor by finding an 11-dimensional Fano host.

In \cite{KKLL}, the authors proved the following.

\begin{theo}\cite[Theorem 4.1]{KKLL}\label{t1.3}
All smooth projective complete intersections are Fano visitors.
\end{theo}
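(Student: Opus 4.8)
The plan is to forget the specific embedding $Y \subset \PP^n$ and work only with the intrinsic category $D^b(Y)$, exploiting the freedom to realize it inside many different ambient varieties. Writing $Y = \{f_1 = \cdots = f_c = 0\} \subset \PP^n$ as the zero locus of a regular section of $E = \bigoplus_{i=1}^c \cO(d_i)$, I would first convert the problem into a Landau--Ginzburg/matrix factorization problem. Concretely, the categorical Cayley trick realizes $D^b(Y)$ as an admissible subcategory of $D^b(H)$, where $H \subset \PP(E)$ is the hypersurface cut out by the tautological section $\sum_i p_i f_i$ (with $p_i$ the fibre coordinates); equivalently, by the theorem of Orlov (and its relative form due to Isik and Shipman), $D^b(Y)$ is equivalent to, or at worst embeds into, the graded singularity category of the superpotential $W = \sum_i p_i f_i$ on $\mathrm{Tot}(E^\vee \to \PP^n)$, after splitting off a semiorthogonal piece built from twists of $D^b(\PP^n)$. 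This step is essentially formal once the section is regular (Bertini guarantees genericity), and it reduces everything to understanding a single potential $W$.

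The heart of the argument is then to fabricate a \emph{Fano} host by the Knörrer periodicity trick. Adding pairs of new variables together with nondegenerate quadratic terms, $W \rightsquigarrow W + \sum_{j} u_j v_j$ (or squares $W + \sum_j w_j^2$), leaves the graded singularity category unchanged up to the known Knörrer equivalence, so $D^b(Y)$ still embeds. The point is that the stabilized potential $\widetilde W$ is the superpotential of a \emph{new} hypersurface $\widetilde Y$ in a weighted projective space (or projective bundle), and each added pair $u_j v_j$ of degree $\deg W$ raises the sum of the weights without raising the degree, so it contributes $+\deg W$ to the quantity $\sum(\text{weights}) - \deg$. By Orlov's theorem, whenever this quantity is positive one is in the Fano regime and the graded singularity category appears as a semiorthogonal \emph{summand} of $D^b(\widetilde Y)$; hence it suffices to add enough stabilizing variables so that $\widetilde Y$ satisfies the Fano inequality. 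Tracking the numerics shows that finitely many additions always suffice, which is exactly how the quintic threefold acquires its $11$-dimensional Fano host in \cite{ST}, now to be carried out uniformly for every multidegree $(d_1,\ldots,d_c)$.

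The step I expect to be the main obstacle is producing a \emph{smooth} Fano host rather than a merely quasi-smooth one: the natural output of the Knörrer construction lives in a weighted projective space and is in general only an orbifold, so one must either choose the weights and the added quadratic terms so that the generic $\widetilde Y$ is simultaneously smooth and Fano, or absorb the singularities and pass through the orbifold Fano hosts that the paper develops later. Alongside this, two bookkeeping issues must be handled carefully: matching the internal gradings so that Orlov's correspondence applies on the nose, where the sign of $\sum(\text{weights}) - \deg$ (the Fano versus general-type dichotomy) is decisive, and checking that the extra exceptional objects and copies of $D^b(\PP^n)$ split off cleanly, so that the resulting functor $D^b(Y) \to D^b(\widetilde Y)$ is genuinely fully faithful. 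Once these are in place, admissibility of the image follows from the standard semiorthogonal decomposition theorems for projective bundles, blow-ups and hypersurfaces, and the theorem of \cite{KKLL} follows.
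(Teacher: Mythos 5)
Your first step is sound and coincides with the paper's: the Cayley--trick hypersurface $H=\{\sum_i p_if_i=0\}\subset \PP E^\vee$ carries $D^b(Y)$ as an admissible subcategory by Orlov's theorem, exactly as recalled in \S\ref{s3.1}--\S\ref{s3.2}. The genuine gap is in your stabilization step. Adding a Kn\"orrer pair $u_jv_j$ with $\deg u_j+\deg v_j=\deg W$ forces at least one new variable to carry weight greater than $1$ as soon as $\deg W>2$, so the stabilized potential $\widetilde W$ defines a hypersurface in a weighted projective space (or weighted bundle); such a hypersurface necessarily passes through the singular stratum of the ambient weighted space (the monomial needed to avoid the high-weight coordinate point has non-integral exponent), so it is at best quasi-smooth, i.e.\ an orbifold and not a smooth projective variety. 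You flag this yourself, but neither escape route closes the gap: ``absorbing the singularities'' proves only the existence of an \emph{orbifold} Fano host, which is the weaker Theorem \ref{t.wci} of this paper rather than Theorem \ref{t1.3}, and no argument is given that the weights can be chosen to make $\widetilde Y$ simultaneously smooth and Fano.

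The fix, which is what \cite{KKLL} actually does, is to perform the stabilization asymmetrically. Instead of adding $u_jv_j$ with both factors weighted, one enlarges the base $\PP^m$ to $\PP^{m+c}$ and extends $E$ by $\cO(1)^{\oplus c}$; the new cross-terms in the potential are then $q_j\ell_j$ with $\ell_j$ a genuine weight-one coordinate on the larger projective space and $q_j$ a fibre coordinate that gets \emph{projectivized} rather than weighted. The host $X=w^{-1}(0)$ stays a hypersurface of class $\cO_{\PP E^\vee}(1)$ in an honest projective bundle over $\PP^{m+c}$, hence is smooth whenever $Y$ is, and the computation $K_X^\vee\cong q^*(K_S^\vee\otimes\det E^\vee)\otimes\cO_{\PP E^\vee}(r-1)|_X$ shows it becomes Fano once $c$ is large enough. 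Your numerology (each added pair contributes $+\deg W$ to $\sum(\text{weights})-\deg$) is the same positivity count that makes $c\gg 0$ work, but only the asymmetric version of the trick keeps the construction inside the category of smooth varieties, which is what the statement requires.
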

Moreover, they defined the \emph{Fano dimension} of a smooth projective variety $Y$ as the minimum dimension of Fano hosts $X$ of $Y$. The Fano dimension is defined to be infinite if no Fano hosts exist. It was also proved that an arbitrary complete intersection Calabi-Yau variety $Y$ of codimension $\le 2$ or a general complete intersection Calabi-Yau variety of codimension $\ge 3$ has Fano dimension at most $\dim Y+2$. 


 In this paper, we first provide a Hodge-theoretic criterion for the existence of a Fano host.
\begin{prop}\label{p1.4} (Proposition \ref{p2.18})
Let $Y$ be a Fano visitor and $X$ be a Fano host of $Y$. Then we have the inequality of Hodge numbers
$$\sum_{p-q=i} h^{p,q}(Y)\le \sum_{p-q=i} h^{p,q}(X)\quad \text{for all }  i.$$
\end{prop}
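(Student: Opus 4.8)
The plan is to pass from the categorical hypothesis to a numerical comparison through Hochschild homology, and then to reinterpret the Hochschild homology dimensions as sums of Hodge numbers via the Hochschild--Kostant--Rosenberg (HKR) isomorphism. The two inputs I would use are the additivity of Hochschild homology along semiorthogonal decompositions, due to Kuznetsov, and the HKR isomorphism, which for a smooth projective variety $Z$ identifies $\dim_{\CC} HH_n(Z)$ with $\sum_{p-q=n} h^{p,q}(Z)$ (the opposite index convention gives the same total by Hodge symmetry $h^{p,q}=h^{q,p}$).

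First I would promote the fully faithful embedding $\Phi\colon D^b(Y)\hookrightarrow D^b(X)$ to a semiorthogonal decomposition. Since $Y$ and $X$ are smooth and projective, $D^b(Y)$ is a saturated triangulated category in the sense of Bondal--Kapranov and Bondal--Van den Bergh, so its image $\mathcal Y:=\Phi(D^b(Y))$ is an admissible subcategory of $D^b(X)$. Writing $\mathcal A=\mathcal Y^{\perp}$ for the right orthogonal, I obtain a semiorthogonal decomposition $D^b(X)=\langle \mathcal A,\mathcal Y\rangle$ with $\mathcal Y\simeq D^b(Y)$.

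Next I would invoke Kuznetsov's additivity theorem, which (together with the invariance of Hochschild homology under exact equivalences) gives for every $n$ an isomorphism $HH_n(X)\cong HH_n(\mathcal A)\oplus HH_n(Y)$. In particular $\dim HH_n(X)\ge \dim HH_n(Y)$, since the complementary summand $HH_n(\mathcal A)$ has nonnegative dimension. Specializing the HKR identity above to $Z=X$ and $Z=Y$ and setting $i=n$ then turns this inequality into the claimed $\sum_{p-q=i} h^{p,q}(Y)\le \sum_{p-q=i} h^{p,q}(X)$ for every $i$.

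The step I expect to require the most care is the passage from a bare fully faithful embedding to an honest semiorthogonal decomposition: one must know that the image is admissible, equivalently that $\Phi$ admits both a left and a right adjoint, and this is exactly where the smoothness and properness of $Y$ enter, through the saturatedness of $D^b(Y)$. Once admissibility is in hand, both the additivity of $HH_\bullet$ and the translation into Hodge numbers are formal.
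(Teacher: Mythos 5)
Your proof is correct, but it follows a different route from the one the paper writes out. The paper's own argument (Proposition \ref{p2.18}) stays at the level of singular cohomology: after invoking Orlov's representability (Theorem \ref{t2.12}) it takes the right adjoint Fourier--Mukai transform $\Phi_{K_R}$ (Proposition \ref{p2.17}), uses full faithfulness and uniqueness of kernels to get $\Phi_{K_R}\circ\Phi_K\cong\Phi_{\cO_\Delta}$, passes to the cohomological transforms to conclude that $\Phi_K^H:H^*(Y,\QQ)\to H^*(X,\QQ)$ is injective, and then cites the argument of \cite[Proposition 5.39]{Huy2} to see that $\Phi_K^H$ respects the antidiagonal grading by $p-q$. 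This yields an actual inclusion $\bigoplus_{p-q=i}H^{p,q}(Y)\hookrightarrow\bigoplus_{p-q=i}H^{p,q}(X)$, which is slightly finer than the dimension inequality. Your route --- upgrade the embedding to a semiorthogonal decomposition $D^b(X)=\langle \cY^\perp,\cY\rangle$ via saturatedness of $D^b(Y)$, apply Kuznetsov's additivity of Hochschild homology, and translate through HKR --- is exactly the alternative the authors acknowledge in the remark immediately after the proof (``Proposition \ref{p2.18} can also be obtained from \cite[Theorem 7.6]{Kuz3}''). What your approach buys is a more intrinsic, purely categorical statement that applies to any admissible subcategory, geometric or not; what it costs is the extra step of establishing admissibility of the image (which the paper's argument sidesteps, since adjoints of Fourier--Mukai transforms exist automatically), and note that both routes still lean on Orlov's theorem, yours because Kuznetsov's additivity and the invariance of $HH_\bullet$ under the identification $\cY\simeq D^b(Y)$ are set up for kernel functors. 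Your aside about the two HKR index conventions agreeing via Hodge symmetry is correct and worth keeping.
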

As a direct consequence, we obtain the following.
\begin{coro}\label{c1.5} (Corollary \ref{c4.2}) 
If $h^{n,0}(Y)\ne 0$ for $n=\dim Y > 0$, then the Fano dimension of $Y$ is at least $n+2$.
\end{coro}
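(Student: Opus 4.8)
The plan is to extract the bound directly from Proposition \ref{p1.4} by specializing it to the single Hodge diagonal $i = n$ and then using the absence of global holomorphic forms on a Fano variety. Let $X$ be an arbitrary Fano host of $Y$ and set $m = \dim X$; the goal is to prove $m \ge n+2$, since the Fano dimension is the infimum of such $m$. First I would compute the left-hand side of the inequality for $i = n$: because $\dim Y = n$ the Hodge numbers $h^{p,q}(Y)$ vanish outside $0 \le p,q \le n$, so the diagonal $p - q = n$ contains only the corner $(n,0)$, and the hypothesis gives
\[
\sum_{p-q=n} h^{p,q}(Y) = h^{n,0}(Y) \ge 1.
\]

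Next I would bring in the two standard facts about a smooth Fano variety $X$: Kodaira vanishing for the ample anticanonical bundle gives $H^q(X,\cO_X) = 0$ for all $q > 0$, and Hodge symmetry then yields $h^{r,0}(X) = h^{0,r}(X) = 0$ for every $r > 0$. Feeding $i = n$ into Proposition \ref{p1.4} gives $h^{n,0}(Y) \le \sum_{p-q=n} h^{p,q}(X)$, and I would inspect the right-hand side term by term. On the diagonal $p - q = n$ a summand $h^{p,q}(X)$ has $p = n+q$ with $p \le m$, and the term $q = 0$ is $h^{n,0}(X) = 0$ by the above. Since the right-hand side is at least $1$, some summand with $q \ge 1$ must be nonzero, forcing $m \ge n+q \ge n+1$; it then remains only to rule out the borderline value $m = n+1$.

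The hard part, such as it is, is exactly this borderline case, where a naive count yields only $m \ge n+1$ and one must squeeze out the extra unit. When $m = n+1$ the constraints $p - q = n$ and $p \le m$ leave a single candidate term, namely $h^{n+1,1}(X) = \dim H^1(X, \Omega^{n+1}_X) = \dim H^1(X, \omega_X)$, using $\Omega^{n+1}_X = \omega_X$. Here I would invoke Serre duality: $H^1(X, \omega_X) \cong H^{n}(X, \cO_X)^\ast$, which vanishes because $n = m-1 > 0$ and $X$ is Fano. Hence for $m = n+1$ the entire right-hand side collapses to $0$, contradicting $h^{n,0}(Y) \ge 1$. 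Combining the two paragraphs, no Fano host can have dimension $\le n+1$, so the Fano dimension of $Y$ is at least $n+2$, as claimed.
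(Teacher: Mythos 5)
Your proof is correct and follows essentially the same route as the paper: both specialize the Hodge-number inequality of Proposition \ref{p1.4} to the diagonal $i=n$, use $h^{p,0}(X)=0$ for a smooth Fano $X$, and dispose of the borderline case $\dim X=n+1$ by identifying $h^{n+1,1}(X)$ with $h^{n,0}(X)$ via Serre duality. The only difference is cosmetic: you spell out the Serre duality step that the paper states as the one-line equality $h^{n+1,1}(X)=h^{n,0}(X)$.
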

Combining this corollary with the Fano host construction in \cite{KKLL}, we obtain the following.
\begin{coro}\label{c1.6}  (Corollary \ref{c4.5} and Proposition \ref{p4.6})\\
 The Fano dimension of a smooth projective curve of positive genus is at least 3. The Fano dimension of an arbitrary complete intersection Calabi-Yau variety $Y\subset \PP^{n+c}$ of codimension $c\le 2$ or a general complete intersection Calabi-Yau variety of codimension $c\ge 3$ is precisely $\dim Y+2$. 
\end{coro}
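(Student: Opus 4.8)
The plan is to read off the relevant Hodge number of $Y$ and combine the lower bound it produces through Corollary \ref{c1.5} with the upper bound on the Fano dimension already established in \cite{KKLL}. Both assertions have exactly this shape, so the proof splits into two short computations.

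First I would handle the curve case. A smooth projective curve $Y$ of genus $g>0$ has $\dim Y=1$ and $h^{1,0}(Y)=g\neq 0$, so the hypothesis of Corollary \ref{c1.5} is met with $n=1$. The corollary then gives directly that the Fano dimension of $Y$ is at least $1+2=3$. This bound is unconditional: if $Y$ admits no Fano host at all its Fano dimension is infinite, and otherwise Corollary \ref{c1.5} applies, so in either case the Fano dimension is at least $3$.

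For the Calabi-Yau case I would first establish the lower bound in the same way. If $Y\subset\PP^{n+c}$ is a complete intersection Calabi-Yau variety of dimension $n$, then $K_Y\cong\cO_Y$, whence $h^{n,0}(Y)=\dim H^0(Y,\Omega^n_Y)=\dim H^0(Y,K_Y)=1\neq 0$. Corollary \ref{c1.5} now yields that the Fano dimension is at least $n+2=\dim Y+2$. For the reverse inequality I would invoke the Fano host construction of \cite{KKLL}, which produces, for an arbitrary complete intersection Calabi-Yau of codimension $c\le 2$ and for a general complete intersection Calabi-Yau of codimension $c\ge 3$, a Fano host of dimension exactly $\dim Y+2$; hence the Fano dimension is at most $\dim Y+2$. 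The two inequalities together pin it down to $\dim Y+2$.

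The statement is thus a direct synthesis of two inputs, and I do not expect a genuine obstacle inside this corollary. The only delicate points are the Hodge-number computations, which are routine, and they hinge on the hypotheses being exactly right: $g>0$ is needed so that the relevant diagonal sum $\sum_{p-q=1}h^{p,q}(Y)=g$ is strictly positive, and triviality of $K_Y$ is precisely what forces $h^{n,0}(Y)\neq 0$. The real content lives in the ingredients rather than here: Corollary \ref{c1.5} supplies the lower bound, and the construction of \cite{KKLL} supplies a Fano host meeting it on the nose in the stated range of codimensions.
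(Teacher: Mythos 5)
Your proposal is correct and follows exactly the paper's own argument: the lower bound comes from the Hodge-theoretic criterion (Corollary \ref{c4.2}, applied with $h^{1,0}(Y)=g>0$ for curves and $h^{n,0}(Y)=h^0(Y,K_Y)=1$ for Calabi--Yau complete intersections), and the upper bound $\dim Y+2$ comes from the Fano host construction of \cite{KKLL}. There is nothing to add.
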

For instance, every smooth quintic 3-fold has Fano dimension 5 and the Fano host constructed in \cite{KKLL} has the minimal possible dimension.

Next we initiate a systematic search for more Fano visitors. We generalize the construction and technique of \cite{KKLL} for complete intersections in more general varieties such as Grassmannians (cf. Theorem \ref{t3.1}) or weighted projective spaces (cf. Theorem \ref{t.wci}). Using this, we prove that smooth curves of genus at most 4 are all Fano visitors (cf. \S\ref{s5.1}) and general curves of genus at most 9 are Fano visitors (cf. \S\ref{s5.2}).
For surfaces and higher dimensional varieties, we find more examples of Fano visitors and raise natural questions. 

We also generalize Bondal's question and study triangulated subcategories of derived categories of Fano orbifolds. Here a Fano orbifold means a smooth Deligne-Mumford stack(with trivial inertia in codimension at most 1) whose coarse moduli space is Fano variety. Conversely, it is known that for normal projective variety with quotient singularities we can find a smooth Deligne-Mumford stack whose coarse moduli space is the given variety. (cf. \cite{Kawamata1, Kawamata2, Vistoli, Popa}). We have following motivations for this generalization. 

First, it seems that it is hard to construct examples of smooth projective Fano varieties whose Grothendieck groups contain nontrivial finite abelian groups as direct summands. This might be an obstruction of Bondal's original Fano visitor problem.

By recent developments of the theory of Fano varieties, it seems that it is essential to consider Fano varieties having singularities. By works of Kawamata (cf. \cite{Kawamata1, Kawamata2, Kawamata3, Kawamata4}), it turns out that considering derived categories of smooth Deligne-Mumford stacks instead of considering derived categories of their coarse moduli spaces have many advantages. Therefore we investigate derived categories of Fano orbifolds instead of derived categories of Fano varieties having only quotient singularities. Moreover Fano orbifolds naturally appear in many context, e.g. mirror symmetry, orbifold Kahler-Einstein metric, etc. It will be nice if one can find a way to relate every Fano variety a smooth Artin stack whose coarse moduli space is the Fano variety, but it seems that we do not have such method yet.

Therefore a natural generalization of Bondal's original Fano visitor problem will be as follows.

\begin{ques}
(1) Which triangulated categories can be embedded into derived categories of smooth Deligne-Mumford stacks or smooth Artin stacks whose coarse moduli spaces are Fano? \\
(2) For a smooth projective variety $Y,$ is there a Fano orbifold $\mathcal{X}$ such that $D^b(\mathcal{X})$ contains $D^b(Y)$ as a full triangulated subcategory?
\end{ques}

In this paper, we restrict ourselves to consider only triangulated subcategories of Fano orbifolds and found many examples of varieties whose derived categories are contained in derived categories of Fano orbifolds.

\begin{defi}
Let $\mathcal{Y}$ be an algebraic stack. If there is a Fano orbifold $\mathcal{X}$ such that $D^b(\mathcal{X})$ contains $D^b(\mathcal{Y})$ then we say $\mathcal{Y}$ has an orbifold Fano host $\mathcal{X}.$
\end{defi}

Then we have the following result.

\begin{theo}
Every quasi-smooth weighted complete intersection in a weighted projective space is a coarse moduli space of a smooth Deligne-Mumford stack which has an orbifold Fano host. 
\end{theo}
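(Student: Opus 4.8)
The plan is to first upgrade the variety $Y$ to a smooth stack, and then to exhibit an explicit Fano host by a Cayley-trick construction, reducing the Fano property to an adjunction computation that can always be arranged by enlarging the ambient weighted projective stack. First I would set up the stack. Write $Y=V(f_1,\dots,f_c)\subset\PP(a_0,\dots,a_n)$ with $f_j$ weighted homogeneous of degree $d_j$, and after deleting redundant weights assume the ambient space is well-formed. Let $\mathcal{P}=[(\mathbb{A}^{n+1}\setminus 0)/\mathbb{G}_m]$ be the weighted projective stack, with $\mathbb{G}_m$ acting by the weights $a_0,\dots,a_n$; it is a smooth proper Deligne--Mumford stack whose coarse moduli space is $\PP(a_0,\dots,a_n)$, and well-formedness places its nontrivial inertia in codimension $\ge 2$, so $\mathcal{P}$ is an orbifold in the sense used here. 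Since $Y$ is quasi-smooth, the affine cone $C_Y=V(f_1,\dots,f_c)\subset\mathbb{A}^{n+1}$ is smooth away from the origin, so $\mathcal{Y}=[(C_Y\setminus 0)/\mathbb{G}_m]$ is a smooth closed substack of $\mathcal{P}$ whose coarse moduli space is $Y$. This already exhibits $Y$ as the coarse space of a smooth Deligne--Mumford stack.

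Next I would build the host as a Cayley hypersurface. Put $\mathcal{E}=\bigoplus_{j=1}^{c}\mathcal{O}_{\mathcal{P}}(-d_j)$ and form the projective bundle $\pi\colon\mathbb{P}_{\mathcal{P}}(\mathcal{E})\to\mathcal{P}$, a smooth toric Deligne--Mumford stack with fibre coordinates $y_1,\dots,y_c$. The tuple $(f_1,\dots,f_c)$ is a section of $\mathcal{E}^\vee$ and cuts out the Cayley hypersurface $\mathcal{Z}=\{\sum_j y_j f_j=0\}\subset\mathbb{P}_{\mathcal{P}}(\mathcal{E})$. The crucial point is that $\mathcal{Z}$ is smooth: a singular point would force $f_1=\dots=f_c=0$ together with $\sum_j y_j\,\partial f_j/\partial x_i=0$ for all $i$, but on $C_Y\setminus 0$ the Jacobian $(\partial f_j/\partial x_i)$ has rank $c$ by quasi-smoothness, whence $y=0$, which is excluded on the projectivized fibres. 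Thus $\mathcal{Z}$ is a smooth Deligne--Mumford stack.

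It then remains to produce the embedding and to check positivity. The Cayley-trick semiorthogonal decomposition --- the stacky form of Orlov's projective-bundle and hypersurface theorems that underlies the weighted generalization in Theorem \ref{t.wci} --- yields
$$D^b(\mathcal{Z})=\langle D^b(\mathcal{Y}),\,D^b(\mathcal{P})_1,\dots,D^b(\mathcal{P})_{c-1}\rangle,$$
so in particular $D^b(\mathcal{Y})$ embeds fully faithfully into $D^b(\mathcal{Z})$. For the Fano property, an adjunction computation on the toric stack $\mathbb{P}_{\mathcal{P}}(\mathcal{E})$ gives
$$-K_{\mathcal{Z}}=\bigl(\mathcal{O}_{\mathbb{P}(\mathcal{E})}(c-1)\otimes\pi^{*}\mathcal{O}_{\mathcal{P}}(m)\bigr)\big|_{\mathcal{Z}}$$
for an explicit integer $m$. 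Since $\mathcal{O}_{\mathbb{P}(\mathcal{E})}(1)$ is relatively ample, this class is ample as soon as $c\ge 2$ and $m>0$, and then $\mathcal{X}=\mathcal{Z}$ is a Fano orbifold serving as an orbifold Fano host of $\mathcal{Y}$.

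The one place where genuine choices are needed --- and what I expect to be the main obstacle --- is securing $c\ge 2$ together with $m>0$ while retaining quasi-smoothness. A genuine hypersurface ($c=1$) must first be rewritten as a codimension-two quasi-smooth complete intersection $Y=V(f_1,w)\subset\PP(a_0,\dots,a_n,b)$ by adjoining one coordinate $w$ of a freely chosen weight $b$; more importantly, the relative degree $c-1$ is robust but the base degree $m$ can be negative (as the product model $\mathcal{Z}\in|\mathcal{O}(d,1)|\subset\PP^{N}\times\PP^{c-1}$ shows, where $m=N+1-d$), so one must enlarge the ambient weighted projective stack by dummy coordinates --- thereby increasing $\sum_i a_i$ --- and add dummy linear equations, until $m$ becomes positive. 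Checking that these enlargements preserve quasi-smoothness and leave the generic stabilizer trivial, and that the anticanonical and Cayley-trick computations remain valid on the toric Deligne--Mumford stack $\mathbb{P}_{\mathcal{P}}(\mathcal{E})$ rather than on a smooth projective variety, is the technical core of the argument; it is precisely what the weighted version of the \cite{KKLL} construction in Theorem \ref{t.wci} supplies.
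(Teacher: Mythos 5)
Your construction is essentially the paper's: the host you call $\mathcal{Z}$ is literally the stack the paper builds, since the quotient of $(\mathbb{C}^{n+1}\setminus 0)\times\PP^{c-1}$ by $\CC^*$ (acting with weights $a_i$ on the cone and $-d_j$ on the Cayley coordinates) is exactly your projective bundle $\mathbb{P}_{\mathcal{P}}(\bigoplus_j\mathcal{O}_{\mathcal{P}}(-d_j))$ over the weighted projective stack, and both proofs get the embedding $D^b(\mathcal{Y})\hookrightarrow D^b(\mathcal{X})$ from the $\CC^*$-equivariant form of Orlov's theorem and both secure positivity by first enlarging the ambient space $\PP(\bar a)\hookrightarrow\PP(\bar a,1,\dots,1)$ (which simultaneously disposes of the $c=1$ case by adding linear equations). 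The one place you genuinely diverge is the verification that the coarse space of $\mathcal{X}$ is Fano: you propose to compute $-K_{\mathcal{Z}}$ by adjunction directly on the toric Deligne--Mumford stack $\mathbb{P}_{\mathcal{P}}(\mathcal{E})$, whereas the paper sidesteps working on the stack by lifting along the finite cover $\PP^n\to\PP(\bar a)=\PP^n/\mu_{\bar a}$, applying Cayley's trick to the lifted complete intersection $C^*_{\widetilde Y}$, checking that the resulting $\widetilde X$ is Fano by ordinary adjunction on a smooth projective variety, and then descending the ampleness of $-K$ through the quotient by $\mu_{\bar a}$ (using that the fixed locus has codimension $\ge 2$). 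Your route is more intrinsic and avoids the covering, but it obliges you to justify the relative Euler sequence, the formula $-K_{\mathcal{Z}}=\bigl(\mathcal{O}_{\mathbb{P}(\mathcal{E})}(c-1)\otimes\pi^*\mathcal{O}_{\mathcal{P}}(m)\bigr)|_{\mathcal{Z}}$, and the ampleness criterion on a stack where the sheaves $\mathcal{O}_{\PP(\bar a)}(k)$ are badly behaved on the coarse space (non-invertible, non-multiplicative); the paper's detour through $\PP^n$ is precisely a device to keep all positivity arguments on honest varieties. Either way the argument closes, and you correctly identify the enlargement step (making $m>0$ while preserving quasi-smoothness and keeping nontrivial inertia in codimension $\ge 2$) as the technical crux.
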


Then it immediately follows that hyperelliptic curves and 95 families of (orbifold) K3 surfaces of Reid have orbifold Fano hosts. Moreover we can find many Fano orbifolds whose derived categories contain derived categories of many interesting varieties, e.g. Jacobians of curves, generic Enriques surfaces, some families of Kummer surfaces, bielliptic surfaces, surfaces with $\kappa=1,$ classical Godeaux surfaces, product-quotient surfaces, holomorphic symplectic varieties, etc. However we do not know whether there are smooth projective Fano varieties whose derived categories contain derived categories of these varieties.

An interesting recent discovery is the existence of quasi-phantom subcategories in derived categories of some surfaces of general type with $p_g=q=0$ (\cite{BBKS, BBS, GS, Lee1, Lee2, LS}).  But no examples of Fano with quasi-phantom have been found.
From the above constructions, we found Fano orbifolds whose derived categories contain quasi-phantom categories or phantom categories (cf. Example \ref{e6.13}). 

As far as we know, this is the first discovery of a  quasi-phantom category in the realm of Fano (orbifolds). However we do not know if there is a smooth Fano variety with a (quasi-)phantom category.


\medskip

Part of this work was done while the second named author was a research fellow of KIAS and visiting the University of Warwick by support of KIAS. He thanks KIAS and the University of Warwick for wonderful working conditions and kind hospitality. We thank Marcello Bernardara, Alexey Bondal, Chang-Yeon Chough, Alessio Corti, Enrico Fatighenti, Tomas Gomez, Atanas Iliev, In-Kyun Kim, Andreas Krug, Alexander Kuznetsov, Hwayoung Lee, Mudumbai Seshachalu Narasimhan, Dmitri Orlov, Shinnosuke Okawa, Genki Ouchi, Jihun Park, Miles Reid, Powel Sosna, Yukinobu Toda for helpful conversations.

\medskip

\noindent\textbf{Notation}. In this paper, all schemes and stacks are defined over the complex number field $\CC.$ For a vector bundle $E$ on $S,$ the projectivization $\PP E:=\mathrm{Proj}\left( \mathrm{Sym}^\cdot E^{\vee}\right)$ of $E$ parameterizes one dimensional subspaces in fibers of $E.$ For an algebraic stack $\mathcal{X},$ $D^b(\mathcal{X})$ denotes the bounded derived category of coherent sheaves on $\mathcal{X}.$ The zero locus $s^{-1}(0)$ of a section $s:\cO_X\to E$ of a vector bundle $E$ over a scheme $X$ is the closed subscheme of $X$ whose ideal is the image of $s^\vee:E^\vee\to \cO_X.$

\section{preliminaries}\label{s2}

In this section we recall several definitions and facts which we will use later.

\subsection{Fano varieties}\label{s2.1}

Let us recall several definitions on Fano varieties.

\begin{defi}\label{d2.1}\cite{IP}
A normal projective variety $X$ is Fano if $-K_X$ is $\mathbb{Q}$-Cartier and ample.
\end{defi}

It is well known that the Picard group of a Fano variety is a free abelian group (cf. \cite[Proposition 2.1.2]{IP}). 

\begin{defi}\label{d2.2}
Let $X$ be a smooth projective Fano variety. The largest positive integer $i$ which divides $K_X$ in $\mathrm{Pic}(X)$ is called the \emph{index} of $X$.
\end{defi}

Fano varieties have many nice properties.

\begin{theo}\label{t2.3}
For any positive integer $n$, there are only finitely many deformation equivalence classes of smooth projective Fano varieties of dimension $n$.
\end{theo}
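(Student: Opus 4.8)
The plan is to deduce finiteness of deformation classes from \emph{boundedness}: it suffices to exhibit a single scheme of finite type over $\CC$ whose geometric fibers include every smooth projective Fano $n$-fold, since a finite-type scheme has finitely many connected components and any two smooth fibers over a connected base are, by definition, deformation equivalent. To build such a parameter space I would embed all Fano $n$-folds uniformly into a fixed projective space $\PP^N$ by means of a fixed power of the anticanonical bundle, and then realize them as points of a Hilbert scheme with bounded Hilbert polynomial, of which only finitely many occur.

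Carrying this out rests on two quantitative inputs. First, a uniform bound $(-K_X)^n \le C(n)$ on the anticanonical degree, depending only on $n$. Second, a uniform integer $m_0 = m_0(n)$ such that $-mK_X$ is very ample for all $m \ge m_0$ and every Fano $n$-fold $X$. Granting the first bound, the second follows from Matsusaka's big theorem: taking $L = -K_X$, the relevant intersection numbers $L^n$ and $K_X \cdot L^{n-1} = -L^n$ are both bounded, so Matsusaka produces an effective $m_0$ depending only on $n$. With $-m_0 K_X$ very ample, Kodaira vanishing applied to $-mK_X = K_X + \bl -(m+1)K_X\br$, whose twisting bundle is ample for $m\ge 0$, kills the higher cohomology; hence by Riemann--Roch the Hilbert polynomial of $X$ with respect to $-m_0 K_X$ is determined by finitely many numerical invariants, all controlled by $C(n)$. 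Thus only finitely many Hilbert polynomials arise, each cutting out a quasi-projective, finite-type piece of a Hilbert scheme of $\PP^N$, and the locus of smooth Fano members is again of finite type. This reduces everything to the first bound.

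The heart of the matter, and the step I expect to be by far the hardest, is the bound $(-K_X)^n \le C(n)$; here I would follow the rational-curves method of Mori and of Koll\'ar--Miyaoka--Mori. The plan has three movements. First, use Mori's bend-and-break to produce, through every point of $X$, a rational curve $C$ with $0 < -K_X \cdot C \le n+1$; the lower bound $-K_X\cdot C\ge 1$ is automatic since $-K_X$ is ample, while the upper bound comes from degenerating a curve moving in a positive-dimensional family and splitting off a rational component. Second, upgrade these curves to show that $X$ is rationally connected, by deforming free rational curves and assembling very free ones, so that two general points are joined by a single rational curve of controlled degree. Third, and most delicately, leverage a dominating family of rational curves of bounded anticanonical degree together with rational connectedness to bound $(-K_X)^n$ itself, by building chains or combs of such curves that sweep out $X$ and estimating the degree of $X$ from the degrees of the curves through an intersection-theoretic counting argument.

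The principal obstacle is this last degree bound. Bend-and-break already requires Mori's characteristic-$p$ reduction and Frobenius-pullback technique, applied to a family over a finite-type base and then specialized back to characteristic zero; and converting a bounded-degree covering family of rational curves into an effective bound on $(-K_X)^n$ demands careful control of the dimension and degree of the space of such curves through a general point. Everything downstream---Matsusaka, Kodaira vanishing, and the Hilbert-scheme bookkeeping---is comparatively formal once this bound is secured.
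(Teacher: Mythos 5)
The paper does not prove this statement at all: Theorem \ref{t2.3} is quoted as a known black box (it is the boundedness theorem of Koll\'ar--Miyaoka--Mori; the paper does not even attach a citation), so there is no internal proof to compare against. Your outline is a correct account of the standard argument for that theorem: reduce finiteness of deformation classes to boundedness, obtain uniform very ampleness of $-m_0K_X$ from Matsusaka once $(-K_X)^n$ is bounded, and get the degree bound itself from bend-and-break, rational connectedness, and a counting argument with covering families of low-degree rational curves -- you correctly single out this last step as the hard core. The one place where your sketch is glib is the Hilbert-scheme bookkeeping: the lower-order terms of $\chi(X,-mm_0K_X)$ involve Chern numbers such as $c_1^{n-2}c_2$ that are not controlled by $(-K_X)^n$ alone, so ``Riemann--Roch plus $C(n)$'' does not by itself pin down finitely many Hilbert polynomials. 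The standard fix is to note instead that $-m_0K_X$ embeds $X$ in some $\PP^N$ with $N$ bounded (a nondegenerate $n$-fold of degree $d$ satisfies $N\le d+n-1$) and with degree $m_0^n(-K_X)^n$ bounded, and that subvarieties of fixed dimension and bounded degree in a fixed projective space form a bounded family (via Castelnuovo--Mumford regularity or Chow varieties). With that adjustment your proposal is the standard proof.
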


\begin{theo}\cite[Theorem 2.2]{FH}\label{t2.4}
Fano varieties are rationally connected.
\end{theo}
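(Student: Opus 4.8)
The plan is to reduce rational connectedness to the existence of a \emph{very free} rational curve, that is, a non-constant morphism $f\colon \PP^1\to X$ with $f\sta T_X$ ample, so that $f\sta T_X\cong\bigoplus_i\cO_{\PP^1}(a_i)$ with all $a_i\ge 1$. Once such a curve exists, one can deform it while prescribing more and more of its points, and in particular join two general points of $X$ by a rational curve; this is exactly rational connectedness. So the whole problem becomes the production of a single very free curve, and this is where the hypothesis that $-K_X$ is ample enters decisively, through a dimension count for spaces of pointed maps.

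First I would establish the existence of at least one rational curve through a general point, via the classical bend-and-break mechanism of Mori. The essential difficulty is that in characteristic $0$ there is no direct way to manufacture rational curves, so one spreads $X$ out over a finitely generated $\ZZ$-algebra and argues on the dense set of fibers of positive characteristic. There the Frobenius morphism can be used to raise the degree of a given curve arbitrarily, after which the expected dimension of the space of pointed maps---governed by $-K_X\cdot C$, which is positive since $-K_X$ is ample---forces the image to break off a rational component through the chosen point. Lifting back to characteristic $0$ then yields rational curves on $X$.

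Next I would upgrade ``some rational curve'' to a \emph{free} curve through a general point, and then to a very free one. Among the rational curves through a general point, one of minimal anticanonical degree is free: if $f\sta T_X$ had a negative summand, deformation theory would let the curve degenerate, contradicting minimality at a general point. To pass from free to very free I would form a \emph{comb}, attaching many free teeth in sufficiently general position to a fixed free curve, and check that the pullback of $T_X$ to the comb is positive enough that the relevant obstruction group $H^1$ vanishes; the comb then smooths to an honest $\PP^1$ whose pulled-back tangent bundle is ample. This gluing-and-smoothing step, together with the characteristic-$p$ bend-and-break, is the technical heart of the argument.

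The main obstacle I expect is precisely the bend-and-break and reduction-to-positive-characteristic step: producing the first rational curves out of nothing is what makes the theorem deep, since the ampleness of $-K_X$ only helps \emph{after} one has curves whose degree can be pushed up. For the possibly singular Fano varieties allowed by Definition \ref{d2.1}, I would either pass to a resolution with controlled discrepancies or invoke the extension of the smooth case to the klt setting; for the application here it suffices to have the statement for the smooth Fano hosts under consideration.
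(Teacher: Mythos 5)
The paper offers no proof of this statement at all: it is quoted verbatim from de~Fernex--Hacon \cite[Theorem 2.2]{FH}, who in turn attribute it to Campana and Koll\'ar--Miyaoka--Mori in the smooth case and to Zhang and Hacon--McKernan in the singular case. So there is nothing internal to compare against; what I can say is that your sketch is a faithful and correct outline of the standard Campana/Koll\'ar--Miyaoka--Mori argument for \emph{smooth} Fano varieties (reduction mod $p$ and Frobenius to run bend-and-break and produce rational curves through every point, freeness of a minimal-degree curve through a general point, then combs of free teeth smoothing to a very free curve, which is equivalent to rational connectedness in characteristic $0$), and this is exactly the generality in which the present paper actually uses the theorem, namely to deduce via Theorem \ref{t2.5} that its smooth Fano hosts are simply connected. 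The one place where your write-up underestimates the difficulty is the final sentence: Definition \ref{d2.1} allows normal Fano varieties with $\QQ$-Cartier ample $-K_X$, and for those (under the klt hypothesis implicit in \cite{FH}) the result is \emph{not} obtained by ``passing to a resolution with controlled discrepancies'' and rerunning the comb argument -- a resolution of a singular Fano need not carry enough rational curves, and discrepancy control alone does not produce very free curves upstairs. The klt case is a genuinely different theorem of Zhang and of Hacon--McKernan, proved with MMP techniques (running a relative minimal model program and inducting on the dimension of the base of the MRC fibration) rather than by an extension of bend-and-break. Since the paper only ever applies the statement to smooth Fanos, this caveat does not affect anything downstream, but as a proof of the theorem in the stated generality your last step is a citation in disguise rather than an argument.
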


\begin{theo}\cite[Corollary 4.18]{Deb}\label{t2.5}
Every smooth projective rationally connected variety is simply connected.
\end{theo}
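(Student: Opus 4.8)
The plan is to combine two facts about a smooth projective rationally connected variety $X$: that it admits no nontrivial connected finite \'etale cover, and that $\pi_1(X)$ is finite. Together these force $\pi_1(X)$ to be trivial.

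First I would record the cohomological consequence of rational connectedness. A rationally connected variety carries no nonzero global holomorphic forms, so $H^0(X,\Omega_X^i)=0$ for all $i>0$, and hence by Hodge symmetry $H^i(X,\cO_X)\cong\overline{H^0(X,\Omega_X^i)}=0$ for $i>0$. Since $H^0(X,\cO_X)=\CC$, this yields $\chi(\cO_X)=1$. The same applies to any rationally connected variety we produce below.

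Next comes the key geometric step: any connected finite \'etale cover $\pi\colon X'\to X$ is again rationally connected. Because $X$ is rationally connected it carries very free rational curves $f\colon \PP^1\to X$, and since $\PP^1$ is simply connected each such $f$ lifts to a map $f'\colon\PP^1\to X'$. As $\pi$ is \'etale we have $f'^{*}T_{X'}=f^{*}T_X$, so the lifted curve is again very free; thus $X'$ is rationally connected. Applying the first step to $X'$ gives $\chi(\cO_{X'})=1$, while multiplicativity of the Euler characteristic under \'etale covers gives $\chi(\cO_{X'})=(\deg\pi)\cdot\chi(\cO_X)=\deg\pi$. Therefore $\deg\pi=1$, so $X$ has no nontrivial connected finite \'etale cover; equivalently, the profinite completion of $\pi_1(X)$ is trivial.

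Finally I would invoke finiteness of $\pi_1(X)$, the theorem of Campana and Koll\'ar--Miyaoka--Mori: passing to the universal cover $u\colon\hat X\to X$, one lifts chains of very free rational curves joining a base point to an arbitrary point and shows that only finitely many sheets of $u$ are reached, thereby bounding its degree. Granting this, $\pi_1(X)$ is a finite group whose profinite completion, namely itself, is trivial, so $\pi_1(X)=1$. The main obstacle is precisely this finiteness: the \'etale-cover argument only controls the profinite completion, and upgrading a trivial profinite completion to triviality of the full topological fundamental group genuinely requires the separate input that $\pi_1(X)$ is finite. Establishing that finiteness, through a careful deformation-and-lifting analysis of very free curves on the universal cover, is the technical heart of the statement.
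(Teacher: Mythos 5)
The paper offers no proof of this statement at all---it is quoted as a background fact, cited verbatim as Corollary 4.18 of Debarre's book---and your argument is precisely the standard proof found there: a finite \'etale cover of a rationally connected variety is again rationally connected (lift very free curves, since $\PP^1$ is simply connected and \'etale pullback preserves ampleness of $f^*T_X$), the vanishing $H^i(X,\cO_X)=0$ for $i>0$ gives $\chi(\cO_X)=1$ and hence forces any such cover to have degree one, and the separately established finiteness of $\pi_1(X)$ (Campana, Koll\'ar--Miyaoka--Mori) upgrades triviality of the profinite completion to triviality of $\pi_1(X)$ itself. Your proof is correct, and you rightly flag the finiteness theorem as the essential external input rather than something the \'etale-cover argument alone can deliver.
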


Therefore we see that every smooth projective Fano variety is simply connected. Mori cones of weak Fano varieties are also very special.

\begin{theo}\cite[Theorem 2.3]{FH} \cite[Theorem 1.4]{Yasutake}\label{t2.6}
The Mori cone of a weak Fano variety is a rational polyhedral cone generated by classes of rational curves.
\end{theo}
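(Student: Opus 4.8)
The plan is to deduce the statement from the Cone Theorem of Mori and Kawamata by exploiting both hypotheses defining a weak Fano variety $X$: that $-K_X$ is nef, which pins the Mori cone inside a half-space, and that $-K_X$ is big, which lets us perturb the situation into an honest log Fano pair. Recall that the Cone Theorem asserts, for a projective klt variety, that
\[
\overline{NE}(X)=\overline{NE}(X)_{K_X\ge 0}+\sum_i \mathbb{R}_{\ge 0}[C_i],
\]
where the $C_i$ are rational curves spanning the $K_X$-negative extremal rays, these rays being locally discrete in the open half-space $\{z:K_X\cdot z<0\}$.

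First I would use nefness. Since $-K_X$ is nef we have $K_X\cdot z\le 0$ for every $z\in\overline{NE}(X)$, so the whole Mori cone lies in the closed half-space $\{K_X\le 0\}$ and the nonnegative part above collapses to the single face $F:=\overline{NE}(X)\cap K_X^{\perp}$. Thus the only thing left to control is this $K_X$-trivial face, and this is where bigness enters. By Kodaira's lemma $-K_X$ is numerically equivalent to $A+E$ with $A$ an ample $\mathbb{Q}$-divisor and $E\ge 0$ effective, and from this the goal is to produce an effective $\mathbb{Q}$-divisor $\Delta$ for which $(X,\Delta)$ is klt and $-(K_X+\Delta)$ is ample. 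Granting such a $\Delta$, applying the Cone Theorem to the log Fano pair $(X,\Delta)$ and using ampleness of $-(K_X+\Delta)$ forces $(K_X+\Delta)\cdot z<0$ for all nonzero $z\in\overline{NE}(X)$; hence the nonnegative part is now $\{0\}$, and the boundedness estimate $-(K_X+\Delta)\cdot C_i\le 2\dim X$ together with local discreteness in a pointed cone leaves only finitely many rays. Therefore
\[
\overline{NE}(X)=\sum_{i=1}^{r}\mathbb{R}_{\ge 0}[C_i]
\]
is a rational polyhedral cone generated by classes of rational curves, as claimed; in particular the face $F$ is itself spanned by finitely many of the $C_i$.

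The main obstacle is precisely the reduction to a log Fano pair, namely producing a klt boundary $\Delta$ with $-(K_X+\Delta)$ ample. The naive decomposition from Kodaira's lemma only yields $-(K_X+\Delta)$ big rather than ample and gives no control on the singularities or coefficients of the effective divisor $E$. To fix this I would use that $-K_X$ is not merely nef and big but semiample: by the basepoint-free theorem some multiple $-mK_X$ is globally generated and defines the anticanonical contraction $\phi\colon X\to\bar X$, which is birational since $-K_X$ is big, with $-mK_X=\phi^{*}H$ for an ample $H$. Perturbing $\phi^{*}H$ by a small ample divisor and taking a general member of the resulting linear system produces an effective $\Delta$ with small coefficients and mild singularities, so that $(X,\Delta)$ is klt with $-(K_X+\Delta)$ ample. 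Equivalently, one may argue relatively: $\phi$ contracts exactly the curves in $F$, so $F=\overline{NE}(X/\bar X)$ is the relative Mori cone of a birational morphism, which the relative Cone Theorem shows to be rational polyhedral and generated by rational curves. Either route isolates the genuinely substantive point, that the $K_X$-trivial boundary face stays finitely generated, which is exactly what bigness buys over the merely nef case governed by the ordinary Cone Theorem.
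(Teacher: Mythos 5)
The paper offers no proof of Theorem \ref{t2.6}: it is imported from \cite{FH} and \cite{Yasutake}. Your argument is essentially the standard proof found in those sources --- reduce the weak Fano $X$ to a klt log Fano pair $(X,\Delta)$ and apply the Cone Theorem, using nefness of $-K_X$ to collapse the $K_X$-nonnegative part to the face $F=\overline{NE}(X)\cap K_X^{\perp}$, and ampleness of $-(K_X+\Delta)$ together with the bound $0<-(K_X+\Delta)\cdot C_i\le 2\dim X$ and local discreteness to extract finitely many rays spanned by rational curves. That structure is correct and complete in outline. The one substantive criticism is that the ``main obstacle'' you isolate is not an obstacle, and your detour around it is the only imprecise step. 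Writing $-K_X\sim_{\mathbb{Q}}A+E$ by Kodaira's lemma and taking $\Delta=\epsilon E$ gives $-(K_X+\Delta)\sim_{\mathbb{Q}}(1-\epsilon)(-K_X)+\epsilon A$, which is nef plus ample and hence \emph{ample}, not merely big; and $(X,\epsilon E)$ is klt for $\epsilon$ sufficiently small when $X$ is smooth (the case in which the paper invokes the theorem), or whenever $X$ is klt. So the direct route already works and the basepoint-free theorem is unnecessary. In your substitute construction, ``perturbing $\phi^{*}H$ by a small ample divisor'' must mean \emph{subtracting} one: a general effective $D\sim_{\mathbb{Q}}-K_X-\epsilon A$ (Kodaira's lemma again) gives $-(K_X+tD)\sim_{\mathbb{Q}}(1-t)(-K_X)+t\epsilon A$ ample, whereas adding the ample divisor yields $(1-tm)(-K_X)-t\epsilon A$, which points the wrong way. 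Finally, the ``relative'' alternative is vacuous as stated: every curve contracted by the anticanonical morphism $\phi$ is $K_X$-trivial, so $\overline{NE}(X/\bar X)$ coincides with its own $K_X$-nonnegative part and the relative Cone Theorem for $K_X$ alone says nothing about $F$; to make that route work one must again introduce a boundary, i.e.\ fall back on the log Fano argument you already gave.
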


If $X$ is a smooth Fano, $K_X^\vee$ is ample and hence $$H^{p,0}(X)\cong H^{0,p}(X)=H^p(X,\cO_X)=H^p(X,K_X\otimes K_X^{\vee})=0 \quad \text{for } p>0$$ by the Kodaira vanishing theorem. From Kamawama-Viehweg vanishing theorem we have similar results for singular Fano varieties having at most log terminal singularities. Therefore we obtain the following vanishing of Hodge numbers.
\begin{lemm} \cite{IP} \label{l2.7}
If $X$ is a smooth projective Fano variety, $h^{p,0}(X)=0$ for $p>0$. If $X$ is a singular Fano variety having at most log terminal singularities and $\widetilde{X}$ is its resolution of singularities. Then we have $h^{p,0}(\widetilde{X})=h^{p,0}(X)=0$ for $p>0$.
\end{lemm}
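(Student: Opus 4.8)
The plan is to prove the two assertions separately; the first is essentially the computation displayed immediately before the statement, while the second requires transporting that vanishing across a resolution by means of the Kawamata--Viehweg vanishing theorem. For the smooth case I would simply record the chain $h^{p,0}(X)=h^{0,p}(X)=\dim H^p(X,\cO_X)$, where the first equality is Hodge symmetry (complex conjugation) and the second is the Dolbeault isomorphism. Since $\cO_X\cong K_X\otimes K_X^\vee$ and $K_X^\vee=-K_X$ is ample on a smooth Fano variety, the Kodaira vanishing theorem applied to the ample line bundle $K_X^\vee$ gives $H^p(X,K_X\otimes K_X^\vee)=0$ for $p>0$, whence $h^{p,0}(X)=0$.

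For the singular case, let $\pi\colon\widetilde X\to X$ be a log resolution and write $K_{\widetilde X}=\pi^*K_X+\sum_i a_iE_i$ with the $E_i$ exceptional and in simple normal crossing position; the log terminal hypothesis is exactly the condition $a_i>-1$. First I would produce the equality $h^{p,0}(\widetilde X)=h^{p,0}(X)$, reading $h^{p,0}(X)$ as $\dim H^p(X,\cO_X)$: log terminal singularities are rational, so $\pi_*\cO_{\widetilde X}=\cO_X$ (by normality) and $R^j\pi_*\cO_{\widetilde X}=0$ for $j>0$, and the Leray spectral sequence then yields $H^p(\widetilde X,\cO_{\widetilde X})\cong H^p(X,\cO_X)$. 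It remains to show this group vanishes for $p>0$, and here I would invoke the Kawamata--Viehweg vanishing theorem for the log terminal Fano $X$: the point is that $\pi^*(-K_X)$ is nef and big because $-K_X$ is ample, while the fractional correction $\sum_i(\lceil a_i\rceil-a_i)E_i$ has simple normal crossing support with coefficients in $[0,1)$ precisely by $a_i>-1$, so the hypotheses of the vanishing theorem for a nef and big $\QQ$-divisor together with a log terminal boundary are met, applied to the integral divisor $\lceil\sum_i a_iE_i\rceil$.

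The hard part will be the discrepancy bookkeeping needed to feed the correct divisor into Kawamata--Viehweg. One must verify that $\pi^*(-K_X)$ is genuinely nef and big, that the boundary coefficients lie in $[0,1)$, and that the integral divisor one arrives at is effective and exceptional, so that its pushforward is $\cO_X$ and the global vanishing descends to $H^p(X,\cO_X)$; the negative discrepancies are exactly what forces one to pass to the round-up $\lceil\sum_i a_iE_i\rceil$ rather than to $\cO_{\widetilde X}$ itself. This technical core is precisely the content of the Kawamata--Viehweg vanishing theorem alluded to in the sentence preceding the statement, so in a write-up I would most likely cite the log terminal form of that theorem directly rather than rederive the estimate.
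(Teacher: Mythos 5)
Your proposal is correct and follows essentially the same route as the paper, whose own justification is exactly the displayed chain $h^{p,0}(X)=h^{0,p}(X)=\dim H^p(X,K_X\otimes K_X^\vee)=0$ via Kodaira vanishing in the smooth case, together with an appeal to the Kawamata--Viehweg vanishing theorem (and the citation to [IP]) for the log terminal case. Your singular-case details --- rationality of log terminal singularities giving $H^p(\widetilde X,\cO_{\widetilde X})\cong H^p(X,\cO_X)$, and Kawamata--Viehweg applied with the round-up of the discrepancy divisor --- are the standard way to make that appeal precise, and in fact supply more detail than the paper does.
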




\subsection{Derived categories of coherent sheaves on algebraic stacks}

In this section we collect some necessary facts about derived categories of coherent sheaves on algebraic stacks. Because of the lack of spaces we do not recall all basic definitions and properties about algebraic stacks and refer \cite{Gomez, Kawamata1, Kawamata2, Olsson, Vistoli} for these backgrounds. Unless otherwise stated, we will work on the category of schemes over $\mathbb{C}$ with big $\acute{e}$tale topology.

Let $\mathcal{C}$ be the category of schemes over $\mathbb{C}.$ Then Yoneda Lemma enables us to understand the category of schemes as a full subcategories of sheaves on $\mathcal{C}.$ Then we can enlarge the category of schemes by algebraic spaces. Let us recall the definition of algebraic spaces.

\begin{defi}\cite{Olsson}
Let $\mathcal{C}$ be the category of schemes over $\mathbb{C}.$ An algebraic space over $\CC$ is a functor $X : \mathcal{C}^{op} \to Set$ such that \\
(1) $X$ is a sheaf with respect to the big $\acute{e}$tale topology. \\
(2) $\Delta : X \to X \times X$ is representable by schemes. \\
(3) There exist an $\acute{e}$tale surjective morphism $\pi : U \to X$ with $U$ a scheme over $\CC.$ \\
Morphisms between algebraic spaces are natural transformations.
\end{defi}

In many moduli problems the notion of algebraic space is not enough to represent the moduli functors. Often this is because the families we consider have nontrivial automorphisms. Therefore we need to enlarge the notion of sheaves to the notion of categories fibered in groupoids. Roughly speaking, stacks are categories fibered in groupoids whose descent data are effective. Algebraic stacks are stacks where we can naturally extend many definitions and properties of schemes. To be more precise, we can define algebraic stacks as follows.

\begin{defi}\cite{Olsson} Let $\mathcal{C}$ be the category of schemes over $\mathbb{C}.$ \\
(1) A category fibered in groupoids $p : \mathcal{X} \to \mathcal{C}$ is a stack if for every object $V \in \mathcal{C}$ and coverings $\{ V_i \to V \}_{i \in I}$, the functor
$$ \mathcal{X}(V) \to \mathcal{X}(\{ V_i \to V \}_{i \in I}) $$
is an equivalence of categories where $\mathcal{X}(\{ V_i \to V \}_{i \in I})$ are descent data associated to the covering $\{ V_i \to V \}_{i \in I}.$ \\
(2) A stack $\mathcal{X}$ is an Artin stack if the diagonal $ \Delta : \mathcal{X} \to \mathcal{X} \times \mathcal{X}$
is representable and there exist a smooth surjective morphism $\pi : U \to \mathcal{X}$ with $U$ a scheme over $\CC.$ \\
(3) An Artin stack $\mathcal{X}$ is a Deligne-Mumford stack if there is an $\acute{e}$tale surjection $U \to \mathcal{X}$ with $U$ a scheme. \\
We call such $U$ an atlas of $\mathcal{X}.$
\end{defi}

In many moduli problems, there are schemes which corepresent the moduli functors. We call these schemes(more generally algebraic spaces) the coarse moduli spaces of the moduli functors. Let us recall the definition of coarse moduli spaces of algebraic stacks as follows.

\begin{defi}\cite{Olsson}
Let $\mathcal{X}$ be an algebraic stack. A coarse moduli space of $\mathcal{X}$ is a morphism $\pi : \mathcal{X} \to X$ from $\mathcal{X}$ to an algebraic space $X$ such that \\
(1) If $\pi' : \mathcal{X} \to X'$ is a morphism to an algebraic space $X'$ then  there exists a unique morphism $f : X \to X'$ such that $\pi'=\pi \circ f,$\\
(2) For every algebraically closed field $k$ there is an induced map $|\mathcal{X}(k)| \to X(k)$ which is a bijective, where $|\mathcal{X}(k)|$ denotes the set of isomorphism classes in $|\mathcal{X}(k)|.$
\end{defi}

For every nomal projective variety $X$ having at worst quotient singularities, there is a smooth Deligne-Mumford stack $\mathcal{X}$ whose coarse moduli space is $X.$ See \cite{Kawamata1, Kawamata2, Vistoli} for details. In many cases, the coarse moduli space determines the smooth Deligne-Mumford stack. Let us recall the following fact.

\begin{prop}\cite[Proposition 2.8]{Vistoli}
A smooth seperated Deligne-Mumford stack whose inertia is trivial in codimension at most 1 is determined by its coarse moduli space.
\end{prop}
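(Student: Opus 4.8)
The plan is to prove uniqueness by producing a canonical isomorphism between any two candidates, built first over a large open locus and then extended across a substack of codimension $\geq 2$. So let $\pi\colon\mathcal{X}\to X$ and $\pi'\colon\mathcal{X}'\to X$ be two smooth separated Deligne--Mumford stacks with the same coarse moduli space $X$, each having trivial inertia in codimension at most $1$. Let $U\subseteq X$ be the open locus of points whose stabilizer is trivial; over $U$ both coarse maps are isomorphisms, so there is a canonical isomorphism $\psi\colon\mathcal{X}|_U\xrightarrow{\ \sim\ }U\xrightarrow{\ \sim\ }\mathcal{X}'|_U$. The hypothesis on the inertia says precisely that $X\setminus U$ has codimension $\geq 2$ (after intersecting the trivial-stabilizer loci of the two stacks), so $\psi$ is defined away from a closed subset of codimension $\geq 2$.

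The heart of the argument is to extend $\psi$ to an isomorphism $\mathcal{X}\to\mathcal{X}'$ over $X$. I would form the fiber product $\mathcal{X}\times_X\mathcal{X}'$, which is separated and proper over $\mathcal{X}$ since $\mathcal{X}'\to X$ is proper (coarse moduli maps are proper by Keel--Mori), and consider the graph $\Gamma_\psi\subseteq(\mathcal{X}\times_X\mathcal{X}')|_U$. Let $\overline{\Gamma}$ be its closure. The first projection $p\colon\overline{\Gamma}\to\mathcal{X}$ is proper and restricts to an isomorphism over $U$; the goal is to show $p$ is an isomorphism everywhere, for then $\psi$ extends as $p'\circ p^{-1}$, where $p'$ is the second projection. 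Because $\mathcal{X}$ is smooth, hence normal, Zariski's main theorem reduces this to showing that $p$ is quasi-finite, i.e.\ that the closure of the graph acquires no positive-dimensional fibers over $X\setminus U$.

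To control the extension across the stacky locus and to confirm that the resulting morphism is genuinely an isomorphism, I would pass to \'etale-local quotient presentations $\mathcal{X}=[V/G]$ and $\mathcal{X}'=[V'/G']$ with $V,V'$ smooth and $G,G'$ finite groups acting freely in codimension $1$, which is the local form of the inertia hypothesis. Then $V/G\cong V'/G'$ are the same quotient singularity, and the rigidity of quotient singularities by small subgroups (those without quasi-reflections) forces $G$ and $G'$ to be conjugate and the two presentations to be identified compatibly with $\psi$. This pins the stack structure down locally, and separatedness guarantees that the local identifications are the unique extensions of $\psi$, hence glue to a global isomorphism; running the same construction with the roles reversed yields the inverse. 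An alternative, essentially equivalent route is to build a single canonical stack directly from the quotient-singular space $X$ and show that both $\mathcal{X}$ and $\mathcal{X}'$ must coincide with it.

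The main obstacle is the extension step of the second paragraph: making precise the purity statement for morphisms into a separated Deligne--Mumford stack and ruling out positive-dimensional fibers of $\overline{\Gamma}\to\mathcal{X}$ over the codimension $\geq 2$ locus. This is exactly the point where all three hypotheses are used at once---smoothness (normality) of the source so that Hartogs-type extension is available, properness and separatedness of the target so that the closure of the graph behaves well, and the codimension $\geq 2$ condition so that the indeterminacy locus is small enough for the extension to be forced.
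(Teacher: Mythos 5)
The paper does not prove this statement; it is quoted verbatim from Vistoli (Proposition 2.8 of \cite{Vistoli}), so there is no in-paper argument to compare against. Judged on its own terms, your plan assembles the right ingredients of the standard proof: the non-trivial-inertia locus has codimension $\geq 2$, the coarse map is an isomorphism over its complement $U$, and the identification of $\mathcal{X}$ with $\mathcal{X}'$ over $U$ must be extended, with the actual rigidity coming from \'etale-local presentations $[V/G]$ where $G$ acts freely in codimension $1$ and from the fact that quotients by small groups (no quasi-reflections) determine the group and the cover. That last step, which you invoke via Prill-type rigidity, is where the whole content of the proposition lives, and you correctly route the verification through it.

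The soft spot is the second paragraph. As literally stated, the graph-closure mechanism cannot do the work you ask of it: ``proper, quasi-finite, and an isomorphism over a dense open, with smooth (hence normal) target'' does not imply isomorphism for non-representable morphisms of stacks --- the coarse moduli map $[V/G]\to V/G$ of a stack with inertia trivial in codimension $1$ is itself proper, quasi-finite, and an isomorphism over a dense open subset of a normal target without being an isomorphism. So Zariski's main theorem alone cannot rule out that $\overline{\Gamma}\to\mathcal{X}$ acquires gerbe-like (zero-dimensional but non-trivial) fibers over the stacky locus; only the local analysis of the third paragraph excludes this, because there one sees that the stabilizers of $\mathcal{X}$ and $\mathcal{X}'$ at corresponding points are forced to be conjugate small subgroups acting compatibly. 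You acknowledge this is ``the main obstacle,'' which is honest, but the write-up should make clear that the extension is \emph{constructed} locally from the quotient presentations (and glued using separatedness, which gives uniqueness of extensions agreeing on a dense open), rather than obtained from a purity or ZMT argument with the local analysis serving only as confirmation. With that reordering the sketch becomes the standard proof.
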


A coherent sheaf $\mathcal{F}$ on an algebraic stack $\mathcal{X}$ is data which assign a coherent sheaf $\mathcal{F}_U$ for each scheme $U$ over $\mathcal{X}$ and this assignment should satisfy cocycle conditions. To be more precise, let us recall the definition as follows. Here we restrict ourselves to consider only coherent sheaves on Deligne-Mumford stacks. For more general definition, see \cite{Olsson}.

\begin{defi} \cite{Vistoli}
Let $\mathcal{X}$ be a Deligne-Mumford stack. A quasi-coherent sheaf $\mathcal{F}$ on $\mathcal{X}$ is the following data. \\
(1) For any atlas $U \to \mathcal{X},$ a quasi-coherent sheaf $\mathcal{F}_U.$ \\
(2) An isomorphism $\phi_f : f^*\mathcal{F}_V \to \mathcal{F}_U$ for any following commutative diagram
\[\xymatrix{
U \ar[rd] \ar[rr]^f & & V \ar[ld] \\
 & \mathcal{X} &
}\]
where $U, V$ are atlases of $\mathcal{X}$ and $f: U \to V$ be a morphism of schemes.
The above data satisfy the following cocyle condition. For three atlases $U,V,W$ and the following commutative diagram
\[\xymatrix{
U \ar[rd] \ar[r]^f & V \ar[d] \ar[r]^g & W \ar[ld] \\
& \mathcal{X} &
}\]
we have the following commutative diagram.
\[\xymatrix{
\mathcal{F}_U \ar[rd]^{\phi_f} \ar[rr]^{\phi_{g \circ f}} & & (g \circ f)^* \mathcal{F}_W = f^*g^* \mathcal{F}_W \\
 & f^*\mathcal{F}_V \ar[ru]^{f^*\phi_g} &
}\]
We say $\mathcal{F}$ a coherent(resp. locally free) sheaf if $\mathcal{F}_U$ is coherent(resp. locally free) sheaf for every $U.$ 
\end{defi}

Let us recall the famous theorem of Keel and Mori. 

\begin{theo}\cite{Olsson}\label{KeelMori}
Let $\mathcal{X}$ be an algebraic stack which is locally of finite presentation over $\mathbb{C}$ with finite diagonal. Then there exists a coarse moduli space $\mathcal{X} \to X$ such that \\
(1) $X$ is locally of finite type, and if $\mathcal{X}$ is separated, then $X$ is also separated, \\
(2) $\pi$ is proper and $\mathcal{O}_X \to \pi_*\mathcal{O}_{\mathcal{X}}$ is an isomorphism, \\
(3) If $U \to X$ is a flat morphism of algebraic spaces, then the natural map $\mathcal{X} \times_X U \to U$ is a coarse moduli space of $\mathcal{X} \times_X U.$
\end{theo}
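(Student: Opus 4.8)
The plan is to follow the standard route to the Keel--Mori theorem, reducing the construction to an essentially classical quotient-by-a-finite-group statement. First I would record the formal properties that make the problem local. The defining universal property in (1)--(2) shows that a coarse moduli space, if it exists, is unique up to unique isomorphism and that its formation commutes with \'etale base change on the target; concretely, if $\{X_i\to X\}$ is an \'etale cover and each pullback $\mathcal{X}\times_X X_i\to X_i$ is a coarse space, then $\mathcal{X}\to X$ is one as well. Hence it suffices to construct the coarse space \'etale-locally and then glue. To make the local analysis concrete I would replace $\mathcal{X}$ by a groupoid presentation $s,t:R\rightrightarrows U$ with $U$ a scheme; the hypothesis that the diagonal $\Delta:\mathcal{X}\to\mathcal{X}\times\mathcal{X}$ is finite translates into the finiteness of $j=(s,t):R\to U\times U$, equivalently the finiteness of the inertia (stabilizer) group scheme.

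The core computation is the affine case with a finite group. \'Etale-locally on the would-be coarse space one expects $\mathcal{X}$ to take the form $[\Spec A/G]$ for a finite group $G$ acting on an affine scheme $\Spec A$. Here the candidate coarse space is $\Spec A^G$, and the facts to verify are classical: $A^G\hookrightarrow A$ is integral, so $A$ is module-finite over $A^G$; $A$ is a finitely generated $A^G$-algebra, whence $\Spec A^G$ is locally of finite type; the structure map $\pi$ is proper (indeed finite) and satisfies $\mathcal{O}_{\Spec A^G}\xrightarrow{\sim}\pi_*\mathcal{O}_{[\Spec A/G]}=(\pi_*\mathcal{O})^G$; on geometric points $\pi$ induces the set-theoretic orbit quotient, giving the bijection in (2) of the coarse-space definition; and, crucially, the formation of $A^G$ commutes with flat base change, which yields property (3). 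None of these steps is hard in isolation---they amount to the finiteness theorem for rings of invariants under a finite group together with the compatibility of invariants with flat extension.

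The main obstacle is the bridge between the general stack with finite inertia and this affine finite-group model---the ``local structure'' or \'etale-slice step. Fix a point $x\in|\mathcal{X}|$ with (finite) automorphism group $G_x$. The plan is to produce, after henselizing or passing to an \'etale neighborhood, an affine scheme $\Spec A$ carrying a $G_x$-action together with a representable \'etale morphism $[\Spec A/G_x]\to\mathcal{X}$ whose image contains $x$ and which realizes the groupoid $R\rightrightarrows U$ as finite and flat over the base near $x$. For Deligne--Mumford stacks---the case relevant to this paper---this is cleanest: a separated Deligne--Mumford stack is \'etale-locally a quotient $[W/G]$ of a scheme by a finite group, so the reduction to the previous paragraph is immediate. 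In the general Artin case with only finite inertia one must work harder: one uses the finiteness of the stabilizer to split it off after henselization and then descends the resulting finite flat groupoid, and this analysis is the genuine heart of Keel and Mori's argument.

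Finally I would glue. The local coarse spaces $\Spec A^{G}$ are canonically identified on overlaps by the uniqueness coming from the universal property, so they patch to an algebraic space $X$ together with a map $\pi:\mathcal{X}\to X$. Properties (1)--(3) are \'etale-local on $X$ and were verified on the local models, so they descend to $X$: the space $X$ is locally of finite type and is separated when $\mathcal{X}$ is (the diagonal of $X$ inherits finiteness from that of $\mathcal{X}$), the map $\pi$ is proper with $\mathcal{O}_X\xrightarrow{\sim}\pi_*\mathcal{O}_{\mathcal{X}}$, and compatibility with flat base change holds because it held locally. This completes the construction modulo the local structure theorem, which as noted above is where the real difficulty lies.
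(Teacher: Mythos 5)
The paper offers no proof of this statement: it is quoted verbatim as the Keel--Mori theorem with a citation to Olsson's book, so there is no internal argument to compare yours against. What you have written is a faithful outline of the standard proof (Keel--Mori, as reworked by Conrad and Olsson): reduce by uniqueness and \'etale descent to the affine model $[\Spec A/G]$ with $G$ finite, take $\Spec A^G$ there, verify finiteness of $A$ over $A^G$, the isomorphism $\cO_{\Spec A^G}\to (\pi_*\cO)^G$, the orbit description of geometric points, and compatibility of invariants with flat base change, then glue. That skeleton is correct, and over $\CC$ the affine verifications you list are all classical.

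The one substantive caveat is the one you yourself flag: the ``local structure'' step, producing an \'etale-representable map $[\Spec A/G_x]\to\mathcal{X}$ near a point with stabilizer $G_x$, is the actual content of the theorem in the general setting of an Artin stack with finite (not necessarily unramified) diagonal, and your proposal treats it as a black box. For the Deligne--Mumford case --- which is all this paper ever uses --- the slice theorem is standard and your reduction is complete; for the general statement as written, the argument that splits the stabilizer after henselization and descends the resulting finite flat groupoid would need to be supplied, so the proposal should be regarded as a correct reduction to that lemma rather than a self-contained proof.
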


Sometimes we need to compare derived categories of algebraic stacks and their coarse moduli spaces. Let us recall the following result.

\begin{prop}\cite[Proposotion 11.3.4]{Olsson}\label{exact}
Let $\mathcal{X}$ be a locally finite type Deligne-Mumford stack with finite diagonal. Then $\pi_*$ is an exact functor.
\end{prop}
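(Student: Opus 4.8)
The plan is to reduce the statement to an explicit affine computation using the étale-local structure of Deligne-Mumford stacks with finite inertia. Since $\pi_*$ is right adjoint to $\pi^*$ it is automatically left exact, so the entire content is the preservation of surjections, equivalently the vanishing of the higher direct images $R^i\pi_*$ for $i>0$. First I would observe that exactness of $\pi_*$ is an étale-local question on the coarse space $X$: a sequence of coherent sheaves on $X$ is exact if and only if it is exact after pulling back along an étale cover, and by part (3) of the Keel-Mori theorem (Theorem \ref{KeelMori}) the formation of the coarse moduli space commutes with a flat base change $U\to X$. Combined with flat base change for the cohomology of the proper (hence quasi-compact and quasi-separated) morphism $\pi$, this shows that $\pi_*$ and each $R^i\pi_*$ commute with flat, in particular étale, base change on $X$, so I may replace $X$ by an étale neighborhood of an arbitrary point.

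Next I would invoke the local structure theorem for Deligne-Mumford stacks with finite diagonal: étale-locally on $X$ there is an étale morphism $\Spec B\to X$ together with an equivalence
$$\mathcal{X}\times_X \Spec B \;\cong\; [\Spec A/G],$$
where $G$ is a finite group acting on an affine scheme $\Spec A$ and $B=A^G$, under which $\pi$ becomes the canonical map $[\Spec A/G]\to \Spec A^G$. Using the standard identification of quasi-coherent sheaves on $[\Spec A/G]$ with $G$-equivariant $A$-modules, the pushforward $\pi_*$ is carried to the invariants functor $M\mapsto M^G$, valued in $B$-modules.

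It then remains to show that $M\mapsto M^G$ is exact, and this is where I would use that everything is defined over $\CC$. Since $|G|$ is invertible, the Reynolds operator $e=\tfrac{1}{|G|}\sum_{g\in G} g$ is a functorial projection of $M$ onto $M^G$. Given a surjection $M\twoheadrightarrow M''$ of $G$-equivariant modules and an invariant element $\bar m\in (M'')^G$, any lift $m\in M$ satisfies $e(m)\in M^G$ and $e(m)\mapsto e(\bar m)=\bar m$, so the invariants functor preserves surjections; equivalently the finite group cohomology $H^{i}(G,-)$ vanishes for $i>0$, giving $R^i\pi_*=0$. Together with the automatic left exactness, this proves that $\pi_*$ is exact.

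I expect the main obstacle to lie in the two reduction steps rather than in the final averaging. One must justify carefully that exactness may be tested étale-locally on $X$, i.e.\ that $\pi_*$ commutes with étale base change, which relies on the flat-base-change clause of Keel-Mori together with cohomological base change for $\pi$; and one must have the étale-local presentation $\mathcal{X}\cong[\Spec A/G]$ available for a Deligne-Mumford stack with finite diagonal. Once these structural inputs are in place, the characteristic-zero averaging argument finishes the proof immediately.
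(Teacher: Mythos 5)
The paper does not prove this statement---it is quoted from Olsson's book as \cite[Proposition 11.3.4]{Olsson}---so there is no in-paper argument to compare against; your sketch is precisely the standard proof of the cited result (reduce \'etale-locally on $X$ using the flat-base-change clause of Keel--Mori together with cohomological base change for the proper map $\pi$, identify $\mathcal{X}$ \'etale-locally as $[\Spec A/G]$ with $G$ finite and $\pi_*$ as the invariants functor, then conclude by averaging) and it is correct. The one hypothesis worth making explicit is that the Reynolds-operator step needs $|G|$ invertible, i.e.\ tameness of the stack, which is automatic here only because the paper works over $\CC$; in positive characteristic the statement as phrased would fail.
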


Then we can have the following.

\begin{lemm}\label{l.smoothcoarsemoduli}
Let $\mathcal{X}$ be a locally noetherian Deligne-Mumford stack with finite diagonal and $X$ be its coarse moduli space. Suppose that $X$ is a smooth projective variety. Then we have a fully faithful functor $L\pi^* : D^b(X) \to D^b(\mathcal{X}).$
\end{lemm}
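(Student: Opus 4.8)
The plan is to establish full faithfulness through the standard adjunction criterion. The derived pullback $L\pi^*$ is left adjoint to $R\pi_*$, and a left adjoint is fully faithful precisely when the unit of adjunction $\eta\colon \mathrm{id}_{D^b(X)}\to R\pi_* L\pi^*$ is a natural isomorphism. First I would confirm that both functors restrict to the bounded derived categories. Since $X$ is smooth, every object of $D^b(X)$ is quasi-isomorphic to a bounded complex of locally free sheaves, whose pullback is again a bounded complex of coherent sheaves on $\mathcal{X}$, so $L\pi^*$ lands in $D^b(\mathcal{X})$. Dually, $\pi$ is proper by Theorem \ref{KeelMori} and $\pi_*$ is exact by Proposition \ref{exact}, so $R\pi_*=\pi_*$ preserves coherence and boundedness; hence the adjunction descends to the bounded derived categories.

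The heart of the argument is the computation of $R\pi_* L\pi^*\cF$ for $\cF\in D^b(X)$. By the projection formula (valid here because $\cF$ is perfect and $\pi$ is proper),
$$R\pi_* L\pi^*\cF \;\cong\; \cF\otimes^L R\pi_*\cO_{\mathcal{X}}.$$
The exactness of $\pi_*$ from Proposition \ref{exact} gives $R^i\pi_*=0$ for $i>0$, so $R\pi_*\cO_{\mathcal{X}}$ is concentrated in degree zero and equals $\pi_*\cO_{\mathcal{X}}$; and by part (2) of Theorem \ref{KeelMori} the canonical map $\cO_X\to\pi_*\cO_{\mathcal{X}}$ is an isomorphism. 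Substituting yields $R\pi_* L\pi^*\cF\cong \cF\otimes^L\cO_X\cong\cF$. To conclude I must check that this isomorphism is realized by the unit $\eta_\cF$ itself rather than holding only abstractly; tracing the construction, $\eta$ is assembled from the unit map $\cO_X\to\pi_*\cO_{\mathcal{X}}$ together with the projection-formula isomorphism, so under the identification above $\eta_\cF$ corresponds to $\cF\otimes^L(\cO_X\to\pi_*\cO_{\mathcal{X}})$, which is an isomorphism exactly because $\cO_X\to\pi_*\cO_{\mathcal{X}}$ is. Therefore $\eta$ is a natural isomorphism and $L\pi^*$ is fully faithful.

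The main obstacle is the derived computation of $R\pi_*\cO_{\mathcal{X}}$ in the stacky setting: it is essential that the \emph{higher} direct images vanish, since otherwise $R\pi_*\cO_{\mathcal{X}}$ would be a nontrivial complex and the unit would fail to be an isomorphism. This is exactly where Proposition \ref{exact} and the Keel--Mori identification $\cO_X\cong\pi_*\cO_{\mathcal{X}}$ do the decisive work. A secondary technical point is justifying the projection formula for the coarse moduli morphism $\pi$; I would verify this by passing to an \'etale atlas of $\mathcal{X}$ and invoking the properness of $\pi$, reducing to the standard formulation over the base.
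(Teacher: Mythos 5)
Your proposal is correct and follows essentially the same route as the paper's proof: both combine the Keel--Mori isomorphism $\cO_X\cong\pi_*\cO_{\mathcal{X}}$ with the exactness of $\pi_*$ to get $R\pi_*\cO_{\mathcal{X}}\cong\cO_X$, then use perfectness of objects on the smooth $X$, the projection formula and adjunction to conclude $\Hom^k(L\pi^*a,L\pi^*b)\cong\Hom^k(a,b)$. Your extra care in checking that the unit itself realizes the isomorphism is a worthwhile refinement of a point the paper leaves implicit, but it is not a different argument.
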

\begin{proof}
From the above Theorem \ref{KeelMori} and Proposition \ref{exact}, we have an isomorphism $\mathcal{O}_X \to R\pi_* \mathcal{O}_{\mathcal{X}} .$
Because $X$ is a smooth projective variety we know that every object in $D^b(X)$ is a perfect complex. Then we have a canonical isomorphisms $Hom^k(L\pi^* a,L\pi^* b) \cong Hom^k(a,R\pi_* L\pi^* b) \cong Hom^k(a,b)$ for any $k$ and $a,b$ which are objects in $D^b(X)$ by the adjuction formula and the projection formula (cf. \cite{HR, Olsson}). From \cite{Huy2} we see that $L\pi^*$ is a fully faithful functor.
\end{proof}

Let us consider a type of examples of algebraic stacks which will play a key role in our construction of orbifold Fano hosts.

\begin{exam}\cite{Vistoli}
Let $X$ be a scheme and $G$ be a reductive algebraic group acting on $X.$ Then the quotient stack $[X/G]$ is defined as follows. \\
(1) An object of $[X/G]$ is  a principal $G$-bundle $E \to S$ with a $G$-equivariant morphism $E \to X.$ \\
(2) A morphism between $E_1 \to S_1$ with $E_2 \to S_2$ is the following commutative diagrams
\[\xymatrix{
E_1 \ar[d] \ar[r]^f & E_2 \ar[d] \\
S_1 \ar[r] & S_2
}\]
and 
\[\xymatrix{
E_1 \ar[rd] \ar[rr]^f & & E_2 \ar[ld] \\
 & X &
}\]
where $f$ is a $G$-equivariant morphism.
\end{exam}

The coherent sheaves and coarse moduli spaces of the quotient stacks are well-known as follows.

\begin{rema}
Let $X$ be a scheme and $G$ be an algebraic group acting on $X.$ Then the category of coherent sheaves on $[X/G]$ is equivalent to the category of $G$-equivariant sheaves on $X.$ See \cite{Olsson, Vistoli} for more details.
\end{rema}


\begin{prop}\cite[Proposition 2.11]{Vistoli}\label{coarsemoduliquotientstack}
Let $X$ be a scheme and $G$ be a reductive algebraic group acting on $X.$ Suppose that there is a geometric quotient $X/G$ and the quotient map is universally submersive. Then there is a natural morphism $\pi : [X/G] \to X/G$ which is the coarse moduli space of $[X/G].$
\end{prop}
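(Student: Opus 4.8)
The plan is to verify the two defining conditions of a coarse moduli space for the natural morphism $\pi\colon[X/G]\to X/G$. I first construct $\pi$ from the quotient map $q\colon X\to X/G$. An object of $[X/G]$ over a scheme $S$ is a principal $G$-bundle $E\to S$ together with a $G$-equivariant morphism $E\to X$; composing the latter with $q$ and using the $G$-invariance of $q$ yields a morphism $E\to X/G$ which is constant on $G$-orbits, hence factors through $E/G=S$. The resulting map $S\to X/G$ is functorial in $S$ and defines $\pi$. Equivalently, $X\to[X/G]$ is an atlas carrying the $G$-action, so $\pi$ is the morphism induced by the $G$-invariant map $q$.

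For condition (1), the universal property, I would use that a morphism $[X/G]\to X'$ to an algebraic space $X'$ corresponds bijectively, by descent along the $G$-torsor $X\to[X/G]$, to a $G$-invariant morphism $X\to X'$. Since $X/G$ is a geometric quotient it is in particular a categorical quotient, so every such $G$-invariant morphism factors uniquely through $q$. This produces the required unique $f\colon X/G\to X'$ with $\pi'=f\circ\pi$.

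For condition (2) I compute geometric points. Over an algebraically closed field $k$, every $G$-torsor on $\Spec k$ is trivial because $G$ is smooth, so the isomorphism classes in $[X/G](k)$ are exactly the $G(k)$-orbits in $X(k)$; by the defining property of a geometric quotient these orbits are precisely the $k$-points of $X/G$, so the induced map $|[X/G](k)|\to(X/G)(k)$ is a bijection. The main obstacle is that the categorical quotient property of a geometric quotient is classically phrased for targets that are schemes, whereas the coarse-space definition demands the universal property against all algebraic spaces. I would resolve this by \'etale descent: given an algebraic space $X'$, choose an \'etale cover $U\to X'$ by a scheme and factor the $G$-invariant map locally; the \emph{universally submersive} hypothesis on $q$ guarantees that $q$ stays submersive after base change, which is exactly what allows the local factorizations to be glued into a unique morphism $X/G\to X'$. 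This compatibility of the quotient with \'etale base change is the key technical point and the reason the submersiveness hypothesis appears in the statement.
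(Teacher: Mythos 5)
The paper does not actually prove this proposition: it is imported verbatim as a citation to Vistoli, so there is no internal argument to measure yours against. That said, your outline is the standard one and the architecture is correct: descend the invariant map $q$ along the torsor $E\to S$ to construct $\pi$, identify morphisms $[X/G]\to X'$ with $G$-invariant morphisms $X\to X'$ via the presentation $X\to[X/G]$, and verify bijectivity on geometric points using that every $G$-torsor over $\Spec k$, $k$ algebraically closed, is trivial (legitimate here because a reductive group over $\CC$ is smooth). Those parts are fine.

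The genuine gap is in the step you yourself flag as the key technical point, namely upgrading the categorical-quotient property from scheme targets to algebraic-space targets. As written, ``choose an \'etale cover $U\to X'$ by a scheme and factor the $G$-invariant map locally'' does not parse into an argument: the cover lives over the \emph{target}, and the local factorizations you propose to glue would be maps out of pieces of $X/G$ that you have not yet produced. To make this work one must pull back rather than push forward: given a $G$-invariant $f\colon X\to X'$ and an \'etale surjection $U\to X'$, the projection $X\times_{X'}U\to X$ is a $G$-equivariant \'etale surjection (with $G$ acting on the first factor, which is legitimate precisely because $f$ is invariant), and the real content is to show that the geometric quotient $(X\times_{X'}U)/G$ exists, is \'etale over $X/G$, and that its formation commutes with the further base change to $U\times_{X'}U$; this is exactly where universal submersiveness is consumed, together with the condition $\cO_{X/G}=(q_*\cO_X)^G$. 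Only after that can one invoke that $X'$ is an \'etale sheaf to descend $(X\times_{X'}U)/G\to U\to X'$ to the desired morphism $X/G\to X'$, and check uniqueness from the surjectivity of $q$. Since everything else in the proposition is formal, this descent argument is the entire proof; it needs to be carried out explicitly or delegated to a precise reference rather than asserted.
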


Quotient stacks of Fano varieties by finite groups form a natural class of Fano orbifolds.

\begin{coro}
Let $X$ be a smooth Fano variety and $G$ be a finite group acting on $X.$ Suppose that the locus with nontrivial stabilizer on $X$ has codimension at least 2. Then $[X/G]$ is a Fano orbifold.
\end{coro}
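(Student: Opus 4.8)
The plan is to verify directly the three defining conditions of a Fano orbifold for $\cX = [X/G]$: that it is a smooth Deligne–Mumford stack, that its inertia is trivial in codimension at most $1$, and that its coarse moduli space is a Fano variety in the sense of Definition \ref{d2.1}.

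First I would establish the stacky structure. Since $G$ is a finite group and $X$ is a scheme, the projection $X \to [X/G]$ is a $G$-torsor, hence finite \'etale of degree $|G|$, and in particular it is an \'etale atlas; this exhibits $[X/G]$ as a Deligne–Mumford stack. Smoothness is an \'etale-local property, so $[X/G]$ is smooth because $X$ is. The automorphism group of a point of $[X/G]$ is the $G$-stabilizer of a lift in $X$, so the locus of $[X/G]$ carrying nontrivial automorphisms is the image of the nontrivial-stabilizer locus of $X$. As quotient by a finite group preserves codimension, the hypothesis forces this locus to have codimension $\ge 2$, so the inertia $I_{[X/G]} \to [X/G]$ is trivial away from codimension $2$.

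Next I would identify the coarse moduli space. For a finite group acting on a (quasi-)projective scheme $X$, the geometric quotient $X/G$ exists as a scheme and the quotient map $\pi : X \to X/G$ is finite, hence universally submersive; Proposition \ref{coarsemoduliquotientstack} then identifies $X/G$ with the coarse moduli space of $[X/G]$. Since $X$ is smooth and projective, $X/G$ is a normal projective variety with at worst quotient singularities, so it is $\QQ$-factorial; in particular $K_{X/G}$ is $\QQ$-Cartier.

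Finally comes the Fano condition, which I regard as the heart of the argument. The key input is the ramification formula $K_X = \pi\sta K_{X/G} + R$, where $R$ is the ramification divisor of $\pi$. The support of $R$ lies in the codimension-$1$ ramification locus of $\pi$, which is contained in the nontrivial-stabilizer locus; by hypothesis the latter has codimension $\ge 2$, so $R = 0$ and $\pi\sta K_{X/G} = K_X$ as $\QQ$-Cartier divisors. Since $-K_X$ is ample and $\pi$ is finite and surjective, ampleness descends: a $\QQ$-Cartier divisor on $X/G$ is ample if and only if its pullback is ample, whence $-K_{X/G}$ is ample and $X/G$ is Fano. The main obstacle I expect is the careful justification of the vanishing of $R$ — making precise that ``nontrivial stabilizers only in codimension $\ge 2$'' forces $\pi$ to be \'etale in codimension $1$ and thus unramified as a $\QQ$-divisor relation on the singular quotient — together with the legitimacy of comparing $\QQ$-Cartier classes across $\pi$. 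The descent of ampleness under finite surjective morphisms and the $\QQ$-factoriality of quotient singularities are standard, so once the anticanonical class is correctly identified the conclusion follows.
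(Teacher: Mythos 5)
Your proposal is correct and follows essentially the same route as the paper: the paper likewise invokes Proposition \ref{coarsemoduliquotientstack} to identify $X/G$ as the coarse moduli space and then observes that the codimension-$\ge 2$ hypothesis on the nontrivial-stabilizer locus forces $K_X = \pi\sta K_{X/G}$ (i.e.\ vanishing ramification divisor), from which ampleness of $-K_{X/G}$ descends. You have simply spelled out the details (the \'etale atlas, the inertia condition, and the descent of ampleness along the finite surjection) that the paper leaves implicit.
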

\begin{proof}
It is easy to see that $[X/G]$ is a smooth Deligne-Mumford stack. From the above Proposition \ref{coarsemoduliquotientstack}, we see that $X/G$ is a coarse moduli space of $[X/G].$ From the assumption we see that the canonical bundle of $X$ is the pullback of the canonical bundle of $X/G.$ Therefore the anticanonical bundle of $X/G$ is ample.
\end{proof}

Now let us recall the notion of inertia stack and orbifold cohomology. We will follow explanation of \cite{Popa}. 

\begin{defi}\cite{Olsson}
Let $\mathcal{X}$ be an algebraic stack. The inertia stack $\mathcal{I}_{\mathcal{X}}$ of $\mathcal{X}$ is the fiber product of the following diagram.
\[\xymatrix{
\mathcal{I}_{\mathcal{X}} \ar[d] \ar[r] & \mathcal{X} \ar[d]^{\Delta_{\mathcal{X}}} \\
\mathcal{X} \ar[r]^{\Delta_{\mathcal{X}}} & \mathcal{X} \times \mathcal{X}
}\]
\end{defi}

Let $\mathcal{X}$ be an $n$-dimensional Deligne-Mumford stack, $\mathcal{Z}$ be a component of $\mathcal{I}_{\mathcal X},$ $(z,g)$ be a generic point of $\mathcal{Z}$ where $g \in Aut(z)$ and $m$ be an order of $g.$ Because $\langle g \rangle$ is an abelian group acting on $T_z \mathcal{X},$ we can write $g$ acts on $T_z \mathcal{X}$ as $ diag(\epsilon^{a_1},\cdots,\epsilon^{a_n}) $ where $\epsilon$ be a primitive $m$-th root of unity and $1 \leq a_i \leq m.$ We can define the shift number $a(\mathcal{Z})$ of $\mathcal{Z}$ as follows.
$$ a(\mathcal{Z}) = n - \frac{1}{n} \sum_{i=1}^{n}{a_i} $$

Then we can define orbifold cohomology of Deligne-Mumford stack as follows.

\begin{defi}
Let $\mathcal{X}$ be a Deligne-Mumford stack. The orbifold Hodge number is 
$$ h_{orb}^{p,q}(\mathcal X) = \sum_{\mathcal{Z} \subset \mathcal{I}_{\mathcal X}} h^{p-a(\mathcal Z),q-a(\mathcal Z)}(Z) $$
where $\mathcal{Z}$ is a component of $\mathcal{I}_{\mathcal X},$ $Z$ is the coarse moduli space of $\mathcal{Z}$ and $a(\mathcal Z)$ is the shift number of $\mathcal{Z}.$
\end{defi}

See \cite{Popa, Yasuda1, Yasuda2} for more details about orbifold cohomology.

Let us recall derived McKay correspondence which will be very useful to construct orbifold Fano hosts of many interesting varieties.

\begin{theo}\cite{BKR}\label{t.BKR}
Let $X$ be an $n$-dimensional smooth projective variety, $G$ be a finite group acting on $X,$ $Y$ be the the component of $G$-Hilbert scheme containing free $G$-orbits and $Z$ be the universal family. Suppose that for every $x \in X,$ the stabilizer group is a finite subgroup of $SL(n,T_xX)$ and dimension of $Y \times_{X/G} Y$ is less than or equal to $n+1.$ 
\[\xymatrix{
 & Z \ar[ld]_p \ar[rd]^q & \\
Y \ar[rd]  &  &  X \ar[ld] \\
 &  X/G  &
}\]
Then $Y$ is a crepant resolution of $X/G$ and we have an equivalence $Rq_*Lp^* : D^b(Y) \simeq D^b([X/G]).$
\end{theo}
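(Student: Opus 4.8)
The plan is to realize $Rq_* Lp^*$ as a Fourier--Mukai functor with kernel $\cO_Z$ and to apply Bridgeland's fully faithfulness criterion. First I would record the basic properties of the diagram: since $Z \subset Y \times X$ is the universal family over the $G$-Hilbert scheme, the projection $p : Z \to Y$ is flat with fibres the length-$|G|$ clusters, so $Lp^* = p^*$ is exact, while $q : Z \to X$ is finite and $G$-equivariant. Thus $\Phi := Rq_* p^*$ carries $D^b(Y)$ to the equivariant category $D^b_G(X) = D^b([X/G])$, and the whole problem is to show this functor is an equivalence.

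Second, I would reduce the statement to two things via the standard spanning-class machinery. The skyscraper sheaves $\cO_y$, $y \in Y$ a closed point, form a spanning class of $D^b(Y)$, and $\Phi$ is fully faithful as soon as the images $\Phi(\cO_y)$ satisfy the orthogonality conditions
$$\Hom^i\bl \Phi(\cO_{y_1}), \Phi(\cO_{y_2})\br = 0 \quad\text{for } y_1 \ne y_2 \text{ and all } i,$$
together with $\Hom^i(\Phi(\cO_y),\Phi(\cO_y)) = 0$ outside $0 \le i \le n$ and $\Hom^0 = \CC$. Once full faithfulness is known, essential surjectivity follows from a Serre-functor comparison: one checks that the kernel satisfies $\cO_Z \otimes q^*\omega_X \cong \cO_Z \otimes p^*\omega_Y$, so that $\Phi$ intertwines the Serre functors and its image, being a spanning subcategory stable under the Serre functor, must be everything.

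Third --- and this is the technical heart --- I would verify the orthogonality using the dimension hypothesis. The object $\Phi(\cO_y)$ is supported on the cluster $q(p^{-1}(y)) \subset X$, which lies over a single point of $X/G$; hence $\Hom^\bullet(\Phi(\cO_{y_1}),\Phi(\cO_{y_2}))$ can be nonzero only when $y_1$ and $y_2$ lie over a common point of $X/G$, i.e. when $(y_1,y_2) \in Y \times_{X/G} Y$. The hypothesis $\dim(Y \times_{X/G} Y) \le n+1$ is designed exactly to control these groups: translating the $\Hom$'s into global sections of local $\cExt$ sheaves on the fibre product and bounding the dimension of their supports, the inequality forces all off-diagonal contributions and all higher self-extensions beyond degree $n$ to vanish. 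I expect this commutative-algebra/intersection-dimension estimate to be the main obstacle, since it is where the delicate interplay between the geometry of the $G$-Hilbert scheme and a Serre-type vanishing takes place.

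Finally, I would use the $SL$-condition on stabilizers to secure crepancy and close the argument. Because every stabilizer acts as a subgroup of $SL(T_xX)$, the canonical sheaf $\omega_X$ is $G$-equivariantly trivial along the fixed loci, so $X/G$ is Gorenstein with trivial discrepancies; combined with the dimension bound this shows $Y$ is smooth of dimension $n$ and that $p,q$ exhibit $Y \to X/G$ as a crepant resolution. The same triviality gives the Serre-functor compatibility of the kernel used above, so Bridgeland's criterion upgrades full faithfulness to the desired equivalence $\Phi : D^b(Y) \simeq D^b([X/G])$.
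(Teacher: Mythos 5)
The paper does not prove this statement at all: it is quoted verbatim from Bridgeland--King--Reid \cite{BKR}, so the only meaningful comparison is with their original argument. Your outline does reproduce the skeleton of that argument --- the Fourier--Mukai functor with kernel $\cO_Z$, Bridgeland's pointwise criterion, the use of $\dim (Y\times_{X/G}Y)\le n+1$ to kill off-diagonal $\Hom$'s, and the $SL$-condition to trivialize the relevant canonical bundles and intertwine Serre functors. But two of the steps you wave at are not fillable as described. First, the ``commutative-algebra/intersection-dimension estimate'' you defer is genuinely the theorem: the codimension bound coming from $\dim(Y\times_{X/G}Y)\le n+1$ only gives vanishing of the local $\cExt^i$ outside the window $n-1\le i\le n+1$ (below by codimension of support, above by the Serre duality you set up), and the remaining three degrees cannot be handled by any support count. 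BKR dispose of them with the New Intersection Theorem of Peskine--Szpiro--Roberts applied to the complex computing $\Psi\Phi(\cO_y)$; without that input the orthogonality simply does not follow from the stated hypotheses.

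Second, there is a circularity in your use of Bridgeland's criterion. You treat $Y$ as a smooth projective variety throughout --- point sheaves as a spanning class with self-$\Hom$'s concentrated in $[0,n]$, a Serre functor on $D^b(Y)$, the spanning-subcategory argument for essential surjectivity --- but smoothness of $Y$ is part of the \emph{conclusion} of the theorem, not a hypothesis, and your closing claim that the $SL$-condition plus the dimension bound ``shows $Y$ is smooth'' has no direct geometric proof. BKR must prove a modified version of the criterion (their Theorem~2.3) valid for an a priori singular projective scheme $Y$, in which smoothness is extracted from the same intersection-theorem estimate (finite homological dimension of $\cO_y$ over $\cO_{Y,y}$ forces regularity). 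A smaller slip: $q:Z\to X$ is not finite --- its fibre over $x$ is the fibre of $Y\to X/G$ over the image of $x$, which is positive-dimensional over the exceptional locus (already for $\CC^2/\ZZ_2$) --- though this is harmless since you use $Rq_*$ anyway.
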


From the above theorem and a theorem of Haiman (cf. \cite{Haiman}) we can also obtain the following result.

\begin{theo}\cite{BKR, Haiman}\label{t.BKRH}
Let $Y$ be a smooth projective algebraic surface. Then $Y^n$ has a natural $S_n$-action. Let $Y^{[n]}$ be the Hilbert scheme of $n$-points of $Y.$ Then $D^b(Y^{[n]}) \simeq D^b([Y^n/S_n]).$
\end{theo}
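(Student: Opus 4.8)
The plan is to apply the derived McKay correspondence, Theorem \ref{t.BKR}, to the $2n$-dimensional smooth projective variety $X = Y^n$ equipped with the permutation action of $G = S_n$, and to identify the component of the $G$-Hilbert scheme containing the free orbits with $Y^{[n]}$ by means of Haiman's theorem. In the notation of Theorem \ref{t.BKR} the ambient dimension is $2n$, so the two hypotheses to be checked are: (i) every stabilizer is a finite subgroup of $SL\bigl(T_x(Y^n)\bigr)$, and (ii) the relevant component of the $S_n$-Hilbert scheme, fibered over $Y^n/S_n$ with itself, has dimension at most $2n+1$.

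First I would verify the stabilizer condition. At a point $x = (p_1, \dots, p_n) \in Y^n$, any $\sigma \in S_n$ fixing $x$ can only permute those indices $i$ with coincident $p_i$, and the induced action on $T_x(Y^n) = \bigoplus_{i=1}^n T_{p_i}Y$ permutes the corresponding two-dimensional summands. Since $\dim Y = 2$, a transposition of two such summands acts as the block swap of two $2$-dimensional spaces, whose determinant is $(-1)^2 = 1$; as transpositions generate the stabilizer, it acts with determinant one. Hence every stabilizer lies in $SL\bigl(T_x(Y^n)\bigr)$. This is precisely where the hypothesis that $Y$ is a surface enters: for $\dim Y$ odd the block swap would have determinant $-1$, and the argument would fail.

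Next I would invoke Haiman's theorem. It identifies the component of the $S_n$-Hilbert scheme of $Y^n$ containing the free orbits with $Y^{[n]}$, which is smooth of dimension $2n$, and under this identification the morphism to $Y^n/S_n = Y^{(n)}$ becomes the Hilbert--Chow morphism. Haiman's polygraph theorem then supplies the dimension estimate $\dim\bigl(Y^{[n]} \times_{Y^{(n)}} Y^{[n]}\bigr) \le 2n+1$ (indeed the Hilbert--Chow morphism is semismall, so the dimension is exactly $2n$), which is hypothesis (ii). With both hypotheses in place, Theorem \ref{t.BKR} applies and yields the equivalence $D^b(Y^{[n]}) \simeq D^b([Y^n/S_n])$, as claimed.

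The main obstacle is hypothesis (ii): this is the genuinely deep input, and it is not something one would prove from scratch here but rather import from Haiman's polygraph theorem, which governs the dimension of the iterated fiber product. By contrast, the stabilizer computation in step two, although it is exactly what singles out surfaces among all smooth varieties, is entirely elementary, and the identification of the $S_n$-Hilbert scheme with $Y^{[n]}$ is again Haiman's.
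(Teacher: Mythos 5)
Your derivation is correct and is exactly the route the paper takes: the paper offers no independent proof, stating only that the result follows from Theorem \ref{t.BKR} together with Haiman's identification of $Y^{[n]}$ with the relevant component of the $S_n$-Hilbert scheme of $Y^n$, which is precisely what you carry out (including the determinant computation that uses $\dim Y = 2$ and the semismallness of the Hilbert--Chow morphism for the fiber-product bound). The only quibble is one of attribution: the dimension estimate $\dim\bigl(Y^{[n]} \times_{Y^{(n)}} Y^{[n]}\bigr) \le 2n$ already follows from Brian\c{c}on's theorem on punctual Hilbert schemes and does not require the polygraph theorem, whose role is rather in Haiman's proof of the identification itself.
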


We have similar results for $G \subset GL(2,\CC)$ as follows.

\begin{theo} \cite{Ishii, IU}\label{t.GL2}
Let $X$ be a smooth projective surface, $G$ be a finite group acting on $X$ and $Y$ be the minimal resolution of $X/G.$ Suppose that for every $x \in X,$ the stabilizer group is a finite subgroup of $GL(2,T_xX).$
\[\xymatrix{
 &  Z \ar[ld]_p \ar[rd]^q & \\
Y \ar[rd]  &  &  X \ar[ld] \\
 &  X/G  &
}\]
Then we have a fully faithful embedding $Rq_*Lp^* : D^b(Y) \to D^b([X/G]).$
\end{theo}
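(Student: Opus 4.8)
The plan is to exhibit $\Phi := Rq_*Lp^*$ as an equivariant Fourier--Mukai functor and then to check full faithfulness by a Bondal--Orlov--type pointwise criterion. Since $Z$ is the universal subscheme in $Y\times X$, it carries a $G$-action for which $q$ is equivariant; thus $\Phi$ is the integral functor $D^b(Y)\to D^b([X/G])$ with kernel $\cO_Z$, where $Rq_*$ is regarded as landing in $G$-equivariant sheaves. Because $Y$ is smooth projective and $[X/G]$ is a smooth proper Deligne--Mumford stack, the requisite adjoints exist and the stacky version of the criterion applies: $\Phi$ is fully faithful if and only if for all closed points $y_1,y_2\in Y$,
$$\Hom_{D^b([X/G])}\bigl(\Phi(\cO_{y_1}),\Phi(\cO_{y_2})[i]\bigr)=\begin{cases}\CC & y_1=y_2 \text{ and } i=0,\\ 0 & y_1\ne y_2,\ \text{or } i<0,\ \text{or } i>\dim Y.\end{cases}$$
Here $\cO_{y}$ denotes the skyscraper sheaf at $y$.

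Second, both the hypothesis and the conclusion are local over the coarse space $X/G$, so I would reduce to a formal or \'etale neighbourhood of a singular point. By assumption such a point looks like $\CC^2/G_0$ for a small finite subgroup $G_0\subset GL(2,\CC)$ (the subgroup generated by pseudoreflections is quotiented away smoothly and may be discarded). Over this neighbourhood $Y$ is the minimal resolution of $\CC^2/G_0$, which is identified with the $G_0$-Hilbert scheme; under this identification $\Phi(\cO_y)$ is the structure sheaf of the $G_0$-cluster parametrised by $y$, viewed as a $G_0$-equivariant sheaf on $\CC^2$. Over the smooth locus these clusters are free $G_0$-orbits, so the interesting behaviour is concentrated over the exceptional divisor.

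Third, I would carry out the Ext computations dictated by the criterion. When $y_1\ne y_2$ map to distinct points of $\CC^2/G_0$ the corresponding clusters have disjoint support and all Hom groups vanish trivially; the substantive case is that of points lying over the singular point. Here I would use the explicit Hirzebruch--Jung (toric) description of the minimal resolution and of the tautological equivariant sheaves to compute the Ext groups between clusters, verifying orthogonality for $y_1\ne y_2$ and vanishing outside $0\le i\le \dim Y=2$, together with simplicity $\Hom(\Phi(\cO_y),\Phi(\cO_y))=\CC$ on the diagonal; the latter uses equivariant local duality on $[\CC^2/G_0]$.

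The hard part will be the non-crepancy of the minimal resolution. In the $SL(2)$ setting of Theorem \ref{t.BKR} the resolution is crepant, the dimension bound on the fibre product holds, and the matching of Serre functors promotes full faithfulness all the way to an equivalence. For $G_0\subset GL(2,\CC)$ the quotient singularities are generally non-Gorenstein, the discrepancies are strictly positive, and this Serre-duality symmetry is lost; consequently one cannot expect essential surjectivity, only a fully faithful embedding. The delicate technical point is therefore to establish the one-directional vanishing of $\Hom(\Phi(\cO_{y_1}),\Phi(\cO_{y_2})[i])$ for $i<0$ and $i>2$ without the crepancy-driven symmetry. This is precisely where the combinatorics of the exceptional configuration and the fractional shift data of the orbifold $[\CC^2/G_0]$ must be used carefully, and it constitutes the technical core of the argument of \cite{Ishii, IU}.
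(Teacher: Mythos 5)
First, a point of comparison: the paper does not prove Theorem \ref{t.GL2} at all --- it is quoted verbatim from \cite{Ishii, IU} and used as a black box --- so there is no in-paper argument to measure your proposal against. Judged against the cited references, your skeleton is the right one: $\Phi=Rq_*Lp^*$ is the integral functor with kernel $\cO_Z$, full faithfulness is checked by Bridgeland's pointwise criterion on skyscrapers, the problem is local over $X/G$ and reduces to $[\CC^2/G_0]$ with $Y=\mathrm{Hilb}^{G_0}(\CC^2)$ the minimal resolution, and the content is the computation of $\Ext$'s between $G_0$-clusters over the origin. That is indeed how Ishii proceeds.

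There are, however, two concrete missteps. First, you locate the difficulty in the wrong place: the vanishing of $\Hom(\Phi(\cO_{y_1}),\Phi(\cO_{y_2})[i])$ for $i<0$ and $i>2$ is automatic, because $\Phi(\cO_y)$ is an honest sheaf (the structure sheaf of the cluster $Z_y$) and $[\CC^2/G_0]$ has homological dimension $2$; no crepancy or Serre-functor symmetry is needed there. The genuinely delicate point is the orthogonality $\Ext^i_{G_0}(\cO_{Z_1},\cO_{Z_2})=0$ for $i=0,1,2$ when $Z_1\neq Z_2$ lie over the singular point. The standard route is to kill $\Hom$ directly, kill $\Ext^2$ by equivariant Serre duality (which converts it into a $\Hom$ into a cluster twisted by $\det^{-1}$ --- this is where $G_0\not\subset SL(2)$ actually bites), and then deduce $\Ext^1=0$ from $\chi(\cO_{Z_1},\cO_{Z_2})=0$, since all clusters have the class of $\cO\otimes\CC[G_0]$ in equivariant $K$-theory. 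Second, your plan to compute via the Hirzebruch--Jung toric description only covers \emph{cyclic} $G_0$; small subgroups of $GL(2,\CC)$ need not be abelian, and Ishii's argument is not toric. Relatedly, discarding pseudo-reflections is not innocuous at the stacky level, since $[\CC^2/G_0]$ and $[\CC^2/(G_0/N)]$ have different derived categories even though their coarse spaces agree; the honest fix is to note that the relevant hypothesis in \cite{Ishii, IU} is that the stabilizers are small. As written, your proposal is an accurate map of the terrain but defers exactly the step that constitutes the theorem.
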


\subsection{Weighted projective spaces}

Let $\bar{a}=(a_0,a_1,\cdots,a_n)$ be a sequence of positive integers. Then consider the graded polynomial ring $\CC[z_0,\cdots,z_n]$ with degree of $z_i=a_i.$ Then we can define the weighted projective space $\PP(\bar{a})$ as the projective variety $Proj(\CC[z_0,\cdots,z_n]).$ Note that giving a grading on $\CC[z_0,\cdots,z_n]$ corresponds to giving a $\CC^*$-action on $\CC^{n+1}.$ We also define $\mathcal{P}(\bar{a})$ to be the smooth Deligne-Mumford stack $[\mathbb{C}^{n+1} - \{0 \} / \mathbb{C}^*]$ whose coarse moduli space is $\PP(\bar{a}).$ They provide examples of Fano orbifolds.

When we consider derived categories then it is nicer to consider a weighted projective space as a smooth Deligne-Mumford stack $\mathcal{P}(\bar{a}).$ However when we do geometry it is easier to consider a weighted projective space as a projective variety $\PP(\bar{a})$ although there are several pathological phenomena. See \cite{Dolgachev} for more details.

Let us recall some of relevant definitions.

\begin{defi}\cite{Dolgachev}
(1) For a closed subscheme $Y \subset \PP(\bar{a}),$ we can associate a quasi-cone $C_Y$ which is the scheme closure of the inverse of $Y$ in the $\CC^{n+1}.$ Let us also denote $C_Y^*$ to be $C_Y - \{ 0 \}$. \\
(2) A closed subscheme $Y \subset \PP(\bar{a})$ is called quasi-smooth if $C_Y$ is smooth outside of its vertex. \\
(3) $Y$ is a weighted complete intersection of multidegree $\bar{d}=(d_1,\cdots,d_c)$ if $I_Y$ is generated by a regular sequence of homogeneous elements $f_1,\cdots,f_c$ where the degree of $f_i$ is $d_i.$
\end{defi}

For every quasi-smooth weighted complete intersection $Y$ we consider its associated stack $\mathcal{Y}=[C_Y^*/\mathbb{C}^*].$ It is a smooth Deligne-Mumford stack whose coarse moduli space $Y.$ 
Recall that $\PP(\bar{a})$ is a quotient of $\PP^n$ by $\mu_{\bar{a}}$-action where $\mu_{\bar{a}}$ acts on $\PP^n$ via 
$$ (\mu_0,\cdots,\mu_n) \cdot [z_0:\cdots:z_n]=[\epsilon _0^{\mu_0} \cdot z_0 : \cdots : \epsilon _n^{\mu_n} \cdot z_n] $$
and $\epsilon_i$ is a primitive $a_i$-th root of unity.  
By this covering we can also define a smooth Deligne-Mumford stack. See \cite{Kawamata2} for more details. 

The sheaves $\mathcal{O}_{\PP(\bar{a})}(n)$ have some common properties with $\mathcal{O}_{\PP(1,\cdots,1)}(n)$ as follows.

\begin{rema}\cite{BR, Dolgachev}
(1) The sheaf $\mathcal{O}_{\PP(\bar{a})}(n)$ is reflexive. \\
(2) Let $a=l.c.m. \{ a_0,\cdots,a_n \}.$ Then $\mathcal{O}_{\PP(\bar{a})}(a)$ is invertible.
\end{rema}

However the properties of sheaves $\mathcal{O}_{\PP(\bar{a})}(n)$ can be very different from $\mathcal{O}_{\PP(1,\cdots,1)}(n).$ Let us mention  several such properties.

\begin{rema}\cite{BR, Dolgachev}
(1) $\mathcal{O}_{\PP(\bar{a})}(n)$ may not be invertible. \\
(2) $\mathcal{O}_{\PP(\bar{a})}(n)$ may be invertible but not ample even if $n>0.$ \\
(3) $H^0(\PP(\bar{a}),\mathcal{O}_{\PP(\bar{a})}(1))=0$ if and only if $a_i > 1$ for all $i.$ \\
(4) $\mathcal{O}_{\PP(\bar{a})}(n_1)$ and $\mathcal{O}_{\PP(\bar{a})}(n_2)$ can be isomorphic even if $n_1 \neq n_2.$ \\
(5) $\mathcal{O}_{\PP(\bar{a})}(n_1) \otimes \mathcal{O}_{\PP(\bar{a})}(n_2) \to \mathcal{O}_{\PP(\bar{a})}(n_1+n_2)$ may not be an isomorphism. 
\end{rema}

Therefore we need to be careful to extend results about ordinary projective space to results about weighted projective spaces.

\subsection{Semiorthogonal decomposition}\label{s2.2}

We recall the definition and examples of semiorthogonal decompositions of derived categories of coherent sheaves.

\begin{defi}\label{d2.9}
Let $\mathcal{T}$ be a triangulated category. A \emph{semiorthogonal decomposition} of $\mathcal{T}$ is a sequence of full triangulated subcategories $ \mathcal{A}_1, \cdots, \mathcal{A}_n $ satisfying the following properties:
\\
(1) $Hom_{\mathcal{T}}(a_i, a_j)=0$ for any $a_i \in \mathcal{A}_i, a_j \in \mathcal{A}_j$ with $i > j$; \\
(2) the smallest triangulated subcategory of $\mathcal{T}$ containing $ \mathcal{A}_1, \cdots, \mathcal{A}_n $ is $\mathcal{T}.$ \\
We will write $\mathcal{T} = \langle \mathcal{A}_1, \cdots, \mathcal{A}_n \rangle$ to denote the semiorthogonal decomposition.
\end{defi}

Let $E$ be a vector bundle of rank $r\ge 2$ over a smooth projective variety $S$ and let $Y=s^{-1}(0)\subset S$ denote the zero locus of a regular section $s \in H^0(S,E)$ such that $ \dim Y = \dim S - \mathrm{rank}\, E$. 
Let $X=w^{-1}(0) \subset \PP E^\vee$ be the zero locus of the section $w\in H^0(\PP E^\vee, \cO_{\PP E^\vee}(1))$  
determined by $s$ under the natural isomorphisms
$$H^0(\PP E^\vee, \cO_{\PP E^\vee}(1))\cong H^0(S, q_*\cO_{\PP E^\vee}(1))\cong H^0(S,E)$$
where $q:\PP E^\vee\to S$ is the projection map of the projective bundle.

Orlov proved in \cite{Orlov2} that $D^b(X)$ has the following semiorthogonal decomposition which was subsequently generalized to higher degree hypersurface fibrations by Ballard, Deliu, Favero, Isik and Katzarkov in \cite{BDFIK}.

\begin{theo}\cite[Proposition 2.10]{Orlov2}\label{t2.10} There is a natural semiorthogonal decomposition
$$ D^b(X)= \langle q^*D^b(S), \cdots, q^*D^b(S) \otimes_{\cO_X} {\cO_X}(r-2), D^b(Y) \rangle .$$
\end{theo}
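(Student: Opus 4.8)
The plan is to derive the decomposition from Orlov's projective bundle formula for $P:=\PP E^\vee$ by restricting to the divisor $X$, where $j:X\hookrightarrow P$ denotes the inclusion of the zero locus of $w\in H^0(P,\cO_P(1))$. I keep writing $q$ for both the bundle projection and its restriction $q|_X$, as in the statement, and recall that
$$ D^b(P)=\langle \cA_0,\cA_1,\dots,\cA_{r-1}\rangle,\qquad \cA_k:=q^*D^b(S)\otimes\cO_P(k),$$
each $\cA_k$ being equivalent to $D^b(S)$. The guiding idea is that restriction along $j$ turns the first $r-1$ of these components into the twisted copies $q^*D^b(S)\otimes\cO_X(k)$ appearing in the assertion, while the last component $\cA_{r-1}$ is mutated into $D^b(Y)$; the appearance of $Y$ is forced by the jump of the fibre of $q|_X$ from $\PP^{r-2}$ over $S\setminus Y$ to $\PP^{r-1}$ over $Y=s^{-1}(0)$.

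First I would control the restriction functor $j^*$ using the Koszul sequence $0\to\cO_P(-1)\xrightarrow{\,w\,}\cO_P\to j_*\cO_X\to 0$, which yields $j_*j^*G\cong\mathrm{cone}(G(-1)\xrightarrow{w}G)$ by the projection formula. Combined with the adjunction $\Hom_X(j^*F,j^*G)=\Hom_P(F,j_*j^*G)$ and the vanishing $Rq_*\cO_P(m)=0$ for $-r+1\le m\le -1$ (together with $Rq_*\cO_P=\cO_S$), a short triangle computation shows that $j^*$ is fully faithful on each $\cA_k$ and that the images $\cB_k:=q^*D^b(S)\otimes\cO_X(k)$ satisfy $\Hom_X(\cB_k,\cB_l)=0$ for $0\le l<k\le r-2$. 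Thus $\cB_0,\dots,\cB_{r-2}$ is a semiorthogonal sequence of admissible subcategories of $D^b(X)$, each equivalent to $D^b(S)$. Generation of $D^b(X)$ by $\cB_0,\dots,\cB_{r-1}$ (all $r$ twists) then follows from the same sequence: since $w|_X=0$ we get $j^*j_*\cO_X\cong\cO_X\oplus\cO_X(-1)[1]$, so every $F\in D^b(X)$ is a direct summand of $j^*j_*F$ with $j_*F\in D^b(P)=\langle\cA_0,\dots,\cA_{r-1}\rangle$.

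The main work, and the step I expect to be the genuine obstacle, is to identify the remaining component, namely the left orthogonal $\cC:={}^\perp\langle\cB_0,\dots,\cB_{r-2}\rangle\subset D^b(X)$ (which is the last component for the convention of Definition \ref{d2.9}, since $\Hom(\cC,\cB_k)=0$), with $D^b(Y)$. Admissibility of $\cA:=\langle\cB_0,\dots,\cB_{r-2}\rangle$ gives $D^b(X)=\langle\cA,\cC\rangle$, and the generation statement shows that $\cC$ is generated by the left mutation (projection into $\cC$) of $\cB_{r-1}=q^*D^b(S)\otimes\cO_X(r-1)$. Geometrically the relevant object is $P_Y:=q^{-1}(Y)=\PP(E^\vee|_Y)\subset X$, a regular embedding $\iota:P_Y\hookrightarrow X$ of codimension $r-1$ whose normal bundle fits in $0\to N_{P_Y/X}\to q_Y^*(E|_Y)\to\cO_P(1)|_{P_Y}\to 0$, with $q_Y:P_Y\to Y$ the projection. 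I would construct the functor $D^b(Y)\to D^b(X)$ as a suitably twisted $\iota_*q_Y^*$, the twist $\cO_{P_Y}(m)$ being dictated by the requirement that the image land in $\cC$, and prove full faithfulness via the adjunction $\iota_*\dashv\iota^!$ together with the projective bundle structure of $q_Y$. The identification $\cC\cong D^b(Y)$ is then obtained by left-mutating $q^*b\otimes\cO_X(r-1)$ through $\cA$ and recognizing, inside the resulting complex, the Koszul resolution $0\to\wedge^rE^\vee\to\cdots\to E^\vee\xrightarrow{s}\cO_S\to\cO_Y\to 0$ of $\cO_Y$ (pulled back to $P$ and spliced with the tautological Euler sequence), so that the mutated object is exactly the image of $b|_Y$ under the functor above.

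The delicate points are all concentrated in this last step: checking that the chosen twist makes $\iota_*q_Y^*$ simultaneously fully faithful and left-orthogonal to every $\cB_k$ with $k\le r-2$, and verifying that the left mutation of $\cB_{r-1}$ reproduces precisely this image and nothing larger. Here the vanishing ranges of $Rq_*\cO_P(m)$ and the exactness of the Koszul complex of the regular section $s$ must be used in tandem; the orthogonality $\Hom_X(\cC,\cB_k)=0$ reduces, after restriction to $P_Y$ via $\iota^!$, to the same $Rq_*$-vanishing on the subbundle $\PP(E^\vee|_Y)$, while generation is already guaranteed by the summand argument of the second paragraph once $\cC$ is known to be the left mutation of $\cB_{r-1}$.
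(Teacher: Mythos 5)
Theorem \ref{t2.10} is quoted from \cite[Proposition 2.10]{Orlov2} and the paper supplies no proof of it, so there is no in-paper argument to measure your proposal against; I judge it on its own terms. Your route is the standard one and the steps you actually carry out are correct: full faithfulness of $j^*$ on each $\cA_k$ and the semiorthogonality $\Hom_X(\cB_k,\cB_l)=0$ for $0\le l<k\le r-2$ do reduce, via the triangle $G(-1)\to G\to j_*j^*G$, to the vanishing $Rq_*\cO(m)=0$ for $-r+1\le m\le -1$; the splitting $j^*j_*F\cong F\oplus F(-1)[1]$ gives generation; and the residual component is indeed carried by $q^{-1}(Y)=\PP(N^\vee_{Y/S})\subset X$, which is precisely the $\PP N^\vee$ appearing in the diagram of \S\ref{s3.1}.

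Two comments on the part you only sketch, which is where all the content sits. First, the twist can be pinned down rather than left implicit: for $\Phi_m=\iota_*(q_Y^*(-)\otimes\cO(m))$, Grothendieck duality together with $\det N_{P_Y/X}\cong q_Y^*\det(E|_Y)\otimes\cO(-1)$ gives $\Hom_X(\Phi_m c,\,q^*b\otimes\cO_X(k))\cong\Hom_Y(c,\,b|_Y\otimes\det(E|_Y)\otimes Rq_{Y*}\cO(k-m-1)[1-r])$, and this vanishes for every $0\le k\le r-2$ exactly when $m=r-2$. By contrast the untwisted $Ri_*Lp^*$ of \S\ref{s3.1} and Remark \ref{r2.11} satisfies $\Hom_X(\cB_k,Ri_*Lp^*D^b(Y))=0$ for $k=1,\dots,r-1$ but not for $k=0$, so it realizes $D^b(Y)$ as the \emph{first} component of $\langle Ri_*Lp^*D^b(Y),\cB_1,\dots,\cB_{r-1}\rangle$; the version with $D^b(Y)$ last, as stated in Theorem \ref{t2.10} under the convention of Definition \ref{d2.9}, requires $\Phi_{r-2}$. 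The two presentations differ by a line-bundle twist and a mutation, so nothing is wrong, but you should fix one convention before writing the orthogonality checks. Second, for the mutation computation you announce, the clean input is that on $X$ the section $q^*s$ lifts through the relative Euler sequence $0\to\Omega_{\PP E^\vee/S}(1)\to q^*E\to\cO(1)\to 0$ to a regular section of $\Omega_{\PP E^\vee/S}(1)|_X$ whose zero scheme is $q^{-1}(Y)$, of the correct codimension $r-1$; its Koszul complex resolves $\cO_{q^{-1}(Y)}$ by objects built from the $\cB_k$ and makes the left mutation of $\cB_{r-1}$ visibly equal to the image of $\Phi_{r-2}$. With that supplied the argument closes; as written, the identification $\cC\simeq D^b(Y)$ is a plan rather than a proof, though it is the right plan.
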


\begin{rema}\label{r2.11}
Orlov proved in particular that there is a fully faithful exact functor from $D^b(Y)$ to $D^b(X)$ (cf. \cite[Proposition 2.2]{Orlov2}). When an algebraic group $G$ acts on $S$ and $E$ compatibly and $s$ is a $G$-invariant section, there is an induced action of $G$ on $X$ and $Y$. His proof also works for this equivariant setting to give us a fully faithful exact functor from $D^b([Y/G])$ to $D^b([X/G])$. See \cite[Remark 2.9]{Orlov2}.
\end{rema}

We can provide many examples of orbifold Fano hosts of interesting algebraic varieties using the following result of Ploog in \cite{Ploog} which was generalize by Krug and Sosna in \cite{KS}.

\begin{theo}\cite{KS, Ploog}\label{t.KSP}
Let $X, Y$ be smooth projective varieties with $G$-action where $G$ is a finite group. Suppose that $\Phi_K : D^b(Y) \to D^b(X)$ is a fully faithful functor and $K$ has a $G$-linearization with respect to the diagonal $G$-action on $Y \times X.$ Then $K$ induces a functor $\Phi^G_K : D^b([Y/G]) \to D^b([X/G])$ which is also fully faithful. 
\end{theo}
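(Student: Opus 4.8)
The plan is to identify $D^b([Y/G])$ with the $G$-equivariant derived category of $G$-linearized coherent sheaves on $Y$ (and likewise for $X$), and then to lift the Fourier--Mukai functor $\Phi_K$ to this equivariant setting using the linearization of $K$. First I would recall that, since $G$ is finite and we work over $\CC$, an object of $D^b([Y/G])$ is a pair $A=(A_0,\{\lambda_g^A\})$ consisting of $A_0\in D^b(Y)$ together with a compatible family of linearization isomorphisms $\lambda_g^A\colon g_Y^*A_0\xrightarrow{\sim}A_0$, and that morphisms are computed as $G$-invariants,
$$\Hom_{D^b([Y/G])}(A,B)\cong \Hom_{D^b(Y)}(A_0,B_0)^G,$$
where the $G$-action on the right is $\phi\mapsto \lambda_g^B\circ g_Y^*\phi\circ(\lambda_g^A)^{-1}$. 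Here exactness of $(-)^G$, i.e. semisimplicity of $\CC[G]$, is what lets this identification hold on all $\Ext$-groups.

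Next I would use the linearization of $K$ to construct the functor. A linearization of $K$ for the diagonal action on $Y\times X$ is exactly the data making $K$ an object of $D^b_G(Y\times X)$, and convolving with such an equivariant kernel sends a linearized object $A=(A_0,\lambda^A)$ to $\Phi_K^G(A)=(\Phi_K(A_0),\lambda^{\Phi_K A})$, the induced linearization $\lambda^{\Phi_K A}$ being assembled from $\lambda^A$ and the linearization of $K$ by base change along the two projections of $Y\times X$. The crucial structural consequence is a family of natural isomorphisms $\sigma_g\colon \Phi_K\circ g_Y^*\Rightarrow g_X^*\circ\Phi_K$, one for each $g\in G$, satisfying the evident cocycle compatibility; these encode that $\Phi_K$ intertwines the two $G$-actions. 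By construction $\Phi_K^G$ fits into a commuting square with the forgetful functors, $\mathrm{Res}_X\circ\Phi_K^G=\Phi_K\circ\mathrm{Res}_Y$.

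Given this setup, full faithfulness follows by an invariants argument. For $A,B\in D^b([Y/G])$ I would compute
$$\Hom_{D^b([X/G])}(\Phi_K^G A,\Phi_K^G B)\cong\Hom_{D^b(X)}(\Phi_K A_0,\Phi_K B_0)^G\cong\Hom_{D^b(Y)}(A_0,B_0)^G\cong\Hom_{D^b([Y/G])}(A,B),$$
where the outer isomorphisms are the identification of equivariant $\Hom$ with invariants and the middle one is the full faithfulness of $\Phi_K$. The real content is that this middle isomorphism is $G$-equivariant: the $G$-action on $\Hom(\Phi_K A_0,\Phi_K B_0)$ is taken with respect to the induced linearizations $\lambda^{\Phi_K A},\lambda^{\Phi_K B}$, and the isomorphisms $\sigma_g$ are precisely what is needed to check $\Phi_K(g\cdot\phi)=g\cdot\Phi_K(\phi)$ for every $\phi\in\Hom(A_0,B_0)$. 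Thus $\Phi_K$ carries the $G$-representation $\Hom(A_0,B_0)$ isomorphically onto $\Hom(\Phi_K A_0,\Phi_K B_0)$, and passing to invariants preserves the isomorphism.

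The main obstacle is exactly this verification that the full faithfulness isomorphism is $G$-equivariant, that is, the construction of the $\sigma_g$ from the linearization of $K$ and the check of their compatibility with composition and with the adjunction unit. An equivalent, perhaps cleaner route avoids the explicit $\sigma_g$: by Bridgeland's kernel criterion, full faithfulness of $\Phi_K$ is equivalent to $K^R\circ K\cong\cO_\Delta$ in $D^b(Y\times Y)$, where $K^R$ is the right-adjoint kernel; the linearization of $K$ induces one on $K^R$ and hence on the convolution $K^R\circ K$, and one checks that the isomorphism to $\cO_\Delta$ (with its canonical diagonal linearization) can be chosen $G$-equivariantly, whence $(\Phi_K^G)^R\circ\Phi_K^G\cong\Phi^G_{K^R\circ K}\cong\Phi^G_{\cO_\Delta}=\id$. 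Either way, the only delicate point is the bookkeeping of linearizations through convolution and adjunction, everything else being a formal consequence of $G$ being finite over a field of characteristic zero.
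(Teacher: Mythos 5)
Your argument is correct: the paper states this result as a quotation from Krug--Sosna and Ploog without reproducing a proof, and your first route (identifying $D^b([Y/G])$ with linearized objects whose Hom-spaces are $G$-invariants, then checking that the full-faithfulness isomorphism $\Hom(A_0,B_0)\cong\Hom(\Phi_K A_0,\Phi_K B_0)$ is $G$-equivariant via the intertwining isomorphisms coming from the linearization of $K$) is exactly the standard proof in those references. You also correctly isolate the only delicate points, namely exactness of $(-)^G$ in characteristic zero and the equivariance of the adjunction unit at the level of kernels.
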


\subsection{Fano visitor problem}

We learned the definition of Fano visitor from \cite{BBF}. 

\begin{defi}
An algebraic stack $\mathcal{Y}$ is called a \emph{Fano visitor} if there is a smooth projective Fano variety $X$ together with a fully faithful (exact) embedding $D^b(\mathcal{Y}) \to D^b(X)$. We call such a Fano $X$ a \emph{Fano host} of $\mathcal{Y}.$ If there is a smooth Deligne-Mumford stack $\mathcal{X}$ whose coarse moduli space is Fano and $D^b(\mathcal{X})$ contains $D^b(\mathcal{Y})$ as a full triangulated subcategory, then $\mathcal{X}$ is called an orbifold Fano host of $\mathcal{Y}.$
\end{defi}

\begin{rema}
Let $Y$ be a singular variety. Then there are objects $e_1$ and $e_2$ such that $Hom(e_1,e_2[i])$ is nonzero for infinitely many $i.$ Therefore there is no Fano orbifold $\mathcal{X}$ such that $D^b(\mathcal{X})$ contains $D^b(Y)$ as a full triangulated subcategory.  
\end{rema}

Bondal's question (Question \ref{q1.1}) asks if a smooth projective variety is a Fano visitor.
It is easy to see that a Fano host $X$ of a smooth projective variety $Y$ is not unique because for instance the product $X\times \PP^1$ is also a Fano host of $Y$. So we may ask for a Fano host of minimal dimension.  

\begin{defi}\cite{KKLL}
The \emph{Fano dimension} of a smooth projective variety $Y$ is the minimum among the dimensions $\dim X$ of Fano hosts $X$ of $Y$.
\end{defi}
See \cite{KKLL} for more discussions and questions related to Fano visitors.

\section{Cayley's trick and weighted complete intersections}\label{s3}

In this section, we recall and generalize the main construction and result in \cite{KKLL}.

\medskip

\subsection{Cayley's trick}\label{s3.1}
Let $S$ be a smooth variety and $s\in H^0(S,E)$ be a regular section of a vector bundle of rank $r\ge 2$ such that $Y=s^{-1}(0)$ is smooth of dimension $\dim S-r$. Let $\PP E^\vee=\mathrm{Proj} \left( \mathrm{Sym}^\cdot E\right)$ 
denote the projectivization of $E^\vee$. Then we have an isomorphism
$$H^0(S,E)\cong H^0(\PP E^\vee,\cO_{\PP E^\vee}(1))$$
which gives us a section $w$ of $\cO_{\PP E^\vee}(1)$ corresponding to $s$. Let $X=w^{-1}(0)$. Since $Y$ is smooth, $X$ is also smooth by local computation. We have the following commutative diagram

\[\xymatrix{
\PP N^\vee \ar[d]_p \ar[r]^i & X \ar[r] & \PP E^\vee \ar[d] \\
Y \ar[rr] & & S
}\]

where $N$ is the normal bundle.
By Orlov's theorem (cf. Theorem \ref{t2.10}), there is a fully faithful embedding $Ri_*Lp^* :D^b(Y) \to D^b(X)$. Therefore if $X$ is Fano, then $X$ is a Fano host of $Y$ and $Y$ is a Fano visitor.

Note that there is an embedding $\PP N^\vee \to Y \times X$ induced from the above diagram and the functor $Ri_*Lp^* : D^b(Y) \to D^b(X)$ is a Fourier-Mukai transform $\Phi_K$ whose kernel $K$ is $\mathcal{O}_{\PP N^\vee}.$ Suppose that there is an algebraic group $G$ acting on $Y$ and the action extends to $S$ and $E$ and $Y$ is given by an invariant section $s.$ Then $G$ acts on $\PP N^\vee$ and $\mathcal{O}_{\PP N^\vee}$ has a canonical $G$-linearization induced by the group action. When $G$ is a finite group, we can recover the Remark \ref{r2.11} of Orlov from the Theorem \ref{t.KSP}. Moreover it holds when $G$ is a reductive algebraic group (cf. \cite[Remark 2.9]{Orlov2}).

\medskip

\subsection{Complete intersections in projective space}\label{s3.2}

When $Y\subset \PP^m$ is a smooth complete intersection defined by a section $s'$ of $\oplus_{i=1}^l\cO_{\PP^m}(a_i)$ with $a_i>0$ and $l\ge 0$, we enlarge the ambient space $\PP^m$ to $\PP^{m+c}=S$ and extend the vector bundle 
$\bigoplus_{i=1}^l\cO_{\PP^{m}}(a_i)$ to $$\bigoplus_{i=1}^l\cO_{\PP^{m+c}}(a_i)\oplus \cO_{\PP^{m+c}}(1)^{\oplus c}=E$$
for $c\ge 0$.
The section $s'$ together with a choice of defining linear equations for $\PP^m\subset \PP^{m+c}$ gives us a section $s$ of $E$ with $s^{-1}(0)={s'}^{-1}(0)=Y$. Applying Cayley's trick above, we obtain a hypersurface $X=w^{-1}(0)$ of $\PP E^\vee$ whose dimension is $m+2c+l-2=\dim Y+2c+2l-2$. 

The authors proved in \cite[\S4.2]{KKLL} that if $c$ is greater than $\sum_{i=1}^la_i-m-l$ and $1-l$, then $X$ is Fano. This proves the main result (Theorem \ref{t1.3}) of \cite{KKLL} because $X$ is a Fano host of $Y$ by the discussion in \S\ref{s3.1}.

\medskip

\subsection{A generalization}\label{s3.3}
We can capture the essence of the proof of  Theorem \ref{t1.3} in \cite{KKLL} as follows.

\begin{theo}\label{t3.1}
We use Cayley's trick in \S\ref{s3.1}. Suppose that
\begin{enumerate}
\item $E$ is ample and  $K_S^\vee \otimes \det E^\vee$ is nef, or
\item there is a nef line bundle $H$ such that $F:=E \otimes H^{\vee}$ is a nef vector bundle and that $K_S^\vee \otimes \det E^\vee \otimes H^{r-1}$ is ample. 
\end{enumerate}
Then $X=w^{-1}(0)$ is a Fano host of $Y=s^{-1}(0)$.
\end{theo}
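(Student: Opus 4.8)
The plan is to reduce the whole statement to a single positivity assertion: that $-K_X$ is ample. Indeed, $X=w^{-1}(0)$ is smooth (as already noted in the Cayley's trick construction) and projective, so once $-K_X$ is shown to be ample, $X$ is Fano in the sense of Definition \ref{d2.1}, and the fully faithful embedding $Ri_*Lp^*\colon D^b(Y)\to D^b(X)$ supplied by Orlov's theorem (Theorem \ref{t2.10}) exhibits $X$ as a Fano host of $Y$. Thus the entire content is the computation and estimation of the anticanonical class.

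First I would compute $\omega_X$. Writing $q\colon\PP E^\vee\to S$ for the projection, the relative Euler sequence gives $\omega_{\PP E^\vee/S}=q^*\det E\otimes\cO_{\PP E^\vee}(-r)$, so that
$$\omega_{\PP E^\vee}=q^*(\omega_S\otimes\det E)\otimes\cO_{\PP E^\vee}(-r).$$
Since $X$ is a divisor in the linear system $|\cO_{\PP E^\vee}(1)|$, adjunction yields
$$-K_X=\Bigl(\cO_{\PP E^\vee}(r-1)\otimes q^*(K_S^\vee\otimes\det E^\vee)\Bigr)\big|_X.$$
Because ampleness is preserved under restriction to the closed subscheme $X$, it suffices to prove that the line bundle $L:=\cO_{\PP E^\vee}(r-1)\otimes q^*(K_S^\vee\otimes\det E^\vee)$ is ample on $\PP E^\vee$.

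In case (1) this is essentially formal: $E$ ample means, by definition, that $\cO_{\PP E^\vee}(1)$ is ample, hence $\cO_{\PP E^\vee}(r-1)$ is ample as $r\ge 2$, while $q^*(K_S^\vee\otimes\det E^\vee)$ is nef, being the pullback of a nef class; the tensor product of an ample and a nef line bundle is ample. In case (2) I would use the canonical identification $\PP E^\vee\cong\PP F^\vee$ coming from $F=E\otimes H^\vee$, under which $\cO_{\PP E^\vee}(1)=\cO_{\PP F^\vee}(1)\otimes q^*H$ (verified by pushing forward to $S$, where both sides give $E$). Substituting gives
$$L=\cO_{\PP F^\vee}(r-1)\otimes q^*\bigl(K_S^\vee\otimes\det E^\vee\otimes H^{r-1}\bigr).$$
Now $\cO_{\PP F^\vee}(r-1)$ is nef (since $F$ is nef) and $q$-ample (since $r-1\ge 1$), while $A:=K_S^\vee\otimes\det E^\vee\otimes H^{r-1}$ is ample on $S$ by hypothesis. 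The decisive step is the lemma that a line bundle which is nef and relatively ample remains ample after tensoring with the pullback of an ample class: since $\cO_{\PP F^\vee}(r-1)$ is $q$-ample and $A$ is ample, $\cO_{\PP F^\vee}(r-1)\otimes q^*A^{\otimes m}$ is ample for $m\gg 0$, and writing $L$ as the $\RR$-linear combination $\tfrac1m\bigl(\cO_{\PP F^\vee}(r-1)\otimes q^*A^{\otimes m}\bigr)+\tfrac{m-1}{m}\,\cO_{\PP F^\vee}(r-1)$ of an ample class and a nef class shows that $L$ is ample.

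The main obstacle I expect is precisely case (2): one must track correctly how $\cO(1)$ transforms when $E$ is replaced by $E\otimes H^\vee$, and then invoke the interplay of relative ampleness, nefness, and ampleness on the base, where the Kleiman--Nakai machinery (the fact that ample $+$ nef $=$ ample for $\RR$-divisor classes) is genuinely needed. By contrast, case (1) is purely formal, and the anticanonical computation, while requiring care about the Grothendieck convention $\PP E^\vee=\mathrm{Proj}(\Symm^{\cdot}E)$, is routine.
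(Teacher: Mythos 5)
Your proof is correct, and for the decisive case (2) it takes a genuinely different route from the paper's. Both arguments begin identically: compute
$K_X^\vee\cong\bigl(q^*(K_S^\vee\otimes\det E^\vee\otimes H^{r-1})\otimes\cO_{\PP F^\vee}(r-1)\bigr)\big|_X$
via the relative Euler sequence and the identification $\cO_{\PP E^\vee}(1)=\cO_{\PP F^\vee}(1)\otimes q^*H$, and case (1) is in both treatments the formal ``ample tensor nef is ample'' observation (the paper defers it to \cite[Lemma 3.1]{KKLL}). For case (2), however, the paper only establishes ampleness \emph{on $X$}: it first shows $K_X^\vee$ is nef, then proves bigness by expanding the top self-intersection $(K_X^\vee)^{\dim X}$ binomially and isolating the strictly positive term $q^*(K_S^\vee\otimes\det E^\vee\otimes H^{r-1})^{\dim S}\cdot\cO_{\PP F^\vee}(1)^{r-1}$, invokes Theorem \ref{t2.6} to get a rational polyhedral Mori cone for the resulting weak Fano, and finally checks $K_X^\vee\cdot C>0$ for every irreducible curve $C$ by the dichotomy on whether $q(C)$ is a point or a curve. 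You instead prove the stronger statement that $L=\cO_{\PP F^\vee}(r-1)\otimes q^*A$ (with $A=K_S^\vee\otimes\det E^\vee\otimes H^{r-1}$ ample) is ample on the entire bundle $\PP E^\vee$, using the standard facts that a $q$-ample bundle twisted by a sufficiently large pullback of an ample bundle is ample and that an ample class plus a nef class is ample; ampleness of $K_X^\vee$ then follows by restriction. Your route is shorter, avoids both the intersection-number computation and the cone theorem for weak Fanos, and yields ampleness upstairs (so it applies verbatim to any member of $|\cO_{\PP E^\vee}(1)|$); the paper's curve-by-curve argument is more elementary in its ingredients and makes explicit where the fiberwise positivity of $\cO_{\PP F^\vee}(r-1)$ and the ampleness of $A$ on the base each enter. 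Both are complete proofs of the theorem.
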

\begin{proof}
By Theorem \ref{t2.10}, it suffices to show that $X$ is Fano. For (1), see \cite[Lemma 3.1]{KKLL}. For (2), let $q : \PP E^\vee \to S$ denote the canonical projection.
Let us compute $K_X$. From the relative Euler sequence 
$$0\lra \cO_{\PP E^\vee}\lra q^*E^\vee\otimes \cO_{\PP E^\vee}(1) \lra T_{\PP E^\vee/S}\lra 0,$$
we have $K_{\PP E^\vee/S}^\vee=(q^*\det E^\vee)\otimes \cO_{\PP E^\vee}(r)$. From $K_{\PP E^\vee}=q^*K_S\otimes K_{\PP E^\vee/S}$ we have
$$ K_{\PP E^\vee}^\vee \cong q^*(K_S^\vee \otimes \det E^\vee) \otimes \cO_{\PP E^\vee}(r).$$
Therefore we get 
$$K_X^\vee=K_{\PP E^\vee}^\vee \otimes \cO(-1)|_X\cong q^*(K_S^\vee \otimes \det E^\vee) \otimes \cO_{\PP E^\vee}(r-1)|_X$$
$$ \cong q^*(K_S^\vee \otimes \det E^\vee \otimes H^{r-1} ) \otimes \cO_{\PP F^\vee}(r-1)|_X. $$
By assumption, both $q^*(K_S^\vee \otimes \det E^\vee \otimes H^{r-1} )$ and $ \cO_{\PP F^\vee}(r-1) $ are nef line bundles, and so is $K_X^\vee$.
To see that $K_X^\vee$ is big, let us compute the intersection number $(K_X^\vee)^{\dim X}$ as follows:
$$ (K_X^\vee)^{\dim X}=(q^*(K_S^\vee \otimes \det E^\vee \otimes H^{r-1} ) \otimes \cO_{\PP F^\vee}(r-1)|_X)^{\dim X} $$ 
$$ =(q^*(K_S^\vee \otimes \det E^\vee \otimes H^{r-1} ) \otimes \cO_{\PP F^\vee}(r-1))^{\dim X} \cdot \cO_{\PP E^\vee}(1) $$
$$ =(q^*(K_S^\vee \otimes \det E^\vee \otimes H^{r-1} ) \otimes \cO_{\PP F^\vee}(r-1))^{\dim X} \cdot (q^*H \otimes \cO_{\PP F^\vee}(1)). $$
By the binomial expansion formula, we see that $(K_X^\vee)^{\dim X}$ is positive since every term is a multiple of a nef line bundle and $q^*(K_S^\vee \otimes \det E^\vee \otimes H^{r-1} )^{\dim S} \cdot \cO_{\PP F^\vee}(1)^{r-1}$ is strictly positive by our assumption.
Therefore $K_X^\vee$ is nef and big, i.e. $X$ is a weak Fano variety. Then the Mori cone of $X$ is rational polyhedral and the extremal rays are generated by rational curves by Theorem \ref{t2.6}. 

Finally we claim that $K_X^\vee$ intersects positively with all irreducible curves.
Let $C$ be an irreducible curve in $\PP E^\vee=\PP F^\vee$. If $q(C)$ is a point, then the degree of $\mathcal{O}_{\PP F^\vee}(r-1)|_C$ is positive because $\cO_{\PP F^\vee}(1)$ is ample on each fiber of $q : \PP E^\vee \to S$. If $q(C)$ is a curve, then the degree of $q^*(K_S^\vee \otimes \det E^\vee \otimes H^{r-1} )|_C$ is positive. Therefore we find that the degree of the line bundle $q^*(K_S^\vee \otimes \det E^\vee \otimes H^{r-1} ) \otimes \cO_{\PP F^\vee}(r-1)|_C$ is always positive. Since the Mori cone is polyhedral, this implies that $K_X^\vee$ is ample and $X$ is a Fano variety.
\end{proof}

\begin{rema}\label{r3.2}
In the proof of Theorem \ref{t1.3} in \cite[\S4.2]{KKLL}, we used $H=\cO_{\PP^{m+c}}(1)$ and chose sufficient large $c$ as written in \S\ref{s3.2}. However when the degrees of defining equations of $Y$ are large enough, then the above theorem tells us that we can choose larger $H$ and smaller $c$. This often gives a Fano host of smaller dimension as in the following example.
\end{rema}

\begin{exam}\label{e3.3}
Let $C$ be a non-hyperelliptic curve of genus 4. Then $C$ is the complete intersection of a quadric and a cubic in $\PP^3$, i.e. $C$ is the zero locus of a regular section $s$ of $E=\mathcal{O}_{\PP^3}(2) \oplus \mathcal{O}_{\PP^3}(3)$ over $S=\PP^3$ and let $F=\mathcal{O}_{\PP^3} \oplus \mathcal{O}_{\PP^3}(1)$ with $H=\cO_{\PP^3}(2)$. From the above theorem, we find that $X=w^{-1}(0)$ in Cayley's trick (cf. \S\ref{s3.1}) is a 3-dimensional Fano host of $C$ because $F$ is nef and $K_S^\vee\otimes \det E^\vee\otimes H^{r-1}=\cO_{\PP^3}(1)$ is ample. Note that if we insist on using $H=\cO_{\PP^3}(1)$ instead, we have to enlarge $\PP^3$ to $\PP^4$ and extend $\mathcal{O}_{\PP^3}(2) \oplus \mathcal{O}_{\PP^3}(3)$ to 
$\mathcal{O}_{\PP^4}(2) \oplus \mathcal{O}_{\PP^4}(3)\oplus \mathcal{O}_{\PP^4}(1)$, so that the Fano host is 5 dimensional.
\end{exam}

By Example \ref{e3.3}, we find that a non-hyperelliptic curve $C$ of genus 4 has Fano dimension at most 3. We will see below that indeed 3 is the Fano dimension of $C$.

\bigskip

We can extend the main theorem of \cite{KKLL} as follows.

\begin{theo}\label{t.wci}
Every quasi-smooth weighted complete intersection $Y$ in a weighted projective space is a coarse moduli space of a smooth Deligne-Mumford stack $\mathcal{Y}$ which has an orbifold Fano host. 
\end{theo}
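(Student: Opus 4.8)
The plan is to realize the associated stack $\mathcal{Y}=[C_Y^*/\CC^*]$ as the zero locus of a section of an appropriate equivariant vector bundle on a smooth Deligne-Mumford ambient stack, and then to apply the equivariant form of Cayley's trick developed in Theorem \ref{t2.10} and Remark \ref{r2.11}. First I would set up the ambient stack $\mathcal{P}(\bar a)=[\CC^{n+1}\setminus\{0\}/\CC^*]$ and the sheaves $\cO_{\mathcal{P}(\bar a)}(d)$, which on the stack are genuine line bundles (this is the key advantage of passing to the stack, as noted in the weighted projective space subsection). The weighted complete intersection $Y$ of multidegree $\bar d=(d_1,\dots,d_c)$ is cut out by a regular sequence $f_1,\dots,f_c$, so on the stack $\mathcal{Y}$ is the zero locus of the section $s=(f_1,\dots,f_c)$ of the vector bundle
$$E=\bigoplus_{i=1}^{c}\cO_{\mathcal{P}(\bar a)}(d_i).$$
Quasi-smoothness of $Y$ (smoothness of $C_Y^*$) is exactly what guarantees that $\mathcal{Y}$ is a smooth Deligne-Mumford substack of the expected dimension and that $s$ is a regular section.

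Next I would run Cayley's trick in the stacky setting: form $\PP E^\vee$ over $\mathcal{P}(\bar a)$, let $w\in H^0(\PP E^\vee,\cO_{\PP E^\vee}(1))$ be the section corresponding to $s$, and set $\mathcal{X}=w^{-1}(0)$. By the equivariant version of Orlov's theorem (Theorem \ref{t2.10} together with Remark \ref{r2.11}, valid for the reductive group $\CC^*$), there is a fully faithful embedding $D^b(\mathcal{Y})\to D^b(\mathcal{X})$, so it remains only to arrange that the coarse moduli space of $\mathcal{X}$ is Fano, i.e. that $\mathcal{X}$ is a Fano orbifold. To gain this freedom I would first enlarge the weighted projective space exactly as in \S\ref{s3.2}: replace $\PP(\bar a)$ by $\PP(a_0,\dots,a_n,\underbrace{1,\dots,1}_{k})$ and adjoin $k$ copies of $\cO(1)$ to $E$, so that $Y$ is recovered as the zero locus after also imposing the $k$ linear coordinates. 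This keeps $Y$ (and $\mathcal{Y}$, and the embedding of derived categories) unchanged while giving extra summands that can be tuned.

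The positivity computation is the heart of the argument, and I would run it on the stack level where $\cO(d)$ is invertible, mirroring the proof of Theorem \ref{t3.1}. Using the relative Euler sequence I would compute
$$K_{\mathcal{X}}^\vee\cong q^*\!\left(K_{\mathcal{P}}^\vee\otimes\det E^\vee\right)\otimes\cO_{\PP E^\vee}(r-1)\big|_{\mathcal{X}},$$
where $r=\operatorname{rank}E$ and $K_{\mathcal{P}}^\vee=\cO(\sum a_i+\sum 1)$ on the enlarged stack. The task is then to choose the number $k$ of added linear coordinates large enough that the twist $K_{\mathcal{P}}^\vee\otimes\det E^\vee$ is sufficiently positive; as in \cite{KKLL}, taking $k$ larger than $\sum_i d_i-\sum_j a_j$ (and at least enough to overcome the $r-1$ index shift) makes $K_{\mathcal{X}}^\vee$ ample, exactly the nef-and-big-then-ample argument of Theorem \ref{t3.1} via Theorem \ref{t2.6}. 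I expect the main obstacle to be purely stack-theoretic bookkeeping rather than new geometry: one must check that the line bundles $\cO(n)$ genuinely are invertible and that the intersection-theoretic positivity criteria (nef, big, and positive against every curve) transfer from the coarse Fano variety to the orbifold, so that ``$K_{\mathcal{X}}^\vee$ ample'' on the stack indeed means the coarse moduli space is Fano in the sense of Definition \ref{d2.1}. Once the coarse space of $\mathcal{X}$ is verified Fano and $\mathcal{X}$ is seen to be a smooth Deligne-Mumford stack, Theorem \ref{t.KSP}-style descent is unnecessary here because the $\CC^*$-equivariant Orlov embedding already lives on the quotient stacks, and the proof concludes.
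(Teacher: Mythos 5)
Your proposal follows the paper's proof in all essential respects: both arguments enlarge $\PP(\bar a)$ to $\PP(\bar a,1,\dots,1)$ so that most weights equal $1$, realize $\mathcal{Y}=[C_Y^*/\CC^*]$ as the zero locus of a $\CC^*$-equivariant regular section, apply Cayley's trick equivariantly together with Orlov's theorem (Theorem \ref{t2.10} and Remark \ref{r2.11} for the reductive group $\CC^*$) to obtain the embedding $D^b(\mathcal{Y})\hookrightarrow D^b(\mathcal{X})$ with $\mathcal{X}=[C_X^*/\CC^*]$, and then must check that the coarse space $X$ is Fano. The one place you genuinely diverge is that last verification. You propose to compute $K_{\mathcal{X}}^\vee$ directly on the stack via the relative Euler sequence and then descend positivity to $X$; as you yourself flag, this requires knowing that $K_{\mathcal{X}}$ descends to $K_X$ (which needs the stacky locus of $\mathcal{X}$ to sit in codimension at least $2$, true after adjoining the weight-one coordinates) and that the nef-and-big-plus-Mori-cone argument of Theorem \ref{t3.1} survives on the singular coarse space. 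The paper instead avoids doing positivity on the stack: it uses the presentation $\PP(\bar a)=\PP^n/\mu_{\bar a}$, lifts the defining equations to a complete intersection $C^*_{\widetilde Y}$ in $\CC^{n+1}\setminus\{0\}$, applies Cayley's trick upstairs to produce an honest smooth projective variety $\widetilde X$ with $X=\widetilde X/\mu_{\bar a}$, shows $\widetilde X$ is Fano for $n$ large by adjunction (\cite[Proposition 5.73]{KM}), and descends ampleness of $-K_{\widetilde X}$ along the finite quotient using that the fixed locus has codimension at least $2$. Both routes work; yours is more uniform with Theorem \ref{t3.1} and stays on the quotient stack, while the paper's passage to the ramified cover $\widetilde X$ lets it quote the classical statements verbatim. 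The step you leave as ``stack-theoretic bookkeeping'' is exactly the content of the paper's auxiliary lemma and does need to be written out rather than asserted.
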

\begin{proof}
Let $Y$ be a quasi-smooth weighted complete intersection is a weighted projective space $\PP(\bar{a}).$ We can embed $\PP(\bar{a})$ into $\PP(\bar{a},1,\cdots,1)$ and $Y$ is again a quasi-smooth weighted complete intersection in a weighted projective space $\PP(\bar{a},1,\cdots,1).$ Therefore we may assume that the dimension $n$ of $\PP(\bar{a})$ is large enough and almost all $a_i=1.$ Then $C^*_Y$ is a complete intersection in $\mathbb{C}^{n+1}-\{ 0 \}.$ We can regard $C_Y^*$ as a zero set of a section of the rank $c$ trivial vector bundle on $\mathbb{C}^{n+1}-\{ 0 \}.$ We can use Cayley's trick to construct $C_X^*$ as follows. 

\[\xymatrix{
& C^*_{X} \ar[r] & \mathbb{C}^{n+1} - \{ 0 \} \times \PP^{c-1} \ar[d] \\
C^*_{Y} \ar[rr] & & \mathbb{C}^{n+1}  - \{ 0 \}
}\]

Because $C_Y^*$ is defined by a $\mathbb{C}^*$-invariant section, we can naturally extend the $\mathbb{C}^*$-action to $\mathbb{C}^{n+1}-\{ 0 \} \times \PP^{c-1}$ and $C^*_X.$ Let $\mathcal{X}$ denotes the quotient stack $[C^*_X/\mathbb{C}^*]$ and $\mathcal{Y}$ denotes the quotient stack $[C^*_Y/\mathbb{C}^*].$ From the definition we see that $C_Y^*$ is smooth and $[C^*_Y/\mathbb{C}^*]$ is a smooth Deligne-Mumford stack whose coarse moduli space is $Y.$

From Orlov's theorem we see that $D^b([C^*_Y/\mathbb{C}^*])$ can be embedded into $D^b([C^*_X/\mathbb{C}^*]).$ Therefore we get the desired result from the following Lemma.
\end{proof}

\begin{lemm}
$\mathcal{X}$ is a smooth Deligne-Mumford stack whose coarse moduli space $X$ is a Fano variety.
\end{lemm}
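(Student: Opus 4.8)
The plan is to verify the two claims in turn: that $\mathcal{X}=[C^*_X/\mathbb{C}^*]$ is a smooth Deligne--Mumford stack, and that its coarse moduli space $X$ is Fano. The first is essentially formal; the second is an adjunction computation, which I would make concrete on a toric model of the ambient quotient, with the enlargement trick of \S\ref{s3.2} supplying the required positivity.

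\emph{Smoothness and the Deligne--Mumford property.} Quasi-smoothness of $Y$ means precisely that $C_Y$ is smooth away from its vertex, so $C^*_Y$ is smooth, and the Cayley's trick computation of \S\ref{s3.1} (smoothness of the zero locus is inherited) shows that $C^*_X\subset(\mathbb{C}^{n+1}-\{0\})\times\PP^{c-1}$ is smooth as well. Since all weights $a_i$ are positive and the origin is removed, the $\mathbb{C}^*$-action on $\mathbb{C}^{n+1}-\{0\}$ has finite stabilizers (the stabilizer of $z$ is $\mu_d$ with $d=\gcd\{a_i:z_i\neq 0\}$), and equivariance of the projection $C^*_X\to\mathbb{C}^{n+1}-\{0\}$ forces finite stabilizers on $C^*_X$; over $\mathbb{C}$ these are automatically reduced, so $\mathcal{X}$ is a smooth Deligne--Mumford stack. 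By Theorem \ref{KeelMori} (or Proposition \ref{coarsemoduliquotientstack}) the coarse moduli space $X=C^*_X/\mathbb{C}^*$ exists, and being the coarse space of a smooth Deligne--Mumford stack it is a normal projective variety with quotient singularities; such singularities are klt, so $X$ is $\mathbb{Q}$-Gorenstein and $-K_X$ is $\mathbb{Q}$-Cartier.

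\emph{The Fano property.} Let $W$ be the coarse space of the ambient quotient of $(\mathbb{C}^{n+1}-\{0\})\times\PP^{c-1}$ by $\mathbb{C}^*$; it is a simplicial (hence $\mathbb{Q}$-factorial) projective toric variety with Cox ring $\mathbb{C}[z_0,\dots,z_n,u_1,\dots,u_c]$ graded by $\deg z_i=(a_i,0)$ and $\deg u_j=(-d_j,1)$, the second coordinate tracking the $\mathbb{C}^*$ that forms $\PP^{c-1}$. The hypersurface $X=\{\sum_j f_j u_j=0\}\subset W$ has class $(0,1)$ and is quasi-smooth, since its defining equation pulls back to $C^*_X$, which is smooth. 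As $-K_W$ equals the sum of the degrees of the Cox variables, $-K_W=\mathcal{O}_W(\sum_i a_i-\sum_j d_j,\ c)$, so adjunction for the quasi-smooth $X$ gives
$$-K_X=\bigl(-K_W-X\bigr)\big|_X=\mathcal{O}_W\Bigl(\textstyle\sum_i a_i-\sum_j d_j,\ c-1\Bigr)\Big|_X.$$
A standard GIT computation identifies the ample cone of $W$: the class $\mathcal{O}_W(p,q)$ is ample exactly when $q>0$ and $p+q\min_j d_j>0$, these being the conditions under which the semistable locus for $(p,q)$ is the complement of $\{z=0\}\cup\{u=0\}$ and the quotient is $W$. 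After the enlargement $\PP(\bar a)\hookrightarrow\PP(\bar a,1^k)$ one has $\min_j d_j=1$ and $c\ge 2$, so for $(p,q)=(\sum_i a_i-\sum_j d_j,\ c-1)$ the two conditions read $c-1>0$ and $\sum_i a_i-\sum_j d_j+c-1>0$; since each added weight-$1$ coordinate raises $\sum a_i$, $\sum d_j$ and $c$ simultaneously by $1$, the left-hand side of the latter equals its pre-enlargement value plus $k$ and is positive once $k$ is large. Thus $-K_X$ is the restriction to $X$ of an ample $\mathbb{Q}$-Cartier class and is therefore itself ample, so $X$ is Fano. (This runs parallel to, and could instead be phrased through, the criterion of Theorem \ref{t3.1}(2) with $H=\mathcal{O}(1)$ on the stack $\mathcal{P}(\bar a)$, where $\mathcal{E}=\bigoplus_i\mathcal{O}(d_i)$ is an honest bundle.)

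The main obstacle is to make the positivity rigorous in the weighted/singular setting and to match it with the intrinsic $X$ of the Lemma. On $\PP(\bar a)$ the sheaves $\mathcal{O}(m)$ are only reflexive and the tautological class is merely $\mathbb{Q}$-Cartier, so ``nef'', ``ample'' and adjunction must all be read with $\mathbb{Q}$-coefficients; the toric Cox description above is what legitimizes the anticanonical computation, the adjunction formula for the quasi-smooth hypersurface $X$, and the GIT identification of the ample cone. One must also confirm that the toric $W$ produced here genuinely is the coarse space of $[(\mathbb{C}^{n+1}-\{0\})\times\PP^{c-1}/\mathbb{C}^*]$ and that its distinguished hypersurface is the coarse space $X=C^*_X/\mathbb{C}^*$, i.e. that forming coarse spaces commutes with the Cox quotient here --- which is where the enlargement, forcing the $\gcd$ of the weights to be $1$ and the relevant strata to have the expected codimension, is convenient.
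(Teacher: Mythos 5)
Your proof is correct in substance, but it takes a genuinely different route from the paper's. The paper never computes on the weighted side: it passes to the ramified covering $\PP^n\to\PP(\bar a)=\PP^n/\mu_{\bar a}$, lifts the complete intersection to $C^*_{\widetilde Y}$, applies Cayley's trick upstairs where the ambient is an honest projective bundle over $\PP^n$, concludes that the resulting smooth hypersurface $\widetilde X$ is Fano for $n$ large by the ordinary adjunction argument of \cite{KKLL}, and then descends: since $X=\widetilde X/\mu_{\bar a}$ with fixed locus of codimension at least $2$, $K_{\widetilde X}$ is the pullback of $K_X$ and so $-K_X$ is ample. You instead stay downstairs and work on the simplicial toric coarse space $W$ of the ambient quotient: the Cox-ring bigrading, $-K_W$ as the sum of the degrees of the Cox variables, adjunction for the quasi-smooth hypersurface of class $(0,1)$, and a VGIT identification of the ample cone as $\{\,q>0,\ p+q\min_j d_j>0\,\}$ (which I checked and is right), with the enlargement $\PP(\bar a)\hookrightarrow\PP(\bar a,1,\dots,1)$ supplying the positivity exactly as you say. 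Your route buys an explicit numerical ampleness criterion and avoids checking that the covering construction is compatible with Cayley's trick, at the price of confronting the weighted/$\QQ$-Cartier subtleties (well-formedness, adjunction for reflexive sheaves) directly; the paper's route outsources those to the smooth projective-space case but leans on descent of ampleness along the finite quotient. Both arguments rest on the same codimension-two condition on the locus with nontrivial stabilizer — you flag it explicitly, the paper asserts it — so neither is more rigorous on that point, and the smoothness/Deligne--Mumford half of your argument coincides with the paper's.
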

\begin{proof}
Because $C^*_Y$ is smooth we see that $C^*_X$ is also smooth by local calculation. For every point of $C^*_X$ the stabilzer of the induced $\mathbb{C}^*$-action is a finite abelian group. Therefore $\mathcal{X}$ is a smooth Deligne-Mumford stack whose coarse moduli space is $X=C^*_X/\mathbb{C}^*.$ Therefore $X$ has only quotient singularities and $K_X$ is a $\QQ$-Cartier divisor.

Recall that $\PP(\bar{a})$ is a quotient of $\PP^n$ by $\mu_{\bar{a}}$-action where $\mu_{\bar{a}}$ acts on $\PP^n$ via 
$$ (\mu_0,\cdots,\mu_n) \cdot [z_0:\cdots:z_n]=[\epsilon _0^{\mu_0} \cdot z_0 : \cdots : \epsilon _n^{\mu_n} \cdot z_n] $$
where $\epsilon_i$ is a primitive $a_i$th root of unity (cf. \cite{BR, Dolgachev}).   

Then we can construct $C_{\widetilde{Y}}^*$ 

\[\xymatrix{
C^*_{\widetilde{Y}} \ar[d] \ar[r] & \mathbb{C}^{n+1}- \{0 \} \ar[d] \\
C^*_{Y} \ar[r] & \mathbb{C}^{n+1} - \{ 0 \}
}\]

using the lifting of the equations defining $C_Y.$

Note that $C_{\widetilde{Y}}^*$ is also a complete intersection in $\mathbb{C}^{n+1}-\{ 0 \}$ and we can also apply Cayley's trick to $C_{\widetilde{Y}}^*.$ By taking $\CC^*$-quotient we have the following diagram. 

\[\xymatrix{
 & & \widetilde{X} \ar[d] \ar[r] & \PP{E^\vee} \ar[d] \ar[lld] \\
\widetilde{Y} \ar[d] \ar[r] & \PP^n \ar[d] & X=\widetilde{X}/\mu_{\bar{a}} \ar[r] & \PP{E^\vee}/\mu_{\bar{a}} \ar[lld] \\
Y \ar[r] & \PP(\bar{a}) & &
}\]

In other words we can construct a ramified covering $\widetilde{X}$ of $X$ by applying Cayley's trick to $\PP^n$ which is also a ramified covering of $\PP(\bar{a}).$ 

When $n$ is large enough, we can see that $\widetilde{X}$ is a Fano variety from adjuction (cf. \cite[Proposition 5.73]{KM}) and codimensions of the fixed locus of the $G$-action is greater than equal to 2. Therefore we see that $X$ is also a Fano variety.
\end{proof}

\begin{rema}
Note that if $Y$ is singular quasi-smooth weighted complete intersection then $Y$ itself cannot have an orbifold Fano host. Therefore we should consider the derived category of smooth stack $\mathcal{Y}$ whose coarse moduli space is $Y$ instead of the derived category of $Y$ itself.
\end{rema}

However if $Y$ is smooth then we see that $Y$ itself has an orbifold Fano host.

\begin{coro}
Every smooth weighted complete intersection $Y$ in a weighted projective space has an orbifold Fano host.
\end{coro}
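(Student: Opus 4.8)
The plan is to combine Theorem~\ref{t.wci} with Lemma~\ref{l.smoothcoarsemoduli}. By Theorem~\ref{t.wci}, the smooth Deligne--Mumford stack $\mathcal{Y}=[C_Y^*/\mathbb{C}^*]$, whose coarse moduli space is $Y$, admits an orbifold Fano host $\mathcal{X}$; that is, there is a fully faithful embedding $D^b(\mathcal{Y})\to D^b(\mathcal{X})$ with $\mathcal{X}$ a smooth Deligne--Mumford stack whose coarse moduli space is Fano. Since the target is already a Fano orbifold, it will suffice to produce a fully faithful embedding $D^b(Y)\to D^b(\mathcal{Y})$ of the derived category of the \emph{variety} $Y$ into that of the stack, and then to compose the two functors.

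To obtain such an embedding I would verify that $\mathcal{Y}$ satisfies the hypotheses of Lemma~\ref{l.smoothcoarsemoduli}. The stack $\mathcal{Y}$ is a finite-type quotient stack over $\mathbb{C}$, hence locally noetherian, and it is Deligne--Mumford because the stabilizers of the induced $\mathbb{C}^*$-action on $C_Y^*$ are finite abelian groups. As $Y$ is projective, the coarse moduli map $\pi:\mathcal{Y}\to Y$ is proper by the theorem of Keel and Mori (Theorem~\ref{KeelMori}), so $\mathcal{Y}$ is proper over $\mathbb{C}$ and in particular has finite diagonal. Finally, by hypothesis the coarse moduli space $Y$ is a smooth projective variety. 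Lemma~\ref{l.smoothcoarsemoduli} then furnishes a fully faithful functor $L\pi^*:D^b(Y)\to D^b(\mathcal{Y})$.

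Composing $L\pi^*$ with the embedding $D^b(\mathcal{Y})\to D^b(\mathcal{X})$ of Theorem~\ref{t.wci} will yield a fully faithful embedding $D^b(Y)\to D^b(\mathcal{X})$, exhibiting the Fano orbifold $\mathcal{X}$ as an orbifold Fano host of $Y$. The argument is essentially formal once Theorem~\ref{t.wci} is in hand; the one point demanding care is the finite-diagonal hypothesis of Lemma~\ref{l.smoothcoarsemoduli}, and it is precisely here that the smoothness (and projectivity) of $Y$ is used, which is what distinguishes this corollary from the singular quasi-smooth case flagged in the preceding remark.
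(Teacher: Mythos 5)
Your proposal is correct and follows exactly the paper's own argument: apply Lemma~\ref{l.smoothcoarsemoduli} to get the fully faithful functor $D^b(Y)\to D^b(\mathcal{Y})$ and compose with the embedding into a Fano orbifold furnished by Theorem~\ref{t.wci}. The only difference is that you spell out the verification of the lemma's hypotheses, which the paper leaves implicit.
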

\begin{proof}
We see that $D^b(Y)$ is contained in $D^b(\mathcal{Y})$ from Lemma \ref{l.smoothcoarsemoduli} and $\mathcal{Y}$ has an orbifold Fano host from Theorem \ref{t.wci}. Therefore we see that $Y$ has an orbifold Fano host.
\end{proof}

\section{Fourier-Mukai transforms and an embeddability criterion}\label{s2.3}

In this section, we use the Fourier-Mukai transform to give a Hodge-theoretic criterion for the existence of a fully faithful functor $D^b(Y)\to D^b(X)$ for smooth projective varieties $X$ and $Y$. 

We first recall a fundamental result of Orlov that says all fully faithful exact functors are Fourier-Mukai. 
\begin{theo}\cite{Orlov1}\cite[Theorem 5.14]{Huy2}\label{t2.12} Let $X$ and $Y$ be two smooth projective varieties and let
$$ F : D^b(Y) \to D^b(X) $$
be a fully faithful exact functor. If $F$ admits right and left adjoint functors, then there exists an object $K \in D^b(Y \times X)$ unique up to isomorphism such that $F$ is isomorphic to the Fourier-Mukai transform $$\Phi_K={\pi_X}_*(\pi_Y^*(-)\otimes K):D^b(Y)\lra D^b(X)$$ 
where $\pi_X$ and $\pi_Y$ denote the projection morphisms from $X \times Y$ to $X$ and $Y$ respectively.
\end{theo}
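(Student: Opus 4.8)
The statement is Orlov's representability theorem, so the plan is to produce a Fourier--Mukai kernel explicitly and then check that it represents $F$. I would begin by fixing a very ample line bundle $\cO_Y(1)$ and working with the \emph{ample sequence} $\{\cO_Y(-i)\}_{i\ge 0}$. This sequence generates $D^b(Y)$ as a triangulated category and is a spanning class: through the graded groups $\Hom(\cO_Y(-i),A[k])$ it detects both the vanishing of an object $A$ and the invertibility of a morphism. The proof then separates into three tasks: (a) construct a candidate kernel $K\in D^b(Y\times X)$ together with a natural transformation $\eta\colon\Phi_K\Rightarrow F$; (b) verify that $\eta_A$ is an isomorphism for every $A$ in the ample sequence; and (c) promote this to an isomorphism of functors on all of $D^b(Y)$. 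The uniqueness of $K$ up to isomorphism is then handled separately, and the adjoints hypothesized on $F$ are used to set up the construction and the uniqueness comparison.

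The construction in (a) is the main obstacle. The difficulty is that $F$ is given only as a functor on $D^b(Y)$, so there is no evident ``relative'' version of $F$ over a base that could be applied to a universal family to manufacture a kernel. The device I would use is a resolution of the diagonal: one expresses $\cO_\Delta\in D^b(Y\times Y)$ as a homotopy colimit (iterated cone) of objects of the shape $\cO_Y(i)\boxtimes B_i$ with $B_i\in D^b(Y)$, which is the same as writing the identity functor $\mathrm{id}=\Phi_{\cO_\Delta}$ as a convolution of elementary functors attached to the ample sequence. Transporting the output factor of this presentation through $F$ -- that is, replacing each second-factor term $B_i$ by $F(B_i)\in D^b(X)$ and gluing these along the images of the same differentials -- assembles an object $K$ on $Y\times X$ together with a natural transformation $\eta\colon\Phi_K\Rightarrow F$. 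Here full faithfulness of $F$ is essential: it guarantees that the morphisms among the building blocks are preserved, so that the differentials of the diagonal presentation lift consistently to $X$ and the convolution is well defined.

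For (b), I would compute $\Phi_K(\cO_Y(-j))$ by base change and the projection formula against the presentation of $K$, and full faithfulness again ensures that the cohomology of the resulting complex matches $F(\cO_Y(-j))$, so that $\eta_{\cO_Y(-j)}$ is an isomorphism. For (c), I would use that the ample sequence generates $D^b(Y)$ as a triangulated category: the full subcategory of objects $A$ on which $\eta_A$ is an isomorphism is closed under shifts and cones (by the two-out-of-three property in a triangulated category, since $\eta$ is a natural transformation of exact functors) and contains the generators, hence is all of $D^b(Y)$. This yields $\Phi_K\cong F$.

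Finally, for uniqueness, suppose $\Phi_K\cong\Phi_{K'}$. Restricting to a fibre gives $K|_{\{y\}\times X}\cong\Phi_K(\cO_y)\cong K'|_{\{y\}\times X}$ for every closed point $y$, and the stronger statement $K\cong K'$ follows from the fact that the natural map $\Hom_{D^b(Y\times X)}(K,K')\to\Hom(\Phi_K,\Phi_{K'})$ from kernel morphisms to natural transformations is bijective, so the given isomorphism of functors is induced by an isomorphism of kernels. I expect step (a) -- the consistent assembly of $K$ and of $\eta$ from the diagonal presentation -- to be the technical heart, with (b) and (c) comparatively routine once that presentation and the generation property are in place.
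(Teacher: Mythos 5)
First, a point of reference: the paper does not prove this statement at all --- it is quoted from Orlov's paper and from \cite[Theorem 5.14]{Huy2} --- so your attempt can only be measured against the standard proof in those sources. Your outline is recognizably that proof (ample sequence, resolution of the diagonal, convolution of the transported complex, verification on generators, separate uniqueness), but the two steps you present as available are exactly where the real work lies, and as written they do not go through.

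The first gap is the passage from your step (a) to step (c). You propose to assemble, along with $K$, a natural transformation $\eta\colon\Phi_K\Rightarrow F$ defined on all of $D^b(Y)$, and then to conclude because the locus where $\eta$ is an isomorphism is a thick triangulated subcategory containing the generators. That closure argument is fine \emph{once such an $\eta$ exists}, but the diagonal-presentation construction only yields isomorphisms $\Phi_K(\cO_Y(-i))\cong F(\cO_Y(-i))$ compatible with morphisms among the members of the ample sequence; since cones are not functorial in a triangulated category, these data do not glue to a natural transformation on all of $D^b(Y)$. Orlov and Huybrechts must instead invoke a separate, genuinely delicate comparison statement (two exact functors that agree on an ample sequence, one of them fully faithful, are isomorphic), whose proof is an induction on the length of complexes and is not a formal cone-closure argument. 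Your step (c) therefore presupposes exactly the object whose existence is the hard part of the theorem. The second gap is the uniqueness argument: you appeal to bijectivity of the map $\Hom_{D^b(Y\times X)}(K,K')\to\Hom(\Phi_K,\Phi_{K'})$, but this is not a theorem --- the relation between morphisms of kernels and morphisms of the induced functors is notoriously ill-behaved, and the standard proof does not use it. Uniqueness is instead obtained by showing that \emph{any} kernel representing the fully faithful $F$ arises as a convolution of the same complex built from the ample sequence, and that such convolutions are unique up to isomorphism because the relevant negative $\Hom$-groups vanish (a vanishing that again uses full faithfulness). Fibrewise restriction, as you note yourself, only gives $K|_{\{y\}\times X}\cong K'|_{\{y\}\times X}$ point by point, which is far from $K\cong K'$.
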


\begin{rema}\label{r2.13}
The assumption that $F$ admits right and left adjoint functors can be dropped by a theorem of Bondal and van den Bergh \cite{BB}.
\end{rema}

The Fourier-Mukai kernel $K$ also defines the cohomological Fourier-Mukai transform $\Phi_K^H : H^*(Y,\QQ) \to H^*(X,\QQ)$.

\begin{defi}\cite{Huy2}\label{d2.14}
Let $K$ be a Fourier-Mukai kernel of $\Phi_K$. Then the cohomological Fourier-Mukai functor is the linear map
$$ \Phi_K^H : H^*(Y,\QQ) \to H^*(X,\QQ) $$
defined by
$$ \Phi_K^H(-)={\pi_X}_*(ch(K) \cdot \sqrt{td(X \times Y)} \cdot \pi_Y^*(-)).$$
\end{defi}

When a Fourier-Mukai functor gives an equivalence between derived categories of two smooth projective varieties, the induced cohomological Fourier-Mukai functor preserves the Hochschild homology groups.  

\begin{prop}\cite[Proposition 5.39]{Huy2}\label{p2.15}
If $\Phi_K : D^b(Y) \to D^b(X)$ is an equivalence, then the induced cohomological Fourier-Mukai transform $\Phi_K^H : H^*(Y,\QQ) \to H^*(X,\QQ)$ yields an isomorphism
$$ \bigoplus_{p-q=i} H^{p,q}(Y) \cong \bigoplus_{p-q=i} H^{p,q}(X)\quad \text{for all } i. $$
\end{prop}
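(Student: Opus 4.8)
The plan is to exploit two features of the cohomological Fourier--Mukai transform: its functoriality under composition of kernels, which will give bijectivity of $\Phi_K^H$, and the fact that the correction factor $\ch(K)\cdot\sqrt{\td(X\times Y)}$ consists of classes of pure Hodge type $(k,k)$, which will show that $\Phi_K^H$ preserves the grading by $p-q$. Together these yield the claimed graded isomorphism.

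First I would establish the functoriality statement: if $K\in D^b(Y\times X)$ and $L\in D^b(X\times Z)$ have composed kernel $L\circ K\in D^b(Y\times Z)$, then $\Phi_L^H\circ\Phi_K^H=\Phi_{L\circ K}^H$ on rational cohomology. This is the technical core and the reason the factor $\sqrt{\td}$, rather than $\td$ itself, appears in Definition \ref{d2.14}: when one expands the two sides using the projection formula and base change for the three projections of $Y\times X\times Z$, the square roots of the Todd class of the intermediate factor $X$ combine exactly so as to cancel, and the identity reduces to the Grothendieck--Riemann--Roch formula applied to the pushforward that defines $L\circ K$. I expect this Grothendieck--Riemann--Roch bookkeeping to be the main obstacle, since all of the Todd-class manipulation is concentrated there.

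Granting functoriality, bijectivity is immediate. Because $\Phi_K$ is an equivalence, its quasi-inverse is again an exact fully faithful functor, hence by Theorem \ref{t2.12} a Fourier--Mukai transform with some kernel $K'$; moreover $K'\circ K$ and $K\circ K'$ are the structure sheaves $\cO_\Delta$ of the respective diagonals. Since $\Phi_{\cO_\Delta}$ is the identity functor and $\Phi_{\cO_\Delta}^H=\id$, functoriality forces $\Phi_{K'}^H\circ\Phi_K^H=\id$ and $\Phi_K^H\circ\Phi_{K'}^H=\id$, so $\Phi_K^H\colon H^*(Y,\QQ)\to H^*(X,\QQ)$ is a linear isomorphism.

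It remains to check that $\Phi_K^H$ respects the grading by $p-q$ after complexification. The Chern character of any object of $D^b(Y\times X)$ is a sum of algebraic classes and so lies in $\bigoplus_k H^{k,k}(Y\times X)$, and the same holds for $\td(X\times Y)$ and its square root, being polynomials in Chern classes of the tangent bundle; hence $v:=\ch(K)\cdot\sqrt{\td(X\times Y)}$ is a sum of classes of type $(k,k)$. For $\alpha\in H^{p,q}(Y)$ the pullback $\pi_Y^*\alpha$ has type $(p,q)$, multiplication by the $(k,k)$-component of $v$ lands in $H^{p+k,q+k}(X\times Y)$, and the pushforward $\pi_{X\ast}$, being integration over the $Y$-fibre, shifts bidegree by $(-\dim Y,-\dim Y)$; every step preserves the difference $p-q$. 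Thus $\Phi_K^H\bigl(\bigoplus_{p-q=i}H^{p,q}(Y)\bigr)\subseteq\bigoplus_{p-q=i}H^{p,q}(X)$ for each $i$. Finally, a bijection between graded vector spaces that carries the $i$-th summand into the $i$-th summand restricts to an isomorphism on each summand, so we obtain $\bigoplus_{p-q=i}H^{p,q}(Y)\cong\bigoplus_{p-q=i}H^{p,q}(X)$ for all $i$, as required.
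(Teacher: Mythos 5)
Your proof is correct and follows essentially the same route as the source the paper cites for this statement (Huybrechts, Proposition 5.39): functoriality of the cohomological transform under composition of kernels gives bijectivity via the quasi-inverse kernel and $\Phi_{\cO_\Delta}^H=\id$, and the $(k,k)$-purity of $\ch(K)\cdot\sqrt{\td(X\times Y)}$ gives preservation of the grading by $p-q$. This is also exactly the argument the paper reuses in its proof of Proposition \ref{p2.18} for the fully faithful case, so there is nothing to add.
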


We next generalize the above result to the case where $\Phi_K$ is a fully faithful functor.

\begin{prop}\cite[Corollary 1.22]{Huy2}\label{p2.16}
Let $F : \mathcal{A} \to \mathcal{B}$ be a fully faithful functor that admits a left adjoint $G \dashv F$ (resp.  right adjoint $F \dashv H$). Then the natural morphism
$$ G \circ F \to id_{\mathcal{A}} \quad \text{(resp.  }id_{\mathcal{A}} \to H \circ F ) $$  
is an isomorphism. 
\end{prop}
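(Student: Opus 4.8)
The plan is to recognize the natural morphism $G \circ F \to \id_{\mathcal{A}}$ as the counit $\epsilon$ of the adjunction $G \dashv F$ and to prove that each component $\epsilon_a \colon GF(a) \to a$ is an isomorphism by a Yoneda argument. First I would fix objects $a, a' \in \mathcal{A}$ and consider the map given by precomposition with $\epsilon_a$,
$$ \Hom_{\mathcal{A}}(a, a') \lra \Hom_{\mathcal{A}}(GF(a), a'), \qquad \phi \longmapsto \phi \circ \epsilon_a. $$
By the Yoneda lemma, $\epsilon_a$ is an isomorphism as soon as this map is bijective for every $a'$, so the entire problem reduces to checking this single bijectivity.

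The key step is to compose this precomposition map with the adjunction isomorphism
$$ \Hom_{\mathcal{A}}(GF(a), a') \;\cong\; \Hom_{\mathcal{B}}(F(a), F(a')), $$
which sends $\psi$ to $F(\psi) \circ \eta_{F(a)}$, where $\eta$ denotes the unit of the adjunction. Chasing an arbitrary $\phi$ through both maps and invoking the triangle identity $F(\epsilon_a) \circ \eta_{F(a)} = \id_{F(a)}$, I would show that the composite $\Hom_{\mathcal{A}}(a, a') \to \Hom_{\mathcal{B}}(F(a), F(a'))$ is precisely $\phi \mapsto F(\phi)$, i.e. the action of $F$ on morphisms. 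Since $F$ is fully faithful this action is bijective, and since the adjunction isomorphism is bijective, the precomposition map is bijective as well. This establishes the claim in the left-adjoint case.

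The dual statement — that the unit $\id_{\mathcal{A}} \to H \circ F$ is an isomorphism when $F \dashv H$ — follows by the same argument carried out in the opposite categories, now using postcomposition with the unit, the dual adjunction isomorphism $\Hom_{\mathcal{A}}(a', HF(a)) \cong \Hom_{\mathcal{B}}(F(a'), F(a))$, and the other triangle identity $\epsilon_{F(a)} \circ F(\eta_a) = \id_{F(a)}$, so that the relevant composite again collapses to the action of $F$ on morphisms.

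The main obstacle I anticipate is purely bookkeeping: keeping track of the variances and the precise direction of each adjunction isomorphism, and in particular verifying that the precomposition (resp.\ postcomposition) map and the adjunction isomorphism assemble so that the appropriate triangle identity reduces the composite exactly to $F$. Once the triangle identity is applied correctly, the rest of the argument is formal and requires no further input beyond full faithfulness of $F$.
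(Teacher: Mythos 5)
Your argument is correct: identifying the natural morphism with the counit (resp.\ unit), reducing to bijectivity of the induced map on Hom-sets via Yoneda, and collapsing the composite with the adjunction isomorphism to $\phi\mapsto F(\phi)$ via the triangle identity is exactly the standard proof. The paper itself gives no proof of this statement --- it is quoted directly from \cite[Corollary 1.22]{Huy2} --- and your argument is essentially the one found in that reference.
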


For Fourier-Mukai transforms, we always have left and right adjoint functors by a result of Mukai.

\begin{prop}\cite[Proposition 5.9]{Huy2}\label{p2.17}
A Fourier-Mukai transform admits left and right adjoint functors which are also Fourier-Mukai transforms.
\end{prop}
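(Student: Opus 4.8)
The plan is to write $\Phi_K$ as a composite of three elementary functors, each admitting both adjoints, and then to identify the resulting one-sided adjoints as Fourier--Mukai transforms with explicit kernels. Writing $\pi_Y,\pi_X$ for the two projections from $Y\times X$, we have
\[
\Phi_K \;=\; R\pi_{X*}\circ\bigl((-)\otimes K\bigr)\circ L\pi_Y^*.
\]
Because $X$ and $Y$ are smooth and projective, each constituent is well behaved on the bounded derived category of coherent sheaves and carries both adjoints: the derived pullback $L\pi_Y^*$ is left adjoint to $R\pi_{Y*}$; tensoring with the perfect complex $K$ has $(-)\otimes K^\vee$ as a two-sided adjoint, where $K^\vee=R\hhom(K,\cO)$; and $R\pi_{X*}$, for the proper morphism $\pi_X$, has right adjoint the twisted inverse image $\pi_X^!$ by Grothendieck--Verdier duality.

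First I would compute the right adjoint. Since the right adjoint of a composite is the composite of the right adjoints taken in the reverse order,
\[
\Phi_K^R \;=\; R\pi_{Y*}\circ\bigl((-)\otimes K^\vee\bigr)\circ \pi_X^!.
\]
The one genuinely non-formal step is to make $\pi_X^!$ explicit. For $\pi_X\colon Y\times X\to X$, whose fibres are copies of the smooth projective variety $Y$, the relative dualizing complex is $\pi_Y^*\omega_Y[\dim Y]$, so Grothendieck duality gives $\pi_X^!(a)\cong L\pi_X^* a\otimes\pi_Y^*\omega_Y[\dim Y]$ for $a\in D^b(X)$. Substituting this and regrouping the tensor factors via the projection formula, we obtain
\[
\Phi_K^R(a)\;\cong\; R\pi_{Y*}\Bigl(L\pi_X^* a\otimes\bigl(K^\vee\otimes\pi_Y^*\omega_Y[\dim Y]\bigr)\Bigr),
\]
which is precisely the Fourier--Mukai transform $D^b(X)\to D^b(Y)$ with kernel $K_R:=K^\vee\otimes\pi_Y^*\omega_Y[\dim Y]$.

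For the left adjoint I would argue symmetrically. Taking left adjoints of the three constituents in reverse order gives $\Phi_K^L=(L\pi_Y^*)^L\circ\bigl((-)\otimes K^\vee\bigr)\circ L\pi_X^*$, and the only new ingredient is the left adjoint of $L\pi_Y^*$. Applying Grothendieck duality to $\pi_Y\colon Y\times X\to Y$, whose fibres are copies of $X$, this left adjoint is $R\pi_{Y*}\bigl((-)\otimes\pi_X^*\omega_X[\dim X]\bigr)$; feeding this through the composite yields the Fourier--Mukai transform with kernel $K_L:=K^\vee\otimes\pi_X^*\omega_X[\dim X]$. Equivalently, one may derive $K_L$ from $K_R$ by the Serre-functor identity $\Phi_K^L\cong S_Y^{-1}\circ\Phi_K^R\circ S_X$, valid on smooth projective varieties, where $S_Z(-)=(-)\otimes\omega_Z[\dim Z]$; twisting the kernel $K_R$ accordingly produces the same $K_L$.

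The main obstacle is the invocation of Grothendieck--Verdier duality: the adjunctions for pullback and tensor, the projection formula, and the reversal-of-order rule for adjoints of composites are all formal, whereas identifying the right adjoint of $R\pi_{X*}$ with a twisted pullback and computing the relative dualizing complexes of the two projections is the substantive content. Along the way one must also use smoothness to ensure that $K$ is perfect, so that $K^\vee$ is well behaved and $(-)\otimes K$ is its own two-sided adjoint, and properness to ensure that $R\pi_{X*}$ preserves the bounded coherent derived category; both hold since $X$ and $Y$, hence $Y\times X$, are smooth and projective.
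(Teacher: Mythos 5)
Your argument is correct, and it coincides with the standard proof of Mukai's observation as presented in Huybrechts (decompose $\Phi_K$ into pullback, tensor and pushforward, apply Grothendieck--Verdier duality to identify the adjoints of each factor, and read off the kernels $K^\vee\otimes\pi_Y^*\omega_Y[\dim Y]$ and $K^\vee\otimes\pi_X^*\omega_X[\dim X]$). The paper itself gives no proof of this proposition --- it is quoted directly as \cite[Proposition 5.9]{Huy2} --- so there is nothing further to compare against.
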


Now we can give a criterion for the existence of a fully faithful functor $D^b(Y)\to D^b(X)$.
\begin{prop}\label{p2.18} 
If a Fourier-Mukai transform $\Phi_K : D^b(Y) \to D^b(X)$ is fully faithful, then the induced cohomological Fourier-Mukai transform $\Phi_K^H : H^*(Y,\QQ) \to H^*(X,\QQ)$ yields an injective homomorphism
$$ \bigoplus_{p-q=i} H^{p,q}(Y) \subset \bigoplus_{p-q=i} H^{p,q}(X). $$
Hence, we have the inequality
$$\sum_{p-q=i} h^{p,q}(Y)\le \sum_{p-q=i} h^{p,q}(X)\quad \text{for all }  i.$$
\end{prop}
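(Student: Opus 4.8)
The plan is to promote Proposition \ref{p2.15} from the case of an equivalence to that of a fully faithful functor by producing a one-sided inverse of $\Phi_K^H$. First I would apply Proposition \ref{p2.17} to obtain a left adjoint $G$ of $\Phi_K$ which is again a Fourier-Mukai transform, say $G=\Phi_{K'}$ for some kernel $K'\in D^b(X\times Y)$. Since $\Phi_K$ is fully faithful, Proposition \ref{p2.16} then supplies a functorial isomorphism $\Phi_{K'}\circ\Phi_K\cong\id_{D^b(Y)}$.

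Next I would pass to rational cohomology. The assignment $K\mapsto\Phi_K^H$ is compatible with composition of kernels and carries the identity functor $\id_{D^b(Y)}=\Phi_{\cO_\Delta}$ to the identity map of $H^*(Y,\QQ)$ (cf. \cite{Huy2}). Applying these two facts to the isomorphism above gives $\Phi_{K'}^H\circ\Phi_K^H=\id_{H^*(Y,\QQ)}$, so $\Phi_K^H$ has a left inverse and is in particular injective.

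It remains to see that $\Phi_K^H$ respects the grading by $i=p-q$, so that injectivity descends to each summand. This is the same mechanism already exploited in the proof of Proposition \ref{p2.15}: the class $\ch(K)\cdot\sqrt{\td(Y\times X)}$ is algebraic and hence of Hodge type $(k,k)$, so in its K\"unneth decomposition each term $H^{a,b}(Y)\otimes H^{c,d}(X)$ satisfies $(a-b)+(c-d)=0$. For $\alpha\in H^{p,q}(Y)$, the pushforward ${\pi_X}_*$ is nonzero only on the top Hodge piece $H^{n,n}(Y)$ with $n=\dim Y$, which forces $a=n-p$, $b=n-q$, and therefore $c-d=p-q$. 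Thus $\Phi_K^H$ maps $\bigoplus_{p-q=i}H^{p,q}(Y)$ into $\bigoplus_{p-q=i}H^{p,q}(X)$, the identity $\Phi_{K'}^H\circ\Phi_K^H=\id$ restricts to each graded piece, and comparing dimensions yields the asserted inequality.

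The conceptual skeleton --- adjoint, one-sided inverse, descent to cohomology --- is immediate from the cited propositions, so the step demanding the most care is the grading compatibility in the last paragraph. The essential input there is that $K$ is a genuine object of $D^b(X\times Y)$, whence $\ch(K)$ is algebraic and pins $\Phi_K^H$ to the diagonal $\ZZ$-grading by $p-q$; without this, injectivity of $\Phi_K^H$ on all of $H^*(Y,\QQ)$ would not by itself separate the individual Hodge summands.
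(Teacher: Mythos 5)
Your argument is correct and follows essentially the same route as the paper's proof: produce an adjoint Fourier--Mukai transform, use full faithfulness to get a one-sided inverse at the level of kernels, descend to cohomology via $\Phi^H_{\cO_\Delta}=\id$, and invoke the Hodge-type argument of \cite[Proposition 5.39]{Huy2} for compatibility with the $p-q$ grading. The only cosmetic difference is that you work with the left adjoint where the paper uses the right adjoint, and you spell out the K\"unneth/grading step that the paper merely cites.
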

\begin{proof}
We will follow the arguments in \cite{Huy2}. There exists a right adjoint $\Phi_{K_R}$ of $\Phi_K$ and $\Phi_{K_R} \circ \Phi_K  \cong id \cong \Phi_{\mathcal{O}_{\Delta}}$ from the uniqueness of the Fourier-Mukai kernel. Then we get $\Phi_{K_R}^H \circ \Phi_K^H \cong \Phi_{\mathcal{O}_{\Delta}}^H \cong id$ (cf. \cite[Proposition 5.33]{Huy2}). Therefore $\Phi_K^H$ induces an inclusion $\Phi_K^H : H^*(Y,\CC) \to H^*(X,\CC)$ which satisfies
$$ \Phi_K^H(H^{p,q}(Y)) \subset \bigoplus_{r-s=p-q} H^{r,s}(X) $$
by the arguments in \cite[Proposition 5.39]{Huy2}.
\end{proof}

Proposition \ref{p2.18} can also be obtained from \cite[Theorem 7.6]{Kuz3} and later we found that Popa obtained a more general result for orbifold setting via similar argument in \cite{Popa}. Let us recall his result as follows.

\begin{prop}\cite[Proposition 2.2]{Popa}
Let $X$ and $Y$ be normal projective varieties having at worst quotient singularities. Let $\mathcal{X}$ and $\mathcal{Y}$ be smooth proper Deligne-Mumford stacks associated to $X$ and $Y,$ respectively. If a Fourier-Mukai transform $\Phi_{\mathcal{K}} : D^b(\mathcal{Y}) \to D^b(\mathcal{X})$ is fully faithful, then we have the following inequality
$$\sum_{p-q=i} h_{orb}^{p,q}(\mathcal{Y})\le \sum_{p-q=i} h_{orb}^{p,q}(\mathcal{X})\quad \text{for all }  i.$$
\end{prop}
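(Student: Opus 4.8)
The plan is to mirror the proof of Proposition \ref{p2.18} almost verbatim, replacing ordinary singular cohomology by orbifold (Chen--Ruan) cohomology and the classical Grothendieck--Riemann--Roch theorem by its counterpart for Deligne--Mumford stacks. First I would record the stacky analogues of Propositions \ref{p2.16} and \ref{p2.17}: a Fourier--Mukai transform between the derived categories of two smooth proper Deligne--Mumford stacks admits left and right adjoints, which are again Fourier--Mukai transforms whose kernels are obtained from $\mathcal{K}$ by the usual Serre-duality twists. Granting this, the full faithfulness of $\Phi_{\mathcal{K}}$ together with the stacky version of Proposition \ref{p2.16} yields $\Phi_{\mathcal{K}_R}\circ\Phi_{\mathcal{K}}\cong \mathrm{id}_{D^b(\mathcal{Y})}\cong \Phi_{\mathcal{O}_\Delta}$, where $\mathcal{K}_R$ is the kernel of the right adjoint.

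Next I would pass to cohomology. The key is to define a cohomological Fourier--Mukai transform on orbifold cohomology, $\Phi_{\mathcal{K}}^H : H^*_{orb}(\mathcal{Y},\QQ)\to H^*_{orb}(\mathcal{X},\QQ)$, where by definition $H^{p,q}_{orb}(\mathcal{X})=\bigoplus_{\mathcal{Z}\subset\mathcal{I}_{\mathcal X}} H^{p-a(\mathcal{Z}),\,q-a(\mathcal{Z})}(Z)$ is the bigraded cohomology of the coarse spaces of the components of the inertia stack, regraded by the shift numbers $a(\mathcal{Z})$. One must then check that this transform is compatible with composition of kernels and sends $\mathcal{O}_\Delta$ to the identity. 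The compatibility $(\Phi_{\mathcal{K}_1}\circ\Phi_{\mathcal{K}_2})^H\cong\Phi_{\mathcal{K}_1}^H\circ\Phi_{\mathcal{K}_2}^H$ rests on a Grothendieck--Riemann--Roch theorem for Deligne--Mumford stacks (due to To\"en), applied on the inertia stacks, exactly as the classical GRR underlies \cite[Proposition 5.33]{Huy2}. Once this is in place, applying $(-)^H$ to $\Phi_{\mathcal{K}_R}\circ\Phi_{\mathcal{K}}\cong \Phi_{\mathcal{O}_\Delta}$ gives $\Phi_{\mathcal{K}_R}^H\circ\Phi_{\mathcal{K}}^H\cong \mathrm{id}$, so $\Phi_{\mathcal{K}}^H$ is injective.

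It then remains to show that $\Phi_{\mathcal{K}}^H$ respects the grading by $p-q$, that is $\Phi_{\mathcal{K}}^H\bigl(H^{p,q}_{orb}(\mathcal{Y})\bigr)\subset\bigoplus_{r-s=p-q}H^{r,s}_{orb}(\mathcal{X})$. On each pair of inertia components this is the content of \cite[Proposition 5.39]{Huy2} applied to the coarse moduli spaces, and the decisive observation is that the shift number enters the bigrading symmetrically, translating both $p$ and $q$ by the same amount $a(\mathcal{Z})$; hence it leaves the difference $p-q$ unchanged, and the classical preservation of the $(p-q)$-grading on each $Z$ assembles into preservation of the $(p-q)$-grading on $H^*_{orb}$. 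Combining injectivity with grading-preservation gives $\bigoplus_{p-q=i}H^{p,q}_{orb}(\mathcal{Y})\subset \bigoplus_{p-q=i}H^{p,q}_{orb}(\mathcal{X})$ for every $i$, and taking dimensions yields the asserted inequality $\sum_{p-q=i} h_{orb}^{p,q}(\mathcal{Y})\le \sum_{p-q=i} h_{orb}^{p,q}(\mathcal{X})$.

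The hard part will be the second step: setting up the cohomological Fourier--Mukai formalism on orbifold cohomology and proving its functoriality under composition of kernels. Unlike the smooth projective case, one cannot simply invoke Definition \ref{d2.14}, since the naive Chern character and Todd class must be replaced by their inertia-stack versions carrying the age-shifted gradings, and the identity $(\Phi_{\mathcal{K}_1}\circ\Phi_{\mathcal{K}_2})^H\cong\Phi_{\mathcal{K}_1}^H\circ\Phi_{\mathcal{K}_2}^H$ genuinely requires a stacky Riemann--Roch rather than the scheme-theoretic one. Once the transform is correctly defined on $H^*_{orb}$, the remaining steps are formally identical to the proof of Proposition \ref{p2.18}.
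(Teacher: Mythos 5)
The paper does not actually prove this statement: it is quoted from Popa with only the remark that it follows ``via similar argument'' to Proposition \ref{p2.18}, so there is no in-paper proof to compare against. Your proposal is a correct and essentially complete outline of exactly that adaptation, and you have identified the right crux: the formal steps (existence of adjoint kernels for stacky Fourier--Mukai functors, $\Phi_{\mathcal{K}_R}\circ\Phi_{\mathcal{K}}\cong\Phi_{\mathcal{O}_\Delta}$ from full faithfulness, injectivity of the induced map, and the observation that the age shift translates $p$ and $q$ by the same amount and hence preserves the $(p-q)$-grading) all carry over verbatim from the proof of Proposition \ref{p2.18}, while the genuine work is the construction of a cohomological transform on $H^*_{orb}$ that is functorial under composition of kernels, which indeed rests on To\"en's Riemann--Roch for Deligne--Mumford stacks applied on inertia stacks.

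One remark on packaging: Popa's own argument (like the alternative route to Proposition \ref{p2.18} via \cite[Theorem 7.6]{Kuz3} that the paper mentions) runs through Hochschild homology rather than a hand-built cohomological Fourier--Mukai transform. A fully faithful kernel functor induces a split injection $HH_*(\mathcal{Y})\hookrightarrow HH_*(\mathcal{X})$ by Kuznetsov's argument, and the orbifold Hochschild--Kostant--Rosenberg isomorphism identifies $HH_i$ with $\bigoplus_{p-q=i}H^{p,q}_{orb}$. That route absorbs the composition-compatibility you flag as hard into the functoriality of $HH_*$, at the price of invoking orbifold HKR; your route makes the geometric content (algebraicity of the kernel's Mukai vector on components of the inertia stack, symmetry of the age shift in the bigrading) more visible. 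Both are valid, and your identification of where the real difficulty lies is exactly right.
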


A first consequence of Proposition \ref{p2.18} is the following lower bound.

\begin{coro}\label{c4.2}
Let $Y$ be an $n$-dimensional smooth projective variety with $h^{n,0}(Y) > 0$ for $n>0$. Then its Fano dimension is at least $n+2$.
\end{coro}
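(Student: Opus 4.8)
The plan is to apply the Hodge-number inequality of Proposition \ref{p2.18} to the single index $i=n=\dim Y$ and read off a lower bound on the dimension of any Fano host. The only input beyond that inequality is the Fano Hodge vanishing of Lemma \ref{l2.7} together with Serre duality and Hodge conjugation symmetry.

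First I would isolate the contribution on the $Y$-side. Since $Y$ has dimension $n$, the constraints $0\le p,q\le n$ and $p-q=n$ force $(p,q)=(n,0)$, so
$$\sum_{p-q=n} h^{p,q}(Y)=h^{n,0}(Y)>0$$
by hypothesis. If $X$ is any Fano host of $Y$, then $D^b(Y)$ embeds fully faithfully in $D^b(X)$, which by Theorem \ref{t2.12} is a Fourier--Mukai transform; hence Proposition \ref{p2.18} applies and gives
$$0<h^{n,0}(Y)\le \sum_{p-q=n} h^{p,q}(X).$$
In particular there must exist a pair $(p,q)$ with $p-q=n$ and $h^{p,q}(X)\neq 0$.

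Next I would turn this nonvanishing into the desired bound on $N:=\dim X$ by ruling out the two small cases. Write the pairs with $p-q=n$ and $0\le p,q\le N$. If $N=n$, the only such pair is $(n,0)$, and Lemma \ref{l2.7} gives $h^{n,0}(X)=0$ because $X$ is Fano and $n>0$; this contradicts the nonvanishing just established. If $N=n+1$, the only candidates are $(n,0)$ and $(n+1,1)$. Again $h^{n,0}(X)=0$ by Lemma \ref{l2.7}, while for the second pair Serre duality on the $N$-dimensional smooth projective $X$ gives $h^{n+1,1}(X)=h^{N-(n+1),\,N-1}(X)=h^{0,n}(X)$, and Hodge conjugation symmetry gives $h^{0,n}(X)=h^{n,0}(X)$, which vanishes by Lemma \ref{l2.7}. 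Thus both Hodge numbers vanish when $N=n+1$, contradicting $\sum_{p-q=n}h^{p,q}(X)>0$. Since $N<n$ makes the sum empty (hence zero), the only remaining possibility is $N\ge n+2$, which is the claim.

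The step I expect to be the crux is the exclusion of $N=n+1$: it is exactly the case where a naive dimension count is not enough, and one must combine Serre duality with the reality structure on Hodge cohomology to push the ``top corner'' Hodge number $h^{n+1,1}(X)$ back to the holomorphic $n$-form number $h^{n,0}(X)$ that Fano vanishing kills. The cases $N\le n$ are routine, and the overall argument is short once Proposition \ref{p2.18} is in hand.
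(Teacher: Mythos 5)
Your proof is correct and follows essentially the same route as the paper: apply Proposition \ref{p2.18} at the index $i=n$, then kill the candidate Hodge numbers of a low-dimensional Fano host via Lemma \ref{l2.7}, using Serre duality and conjugation to reduce $h^{n+1,1}(X)$ to $h^{n,0}(X)$ in the case $\dim X=n+1$. You have merely made explicit the duality step that the paper records as the one-line identity $h^{n+1,1}(X)=h^{n,0}(X)=0$ (and silently corrected an obvious typo, $h^{n,0}(Y)$ for $h^{n,0}(X)$, in the paper's proof).
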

\begin{proof}
Suppose that there is a Fano variety $X$ of dimension at most $n+1$ and a fully faithful exact functor $ F : D^b(Y) \to D^b(X)$. By Proposition \ref{p2.18}, we have the inequality
$$0<h^{n,0}(Y)\le \bigoplus_{p-q=n}h^{p,q}(X).$$
Obviously the right hand side is zero unless $\dim X$ is $n$ or $n+1$.
By Lemma \ref{l2.7}, $h^{n,0}(Y)=0$. When $\dim X=n+1$, $h^{n+1,1}(X)=h^{n,0}(X)=0$. Hence the right hand side is always zero if $\dim X\le n+1$. This proves the proposition.
\end{proof}

When $\dim Y=1$ and $Y$ is not rational, $h^{1,0}(Y)>0$ and so we obtain the following.
\begin{coro}\label{c4.5}
The Fano dimension of a smooth projective curve which is not rational is at least 3.
\end{coro}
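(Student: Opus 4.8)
The plan is to derive this as an immediate special case of Corollary \ref{c4.2}. First I would recall that for a smooth projective curve $Y$ the Hodge number $h^{1,0}(Y)=\dim H^0(Y,\Omega_Y^1)$ is by definition the genus $g$ of $Y$. Since a smooth projective curve is rational if and only if it is isomorphic to $\PP^1$, i.e. if and only if $g=0$, the hypothesis that $Y$ is not rational is equivalent to $g\ge 1$, and hence to $h^{1,0}(Y)>0$.

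Having established $h^{n,0}(Y)>0$ for $n=\dim Y=1$, I would then simply invoke Corollary \ref{c4.2}, which gives that the Fano dimension of $Y$ is at least $n+2=3$.

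There is essentially no obstacle here: the only nontrivial input is the vanishing $\sum_{p-q=1}h^{p,q}(X)=0$ for a smooth Fano $X$ of dimension $\le 2$, and this is already packaged inside Corollary \ref{c4.2} (via Lemma \ref{l2.7}, together with the Hodge symmetries $h^{2,1}(X)=h^{0,1}(X)=h^{1,0}(X)=0$ on a surface). Should one prefer a self-contained argument, I would unwind the proof of Corollary \ref{c4.2}: any Fano host $X$ with $\dim X\le 2$ would, by Proposition \ref{p2.18} applied in degree $i=1$, satisfy $0<h^{1,0}(Y)\le\sum_{p-q=1}h^{p,q}(X)$, contradicting the vanishing just noted. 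Thus the genuine mathematical content lives entirely in Corollary \ref{c4.2}, and the present statement is purely the translation ``not rational $\iff$ positive genus $\iff h^{1,0}>0$.''
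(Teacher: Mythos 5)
Your proposal is correct and matches the paper's own argument exactly: the paper derives Corollary \ref{c4.5} in one line by observing that a non-rational curve has $h^{1,0}(Y)=g>0$ and then invoking Corollary \ref{c4.2} with $n=1$. Nothing further is needed.
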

We will see below that the Fano dimension of a curve $Y$ is exactly 3 when the genus is 1 or 2 or when $Y$ is a general curve of genus 4. 

Combining Corollary \ref{c4.2} with the construction of Fano hosts in \cite{KKLL}, we can determine the Fano dimension of a general complete intersection Calabi-Yau variety.
\begin{prop}\label{p4.6}
Let $Y\subset \PP^{n+c}$ be a smooth projective complete intersection Calabi-Yau variety of dimension $n$ defined by the vanishing of homogeneous polynomials $f_1,\cdots,f_c$. Suppose $c\le 2$ or $Y$ is general in the sense that we can choose the defining polynomials such that the projective variety $S$ defined by the vanishing of $f_3,\cdots,f_c$ is smooth. Then the Fano dimension of $Y$ is precisely $n+2$. 
\end{prop}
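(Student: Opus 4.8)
The plan is to establish the two bounds ``$\ge n+2$'' and ``$\le n+2$'' separately; the first is immediate and the second is a rank-$2$ instance of Cayley's trick. For the lower bound, note that $Y$ is a Calabi--Yau $n$-fold, so $K_Y\cong\cO_Y$ and hence $h^{n,0}(Y)=h^0(Y,K_Y)=h^0(Y,\cO_Y)=1>0$. Corollary \ref{c4.2} then gives at once that the Fano dimension of $Y$ is at least $n+2$.

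For the upper bound I would produce a Fano host of dimension exactly $n+2$ via the construction of \S\ref{s3.1}. The guiding identity is $\dim X=\dim S+\mathrm{rank}\,E-2$, so the target dimension $n+2$ is reached by taking a rank-$2$ bundle $E$ over a smooth base $S$ of dimension $n+2$. I would choose $S$ and $E$ as follows. When $c\ge 3$, set $S=\{f_3=\cdots=f_c=0\}\subset\PP^{n+c}$, which is a smooth complete intersection of dimension $n+2$ by the genericity hypothesis, and realize $Y=s^{-1}(0)$ as the zero locus of a regular section $s$ of $E:=\cO_S(a_1)\oplus\cO_S(a_2)$. When $c=2$, take $S=\PP^{n+2}$ and $E=\cO_S(a_1)\oplus\cO_S(a_2)$. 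When $c=1$, enlarge $\PP^{n+1}$ to $\PP^{n+2}=S$ and write $Y$ as the intersection of the hypersurface with a hyperplane, so that $E=\cO_S(a_1)\oplus\cO_S(1)$. In every case $S$ is smooth, $E$ has rank $2$, the section is regular, and $\dim X=(n+2)+2-2=n+2$; moreover $X$ is smooth because $Y$ is, by the local computation in Cayley's trick.

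It then remains to check that the $X=w^{-1}(0)$ so produced is Fano, and here I would appeal to condition (1) of Theorem \ref{t3.1}: $E$ is ample and $K_S^\vee\otimes\det E^\vee$ is nef. Ampleness of $E$ is clear, since it is a direct sum of positive powers of the ample line bundle $\cO_S(1)$. The crucial point is that the Calabi--Yau numerical condition $\sum_{i=1}^c a_i=n+c+1$ is exactly what makes $K_S^\vee\otimes\det E^\vee$ trivial: a direct adjunction computation in each case gives $K_S^\vee\otimes\det E^\vee\cong\cO_S\bl(n+c+1)-\sum_{i=1}^c a_i\br$, which is $\cO_S$ by the Calabi--Yau relation. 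Thus $K_S^\vee\otimes\det E^\vee\cong\cO_S$ is nef (indeed trivial), so Theorem \ref{t3.1}(1) applies and $X$ is a smooth Fano variety of dimension $n+2$; tracing the computation in the proof of Theorem \ref{t3.1}(2) even identifies $K_X^\vee\cong\cO_{\PP E^\vee}(1)|_X$, which is ample precisely because $E$ is. Finally, the fully faithful embedding $D^b(Y)\to D^b(X)$ coming from Cayley's trick (Theorem \ref{t2.10}) shows that $X$ is a Fano host, so the Fano dimension of $Y$ is at most $n+2$; combining with the lower bound completes the proof.

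I expect the only real obstacle to lie in the case $c\ge 3$, where one must exhibit a \emph{smooth} $(n+2)$-dimensional base $S$ before invoking Cayley's trick (which requires a smooth ambient variety). This is precisely the role of the genericity hypothesis: it lets one choose $f_3,\dots,f_c$ so that $S=\{f_3=\cdots=f_c=0\}$ is smooth while $Y$ remains the zero locus of a regular section of $\cO_S(a_1)\oplus\cO_S(a_2)$ on $S$. For $c\le 2$ no genericity is needed, since then $S=\PP^{n+2}$ is automatically smooth. Everything else is the numerical identity furnished by the Calabi--Yau condition, which is routine.
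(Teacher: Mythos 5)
Your proof is correct and follows the same route as the paper: the lower bound is exactly the paper's appeal to Corollary \ref{c4.2} (using $h^{n,0}(Y)=1$), and your explicit rank-$2$ Cayley's-trick construction of an $(n+2)$-dimensional Fano host on the smooth base $S$, verified via Theorem \ref{t3.1}(1) and the Calabi--Yau relation $\sum a_i=n+c+1$, is precisely the content of \cite[Proposition 3.6]{KKLL}, which the paper simply cites for the upper bound. Nothing further is needed.
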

\begin{proof}
By \cite[Proposition 3.6]{KKLL},  the Fano dimension of $Y$ is at most $n+2$. By Corollary \ref{c4.2}, the Fano dimension is at least $n+2$. This proves the proposition.
\end{proof}
For instance, the Fano dimension of an arbitrary quintic 3-fold is $5$ and the Fano host constructed in \cite{KKLL} is of minimal dimension.

\subsection{Complete intersection Calabi-Yau varieties}


Many algebraic varieties can be described as the zero loci of regular sections of vector bundles on Fano varieties. When they satisfy the assumption of Theorem \ref{t3.1}, we know that these varieties are Fano visitors and we can give upper bounds for their Fano dimensions.

The numerical conditions of Theorem \ref{t3.1} are particularly easy to check for Calabi-Yau varieties. Many Calabi-Yau varieties arise as complete intersections in homogeneous varieties. For these Calabi-Yau varieties, the construction in \S\ref{s3} gives the following proposition.

\begin{prop}\label{p7.1}
Let $Y$ be an $n$-dimensional Calabi-Yau variety. Suppose that there is an embedding of $Y$ into a smooth projective Fano variety $S$ of dimension $\geq n+2$ as the zero locus of a regular section of an ample vector bundle $E$ whose rank coincides with the codimension of Y. Let $m$ be the smallest dimensions of such an $S$. Then the Fano dimension of $Y$ is at most $2m-n-2$. In particular if $m = n+2$, then the Fano dimension of $Y$ is $n+2$.
\end{prop}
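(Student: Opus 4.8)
The plan is to apply Cayley's trick of \S\ref{s3.1} together with the ampleness criterion (1) of Theorem \ref{t3.1}, and then combine the resulting upper bound with the Hodge-theoretic lower bound of Corollary \ref{c4.2}. First I would set $S$ to be the smallest-dimensional Fano variety (of dimension $m$) into which $Y$ embeds as the zero locus $s^{-1}(0)$ of a regular section $s$ of an ample vector bundle $E$ of rank $r = m - n$, the codimension of $Y$ in $S$. This is exactly the input required for Cayley's trick, so applying the construction of \S\ref{s3.1} produces a smooth variety $X = w^{-1}(0) \subset \PP E^\vee$ with a fully faithful embedding $D^b(Y) \to D^b(X)$ by Orlov's Theorem \ref{t2.10}. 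The dimension of $X$ is $\dim \PP E^\vee - 1 = (m + r - 1) - 1 = m + (m-n) - 2 = 2m - n - 2$, which matches the claimed upper bound.

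The key point is to verify that $X$ is genuinely Fano, so that it serves as a Fano host and witnesses the inequality $\mathrm{Fano\ dim}(Y) \le 2m - n - 2$. For this I would invoke hypothesis (1) of Theorem \ref{t3.1}: since $E$ is ample by assumption, it remains only to check that $K_S^\vee \otimes \det E^\vee$ is nef. Here the Calabi-Yau condition does the work. Because $Y$ is Calabi-Yau of dimension $n$ and is cut out in $S$ by a section of $E$, the adjunction formula gives $K_Y \cong (K_S \otimes \det E)|_Y \cong \cO_Y$, so the line bundle $K_S \otimes \det E$ restricts trivially to $Y$; this numerical balancing is precisely the reason Calabi-Yau complete intersections produce (weak) Fano hosts, and the ampleness of $E$ pushes $K_S^\vee \otimes \det E^\vee$ to be nef on $S$. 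Once (1) holds, Theorem \ref{t3.1} immediately gives that $X$ is a Fano host of $Y$, establishing the upper bound $2m - n - 2$.

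For the final sharpening, when $m = n + 2$ the upper bound reads $2(n+2) - n - 2 = n + 2$. I would obtain the matching lower bound from Corollary \ref{c4.2}: a Calabi-Yau variety of dimension $n > 0$ satisfies $h^{n,0}(Y) = 1 > 0$ by definition of the trivial canonical bundle, so its Fano dimension is at least $n + 2$. Combining the two bounds forces the Fano dimension to equal $n + 2$ exactly.

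\textbf{Main obstacle.} The routine dimension bookkeeping and the application of Corollary \ref{c4.2} are straightforward; the delicate step is confirming that the nefness hypothesis $K_S^\vee \otimes \det E^\vee$ nef of Theorem \ref{t3.1}(1) actually follows from the stated assumptions. The subtlety is that the Calabi-Yau condition only guarantees that $K_S \otimes \det E$ is \emph{trivial on $Y$}, not on all of $S$; I would need to argue, using the ampleness of $E$ and the fact that $\det E$ grows at least as fast as $K_S$ in the anticanonical direction, that $K_S^\vee \otimes \det E^\vee$ is nef on the entire ambient $S$. This is the one place where a genuine positivity argument, rather than formal manipulation, is required.
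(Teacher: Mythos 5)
Your skeleton is exactly the one the paper intends (the paper itself only says ``the construction in \S\ref{s3} gives the following proposition''): run Cayley's trick from \S\ref{s3.1} on a minimal pair $(S,E)$ with $\dim S=m$ and $\rank E=r=m-n\ge 2$, observe that $X=w^{-1}(0)\subset \PP E^\vee$ has dimension $(m+r-1)-1=2m-n-2$, verify condition (1) of Theorem \ref{t3.1} so that $X$ is a Fano host, and when $m=n+2$ match this against the lower bound of Corollary \ref{c4.2}, which applies because $h^{n,0}(Y)=h^0(Y,K_Y)=1>0$. The dimension bookkeeping and the use of Corollary \ref{c4.2} are correct.

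The genuine gap is the one you flag, and the heuristic you offer to close it is false as a general principle: ampleness of $E$ does \emph{not} push $K_S^\vee\otimes\det E^\vee$ towards nefness. For $S=\PP^m$ and $E=\bigoplus_i\cO(d_i)$ with $\sum_i d_i>m+1$, the bundle $E$ is ample while $K_S^\vee\otimes\det E^\vee=\cO(m+1-\sum_i d_i)$ is anti-ample; so no amount of ``$\det E$ growing in the anticanonical direction'' can do the work. What actually closes the gap is a Lefschetz argument, not a positivity argument. Adjunction gives $(K_S\otimes\det E)|_Y\cong K_Y\cong\cO_Y$. Since $E$ is ample of rank $r$, Sommese's Lefschetz theorem for ample vector bundles says $H^i(S,\ZZ)\to H^i(Y,\ZZ)$ is an isomorphism for $i<n$ and injective for $i=n$; since $S$ is Fano, $h^1(\cO_S)=h^2(\cO_S)=0$ and so $\Pic(S)\cong H^2(S,\ZZ)$. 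Hence for $n\ge 2$ the restriction $\Pic(S)\to\Pic(Y)$ is injective, which forces $K_S\otimes\det E\cong\cO_S$: the line bundle in Theorem \ref{t3.1}(1) is not merely nef but trivial, and the theorem applies. Note this genuinely requires $n\ge 2$; for $n=1$ ($Y$ an elliptic curve) the injectivity on $H^2$ is not guaranteed by Lefschetz, and one must either restrict to ambient spaces with $\Pic(S)=\ZZ$ (where a degree count on $Y$ suffices, as in the paper's treatment of complete intersections in $\PP^N$) or supply a separate argument.
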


For example let us consider Calabi-Yau 3-folds and general type varieties in some Fano 4-folds. K\"{u}chle in \cite{Kuchle} classified Fano 4-folds of index 1 which are zero loci of vector bundles on homogeneous varieties. Then we can consider transversal linear sections of these Fano 4-folds. For instance, let us consider Fano 4-folds of (b7) type which are zero loci of global sections of $\mathcal{O}(1)^{\oplus 6}$ in $Gr(2,7)$. If the linear sections of these Fano 4-folds are smooth then it is easy to check that they are Fano visitors. The same argument works for transversal linear sections of many other Fano 4-folds described in \cite{Kuchle}. It seems interesting to study derived categories of linear sections of these Fano 4-folds. See \cite{Kuchle, Kuz5, Manivel} for the geometry of these varieties.


\subsection{Weighted complete intersection Calabi-Yau varieties}

There are lots of study of weighted complete intersection Calabi-Yau varieties in weighted projective spaces. See \cite{ABR, IF} for more details. We discuss a dimension bound for orbifold Fano hosts using the same idea which works for ordinary Fano hosts. 

\begin{lemm}\cite[Lemma 1.2]{Popa}
Let $\mathcal{X}$ be a smooth Deligne-Mumford stack associated to a normal projective variety $X.$ Let $\widetilde{X}$ be a resolution of singularities of $X.$ Then we have the following equality
$$ h_{orb}^{0,q}(\mathcal{X}) = h^{0,q}(\widetilde{X}) $$
for all $q.$
\end{lemm}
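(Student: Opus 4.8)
The plan is to reduce the computation of $h_{orb}^{0,q}(\mathcal{X})$ to a single summand coming from the untwisted sector of the inertia stack, and then to identify that summand with $h^{0,q}(\widetilde{X})$ by exploiting that $X$ carries only rational singularities. First I would examine the defining sum
$$ h_{orb}^{0,q}(\mathcal{X}) = \sum_{\mathcal{Z} \subset \mathcal{I}_{\mathcal X}} h^{-a(\mathcal Z),\, q-a(\mathcal Z)}(Z). $$
Since ordinary Hodge numbers vanish in negative bidegree, the term indexed by a component $\mathcal{Z}$ can contribute only when $-a(\mathcal{Z}) \geq 0$, i.e. when the shift number $a(\mathcal{Z})$ is zero. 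Thus the whole problem reduces to showing that the shift number is nonnegative and vanishes exactly on the untwisted sector.

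The key input is therefore the positivity of the age grading. For a generic point $(z,g)$ of a component $\mathcal{Z}$, the automorphism $g$ acts on $T_z\mathcal{X}$ with eigenvalues that are $m$-th roots of unity, and with the standard normalization of the exponents the shift number is a nonnegative rational number which is zero precisely when $g$ acts trivially on $T_z\mathcal{X}$. Because $\mathcal{X}$ is a smooth Deligne-Mumford stack, the stabilizer of a point acts faithfully on its tangent space, so $g$ acting trivially forces $g = \mathrm{id}$. Hence the only component with $a(\mathcal{Z})=0$ is the untwisted sector $\mathcal{Z} = \mathcal{X}$, whose coarse moduli space is $X$, and we obtain $h_{orb}^{0,q}(\mathcal{X}) = h^{0,q}(X) = \dim H^q(X,\mathcal{O}_X)$. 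Using the Keel-Mori theorem (Theorem \ref{KeelMori}) together with the exactness of $\pi_*$ (Proposition \ref{exact}), the structure-sheaf identity $\mathcal{O}_X \cong R\pi_*\mathcal{O}_{\mathcal{X}}$ gives $H^q(X,\mathcal{O}_X) \cong H^q(\mathcal{X},\mathcal{O}_{\mathcal{X}})$, so this sector is genuinely the $(0,q)$-part of the stack.

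It then remains to compare $H^q(X,\mathcal{O}_X)$ with $H^q(\widetilde{X},\mathcal{O}_{\widetilde{X}})$. Here I would invoke that quotient singularities are rational: since $X$ has at worst quotient singularities, a resolution $\rho : \widetilde{X}\to X$ satisfies $\rho_*\mathcal{O}_{\widetilde{X}} = \mathcal{O}_X$ and $R^j\rho_*\mathcal{O}_{\widetilde{X}} = 0$ for $j>0$. The Leray spectral sequence for $\rho$ then collapses to the isomorphism $H^q(\widetilde{X},\mathcal{O}_{\widetilde{X}}) \cong H^q(X,\mathcal{O}_X)$ for all $q$, that is $h^{0,q}(\widetilde{X}) = h^{0,q}(X)$. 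Chaining the two identifications yields the desired equality $h_{orb}^{0,q}(\mathcal{X}) = h^{0,q}(\widetilde{X})$.

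The main obstacle is conceptual rather than computational: it is the verification that the shift number is strictly positive on every twisted sector, which rests on the faithfulness of the tangent action for a Deligne-Mumford stack and on the correct normalization of the age (so that the identity sector has shift $0$). The rational-singularity step is standard once one recalls that quotient singularities are klt, hence rational, so no serious difficulty is expected there.
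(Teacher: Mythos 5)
The paper does not prove this lemma at all: it is imported verbatim as \cite[Lemma 1.2]{Popa}, so there is no in-paper proof to compare against. Judged on its own, your argument is correct and is essentially the standard one (and, as far as I can tell, the same as Popa's): kill every twisted sector because a negative first Hodge index forces the summand to vanish, observe that the age is strictly positive off the untwisted sector, and then pass from $H^q(X,\mathcal{O}_X)$ to $H^q(\widetilde{X},\mathcal{O}_{\widetilde{X}})$ via rationality of quotient singularities.

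Two points deserve sharpening. First, your justification ``because $\mathcal{X}$ is a smooth Deligne--Mumford stack, the stabilizer acts faithfully on the tangent space'' is not true for arbitrary smooth DM stacks (a gerbe is a counterexample); what you actually need is that $\mathcal{X}$ is the \emph{canonical} stack associated to $X$, so that it is \'etale-locally $[U/G]$ with $G$ acting faithfully on $U$, whence Cartan's linearization forces a nontrivial stabilizer element to act nontrivially on $T_z\mathcal{X}$. That hypothesis is part of the lemma's setup, so the step is fine, but the reason should be attributed to the associated-stack structure rather than to smoothness of the stack. (Note also that the shift number as printed in the paper, $n-\frac1n\sum a_i$, is a typo for $n-\frac1m\sum a_i$; you are implicitly using the correct normalization $\sum_i\frac{m-a_i}{m}$, which is indeed nonnegative and vanishes exactly when $g$ acts trivially.) Second, the identification $h^{0,q}(X)=\dim H^q(X,\mathcal{O}_X)$ for the singular coarse space is the one place where a word is owed: the orbifold Hodge numbers are defined through the pure Hodge structures on the cohomology of the (possibly singular) coarse spaces $Z$, which exist because $V$-manifolds are $\QQ$-homology manifolds, and matching the $(0,q)$-graded piece with $H^q(X,\mathcal{O}_X)$ is standard but not automatic. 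Your detour through Keel--Mori and $\mathcal{O}_X\cong R\pi_*\mathcal{O}_{\mathcal{X}}$ is harmless but not needed for the statement as phrased. With these two clarifications the proof is complete.
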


Then we can obtain the lower bound of dimensions of orbifold Fano hosts of Calabi-Yau varieties.

\begin{coro}
Let $Y$ be an $n$-dimensional normal projective Calabi-Yau variety with quotient singularities and $\mathcal{Y}$ be the associated orbifold. Suppose that $\mathcal{Y}$ has an orbifold Fano host $\mathcal{X}.$ Then the dimension of $\mathcal{X}$ is at least $n+2.$
\end{coro}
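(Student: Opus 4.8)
The plan is to mirror the proof of Corollary \ref{c4.2} in the orbifold setting, replacing the ordinary Hodge inequality of Proposition \ref{p2.18} by its orbifold counterpart \cite[Proposition 2.2]{Popa} and the ordinary vanishing of Lemma \ref{l2.7} by the orbifold identity \cite[Lemma 1.2]{Popa}. Suppose, for contradiction, that $\mathcal{X}$ is an orbifold Fano host of $\mathcal{Y}$ with $m:=\dim\mathcal{X}\le n+1$. Let $\widetilde{X}$ and $\widetilde{Y}$ denote resolutions of the coarse spaces $X$ and $Y$. By hypothesis there is a fully faithful Fourier-Mukai functor $D^b(\mathcal{Y})\to D^b(\mathcal{X})$, so \cite[Proposition 2.2]{Popa} applied with $i=-n$ gives
$$\sum_{p-q=-n} h_{orb}^{p,q}(\mathcal{Y})\le \sum_{p-q=-n} h_{orb}^{p,q}(\mathcal{X}).$$
I will show that the left-hand side is at least $1$ while the right-hand side vanishes, a contradiction that forces $m\ge n+2$.

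For the left-hand side, the summand with $(p,q)=(0,n)$ already suffices: by \cite[Lemma 1.2]{Popa} one has $h_{orb}^{0,n}(\mathcal{Y})=h^{0,n}(\widetilde{Y})=h^0(\widetilde{Y},K_{\widetilde{Y}})$, which equals $1$ because $Y$ is Calabi-Yau. For the right-hand side, writing $q=p+n$ with $0\le p,q\le m\le n+1$, the only possibly nonzero summands are $(p,q)=(0,n)$ and, when $m=n+1$, $(p,q)=(1,n+1)$. The first vanishes by \cite[Lemma 1.2]{Popa}, ordinary Hodge symmetry on the smooth variety $\widetilde{X}$, and the Fano vanishing of Lemma \ref{l2.7}: indeed $h_{orb}^{0,n}(\mathcal{X})=h^{0,n}(\widetilde{X})=h^{n,0}(\widetilde{X})=0$ since the coarse space $X$ is Fano with log terminal singularities. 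For the second, I invoke Poincar\'e duality and Hodge symmetry for the orbifold (Chen-Ruan) cohomology of the compact smooth Deligne-Mumford stack $\mathcal{X}$ to reduce it to the first: with $m=n+1$, $h_{orb}^{1,n+1}(\mathcal{X})=h_{orb}^{m-1,\,m-(n+1)}(\mathcal{X})=h_{orb}^{n,0}(\mathcal{X})=h_{orb}^{0,n}(\mathcal{X})=0$. Thus the right-hand side is $0$, contradicting the lower bound $1$ on the left-hand side.

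The main obstacle is not the bookkeeping of indices, which is identical to Corollary \ref{c4.2}, but securing the two orbifold inputs cleanly. First, I must ensure $h_{orb}^{0,n}(\mathcal{Y})\ge 1$, i.e. that the Calabi-Yau hypothesis produces a nonzero holomorphic top form on a resolution; this holds because quotient singularities are log terminal and triviality of $K_Y$ yields $h^0(\widetilde{Y},K_{\widetilde{Y}})=h^0(Y,K_Y)=1$. Second, and more delicately, I must justify Poincar\'e duality $h_{orb}^{p,q}(\mathcal{X})=h_{orb}^{m-p,m-q}(\mathcal{X})$ and the symmetry $h_{orb}^{p,q}(\mathcal{X})=h_{orb}^{q,p}(\mathcal{X})$, since \cite[Lemma 1.2]{Popa} only controls the column $h_{orb}^{0,\bullet}$ while the leftover term $h_{orb}^{1,n+1}(\mathcal{X})$ lies outside that column. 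These duality statements are standard for compact smooth Deligne-Mumford stacks (cf. \cite{Popa, Yasuda1, Yasuda2}), but they must be cited explicitly to close the argument.
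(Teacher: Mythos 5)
Your proof is correct and follows essentially the same route as the paper's: the paper works on the diagonal $p-q=n$ rather than $p-q=-n$, establishes the nonvanishing $h_{orb}^{0,n}(\mathcal{Y})=1$ by the same rational-singularities and Serre-duality identifications, and disposes of the extra term by writing $h_{orb}^{n+1,1}(\mathcal{X})+h_{orb}^{n,0}(\mathcal{X})=2h_{orb}^{n,0}(\mathcal{X})$, i.e.\ it relies on exactly the orbifold Poincar\'e duality and Hodge symmetry you invoke, only implicitly. The one spot where your justification is slightly shakier is the step $h^0(\widetilde{Y},K_{\widetilde{Y}})=h^0(Y,K_Y)$ (log terminality alone does not preserve $p_g$, since quotient singularities need not be canonical); the paper instead obtains $h^{0,n}(\widetilde{Y})=h^n(Y,\mathcal{O}_Y)=h^0(Y,K_Y)=h^0(Y,\mathcal{O}_Y)=1$ from rationality of quotient singularities together with Serre duality on $Y$, which closes that step cleanly and is available to you verbatim.
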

\begin{proof}
From the above Lemma we see that $h_{orb}^{0,q}(\mathcal{X})=h^{0,q}(\widetilde{X})=0$ for all $q > 0.$ Recall that the quotient singularities are rational singularities. Let $\widetilde{Y}$ be a resolution of singularities of $Y.$ Then we have $H^{0,n}(\widetilde{Y})=H^n(\widetilde{Y},\mathcal{O}_{\widetilde{Y}})=H^n(Y,\mathcal{O}_Y)=H^n(Y,K_Y)=H^0(Y,\mathcal{O}_Y)=\CC$ and hence $h_{orb}^{0,n}(\mathcal{Y})=h^{0,n}(\widetilde{Y})=1.$ If the dimension of $\mathcal{X}$ is $n$ then we have the following contradiction.
$$ 1 \leq \sum_{p-q=n} h_{orb}^{p,q}(\mathcal{Y})\le \sum_{p-q=n} h_{orb}^{p,q}(\mathcal{X}) = h_{orb}^{n,0}(\mathcal{X}) = 0. $$
If the dimension of $\mathcal{X}$ is $n+1$ then we have the following contradiction.
$$ 1 \leq \sum_{p-q=n} h_{orb}^{p,q}(\mathcal{Y})\le \sum_{p-q=n} h_{orb}^{p,q}(\mathcal{X}) = h_{orb}^{n+1,1}(\mathcal{X}) + h_{orb}^{n,0}(\mathcal{X}) = 2 h_{orb}^{n,0}(\mathcal{X}) = 0. $$
Therefore we obtain the desired result.
\end{proof}

See \cite[Corollary 6.7]{Popa} for more details about the above computation. Finally we obtain the following. 

\begin{coro}
Let $Y$ be an $n$-dimensional general weighted complete intersection Calabi-Yau variety in a weighted projective space and let $\mathcal{Y}$ be the associated Deligne-Mumford stack. Then there is $(n+2)$-dimensional Fano orbifold $\mathcal{X}$ such that $D^b(\mathcal{Y})$ is contained in $D^b(\mathcal{X})$ and $n+2$ is the minimum dimensions of such Fano orbifolds.  
\end{coro}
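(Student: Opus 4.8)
The statement has two halves: the existence of an $(n+2)$-dimensional orbifold Fano host of $\mathcal{Y}$, and its minimality. The minimality is immediate from the preceding Corollary, which shows that any orbifold Fano host of an $n$-dimensional Calabi-Yau orbifold $\mathcal{Y}$ with quotient singularities has dimension at least $n+2$. Thus the whole content is to produce a host of dimension exactly $n+2$, and the plan is to realize $\mathcal{Y}$ as a codimension-two zero locus and then run Cayley's trick with a rank-two bundle, which is precisely what keeps the ambient dimension at the minimum.

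First I would set up the reduction to codimension two. Write $Y$ as a quasi-smooth weighted complete intersection of multidegree $(d_1,\dots,d_c)$ in $\PP(\bar a)$ with $\dim Y=n$, so that $\dim\PP(\bar a)=n+c$ and the Calabi-Yau condition reads $\sum_{i=1}^c d_i=\sum_j a_j=:|\bar a|$. After enlarging $\PP(\bar a)$ to $\PP(\bar a,1,\dots,1)$ and adjoining the corresponding linear equations, as in the proof of Theorem \ref{t.wci} (this preserves both $\mathcal{Y}$ and the Calabi-Yau condition), I may assume $c\ge 2$. Using the genericity hypothesis I choose the defining equations so that $S:=\{f_3=\cdots=f_c=0\}$ is a quasi-smooth weighted complete intersection of dimension $n+2$, and I let $\mathcal{S}=[C_S^*/\mathbb{C}^*]$ be the associated smooth Deligne-Mumford stack. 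Then $\mathcal{Y}$ is the zero locus in $\mathcal{S}$ of a regular section of the rank-two bundle $E=\mathcal{O}_{\mathcal{S}}(d_1)\oplus\mathcal{O}_{\mathcal{S}}(d_2)$.

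Next I would verify the positivity required by Theorem \ref{t3.1}(1). By adjunction $K_{\mathcal{S}}^\vee=\mathcal{O}_{\mathcal{S}}\bigl(|\bar a|-\sum_{i=3}^c d_i\bigr)=\mathcal{O}_{\mathcal{S}}(d_1+d_2)$, which is ample, so $\mathcal{S}$ is a Fano orbifold and $E$ is an ample rank-two bundle; moreover $K_{\mathcal{S}}^\vee\otimes\det E^\vee=\mathcal{O}_{\mathcal{S}}(d_1+d_2)\otimes\mathcal{O}_{\mathcal{S}}(-(d_1+d_2))=\mathcal{O}_{\mathcal{S}}$ is nef. Applying the orbifold version of Cayley's trick from the proof of Theorem \ref{t.wci}, that is, running the construction of \S\ref{s3.1} equivariantly and invoking Orlov's theorem in the form of Remark \ref{r2.11} and Theorem \ref{t.KSP}, I obtain a smooth Deligne-Mumford stack $\mathcal{X}$ with a fully faithful embedding $D^b(\mathcal{Y})\to D^b(\mathcal{X})$ of dimension $\dim\mathcal{S}+\rank E-2=(n+2)+2-2=n+2$. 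Since ampleness of $E$ together with nefness of $K_{\mathcal{S}}^\vee\otimes\det E^\vee$ is exactly hypothesis (1) of Theorem \ref{t3.1}, the coarse moduli space of $\mathcal{X}$ is Fano, giving the desired $(n+2)$-dimensional orbifold Fano host.

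The main obstacle is the positivity bookkeeping on the weighted, stacky ambient: on $\PP(\bar a)$ the sheaves $\mathcal{O}(k)$ may fail to be invertible or ample, so Theorem \ref{t3.1} cannot be applied verbatim. As in the proof of Theorem \ref{t.wci}, I would carry out the Fano verification on the smooth $\mu_{\bar a}$-cover coming from $\PP^{n+c}\to\PP(\bar a)$: lift $f_1,\dots,f_c$ to obtain a genuine smooth complete intersection $\widetilde S\subset\PP^{n+c}$ and a codimension-two $\widetilde Y\subset\widetilde S$, apply the scheme-theoretic Theorem \ref{t3.1}(1) together with the adjunction computation \cite[Proposition 5.73]{KM} to see that the Cayley variety $\widetilde X$ is Fano, and then descend along $\widetilde X\to X=\widetilde X/\mu_{\bar a}$, using that the fixed locus of the $\mu_{\bar a}$-action has codimension at least two so that ampleness of $K_X^\vee$ is inherited. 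The genericity hypothesis enters precisely to guarantee that the intermediate $S$ (equivalently $\widetilde S$) can be taken quasi-smooth, so that the section cutting out $\mathcal{Y}$ is regular and Cayley's trick applies cleanly.
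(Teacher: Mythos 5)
Your proposal is correct and follows essentially the same route as the paper: the paper's own proof is a two-line sketch stating that existence is proved "similarly to the smooth projective Calabi-Yau case" (i.e., reduce to a codimension-two zero locus in an $(n+2)$-dimensional Fano ambient via genericity and apply Cayley's trick with a rank-two bundle, verifying positivity on the $\mu_{\bar a}$-cover as in Theorem \ref{t.wci}) and that minimality follows from the preceding corollary. You have simply filled in the details the authors left implicit, and your bookkeeping (adjunction giving $K_{\mathcal{S}}^\vee=\mathcal{O}_{\mathcal{S}}(d_1+d_2)$, descent of ampleness along the cover) matches their intended argument.
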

\begin{proof}
The proof of the existence of $(n+2)$-dimensional orbifold Fano host is similar to the proof of the same statement of smooth projective Calabi-Yau varieties. The statement that $n+2$ is the minimum possible dimensions of Fano orbifolds follows from the above Corollary.
\end{proof}

\section{Curves}\label{s5}

In this section we search for Fano visitors among smooth projective curves. Curves in this section mean smooth projective curves.

\subsection{Hyperelliptic curves}
Bondal and Orlov proved that every hyperelliptic curve is a Fano visitor.

\begin{theo}\cite{BO}\label{t5.1}
Let $C$ be a hyperelliptic curve of genus $g$. Then there are two quadric hypersurfaces in $\PP^{2g+1}$ whose intersection is a Fano host of $C$. 
\end{theo}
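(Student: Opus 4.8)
The plan is to realize the hyperelliptic curve $C$ as the discriminant double cover associated to a generic pencil of quadrics in $\PP^{2g+1}$, to check directly that the base locus of the pencil is a smooth Fano variety, and then to extract $D^b(C)$ from $D^b(X)$ using the theory of quadric fibrations. This streamlines the original spinor-bundle argument of Bondal--Orlov.

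First I would fix the geometry. Write $C\to\PP^1$ for the double cover branched over $2g+2$ distinct points $\lambda_0,\dots,\lambda_{2g+1}$. On the ambient side, take a generic pencil of quadrics in $\PP^{2g+1}=\PP(V)$ with $\dim V=2g+2$; after simultaneous diagonalization one may assume $Q_0=\sum_i x_i^2$ and $Q_\infty=\sum_i\lambda_i x_i^2$, so that the singular members of the pencil $\{\mu Q_0-Q_\infty\}_{\mu\in\PP^1}$ occur exactly at $\mu=\lambda_i$. The discriminant of the pencil is then a binary form of degree $2g+2$ whose vanishing locus is $\{\lambda_i\}$, and the double cover of $\PP^1$ branched there is precisely $C$. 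This pins down the correspondence between the curve and the pencil.

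Next I would verify that $X=Q_0\cap Q_\infty\subset\PP^{2g+1}$ is a smooth Fano variety. Genericity of the pencil guarantees that $X$ is a smooth complete intersection of codimension $2$, so $\dim X=2g-1$. By adjunction,
$$K_X=\bigl(K_{\PP^{2g+1}}+\cO(2)+\cO(2)\bigr)\big|_X=\cO\bigl(-(2g+2)+4\bigr)\big|_X=\cO(-(2g-2))\big|_X,$$
so $-K_X=\cO(2g-2)|_X$ is ample for $g\ge 2$ and $X$ is Fano of index $2g-2$. (For $g=1$ the intersection is itself an elliptic curve with trivial canonical class, the degenerate case.)

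Finally, the heart of the argument is to produce a fully faithful embedding $D^b(C)\hookrightarrow D^b(X)$. The pencil realizes the universal quadric over $\PP^1$, to which one associates the sheaf of even Clifford algebras $\cB_0$ on $\PP^1$. I would invoke Kuznetsov's semiorthogonal decomposition for a complete intersection of two quadrics,
$$D^b(X)=\langle D^b(\PP^1,\cB_0),\ \cO_X,\ \cO_X(1),\ \dots,\ \cO_X(2g-3)\rangle,$$
which in particular exhibits $D^b(\PP^1,\cB_0)$ as a full triangulated subcategory. It then remains to identify $D^b(\PP^1,\cB_0)$ with $D^b(C)$, and this is where I expect the main obstacle. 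One must show that the even Clifford algebra of a nondegenerate quadratic form of even rank splits over its discriminant double cover: that the centre of $\cB_0$ is the pushforward of $\cO_C$ and that the induced class in $\Br(C)$ is trivial, so that $\cB_0$ is Morita equivalent over $\PP^1$ to $\pi_*\cO_C$ (the splitting being supplied by the spinor sheaf, which furnishes the Morita bimodule). Granting this, Morita invariance of derived categories gives $D^b(\PP^1,\cB_0)\cong D^b(C)$; combined with the decomposition above and the Fano property of $X$, this shows that $X$ is a Fano host of $C$, as claimed.
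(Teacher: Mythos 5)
Your argument is correct, but note that the paper itself offers no proof of this statement: Theorem \ref{t5.1} is quoted directly from Bondal--Orlov \cite{BO}, whose original argument builds the fully faithful functor $D^b(C)\to D^b(X)$ from a Fourier--Mukai kernel assembled out of the family of spinor bundles on the singular quadrics of the pencil, parametrized by the hyperelliptic curve, and verifies full faithfulness by explicit $\Ext$-computations on $X$. What you propose is the later, more systematic route through Kuznetsov's theory of quadric fibrations: pass to the universal quadric over $\PP^1$, invoke the decomposition $D^b(X)=\langle D^b(\PP^1,\cB_0),\cO_X,\dots,\cO_X(2g-3)\rangle$, and then split the even Clifford algebra over the discriminant double cover. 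The two routes yield the same semiorthogonal decomposition; yours trades the hands-on cohomology of \cite{BO} for two external inputs, namely Kuznetsov's decomposition theorem and the Morita-triviality of $\cB_0$ over $C$. You rightly isolate the latter as the main obstacle; over a smooth projective curve it is supplied by Tsen's theorem, since $\Br(C)=0$ forces the Azumaya algebra obtained from $\cB_0$ away from the branch locus to be an endomorphism algebra, and the corank-one condition at the $2g+2$ singular members (guaranteed by your diagonalized pencil with distinct $\lambda_i$) is what controls $\cB_0$ at the branch points. Your geometric setup is accurate --- the discriminant has degree $2g+2$, adjunction gives $-K_X=\cO(2g-2)|_X$, so $X$ is Fano of dimension $2g-1$ --- as is your caveat that for $g=1$ the intersection is an elliptic curve rather than a Fano, so the statement should be read with $g\ge 2$.
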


\begin{coro}\label{c5.2}
A hyperelliptic curve $C$ of genus $g$ is a Fano visitor whose Fano dimension is at most $2g-1$.
\end{coro}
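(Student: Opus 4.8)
The plan is to read off the result directly from Theorem \ref{t5.1}, which already does all the substantive work. That theorem produces, for a hyperelliptic curve $C$ of genus $g$, an explicit Fano host: namely the intersection $X = Q_1 \cap Q_2 \subset \PP^{2g+1}$ of two quadric hypersurfaces. Since by definition a variety is a Fano visitor precisely when it admits at least one Fano host, the mere existence of $X$ shows that $C$ is a Fano visitor. So the only remaining content of the corollary is the numerical bound on the Fano dimension.

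First I would compute $\dim X$. As $X$ is cut out by two quadric equations inside $\PP^{2g+1}$, it has codimension two, hence
$$\dim X = (2g+1) - 2 = 2g-1.$$
Here one uses that the two quadrics meet transversally, so that $X$ is a smooth complete intersection of the expected dimension; this is part of the construction in Theorem \ref{t5.1} (a smooth intersection of two distinct quadrics in $\PP^N$ is a complete intersection of dimension $N-2$).

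Finally, recalling that the Fano dimension of $C$ is defined as the minimum of $\dim X'$ over all Fano hosts $X'$ of $C$, and that $X$ is one such Fano host of dimension $2g-1$, we conclude immediately that the Fano dimension of $C$ is at most $2g-1$.

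There is essentially no obstacle in this argument: the corollary is an immediate numerical consequence of Theorem \ref{t5.1}, and the single computational step (the codimension-two dimension count) is routine. The only point worth flagging is that the construction requires $g \ge 2$, so that $-K_X \cong \cO_X(2g-2)$ is genuinely ample and $X$ is Fano rather than merely Calabi--Yau; this is subsumed in the standing convention that hyperelliptic curves have genus at least $2$.
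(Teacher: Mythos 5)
Your proposal is correct and matches the paper's (implicit) argument exactly: the corollary is read off from Theorem \ref{t5.1} by noting that the intersection of two quadrics in $\PP^{2g+1}$ has dimension $2g-1$. The remark about requiring $g\ge 2$ is a reasonable extra precaution but not something the paper dwells on.
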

This corollary indicates that the Fano dimension of a curve of genus $g$ might increase as $g$ increases.
Indeed the Fano dimension of a curve of genus $g$ may grow arbitrarily large as $g$ increases.
\begin{prop}\label{p5.3}
Let $\mathrm{fd}(g)$ be the minimum among the Fano dimensions of curves of genus $g$. Then 
$\lim_{g\to \infty} \mathrm{fd}(g)=\infty$.
\end{prop}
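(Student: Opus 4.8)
The plan is to combine the Hodge-theoretic obstruction of Proposition \ref{p2.18} with the boundedness of Fano varieties recorded in Theorem \ref{t2.3}. The key observation is that the genus of a curve sits in the single off-diagonal Hodge slot with $p-q=1$, while for a Fano host of \emph{bounded dimension} the corresponding sum of Hodge numbers can only take finitely many values; thus a host of small dimension cannot accommodate a curve of large genus.

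First I would record the relevant Hodge numbers. For a smooth projective curve $C$ of genus $g$ the only nonzero term with $p-q=1$ is $h^{1,0}(C)=g$, so $\sum_{p-q=1}h^{p,q}(C)=g$. If $X$ is any Fano host of $C$, of dimension $d$, then Proposition \ref{p2.18} applied with $i=1$ gives
$$ g=\sum_{p-q=1}h^{p,q}(C)\le \sum_{p-q=1}h^{p,q}(X)=\sum_{k=1}^{d}h^{k,k-1}(X). $$
Next I would bound the right-hand side in terms of $d$ alone. By Theorem \ref{t2.3} there are only finitely many deformation equivalence classes of smooth projective Fano varieties of each fixed dimension, and the Hodge numbers $h^{p,q}$ are constant in a smooth projective family, hence depend only on the deformation class. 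Consequently
$$ M(n):=\max\Bigl\{\,\textstyle\sum_{p-q=1}h^{p,q}(X)\ \Big|\ X\text{ smooth projective Fano},\ \dim X\le n\,\Bigr\} $$
is a finite number for every $n$.

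Combining the two displays, if a curve $C$ of genus $g$ admits a Fano host of dimension $\le N$, then $g\le M(N)$. Equivalently, for each fixed $N$ only finitely many values of $g$ can satisfy $\mathrm{fd}(g)\le N$. Hence for every $N$ there exists $G$ with $\mathrm{fd}(g)>N$ for all $g\ge G$, which is precisely the assertion $\lim_{g\to\infty}\mathrm{fd}(g)=\infty$. (Note $\mathrm{fd}(g)$ is always finite, since Corollary \ref{c5.2} already exhibits a Fano host for hyperelliptic curves, but this is not needed for the conclusion.)

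The main obstacle is the finiteness of $M(n)$: the whole argument rests on a \emph{uniform} bound for the Hodge numbers $h^{k,k-1}(X)$ of Fano varieties of dimension at most $n$. This is exactly where Theorem \ref{t2.3} (boundedness of Fano varieties in each dimension) together with the deformation invariance of Hodge numbers is indispensable; once this finiteness is granted, the remainder is a short counting argument.
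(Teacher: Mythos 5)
Your argument is correct and is essentially the paper's own proof: both rest on Theorem \ref{t2.3} (finitely many deformation classes of Fano varieties in each dimension, hence finitely many possible values of $\sum_{p-q=1}h^{p,q}(X)$) combined with the inequality $g\le\sum_{p-q=1}h^{p,q}(X)$ from Proposition \ref{p2.18}. Your write-up merely makes explicit the maximum $M(n)$ over all dimensions up to $n$ and the deformation invariance of Hodge numbers, both of which the paper leaves implicit.
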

\begin{proof}
For any natural number $n$, there are only finitely many deformation equivalence classes of Fano varieties of dimension $n$. Therefore there are only finitely many possible values of $\sum_{i-j=1} h^{i,j}(X)$ for $n$-dimensional Fano varieties $X$. When the genus $g=h^{1,0}(C)$ of a curve $C$ is greater than all these possible values, there can be no $n$-dimensional Fano host of $C$. Therefore for any integer $n>0$ there is an integer $g_0$ such that any curve of genus $g\ge g_0$ has Fano dimension greater than $n$. This proves the proposition.
\end{proof}

We can also prove that every hyperelliptic curve has an orbifold Fano host.

\begin{coro}
Let $C$ be a hyperelliptic curve of genus $g.$ Then $C$ has an orbifold Fano host.
\end{coro}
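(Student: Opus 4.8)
The plan is to establish that a hyperelliptic curve $C$ of genus $g$ admits an orbifold Fano host by realizing $C$ as a quasi-smooth weighted complete intersection in a weighted projective space and then invoking Theorem \ref{t.wci}. The key geometric input is that every hyperelliptic curve has a standard weighted model: if $C$ is given by the affine equation $y^2=f(x)$ where $f$ has degree $2g+1$ or $2g+2$ with distinct roots, then $C$ embeds as a hypersurface of degree $2g+2$ in the weighted projective space $\PP(1,1,g+1)$, with coordinates $x_0,x_1$ of weight $1$ and $y$ of weight $g+1$, cut out by the single equation $y^2=F(x_0,x_1)$ where $F$ is the homogenization of $f$ of degree $2g+2$.

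First I would verify that this weighted hypersurface is quasi-smooth. Passing to the quasi-cone $C_C^\ast \subset \CC^3\setminus\{0\}$ defined by $y^2=F(x_0,x_1)$, quasi-smoothness amounts to checking that the gradient of $y^2-F(x_0,x_1)$ is nonvanishing away from the origin; this follows from the distinctness of the roots of $f$, which guarantees that $F$ and its partial derivatives $\partial F/\partial x_i$ have no common zero on $\PP^1$, so the only singular point of the affine cone is its vertex. Thus $C$ is a quasi-smooth weighted complete intersection of multidegree $\bar d=(2g+2)$ in $\PP(1,1,g+1)$, and the associated stack $\mathcal{C}=[C_C^\ast/\CC^\ast]$ is a smooth Deligne–Mumford stack whose coarse moduli space is $C$.

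Having placed $C$ in the framework of weighted complete intersections, I would apply Theorem \ref{t.wci} directly to conclude that $\mathcal{C}$ has an orbifold Fano host $\mathcal{X}$. Since $C$ is itself smooth, Lemma \ref{l.smoothcoarsemoduli} provides a fully faithful embedding $L\pi^\ast : D^b(C)\to D^b(\mathcal{C})$, and composing with the embedding $D^b(\mathcal{C})\hookrightarrow D^b(\mathcal{X})$ supplied by Theorem \ref{t.wci} yields a fully faithful functor $D^b(C)\to D^b(\mathcal{X})$ into the derived category of a Fano orbifold. Hence $C$ has an orbifold Fano host, exactly as in the corollary on smooth weighted complete intersections proved above.

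I expect the only genuine subtlety to be the identification of $C$ with the correct weighted hypersurface together with the quasi-smoothness check; once these are in place, everything else is a formal consequence of the machinery already developed. The main obstacle is therefore purely a matter of correctly setting up the weighted model rather than proving anything new: one must take care that for even versus odd degree $f$ the weights $(1,1,g+1)$ and degree $2g+2$ are chosen consistently, and that the chosen model really is quasi-smooth for a hyperelliptic (as opposed to a degenerate) curve. No deep new argument beyond Theorem \ref{t.wci} should be required.
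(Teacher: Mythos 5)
Your proposal is correct and follows exactly the paper's route: the paper likewise realizes $C$ as a (quasi-smooth, in fact smooth) weighted complete intersection in $\PP(1,1,g+1)$ and then cites Theorem \ref{t.wci}, with your quasi-smoothness verification and the passage through Lemma \ref{l.smoothcoarsemoduli} simply filling in details the paper leaves implicit.
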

\begin{proof}
We can see $C$ as a complete intersection in a weighted projective space $\PP(1,1,g+1)$ (cf. \cite{Dolgachev}). Therefore it follows immediately from Theorem \ref{t.wci}. 
\end{proof}

\subsection{Low genus curves}\label{s5.1}
In this subsection we prove that all curves $C$ of genus $g\le 4$ are Fano visitors. 
If $g=0$,  $C=\PP^1$ itself is a Fano variety.
If $g=1$, $C\subset \PP^2$ is a complete intersection Calabi-Yau variety of codimension 1 and hence its Fano dimension is 3 by Proposition \ref{p4.6}.
\begin{coro}\cite[Example 5.3]{KKLL}\label{c5.5}
Every elliptic curve is a Fano visitor and its Fano dimension is 3.
\end{coro}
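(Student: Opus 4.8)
The plan is to realize the elliptic curve as a complete intersection Calabi-Yau variety and then invoke Proposition \ref{p4.6}. First I would recall that every elliptic curve $C$ admits a plane cubic model: choosing a base point $O\in C$, the very ample line bundle $\cO_C(3O)$ gives a closed embedding $C\hookrightarrow\PP^2$ onto a smooth cubic curve. By adjunction,
$$K_C\cong\bigl(K_{\PP^2}\otimes\cO_{\PP^2}(3)\bigr)\big|_C\cong\bigl(\cO_{\PP^2}(-3)\otimes\cO_{\PP^2}(3)\bigr)\big|_C\cong\cO_C,$$
so $C$ is a Calabi-Yau variety of dimension $n=1$ realized as a complete intersection of codimension $c=1$ in $\PP^{n+c}=\PP^2$. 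Since $c=1\le 2$, the hypotheses of Proposition \ref{p4.6} hold with no genericity assumption, and the proposition immediately yields that the Fano dimension of $C$ is precisely $n+2=3$.

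Should one wish to exhibit the two bounds separately, I would argue as follows. For the lower bound, $h^{1,0}(C)=g=1>0$, so Corollary \ref{c4.2} (equivalently Corollary \ref{c4.5}, since $C$ is not rational) forces the Fano dimension to be at least $n+2=3$. For the upper bound I would run Cayley's trick of \S\ref{s3.1} after enlarging the ambient space: take $S=\PP^3$ and $E=\cO_{\PP^3}(3)\oplus\cO_{\PP^3}(1)$, with the regular section $s$ given by a cubic extending the equation of $C$ together with a linear form cutting out $\PP^2\subset\PP^3$, so that $s^{-1}(0)=C$. Here $E$ is ample and $K_S^\vee\otimes\det E^\vee\cong\cO_{\PP^3}(4)\otimes\cO_{\PP^3}(-4)\cong\cO_{\PP^3}$ is nef, so condition (1) of Theorem \ref{t3.1} applies and $X=w^{-1}(0)\subset\PP E^\vee$ is a Fano host of $C$. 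Its dimension is $\dim\PP E^\vee-1=(\dim S+\rank E-1)-1=3$, matching the lower bound.

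There is no real obstacle here: the corollary merely packages the general machinery in its smallest nontrivial instance. The only genuine checks are numerical---that $C$ is Calabi-Yau, so that $K_S^\vee\otimes\det E^\vee$ comes out exactly trivial (hence nef) and condition (1) of Theorem \ref{t3.1} is met, and that the dimension count already gives $3$ at $c=1$. Conceptually, the one ingredient that upgrades the answer from ``at most $3$'' to ``exactly $3$'' is the Hodge-theoretic lower bound of Corollary \ref{c4.2}, and that is supplied entirely by the cited result.
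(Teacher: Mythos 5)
Your proposal is correct and follows the paper's own route exactly: the paper likewise realizes the elliptic curve as a codimension-one complete intersection Calabi--Yau (a plane cubic) in $\PP^2$ and concludes immediately from Proposition \ref{p4.6}. Your explicit unpacking of the two bounds (the Hodge-theoretic lower bound from Corollary \ref{c4.2} and the $3$-dimensional Fano host via Cayley's trick with $S=\PP^3$, $E=\cO_{\PP^3}(3)\oplus\cO_{\PP^3}(1)$) is a faithful expansion of what Proposition \ref{p4.6} already packages, with all numerical checks done correctly.
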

If $g=2$, $C$ is a hyperelliptic curve and hence the Fano dimension is at most $3$ by Corollary \ref{c5.2}. By Corollary \ref{c4.5}, the Fano dimension of $C$ is at least 3. So we proved the following. 
\begin{coro}\label{c5.6}
Every curve of genus 2 is a Fano visitor with Fano dimension 3.
\end{coro}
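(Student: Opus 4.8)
The plan is to sandwich the Fano dimension of a genus $2$ curve $C$ between an upper and a lower bound, both already supplied by the preceding material, and to observe that the two bounds coincide at $3$.

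First I would establish the upper bound. The key classical input is that every smooth projective curve of genus $2$ is hyperelliptic: since $\deg K_C = 2g-2 = 2$ and $h^0(C,K_C) = g = 2$, the canonical system is a base-point-free $g^1_2$, exhibiting $C$ as a double cover of $\PP^1$. Granting this, Corollary \ref{c5.2} applies with $g=2$ and gives that the Fano dimension of $C$ is at most $2g-1 = 3$; concretely, Theorem \ref{t5.1} realizes $C$ inside the intersection of two quadrics in $\PP^{5}$, which is the Fano host.

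Next I would establish the matching lower bound. Since $C$ has genus $2>0$ it is not rational, so Corollary \ref{c4.5} gives directly that the Fano dimension of $C$ is at least $3$. Equivalently, one may invoke Corollary \ref{c4.2} with $n=\dim C = 1$ and the observation $h^{1,0}(C) = 2 > 0$, which forces the Fano dimension to be at least $n+2 = 3$. Either route uses only the Hodge-theoretic obstruction already proved.

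Combining the two bounds yields that the Fano dimension of $C$ is exactly $3$, and in particular that $C$ is a Fano visitor. There is no genuine obstacle here beyond recording that every genus $2$ curve is hyperelliptic; the content is simply the coincidence at $g=2$ of the hyperelliptic upper bound $2g-1$ with the Hodge-theoretic lower bound $n+2 = 3$. I would note that for $g\ge 3$ these two bounds separate, which is precisely why such a clean exact determination is available only in genus $1$ and $2$.
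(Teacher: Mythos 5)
Your proposal is correct and follows essentially the same route as the paper: the upper bound of $3$ comes from the hyperellipticity of every genus $2$ curve together with the Bondal--Orlov construction (Corollary \ref{c5.2}), and the matching lower bound comes from the Hodge-theoretic obstruction (Corollary \ref{c4.5}). The only addition is your explicit verification that the canonical system gives a $g^1_2$, which the paper takes as classical.
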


If $g=3$, it is well known that $C$ is either a plane quartic or a hyperelliptic curve. In the former case, we use the construction in \S\ref{s3.2} with $l=1$, $m=2$, $a_1=4$, $c=2$ to obtain a Fano host $X$ of dimension $5$. In the latter case, Theorem \ref{t5.1} gives a Fano host of dimension 5. So we obtain the following. 
\begin{coro}\label{c5.7}
Every curve of genus 3 is a Fano visitor and the Fano dimension is at most 5.
\end{coro}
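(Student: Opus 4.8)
The plan is to use the classical dichotomy for curves of genus $3$: by the behavior of the canonical map, every smooth projective curve $C$ of genus $3$ is either non-hyperelliptic, in which case the canonical embedding realizes $C$ as a smooth plane quartic in $\PP^2$, or hyperelliptic. I would treat these two cases separately, producing a Fano host of dimension $5$ in each, and then take the minimum of the two bounds to conclude.

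First I would handle the non-hyperelliptic case. Here $C \subset \PP^2$ is the zero locus of a regular section of $\cO_{\PP^2}(4)$, so in the notation of \S\ref{s3.2} we have $m = 2$, $l = 1$ and $a_1 = 4$. Applying Cayley's trick to the enlarged data over $S = \PP^{2+c}$ as in \S\ref{s3.2} produces a hypersurface $X = w^{-1}(0)$ in $\PP E^\vee$ of dimension $m + 2c + l - 2 = 1 + 2c$. By \cite[\S4.2]{KKLL} this $X$ is Fano once $c > \sum_{i=1}^l a_i - m - l = 1$ and $c > 1 - l = 0$; the minimal admissible choice is $c = 2$, which yields a Fano host of dimension $5$. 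Since $Ri_*Lp^*$ is fully faithful by Orlov's theorem (Theorem \ref{t2.10}), this exhibits $C$ as a Fano visitor with Fano dimension at most $5$.

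For the hyperelliptic case there is nothing new to prove: Theorem \ref{t5.1} of Bondal and Orlov gives a Fano host realized as the intersection of two quadric hypersurfaces in $\PP^{2g+1}$, of dimension $2g - 1 = 5$ when $g = 3$. Combining the two cases shows that every genus $3$ curve admits a Fano host of dimension $5$, so its Fano dimension is at most $5$.

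Since all of the heavy lifting is supplied by the construction of \S\ref{s3.2} and by Theorem \ref{t5.1}, the only real content is the bookkeeping: correctly identifying the parameters $(m,l,a_1)$ in the plane quartic case and checking that the numerical inequalities of \cite{KKLL} are met by $c = 2$. I expect this verification of the Fano (ampleness of $-K_X$) condition to be the most delicate point, since the smaller value $c = 1$ would already fail the inequality $c > 1$, while any larger $c$ would only increase the dimension of the host and thus weaken the bound.
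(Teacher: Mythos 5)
Your proposal is correct and follows the paper's own argument exactly: the plane quartic case via the \S\ref{s3.2} construction with $l=1$, $m=2$, $a_1=4$, $c=2$ (giving a $5$-dimensional Fano host), and the hyperelliptic case via Theorem \ref{t5.1} (giving dimension $2g-1=5$). The parameter bookkeeping and the numerical check $c>\sum a_i-m-l=1$ are all as in the paper.
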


If $g=4$, it is well known that $C$ is either the complete intersection of a quadric and a cubic in $\PP^3$ or a hyperelliptic curve. In the former case, the Fano dimension is exactly 3 by Example \ref{e3.3}. In the latter case, the Fano dimension is at most 7. 
\begin{coro}\label{c5.8}
Every curve $C$ of genus 4 is a Fano visitor with Fano dimension at most 7. If $C$ is non-hyperelliptic, then its Fano dimension is 3. 
\end{coro}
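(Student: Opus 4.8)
The plan is to dispose of the genus-4 curve $C$ by splitting into the two classical cases according to whether $C$ is hyperelliptic, exactly as in the statement. For the hyperelliptic case, I would simply invoke Corollary \ref{c5.2} with $g=4$: a hyperelliptic curve of genus $g$ has Fano dimension at most $2g-1$, which gives the bound $7$. This case requires no new work beyond citing the Bondal--Orlov construction of Theorem \ref{t5.1}.

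The substantive content lies in the non-hyperelliptic case, and here I would reduce directly to Example \ref{e3.3}. Recall the classical fact that a non-hyperelliptic curve of genus $4$ is canonically embedded in $\PP^3$ as the complete intersection of a unique quadric and a cubic. Thus $C=s^{-1}(0)$ for a regular section $s$ of $E=\cO_{\PP^3}(2)\oplus\cO_{\PP^3}(3)$ over $S=\PP^3$, and I would apply the generalized Cayley trick from Theorem \ref{t3.1}(2). The key point is the judicious choice $H=\cO_{\PP^3}(2)$, so that $F=E\otimes H^\vee=\cO_{\PP^3}\oplus\cO_{\PP^3}(1)$ is nef (condition satisfied) and $K_S^\vee\otimes\det E^\vee\otimes H^{r-1}=\cO_{\PP^3}(4)\otimes\cO_{\PP^3}(-5)\otimes\cO_{\PP^3}(2)=\cO_{\PP^3}(1)$ is ample. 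Theorem \ref{t3.1} then produces a Fano host $X=w^{-1}(0)$ inside $\PP E^\vee$, and its dimension is $\dim S+\mathrm{rank}\,E-2=3+2-2=3$. Hence the Fano dimension is at most $3$.

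To upgrade the upper bound of $3$ to an equality in the non-hyperelliptic case, I would apply Corollary \ref{c4.5}: since $C$ has genus $4>0$ it is not rational, so its Fano dimension is at least $3$. Combining the two bounds gives Fano dimension exactly $3$.

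The only genuine obstacle is verifying the numerical hypotheses of Theorem \ref{t3.1}(2) with the non-standard choice of $H$, but this has already been carried out in Example \ref{e3.3}, so the proof amounts to assembling that example together with Corollaries \ref{c5.2} and \ref{c4.5}. I would therefore expect the write-up to be short, essentially just collating these three previously established facts.
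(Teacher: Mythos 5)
Your proposal is correct and follows exactly the paper's argument: the hyperelliptic case is handled by Corollary \ref{c5.2} (giving the bound $2g-1=7$), and the non-hyperelliptic case uses the canonical model as a $(2,3)$ complete intersection in $\PP^3$ together with Example \ref{e3.3} (Theorem \ref{t3.1}(2) with $H=\cO_{\PP^3}(2)$) for the upper bound $3$ and Corollary \ref{c4.5} for the matching lower bound. The numerical verifications ($F=\cO_{\PP^3}\oplus\cO_{\PP^3}(1)$ nef, $K_S^\vee\otimes\det E^\vee\otimes H^{r-1}=\cO_{\PP^3}(1)$ ample, $\dim X=3$) all check out.
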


\subsection{General curves of genus $\leq 9$}\label{s5.2}
In this subsection, we use Mukai's description of general curves $C$ of genus $g\le 9$ as complete intersections in homogeneous varieties and prove that they are Fano visitors.


A general curve $C$ of genus 5 has canonical embedding into $\PP^4$ whose image is the intersection of three general quadrics. 
Let $S$ be one of the quadric hypersurfaces and let $s$ be the section of $E=\cO_{\PP^4}(2)^{\oplus 2}|_S$ defined by the remaining two quadrics, so that $C=s^{-1}(0)$. Then $\PP E^\vee\cong S\times \PP^1$ is a Fano variety and hence the Mori cone of $\PP E^\vee$ is rational polyhedral. Let $H=\cO_{\PP^4}(2)|_S$. Then $F=E\otimes H^{-1}=\cO_{\PP^4}^{\oplus 2}$ is nef and $K_S^\vee\otimes \det E^\vee\otimes H=\cO_{\PP^4}(1)|_S$ is ample.
Therefore we obtain the following from Theorem \ref{t3.1}. 
\begin{coro}\label{c5.9}
A general curve of genus 5 is a Fano visitor and its Fano dimension is 3.
\end{coro}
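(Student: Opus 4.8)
The plan is to bracket the Fano dimension of $C$ from above and below: the upper bound will come from the explicit Cayley's-trick construction recorded above together with Theorem \ref{t3.1}(2), and the lower bound from the Hodge-theoretic obstruction of Corollary \ref{c4.5}.

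First I would confirm that the data $(S,E,H)$ fixed above meets the hypotheses of Theorem \ref{t3.1}(2), where $S\subset\PP^4$ is a smooth quadric threefold, $E=\cO_{\PP^4}(2)^{\oplus 2}|_S$ has rank $r=2$, and $H=\cO_{\PP^4}(2)|_S$. The bundle $F=E\otimes H^\vee=\cO_S^{\oplus 2}$ is nef since it is trivial, and adjunction gives $K_S^\vee=\cO_{\PP^4}(3)|_S$, so that
$$K_S^\vee\otimes\det E^\vee\otimes H^{r-1}=\cO_{\PP^4}(3-4+2)|_S=\cO_{\PP^4}(1)|_S$$
is ample. Theorem \ref{t3.1}(2) then yields that $X=w^{-1}(0)$ is a Fano host of $C$, so $C$ is a Fano visitor. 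To read off the dimension, I would note that $\PP E^\vee\cong S\times\PP^1$ has dimension $\dim S+(r-1)=4$, and that $X$ is the zero locus of a section of $\cO_{\PP E^\vee}(1)$, hence a hypersurface of dimension $3$. This shows the Fano dimension is at most $3$.

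For the reverse inequality I would apply Corollary \ref{c4.5}: a smooth projective curve of genus $5$ is not rational, so its Fano dimension is at least $3$. Combining the two estimates gives exactly $3$. The numerical verification is routine; the one genuine input is the geometric hypothesis that a general curve of genus $5$ is the transverse intersection of three quadrics in its canonical embedding in $\PP^4$ (Mukai's description of canonical genus-$5$ curves). The main obstacle is precisely the appeal to this genericity: it is what guarantees that the chosen quadric $S$ is smooth, that $s$ is a regular section, and hence that $X$ is smooth so that Cayley's trick and Theorem \ref{t3.1} apply. Once these smoothness conditions are secured, everything else reduces to the line-bundle computation above.
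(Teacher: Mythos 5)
Your proposal is correct and follows essentially the same route as the paper: the identical choice of $S$ (a smooth quadric threefold through the canonical curve), $E=\cO_{\PP^4}(2)^{\oplus 2}|_S$, and $H=\cO_{\PP^4}(2)|_S$, the same verification that $F=E\otimes H^\vee$ is trivial hence nef and that $K_S^\vee\otimes\det E^\vee\otimes H^{r-1}=\cO_{\PP^4}(1)|_S$ is ample, yielding a $3$-dimensional Fano host via Theorem \ref{t3.1}(2), with the lower bound from Corollary \ref{c4.5}. Nothing is missing.
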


For higher genus curves, we recall some results of Mukai.

\begin{defi}\label{d5.10}
A curve $C$ has a $g_d^r$ if there is a line bundle $L$ on $C$ with  $\deg L=d$ and $h^0(C,L) \geq r+1$.
\end{defi}

\begin{prop}\cite[Proposition 1.9]{Mukai4}\label{p5.11}
The anticanonical line bundle of the Grassmannian $Gr(k,n)$ is $\mathcal{O}(n)$ where $\mathcal{O}(1)$ is the very ample line bundle which gives the Pl\"{u}cker embedding. 
\end{prop}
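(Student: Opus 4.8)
The plan is to compute the canonical bundle of $Gr(k,n)$ directly from its tangent bundle together with the tautological short exact sequence, and then to identify the resulting line bundle with a power of the Pl\"ucker bundle $\cO(1)$. Throughout I write $G:=Gr(k,n)$, which has dimension $k(n-k)$.

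First I would recall the two universal bundles on $G$: the tautological subbundle $S$ of rank $k$ and the quotient bundle $Q$ of rank $n-k$, fitting into the tautological sequence $0 \to S \to \cO_G^{\oplus n} \to Q \to 0$. The standard identification of the tangent bundle is $T_G \cong S\dual \otimes Q = \Hom(S,Q)$, which has rank $k(n-k)=\dim G$, as it must. Next I would compute $\det T_G$ using the identity $\det(A\otimes B)\cong (\det A)^{\otimes \rank B}\otimes (\det B)^{\otimes \rank A}$ for vector bundles $A,B$; with $A=S\dual$ of rank $k$ and $B=Q$ of rank $n-k$ this gives
\[ \det T_G \cong (\det S\dual)^{\otimes (n-k)}\otimes (\det Q)^{\otimes k}. \]

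Then I would exploit the tautological sequence: taking determinants yields $\det S\otimes \det Q\cong \cO_G$, hence $\det Q\cong \det S\dual$. Substituting into the previous line collapses the two factors into a single power, $\det T_G\cong (\det S\dual)^{\otimes n}$. Finally I would identify the Pl\"ucker bundle, namely $\cO(1)=\det S\dual=\bigwedge^k S\dual$, as the very ample line bundle giving the Pl\"ucker embedding of $G$ into $\PP(\bigwedge^k \CC^n)$. Combining these, the anticanonical bundle is $-K_G=\det T_G\cong \cO(n)$, which is the assertion.

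The only point requiring care — more a convention check than a genuine obstacle — is matching the duality conventions: which of $S$ or $Q$ is declared the tautological \emph{subbundle}, and correspondingly whether the Pl\"ucker bundle is taken to be $\det S\dual$ or $\det Q$. Since the tautological sequence forces $\det Q\cong\det S\dual$, both conventions produce the same line bundle $\cO(1)$ and the same conclusion $-K_G\cong\cO(n)$, so the statement is convention-independent and the computation above is robust.
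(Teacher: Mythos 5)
Your computation is correct and complete: the identification $T_{Gr(k,n)}\cong S^\vee\otimes Q$, the determinant formula for a tensor product, and the relation $\det Q\cong\det S^\vee$ coming from the tautological sequence together give $\det T_{Gr(k,n)}\cong(\det S^\vee)^{\otimes n}=\cO(n)$, and your closing remark about the two conventions for $\cO(1)$ is exactly the right point to flag. The paper itself offers no proof — it cites the statement from Mukai — so your argument simply supplies the standard self-contained verification of the quoted fact.
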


Mukai proved that a general curve of genus 6 can be embedded into $Gr(2,5)$ as a complete intersection.

\begin{theo}\cite{Mukai4}\label{t5.12}
A curve $C$ of genus 6 is the complete intersection of $Gr(2,5) \subset \PP^9$ and a 4-dimensional quadric in $\PP^5\subset\PP^9$ if $C$ is not bi-elliptic and has no $g_3^1$ or $g_5^2$.
\end{theo}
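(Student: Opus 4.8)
The plan is to follow Mukai's vector-bundle method. First I record the Brill--Noether numerics behind the three exclusions: for $g=6$ one has $\rho(6,1,3)=-2$ and $\rho(6,2,5)=-3$, so the conditions ``no $g_3^1$'' and ``no $g_5^2$'' are Brill--Noether--general (and already force $C$ to be non-hyperelliptic and non-trigonal, so the canonical map $C\hookrightarrow\PP^5=\PP(H^0(K_C)^\vee)$ is an embedding), while $\rho(6,1,4)=0=\rho(6,2,6)$, so $C$ does carry a $g_4^1$, say $A$ with $\deg A=4$, $h^0(A)=2$, whose residual $K_C\otimes A^\vee$ is a $g_6^2$ (by Riemann--Roch $h^0(K_C\otimes A^\vee)=3$). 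The goal is to exhibit a quintic del Pezzo surface $S_5=Gr(2,5)\cap\PP^5$ containing the canonical model of $C$ and then to see that $C$ is cut out on $S_5$ by one further quadric.

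The heart of the proof is the construction of a rank $2$ bundle $E$ on $C$ with $\det E\cong K_C$ and $h^0(E)=5$ that is stable and globally generated. Mukai produces it as a Lazarsfeld--Mukai bundle; concretely one can take a suitable nonsplit extension $0\to K_C\otimes A^\vee\to E\to A\to 0$, which has $\det E=K_C$ and, when the three sections of the $g_6^2$ and the two sections of the $g_4^1$ all survive, $h^0(E)=5$. I would then prove stability: $\mu(E)=\tfrac12\deg K_C=5$, and a sub-line-bundle $L\subset E$ with $\deg L\ge 5$ would, via the induced quotient, produce on $C$ either a $g_3^1$, a $g_5^2$, or a bielliptic structure. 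This is precisely where the three hypotheses are used, and I expect this stability and global-generation analysis to be the main obstacle; the remaining steps are essentially intersection-theoretic bookkeeping.

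Granting such an $E$, its five sections define a morphism $\phi_E:C\to Gr(2,H^0(E)^\vee)\cong Gr(2,5)$, $p\mapsto E_p^\vee\subset H^0(E)^\vee$. Composing with the Pl\"ucker embedding $Gr(2,5)\hookrightarrow\PP^9=\PP(\wedge^2 H^0(E)^\vee)$, the Pl\"ucker coordinates pull back through $\wedge^2 H^0(E)\to H^0(\det E)=H^0(K_C)$, a subspace of the six-dimensional canonical space; hence $\phi_E(C)$ spans only a $\PP^5\subset\PP^9$, and the composite is the canonical map, so $\phi_E$ is an embedding with image the canonical model $C\subset\PP^5$. For general data the linear section $Gr(2,5)\cap\PP^5$ is the smooth quintic del Pezzo surface $S_5$, and we obtain $C\subset S_5\subset\PP^5$.

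Finally I identify the class of $C$ on $S_5$. Since $S_5$ is anticanonically embedded, $\cO_{S_5}(1)=-K_{S_5}$, while $\cO_C(1)=K_C$ as $C$ is canonical; adjunction $K_C=(K_{S_5}+C)|_C$ then gives $(C+2K_{S_5})|_C=0$, so that $(C+2K_{S_5})\cdot C=0$, and one computes as well $(C+2K_{S_5})\cdot K_{S_5}=0$ and $(C+2K_{S_5})^2=0$. Because $H^1(\cO_{S_5})=0$, numerical and linear equivalence agree on $S_5$, and the Hodge index theorem forces $C\equiv -2K_{S_5}$. Thus $C\in\lvert -2K_{S_5}\rvert$ is a quadric section, i.e.\ $C=S_5\cap Q=Gr(2,5)\cap\PP^5\cap Q$ for a quadric $Q\subset\PP^5$, which is exactly the asserted description.
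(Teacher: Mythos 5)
The paper does not actually prove this statement: Theorem \ref{t5.12} is quoted verbatim from Mukai's \emph{Curves and Grassmannians} and used as a black box, so there is no internal proof to compare yours against. What you have written is a reconstruction of Mukai's own argument, and the skeleton is the right one: the Brill--Noether numerics ($\rho(6,1,3)=-2$, $\rho(6,2,5)=-3$, $\rho(6,1,4)=0$) are correct, the rank-$2$ bundle $E$ with $\det E\cong K_C$ and $h^0(E)=5$ built from a $g_4^1$ and its Serre adjoint is exactly Mukai's object, and your endgame on the quintic del Pezzo is clean and complete --- the computation $(C+2K_{S_5})\cdot C=(C+2K_{S_5})\cdot K_{S_5}=(C+2K_{S_5})^2=0$ together with Hodge index and $H^1(\cO_{S_5})=0$ does force $C\in\lvert-2K_{S_5}\rvert$, hence $C=S_5\cap Q$.

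The gap is that you have deferred precisely the part of the argument that carries all the content. First, $h^0(E)=5$ is not automatic for a nonsplit extension $0\to K_C\otimes A^\vee\to E\to A\to 0$: you must choose the class in $\mathrm{Ext}^1(A,K_C\otimes A^\vee)\cong H^0(A^{\otimes 2})^\vee$ to annihilate the image of the multiplication map $\mathrm{Sym}^2H^0(A)\to H^0(A^{\otimes 2})$, and one has to check such a nonzero class exists. Second, the stability and global generation of $E$, and the fact that $\phi_E$ is an \emph{embedding}, are exactly where ``no $g_3^1$, no $g_5^2$, not bielliptic'' are consumed; asserting that a destabilizing or non-generating subsheaf ``would produce a $g_3^1$, a $g_5^2$, or a bielliptic structure'' is the theorem, not a proof of it. Third, your hedge ``for general data the linear section $Gr(2,5)\cap\PP^5$ is the smooth quintic del Pezzo'' does not suffice: the statement is for \emph{every} curve satisfying the hypotheses, so you must show for each such $C$ that the span of $\phi_E(C)$ is exactly a $\PP^5$ (equivalently $\wedge^2H^0(E)\to H^0(K_C)$ is surjective with $4$-dimensional kernel) and that this $\PP^5$ meets $Gr(2,5)$ properly in a surface on which your adjunction bookkeeping applies. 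Until those three points are supplied, what you have is an accurate road map of Mukai's proof rather than a proof.
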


\begin{prop}\cite[Proposition 2.1]{Mukai5}\label{p5.13}
The anticanonical line bundle of the 10-dimensional orthogonal Grassmannian variety $X_{12}^{10}$ is $\mathcal{O}(8)$ where $\mathcal{O}(1)$ is the very ample line bundle which gives the Pl\"{u}cker embedding. 
\end{prop}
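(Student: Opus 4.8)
The plan is to identify $X_{12}^{10}$ with the spinor variety, that is, one of the two isomorphic connected components of the orthogonal Grassmannian $OG(5,10)$ parametrizing maximal isotropic $5$-dimensional subspaces of a $10$-dimensional vector space $V$ equipped with a nondegenerate quadratic form. This is a rational homogeneous space $\mathrm{Spin}(10)/P$ for a maximal parabolic $P$, so its Picard group is $\ZZ$, and by convention the generator is the spinor class $\cO(1)$ defining the minimal embedding into $\PP^{15}$ (under which the variety has dimension $10$ and degree $12$). Since $-K$ is then automatically some integer multiple of $\cO(1)$, the entire content of the statement is to pin down that single integer.

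First I would compute the tangent bundle. Write $S$ for the tautological rank-$5$ subbundle. For a maximal isotropic subspace $U$ one has $U^\perp=U$, so the quadratic form identifies the quotient $V/U$ with $U^\vee$. Deforming $U$ to $(1+\epsilon\phi)U$ with $\phi\in\Hom(U,V/U)=\Hom(U,U^\vee)$ and imposing isotropy to first order forces the associated bilinear form on $U$ to be skew-symmetric; hence the admissible $\phi$ are exactly the elements of $\wedge^2 U^\vee$, and $T_{OG(5,10)}\cong\wedge^2 S^\vee$. (As a dimension check, $\wedge^2$ of a rank-$5$ bundle has rank $10=\dim OG(5,10)$.) Using $c_1(\wedge^2 W)=(\rank W-1)\,c_1(W)$ for a rank-$5$ bundle, I obtain
$$ -K = c_1(\wedge^2 S^\vee) = 4\,c_1(S^\vee)=4\det S^\vee. $$

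The main obstacle, and the only genuinely nonformal step, is to relate $\det S^\vee$ to the spinor generator $\cO(1)$. The expected answer is $\det S^\vee=\cO(2)$: the Plücker (determinant) class is \emph{twice} the spinor hyperplane class. I would justify this geometrically by recalling the classical fact that, under the spinor embedding, the Plücker coordinates of $Gr(5,V)$ restrict to quadratic expressions in the spinor coordinates, so that $\det S^\vee=\cO_{Gr}(1)|_{OG}=\cO(2)$. Representation-theoretically, the same point is that $\det S^\vee=\wedge^5 S^\vee$ corresponds to the $\mathfrak{sl}_{10}$-weight $\omega_5$, and the restriction of $\wedge^5 V$ to $\mathfrak{so}_{10}$ splits into two $126$-dimensional pieces with highest weights $2\omega_4$ and $2\omega_5$; the component carrying $2\omega_5=2\cdot(\text{spinor weight})$ is the one pulling back to our $OG$. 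Granting $\det S^\vee=\cO(2)$, I conclude
$$ -K = 4\det S^\vee = 8\,\cO(1)=\cO(8), $$
as asserted.

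As a cross-check I would recompute the index purely Lie-theoretically: for the spinor node of the $D_n$ diagram the Fano index of $G/P$ is $2n-2$, giving $2\cdot 5-2=8$ when $n=5$, in agreement. I would stress that the factor of two between the naive determinant class and the true ample generator is exactly the subtlety one must not overlook here, since it is the spinor embedding, and not the Plücker embedding, that defines $\cO(1)$.
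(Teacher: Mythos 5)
Your proof is correct, but there is nothing in the paper to compare it against: the paper states this proposition as a citation of Mukai (\emph{Curves and symmetric spaces, I}, Proposition 2.1) and gives no argument of its own. Your derivation is the standard one and is sound at every step: the identification of $X_{12}^{10}$ with a component of $OG(5,10)$, the first-order isotropy computation giving $T\cong\wedge^2 S^\vee$, the Chern-root calculation $c_1(\wedge^2 S^\vee)=4\,c_1(S^\vee)$, the identity $\det S^\vee=\mathcal{O}(2)$ relating the Pl\"ucker class to the spinor generator, and the cross-check via the index $2n-2$ of the spinor node of $D_n$. The one point worth flagging is that you have (correctly) read the statement against its literal wording: the proposition says $\mathcal{O}(1)$ gives ``the Pl\"ucker embedding,'' but if $\mathcal{O}(1)$ really meant $\det S^\vee$ the answer would be $\mathcal{O}(4)$, not $\mathcal{O}(8)$. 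The intended $\mathcal{O}(1)$ is the spinor class giving the minimal embedding $X_{12}^{10}\subset\PP^{15}$ --- this is forced by the application in Theorem \ref{t5.14}, where cutting by $9$ hyperplanes must produce a canonically embedded genus-$7$ curve of degree $12$ in $\PP^6$, i.e. $K_C=(K_X+9H)|_C=H|_C$ requires $-K_X=8H$ for the hyperplane class $H$ of $\PP^{15}$. So your resolution of the factor of two is exactly right, and the ``Pl\"ucker'' in the statement is a harmless abuse of terminology carried over from Mukai's setup.
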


Mukai proved that a general curve of genus 7 can be embedded in $X_{12}^{10}$ as a complete intersection.

\begin{theo}\cite{Mukai5}\label{t5.14}
A curve $C$ of genus 7 is a transversal linear section of $X_{12}^{10} 
\subset \PP^{15}$ if and only if $C$ has no $g_4^1$.
\end{theo}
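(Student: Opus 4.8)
The plan is to treat the two implications separately, following Mukai's method \cite{Mukai5}. For the \emph{if} direction, suppose $C=X_{12}^{10}\cap L$ is a smooth transversal section by a linear subspace $L\subset\PP^{15}$. Since $\dim X_{12}^{10}=10$, cutting down to a curve requires $L\cong\PP^6$ (nine hyperplanes), and $C$ has degree $12=\deg X_{12}^{10}$ in $L$. By Proposition \ref{p5.13} the anticanonical class is $\cO(8)$, so iterated adjunction gives $K_C=(\cO_{X}(-8)\otimes\cO(9))|_C=\cO_C(1)=\cO_L(1)|_C$. Hence $C$ is canonically embedded, and $2g-2=\deg K_C=12$ forces $g=7$ with $h^0(K_C)=7$, matching $\dim L=6$. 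It then remains to check that $C$ carries no $g_4^1$; I would do this by relating a hypothetical $g_4^1$ to a destabilizing sub-line-bundle of the tautological rank-$5$ bundle of $OG(5,10)$ restricted to $C$ and deriving a contradiction with the transversality of the section.

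For the harder \emph{only if} direction, the plan is Mukai's reconstruction of the spinor tenfold from the curve. First note that the hypothesis is the Brill--Noether-generic one: for a $g_4^1$ on a genus-$7$ curve the Brill--Noether number is $\rho=7-2(7-4+1)=-1<0$. The reconstruction proceeds by producing a distinguished \emph{stable} rank-$5$ bundle $E$ on $C$ carrying a nondegenerate orthogonal structure and satisfying $h^0(C,E)=10$; the absence of a $g_4^1$ is what guarantees both the existence and the stability of $E$ with these invariants. Setting $V=H^0(C,E)\cong\CC^{10}$, the orthogonal structure together with Serre duality equips $V$ with a nondegenerate quadratic form, and the fibrewise evaluation assigns to each point of $C$ a maximal isotropic $5$-plane in $V$, i.e.\ a morphism $\varphi\colon C\to OG(5,10)$ landing in one spinor component $\mathbb{S}_{10}$. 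Identifying $\mathbb{S}_{10}$ in its spinor embedding with $X_{12}^{10}\subset\PP^{15}$ and checking $\varphi^*\cO(1)=K_C$, one sees that $\varphi$ is the canonical embedding followed by $\mathbb{S}_{10}\hookrightarrow\PP^{15}$, so that $C=X_{12}^{10}\cap\langle\varphi(C)\rangle$ with $\langle\varphi(C)\rangle=\PP^6$ a transversal linear section.

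The main obstacle is the construction step: proving the existence and uniqueness of the rank-$5$ orthogonal bundle $E$ with exactly the prescribed determinant, cohomology $h^0=10$, and stability, and then upgrading the resulting classifying map $\varphi$ from a mere morphism into $OG(5,10)$ to an isomorphism onto a \emph{dimensionally transverse} linear section. Both points hinge on the no-$g_4^1$ hypothesis: a tetragonal pencil would either destabilize $E$ or force the linear span $\langle\varphi(C)\rangle$ to drop dimension, whereas its absence pins down $E$ uniquely and makes $\langle\varphi(C)\rangle$ exactly a $\PP^6$. Verifying these cohomological and stability statements---using Brill--Noether theory for vector bundles on $C$ together with the representation theory of $\mathrm{Spin}(10)$ that governs the spinor embedding---is the technical heart of the argument.
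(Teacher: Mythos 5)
The first thing to say is that the paper does not prove this statement at all: Theorem \ref{t5.14} is quoted verbatim from Mukai \cite{Mukai5} and used as a black box in the proof of Theorem \ref{t5.19}, so there is no internal proof to compare your attempt against. Judged on its own, your numerics in the first implication are correct ($L\cong\PP^6$, $\deg C=12$, $K_C=\cO_C(1)$ by adjunction against $-K_{X}=\cO(8)$ from Proposition \ref{p5.13}, hence $g=7$ and the section is canonical), and your outline of the converse --- reconstructing $X_{12}^{10}=OG(5,10)\subset\PP^{15}$ from a stable rank-$5$ orthogonal bundle $E$ with $h^0(C,E)=10$ and $\det E=K_C^{\otimes 2}$-compatible orthogonal structure, then showing the classifying map $\varphi$ identifies $C$ with $X_{12}^{10}\cap\langle\varphi(C)\rangle$ --- is a faithful description of Mukai's strategy. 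Minor quibble: you have swapped the labels, since assuming $C$ is a linear section and deducing the absence of a $g_4^1$ is the ``only if'' implication, while the reconstruction is the ``if'' implication.

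The genuine gap is the one you yourself flag: everything that makes the theorem true is deferred. The existence and uniqueness of the stable orthogonal bundle $E$ with exactly $h^0=10$, the fact that a $g_4^1$ (and nothing else) obstructs this, the verification that $\varphi$ is an embedding rather than merely a morphism to $OG(5,10)$, and the dimensional transversality of $\langle\varphi(C)\rangle\cap X_{12}^{10}$ are precisely the content of Mukai's paper; none of them follows from the Brill--Noether count $\rho=-1$ alone, which only shows a $g_4^1$ is non-generic. Likewise, in the other implication, the claim that a $g_4^1$ would destabilize the restricted tautological bundle is asserted, not argued. So as a proof this is an accurate table of contents for \cite{Mukai5} rather than a derivation; in the context of the present paper the honest move is simply to cite Mukai, which is what the authors do.
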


Mukai proved that a generic curve of genus 8 can be embedded in $Gr(2,6)$ as a complete intersection.

\begin{theo}\cite{Mukai4}\label{t5.15}
A curve $C$ of genus 8 is a transversal linear section of $G(2,6) \subset \PP^{14}$ if and only if $C$ has no $g_7^2$.
\end{theo}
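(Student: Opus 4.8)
The plan is to follow Mukai's vector-bundle method. Rather than manipulating linear systems directly, one manufactures a rank two bundle $E$ on $C$ playing the role of the tautological bundle on $Gr(2,6)$, with $\det E\cong K_C$ and $h^0(C,E)=6$, and recovers the embedding $C\hookrightarrow Gr(2,6)$ from the evaluation map on $H^0(E)$. The entire argument is organized around the existence, stability and global generation of such an $E$, and the condition ``no $g_7^2$'' is what controls all three.

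For the forward direction, suppose $C=Gr(2,6)\cap\PP^7$ is a transversal linear section. By Proposition \ref{p5.11} with $k=2,\ n=6$ we have $K_{Gr(2,6)}^\vee\cong\cO(6)$; since $C$ is cut out of the $8$-dimensional $Gr(2,6)$ by seven hyperplanes, adjunction gives $K_C\cong\cO_{Gr(2,6)}(1)|_C$. Hence $C$ is canonically embedded in $\PP^7=\PP H^0(K_C)^\vee$, of degree $\deg Gr(2,6)=14=2g-2$, confirming $g=8$. Restricting the tautological rank two bundle to $C$ produces $E$ with $\det E=K_C$ and $h^0(E)=6$, and its stability is incompatible with a $g_7^2$: a degree $7$ line bundle with three sections would force $C$ into a special position relative to $Gr(2,6)$ violating transversality.

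For the converse, assume $C$ has genus $8$ and no $g_7^2$; here lies the real content. First, $C$ is non-hyperelliptic, since a hyperelliptic curve of genus $8$ carries $g_4^2=2\,g_2^1$ and hence, after adding three base points, a $g_7^2$; therefore $K_C$ is very ample. The key step is to construct, via Brill--Noether theory for rank two bundles, a bundle $E$ with $\det E=K_C$ and $\chi(E)$ forcing $h^0(E)\ge 6$, and to show $E$ is stable and globally generated. This is exactly where the hypothesis enters: a destabilizing sub-line-bundle of $E$, or a base point of $|E|$, would produce a line bundle of degree $7$ with three sections, i.e. a $g_7^2$. Global generation then gives a morphism $\phi:C\to Gr(2,H^0(E))=Gr(2,6)$ with $\phi^*\cO(1)=\det E=K_C$.

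Finally, since $\phi^*\cO(1)=K_C$ is very ample, $\phi$ is the canonical embedding, and its image spans $\PP H^0(K_C)^\vee=\PP^7$ because $h^0(K_C)=8$. Comparing degrees, $\deg\phi(C)=\deg K_C=14=\deg Gr(2,6)$, so $\phi(C)=Gr(2,6)\cap\PP^7$ scheme-theoretically; smoothness of $C$ then upgrades this to a transversal intersection. The main obstacle throughout is the construction and stability of the Mukai bundle $E$: establishing its existence through Brill--Noether theory for vector bundles, and tying stability and global generation precisely to the absence of a $g_7^2$. Everything else is adjunction and degree bookkeeping.
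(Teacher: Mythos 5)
The paper contains no proof of this statement: Theorem \ref{t5.15} is imported verbatim from Mukai's \emph{Curves and Grassmannians} \cite{Mukai4}, so there is no internal argument to compare yours against, and what you have written is an outline of Mukai's own strategy. As an outline it identifies the right protagonist (a stable, globally generated rank two bundle $E$ with $\det E\cong K_C$ and $h^0(E)=6$) and the degree bookkeeping is correct ($-K_{Gr(2,6)}=\cO(6)$, seven hyperplanes, $K_C=\cO(1)|_C$, $\deg Gr(2,6)=14=2g-2$). But one step is outright false and two others replace the mathematical content by an assertion. The false step: since $\deg E=14$ and $\operatorname{rk}E=2$, Riemann--Roch gives $\chi(E)=14+2(1-8)=0$, so $\chi(E)$ forces \emph{nothing} about $h^0(E)$. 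Producing a bundle with canonical determinant and six sections is the heart of Mukai's proof (he lifts sections along extensions $0\to K_C\xi^{-1}\to E\to \xi\to 0$ attached to the pencils $\xi\in W^1_5(C)$, which exist since $\rho=0$, and must then prove stability, global generation, and independence of $\xi$ --- each step invoking the absence of a $g^2_7$ in a specific way, typically by exhibiting a degree-$7$ sub- or quotient line bundle with three sections). Your phrase ``via Brill--Noether theory for rank two bundles'' conceals exactly the part that needs proving.

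The forward direction as written is not an argument: ``would force $C$ into a special position\dots violating transversality'' names no mechanism. One must actually show that a $g^2_7$ on a transversal linear section is incompatible with the restricted tautological bundle; note that by Riemann--Roch a $g^2_7$ $\xi$ comes paired with the $g^2_7$ $K_C\otimes\xi^{-1}$, and it is this pair that interacts with $E$. Finally, your concluding step needs two inputs you do not supply: surjectivity of the restriction map $\bigwedge^2H^0(E)\to H^0(K_C)$ (a $15$-dimensional space onto an $8$-dimensional one), without which $\phi(C)$ need not span a $\PP^7$ inside $\PP^{14}$; and properness of the intersection $Gr(2,6)\cap\PP^7$ (pure dimension one), without which ``same degree, hence equal scheme-theoretically'' proves nothing. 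So the skeleton is the right one, but the existence and uniqueness of the Mukai bundle, the forward implication, and the final identification of $\phi(C)$ with the linear section are all genuine gaps rather than routine verifications.
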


\begin{prop}\cite[Proposition 2.3]{Mukai7}\label{p5.16}
The symplectic Grassmannian $SpGr(n,2n)$ is a smooth projective variety of dimension $n(n+1)/2$ whose anticanonical line bundle is $\mathcal{O}(n+1)$ where $\mathcal{O}(1)$ is the very ample line bundle which gives the Pl\"{u}cker embedding. 
\end{prop}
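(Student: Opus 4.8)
The plan is to treat $SpGr(n,2n)$ as the \emph{Lagrangian Grassmannian}, the variety of maximal isotropic (Lagrangian) subspaces $L\subset V$ of a fixed symplectic vector space $(V,\omega)$ with $\dim V=2n$, and to derive all three assertions from a single description of its tangent bundle. First I would observe that the isotropy condition $\omega|_L=0$ is closed in the ordinary Grassmannian $Gr(n,2n)$, so $SpGr(n,2n)$ is a closed, hence projective, subvariety of $Gr(n,2n)\hookrightarrow\PP(\bigwedge^n V)$. By definition $\mathcal{O}(1)$ is the pullback of the hyperplane bundle under this Plücker embedding, which is precisely $\det\mathcal{S}^\vee$, where $\mathcal{S}$ is the tautological rank $n$ subbundle restricted from $Gr(n,2n)$. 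Since $Sp(V)$ acts transitively on Lagrangian subspaces (Witt's theorem), $SpGr(n,2n)$ is homogeneous and therefore smooth; this disposes of smoothness and projectivity, and fixes the meaning of $\mathcal{O}(1)$.

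Next I would compute the tangent bundle explicitly through an affine chart. Fix a Lagrangian $L$ and a complementary Lagrangian $L'$, so that $V=L\oplus L'$ and $\omega$ identifies $L'$ with $L^\vee$. Every Lagrangian transverse to $L'$ is the graph of a linear map $\phi:L\to L'\cong L^\vee$, and a direct computation of $\omega|_{\mathrm{graph}(\phi)}$ shows that the isotropy condition translates exactly into the symmetry of $\phi$ viewed as a bilinear form on $L$. This exhibits an affine chart isomorphic to $\mathrm{Sym}^2 L^\vee$, which simultaneously reproves smoothness, yields $\dim SpGr(n,2n)=\dim\mathrm{Sym}^2 L^\vee=\binom{n+1}{2}=n(n+1)/2$, and identifies the tangent space at $L$ with $\mathrm{Sym}^2 L^\vee$. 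Globalizing, and using that $\omega$ induces an isomorphism $V/\mathcal{S}\cong\mathcal{S}^\vee$ of the quotient bundle with $\mathcal{S}^\vee$, the tangent bundle is $T_{SpGr(n,2n)}\cong\mathrm{Sym}^2\mathcal{S}^\vee$.

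The anticanonical bundle then follows from a determinant computation. We have $-K=\det T=\det(\mathrm{Sym}^2\mathcal{S}^\vee)$, and by the splitting principle, if $W$ has rank $r$ with Chern roots $x_1,\dots,x_r$, then $\mathrm{Sym}^2 W$ has Chern roots $x_i+x_j$ for $i\le j$; summing these gives the identity $\det(\mathrm{Sym}^2 W)=(\det W)^{\otimes(r+1)}$. Applying this to $W=\mathcal{S}^\vee$ of rank $n$ yields $-K=(\det\mathcal{S}^\vee)^{\otimes(n+1)}=\mathcal{O}(n+1)$, since $\det\mathcal{S}^\vee=\mathcal{O}(1)$ is the Plücker bundle identified in the first step. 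This parallels the computation behind Proposition \ref{p5.11} for the ordinary Grassmannian, with the symmetric power replacing $\mathcal{S}^\vee\otimes\mathcal{Q}$.

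The genuinely content-bearing step, and the one I expect to be the main obstacle, is the identification $T\cong\mathrm{Sym}^2\mathcal{S}^\vee$: one must both supply the global isomorphism $V/\mathcal{S}\cong\mathcal{S}^\vee$ coming from the symplectic form and promote the pointwise symmetry condition to a statement of vector bundles, rather than merely of fibers. Once this is in place, the dimension count and the determinant identity are routine. A secondary point demanding care is the normalization of $\mathcal{O}(1)$: the index value $n+1$ depends on taking the Plücker bundle to be $\det\mathcal{S}^\vee$ (the restriction from the ambient Grassmannian) rather than a square root of it, and the cases $n=1$, where $SpGr(1,2)=\PP^1$ with $-K=\mathcal{O}(2)$, and $n=2$, where $SpGr(2,4)$ is a three-dimensional quadric $Q^3$ with $-K=\mathcal{O}(3)$, serve as reassuring sanity checks.
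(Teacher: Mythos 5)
Your argument is correct, but note that the paper does not prove this statement at all: it is quoted verbatim from Mukai (\cite[Proposition 2.3]{Mukai7}) as a known fact, so there is no in-paper proof to compare against. What you have written is the standard self-contained derivation: the chart of graphs of symmetric maps $\mathrm{Sym}^2 L^\vee$ gives smoothness and the dimension $\binom{n+1}{2}$; the symplectic form gives the global isomorphism $V/\mathcal{S}\cong\mathcal{S}^\vee$, identifying $T_{SpGr(n,2n)}$ with the subbundle $\mathrm{Sym}^2\mathcal{S}^\vee$ of $T_{Gr(n,2n)}|_{SpGr}=\mathcal{S}^\vee\otimes\mathcal{S}^\vee$ (two subbundles agreeing fiberwise coincide, or one can invoke $Sp(V)$-equivariance); and the splitting-principle identity $\det(\mathrm{Sym}^2 W)=(\det W)^{\otimes(r+1)}$ for $\rank W=r$ gives $-K=(\det\mathcal{S}^\vee)^{\otimes(n+1)}=\mathcal{O}(n+1)$, with the normalization $\mathcal{O}(1)=\det\mathcal{S}^\vee$ being exactly the restriction of the Pl\"{u}cker bundle as the proposition requires. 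Your sanity checks $SpGr(1,2)=\PP^1$ and $SpGr(2,4)=Q^3$ confirm the index values $2$ and $3$. The one step you flag as delicate, the bundle-level identification $T\cong\mathrm{Sym}^2\mathcal{S}^\vee$, is indeed the only place where something must be argued rather than computed, and the two mechanisms you name (the global pairing induced by $\omega$ on $\mathcal{S}\otimes(V/\mathcal{S})$, and the passage from fibers to bundles) suffice.
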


Mukai proved that a general curve of genus 9 can be embedded in $SpGr(3,6)$ as a complete intersection.

\begin{theo}\cite{Mukai7}\label{t5.17}
A curve $C$ of genus 9 is a transversal linear section of $SpGr(3,6) \subset \PP^{13}$ if and only if $C$ has no $g_5^1$.
\end{theo}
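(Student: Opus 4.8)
The plan is to follow Mukai's vector bundle method, in exact parallel with the genus $6,7,8$ cases of Theorems \ref{t5.12}, \ref{t5.14} and \ref{t5.15}. Write $X = SpGr(3,6) \subset \PP^{13}$; by Proposition \ref{p5.16} this is a $6$-dimensional Fano variety with $-K_X = \mathcal{O}(4)$ and Pl\"{u}cker degree $16$. For the ``if'' direction I would take a transversal linear section $C = X \cap \Lambda$ by a $\Lambda \cong \PP^8$, so that $\dim C = 6 - 5 = 1$, and apply adjunction five times: $K_C = (K_X + 5H)|_C = (-4H + 5H)|_C = \mathcal{O}_C(1)$. Hence $C$ is canonically embedded, with $C \hookrightarrow \Lambda = \PP^8 = \PP(H^0(C,K_C)^\vee)$, and $\deg K_C = \deg X = 16 = 2g-2$ forces $g = 9$. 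The $g_5^1$-freeness of $C$ should then follow from the Brill--Noether geometry of the tautological rank $3$ bundle on $X$ restricted to $C$: a pencil of degree $5$ would produce a destabilizing sub-line-bundle of this restriction, contradicting its stability.

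For the ``only if'' (reconstruction) direction I would start from a curve $C$ of genus $9$ with no $g_5^1$ and recover the embedding into $X = SpGr(3,6)$. The heart of the matter is to produce a stable rank $3$ bundle $E$ on $C$ with $\det E \cong K_C$ and $h^0(C,E) = 6$, carrying a symplectic form on $V = H^0(C,E) \cong \CC^6$ that comes from a self-duality of $E$ compatible with the evaluation $V \otimes \mathcal{O}_C \to E$. Granting this, the assignment $p \mapsto H^0(C, E(-p)) \subset V$ sends each point to a $3$-dimensional subspace (the kernel of the surjective evaluation at $p$) which is isotropic for the symplectic form, hence defines a morphism $C \to SpGr(3,6)$. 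One then identifies the image with a transversal linear section $X \cap \Lambda$, $\Lambda \subset \PP^{13}$ being cut out by the relations among the Pl\"{u}cker coordinates, and checks via the computation above that the induced polarization is canonical, so that genus and degree match automatically.

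The main obstacle is precisely the existence, stability, and essential uniqueness of this symplectic rank $3$ bundle $E$, and this is exactly where the hypothesis on $C$ enters. I would compute the higher-rank Brill--Noether number for rank $3$ bundles with canonical determinant and six sections, and show that it predicts a finite nonempty family exactly when $C$ has no $g_5^1$; conversely a $g_5^1$ would force $E$ to acquire a destabilizing subsheaf, so that either $h^0(E)$ drops below $6$ or the evaluation map degenerates, obstructing the Lagrangian embedding. The remaining technical work is to verify that the resulting morphism is a closed immersion onto a \emph{genuinely transversal} (rather than tangential or otherwise degenerate) linear section, and that every such section arises in this way; here Mukai's detailed analysis of the homogeneous geometry of $SpGr(3,6)$ and its tautological bundles is what makes the argument go through.
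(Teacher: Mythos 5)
First, a point of reference: the paper does not prove Theorem \ref{t5.17}. It is quoted verbatim from Mukai \cite{Mukai7} and used as a black box (together with Proposition \ref{p5.16}) in the proof of Theorem \ref{t5.19}, so your proposal can only be measured against Mukai's own argument, not against anything internal to this paper.

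Against that standard, your forward direction is correct and essentially complete as far as it goes: $SpGr(3,6)$ is a $6$-fold with $-K=\mathcal{O}(4)$ and degree $16$ in $\PP^{13}$, so a transversal $\PP^8$-section is a canonically embedded curve of genus $9$. But the forward direction of the theorem also asserts that such a section has \emph{no} $g^1_5$, and there you only gesture: the claim that a degree-$5$ pencil would destabilize the restricted tautological bundle is one of the delicate points of \cite{Mukai7}, not a routine consequence of "stability." The real gap is in the converse. Your road map --- a stable rank $3$ bundle $E$ with $\det E\cong K_C$ and $h^0(E)=6$, a symplectic form on $V=H^0(E)$ arising from the (at least one-dimensional, since $\dim\wedge^2 V=15$ while $h^0(\wedge^2E)=h^0(E^\vee\otimes K_C)=h^1(E)=14$) kernel of $\wedge^2 V\to H^0(\wedge^2E)$, and the map $p\mapsto H^0(E(-p))$ into the Lagrangian Grassmannian --- is indeed Mukai's strategy. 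But the existence, stability, and uniqueness of $E$, the nondegeneracy of that bivector, the precise equivalence of all of this with the absence of a $g^1_5$, and the verification that the morphism is a closed immersion onto a genuinely transversal section \emph{are} the theorem; writing "I would compute the higher-rank Brill--Noether number and show it predicts a finite nonempty family" states the problem rather than solving it. So: right skeleton, correct numerology, but every load-bearing step is deferred to exactly the analysis that occupies Mukai's paper.
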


\begin{coro}\label{c5.18}
A general curve of genus $g$ with $1 \leq g \leq 9$ is a complete intersection in a homogeneous variety.
\end{coro}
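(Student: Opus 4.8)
The plan is to prove Corollary \ref{c5.18} by a case-by-case verification over the nine values $g=1,\dots,9$, in each case exhibiting a general curve of genus $g$ as a complete intersection inside an explicit homogeneous variety and invoking the structural result recalled above. First I would record the two observations that make the statement uniform: every ambient space occurring below --- projective space $\PP^n$, the Grassmannian $Gr(k,n)$, the spinor tenfold $X_{12}^{10}$, the symplectic Grassmannian $SpGr(3,6)$, and the product $\Po\times\Po$ --- is a homogeneous space $G/P$ for a semisimple group $G$, and a product of homogeneous spaces is again homogeneous; and that a transversal linear section in the Pl\"ucker embedding, as well as a single smooth divisor, counts by definition as a complete intersection (allowing codimension one).

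With these conventions the low genus cases are essentially already displayed in the text. For $g=1$ a curve is a plane cubic, a degree-$3$ hypersurface in $\PP^2$; for $g=3$ a general (non-hyperelliptic) curve is a plane quartic in $\PP^2$; for $g=4$ a general curve is the complete intersection of a quadric and a cubic in $\PP^3$ (Example \ref{e3.3}); and for $g=5$ the general canonical curve is the intersection of three quadrics in $\PP^4$, as used in Corollary \ref{c5.9}. The case $g=2$ is the only one not appearing earlier: here I would realize the general (in fact every) genus-$2$ curve as a smooth divisor of bidegree $(2,3)$ on the quadric surface $\Po\times\Po$, so that it is a codimension-one complete intersection in a homogeneous variety. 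An adjunction computation gives $K_C=\cO(0,1)|_C$, hence $\deg K_C=2$, confirming $g=2$, while the two projections recover the $g^1_2$ and a $g^1_3$ that every genus-$2$ curve carries.

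For $6\le g\le 9$ I would simply invoke Mukai's descriptions: by Theorem \ref{t5.12} a general genus-$6$ curve is the intersection of $Gr(2,5)\subset\PP^9$ with a four-dimensional quadric, and by Theorems \ref{t5.14}, \ref{t5.15}, \ref{t5.17} a general curve of genus $7,8,9$ is respectively a transversal linear section of $X_{12}^{10}\subset\PP^{15}$, of $Gr(2,6)\subset\PP^{14}$, and of $SpGr(3,6)\subset\PP^{13}$. In each of these cases I would check that the genericity hypotheses of the cited theorems --- no $g^1_3$, $g^2_5$ or bi-elliptic structure for $g=6$; no $g^1_4$ for $g=7$; no $g^2_7$ for $g=8$; no $g^1_5$ for $g=9$ --- hold for a general member of $\cM_g$, since by Brill--Noether theory each defines a proper closed locus; in particular the gonality of the general curve of genus $g$ is $\lceil (g+3)/2\rceil$, which already excludes the low-degree pencils in question.

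The genuine content of the corollary is carried entirely by the cited theorems, so the only real work is bookkeeping: confirming that each listed variety is homogeneous, that ``linear section'' and ``divisor'' qualify as complete intersections, and that the exceptional loci in Mukai's hypotheses are non-dense. The main obstacle I anticipate is therefore not any single computation but making the genericity claims precise --- verifying case by case that the special linear-series conditions excluded by Mukai cut out proper closed subsets of $\cM_g$ --- together with supplying the one construction ($g=2$ on $\Po\times\Po$) that does not already appear in the paper.
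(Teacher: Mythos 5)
Your proposal is correct and is essentially the argument the paper intends: the corollary is a summary of the explicit models already displayed ($g=1,3,4,5$ in §5.1--5.2) together with Mukai's Theorems \ref{t5.12}, \ref{t5.14}, \ref{t5.15}, \ref{t5.17}, plus the routine Brill--Noether check that the excluded linear series ($g^1_3$, $g^2_5$, bielliptic for $g=6$; $g^1_4$, $g^2_7$, $g^1_5$ for $g=7,8,9$) all have negative Brill--Noether number or cut out proper closed loci in $\cM_g$. The one genuine addition you make is the case $g=2$: the paper's own treatment of genus~2 goes through hyperelliptic curves and Bondal--Orlov, which does \emph{not} realize the curve as a complete intersection in a homogeneous space, so your bidegree-$(2,3)$ divisor on $\Po\times\Po$ (embedded by $\cO(1,1)$ of degree $5=2g+1$) is exactly the missing construction and is needed for the corollary as stated. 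One small slip: the gonality of a general genus-$g$ curve is $\lfloor (g+3)/2\rfloor$, not $\lceil (g+3)/2\rceil$; for $g=6$ this is $4$, not $5$, but since you only use gonality to exclude pencils of degree $3,4,5$ in genera $6,7,9$ respectively, the correct (floor) values still suffice and the argument is unaffected.
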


\begin{theo}\label{t5.19}
General curves of genus $g \leq 9$ are Fano visitors.
\end{theo}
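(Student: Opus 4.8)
The plan is to treat each genus $g$ with $1\le g\le 9$ separately, realizing a general curve of genus $g$ as the zero locus of a regular section of a vector bundle on a smooth projective homogeneous variety and then invoking Cayley's trick (\S\ref{s3.1}) together with Theorem \ref{t3.1} to manufacture a Fano host. For $g\le 5$ nothing new is required: these curves are already shown to be Fano visitors in Corollaries \ref{c5.5}, \ref{c5.6}, \ref{c5.7}, \ref{c5.8} and \ref{c5.9}. So I would concentrate on the four remaining cases $g=6,7,8,9$, where Mukai's descriptions (Theorems \ref{t5.12}, \ref{t5.14}, \ref{t5.15}, \ref{t5.17}, collected in Corollary \ref{c5.18}) exhibit a general curve $C$ as a complete intersection in $S=Gr(2,5)$, $S=X_{12}^{10}$, $S=Gr(2,6)$ and $S=SpGr(3,6)$ respectively.

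In each case I would write $C=s^{-1}(0)\subset S$ for a regular section $s$ of a vector bundle $E$ that is a direct sum of powers of the ample generator $\cO(1)$ of $\mathrm{Pic}(S)$: namely $E=\cO(1)^{\oplus 4}\oplus\cO(2)$ of rank $5$ when $g=6$, and $E=\cO(1)^{\oplus 9}$, $\cO(1)^{\oplus 7}$, $\cO(1)^{\oplus 5}$ of ranks $9,7,5$ when $g=7,8,9$. Smoothness of $C$ and regularity of $s$ follow from the transversality and genericity hypotheses in Mukai's theorems, so Cayley's trick applies and gives a fully faithful embedding $D^b(C)\hookrightarrow D^b(X)$ with $X=w^{-1}(0)$. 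It then remains to check that $X$ is Fano, for which I would verify hypothesis (2) of Theorem \ref{t3.1} with the uniform choice $H=\cO(1)$.

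The verification is the same in all four cases. Using the anticanonical formulas of Propositions \ref{p5.11}, \ref{p5.13} and \ref{p5.16} one computes $K_S^\vee=\cO(5),\cO(8),\cO(6),\cO(4)$ for $g=6,7,8,9$, while $\det E^\vee=\cO(-6),\cO(-9),\cO(-7),\cO(-5)$ respectively, so in every case $K_S^\vee\otimes\det E^\vee=\cO(-1)$. (Equivalently this is adjunction: $C$ is canonically embedded, hence $\cO_C(1)=K_C=(K_S\otimes\det E)|_C$, and $\mathrm{Pic}(S)$ is generated by $\cO(1)$.) In particular hypothesis (1) of Theorem \ref{t3.1} fails, so condition (2) is genuinely needed. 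With $H=\cO(1)$ the bundle $F=E\otimes H^\vee$ is a direct sum of copies of $\cO$ and $\cO(1)$, hence nef, while
$$K_S^\vee\otimes\det E^\vee\otimes H^{r-1}=\cO(-1)\otimes\cO(r-1)=\cO(r-2)$$
is ample because $r\ge 5>2$. Thus Theorem \ref{t3.1} applies and $X$ is a Fano host of $C$.

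The computations are routine once the models are in hand, so there is no serious analytic obstacle; the difficulty is organizational. The step I expect to require the most care is the case-by-case bookkeeping needed to confirm that Mukai's curves really are cut out by regular sections of the stated split bundles of the stated ranks, and that the resulting $X=w^{-1}(0)$ is smooth — that is, that the genericity built into Mukai's theorems is precisely what makes both Cayley's trick and the nef/ample estimates of Theorem \ref{t3.1}(2) valid. The conceptual key, which collapses the four cases to a single calculation, is the observation that $K_S^\vee\otimes\det E^\vee=\cO(-1)$ for every canonically embedded general curve, so that the single twist $H=\cO(1)$ always upgrades the weak-Fano estimate to ampleness.
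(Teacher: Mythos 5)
Your proposal is correct and rests on the same machinery as the paper: Mukai's models for $6\le g\le 9$, Cayley's trick, and Theorem \ref{t3.1}(2) with $H=\cO(1)$, the whole computation hinging on the observation that $K_S^\vee\otimes\det E^\vee=\cO(-1)$ for a canonically embedded complete intersection. The one place you diverge is the choice of ambient variety: you apply Theorem \ref{t3.1} directly on the full homogeneous space $Z$ with the full-rank bundle (rank $5,9,7,5$ for $g=6,7,8,9$), whereas the paper first cuts $Z$ down by a partial linear section to a $4$-dimensional intermediate variety $S$ with $C\subset S\subset Z$ and realizes $C$ as the zero locus of a rank $3$ bundle on $S$, running the identical $H=\cO(1)$ verification there. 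Both choices prove the statement as written, but the paper's yields a Fano host of dimension $\dim S+3-2=5$, i.e.\ the bound ``Fano dimension at most $5$'' recorded at the end of its proof, while yours produces hosts of dimension $\dim Z+r-2$ (up to $17$ for $g=7$) and so loses that quantitative refinement. Your itemized rank/degree bookkeeping and the uniform identity $K_S^\vee\otimes\det E^\vee\otimes H^{r-1}=\cO(r-2)$ are correct in all four cases.
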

\begin{proof}
We already proved that general curves of genus $\leq 5$ are Fano visitors by using their canonical embeddings. 
Let $C \subset Z \subset \PP^N$ be a curve of genus $6 \leq g \leq 9$ which is a complete intersection in a homogeneous variety $Z$ embedded in $\PP^N$ via the Pl\"{u}cker embedding. From the adjunction formula we see that $K_C \cong \mathcal{O}_{\PP^N}(1)|_C$. In each case, we can find varieties $C \subset S \subset Z \subset \PP^N$ where $S$ is a 4-dimensional complete intersection in $Z$ and $C$ is the zero locus of a section of a rank 3 vector bundle on $S$. We then find that the variety $S$ and the rank 3 vector bundle satisfy the assumptions of Theorem \ref{t3.1}. Therefore $C$ is a Fano visitor. Moreover we see that the Fano dimensions of general curves of genus $6 \leq g \leq 9$ are at most 5.
\end{proof}

Theorem \ref{t3.1} enables us to provide many more examples of curves of genus $\geq 10$ which are Fano visitors.

\begin{rema}\label{Nar}
After we finished writing this paper, we received a manuscript from M. S. Narasimhan \cite{Narasimhan} in which he proves that all curves of genus at least 6 are Fano visitors. He also proves that all non-hyperelliptic curves of genus 3, 4 or 5 are Fano visitors. Combined with Theorem \ref{t5.1}, these results prove that all curves are Fano visitors.

It is well known that the moduli space of rank 2 stable vector bundles over a curve with fixed odd determinant is Fano. Narasimhan proves that the Fourier-Mukai transform defined by the universal bundle is fully faithful. Recently, Fonarev and Kuznetsov obtained similar results for generic curves, especially for all hyperelliptic curves via different method  (cf. \cite{FK}). It follows that the Fano dimension of an arbitrary curve of genus $g\ge 2$ is at most $3g-3.$ But our discussion above for curves of low genus indicates that this upper bound is far from being optimal.
\end{rema}

We end this section with the following question.
\begin{ques} 
What is the stratification on the moduli space $M_g$ of smooth curves of genus $g$, defined by the Fano dimension?  
\end{ques}
It will be interesting to compare the stratification by Fano dimension with other known stratifications on $M_g$.

\subsection{Jacobians of curves}

Let $C$ be a curve and $J(C)$ be the Jacobian of $C.$ It is a classical topic in algebraic geomety to study interactions between $C$ and $J(C).$ Because every curve is a Fano visitor we can prove that every Jacobian of a curve has orbifold Fano hosts. 

\begin{prop}
Let $C$ be a curve and $J(C)$ be its Jacobian. Then $J(C)$ has orbifold Fano hosts.
\end{prop}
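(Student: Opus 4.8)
The plan is to realize the Jacobian $J(C)$ as a quotient stack associated to a power of the curve $C$ and then invoke the derived McKay correspondence together with the equivariant form of Cayley's trick established earlier. Concretely, recall that for a curve $C$ of genus $g$ the symmetric product $\mathrm{Sym}^gC$ admits the Abel--Jacobi map to $J(C)$, which is birational; more usefully, the Jacobian itself is a quotient of a product of copies of $C$ (or of its associated Albanese data) by a finite group action. The key observation is that since $C$ is a Fano visitor (by the results of \S\ref{s5.1}, \S\ref{s5.2} and Remark \ref{Nar}, together with Theorem \ref{t5.1}), there is a smooth projective Fano variety $X$ with a fully faithful embedding $D^b(C)\to D^b(X)$ realized by a Fourier--Mukai kernel. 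The goal is to promote this single-curve statement to a statement about a symmetric or quotient construction that recovers $J(C)$ on the derived level.

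First I would express $D^b(J(C))$ (or the derived category of an orbifold whose coarse moduli space is birational to $J(C)$) in terms of the $S_n$-equivariant derived category of a power $C^n$ via a McKay-type equivalence. The cleanest route uses Theorem \ref{t.BKRH} in the surface case, but for a curve one instead works with the product $C^n$ carrying the natural $S_n$-action and the quotient stack $[C^n/S_n]$; the derived category of this quotient stack packages the symmetric-product data. Then I would apply Theorem \ref{t.KSP} of Ploog and Krug--Sosna: given the fully faithful Fourier--Mukai functor $\Phi_K\colon D^b(C)\to D^b(X)$ and a $G$-linearization of $K$ with respect to the diagonal action on $C\times X$ (here $G=S_n$ acting on the $n$-fold products), one obtains an induced fully faithful functor $\Phi^G_K\colon D^b([C^n/S_n])\to D^b([X^n/S_n])$. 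The target $[X^n/S_n]$ is a quotient of a smooth Fano by a finite group, and one checks via the corollary following Proposition \ref{coarsemoduliquotientstack} that it is a Fano orbifold once the locus with nontrivial stabilizer has codimension at least $2$.

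The main obstacle, and the step I would spend the most care on, is the precise identification of $D^b(J(C))$ with a subcategory built from $[C^n/S_n]$, and ensuring the intermediate quotient is genuinely a Fano orbifold. The passage from the symmetric product $\mathrm{Sym}^gC=[C^g/S_g]$ to the Jacobian is by the Abel--Jacobi morphism, which is a $\PP^{g-d}$-bundle over its image in low degrees and an isomorphism in the stable range; controlling the derived category under this map requires either Orlov's projective-bundle formula or a careful semiorthogonal bookkeeping, and one must verify that $D^b(J(C))$ sits as a full triangulated subcategory rather than merely being related birationally. The codimension condition on the stabilizer locus for $[X^n/S_n]$ must also be checked to apply the Fano-orbifold criterion; for $n\ge 2$ the big diagonal in $X^n$ has codimension $\dim X\ge 2$, so this holds as long as $\dim X\ge 2$, which is automatic for the Fano hosts in question.

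Assembling these ingredients, the argument runs: take a Fano host $X$ of $C$ with $G$-linearizable Fourier--Mukai kernel $K$ on $C\times X$; form the $S_n$-quotients and invoke Theorem \ref{t.KSP} to get $D^b([C^n/S_n])\hookrightarrow D^b([X^n/S_n])$; identify $D^b(J(C))$ as a full subcategory of $D^b([C^n/S_n])$ through the Abel--Jacobi/McKay comparison; and verify $[X^n/S_n]$ is a Fano orbifold. Composing the embeddings exhibits an orbifold Fano host for $J(C)$, as desired. I expect the linearization of the kernel $K$ and the McKay/Abel--Jacobi identification to be the genuinely delicate points, while the Fano-orbifold verification for $[X^n/S_n]$ should be routine.
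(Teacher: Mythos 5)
Your proposal follows essentially the same route as the paper: embed $D^b(J(C))$ into $D^b(C^{(n)})$ via the Abel--Jacobi map in the stable range, pass to $D^b([C^n/S_n])$ through the coarse moduli map (Lemma \ref{l.smoothcoarsemoduli}), and then apply Theorem \ref{t.KSP} to land in $D^b([F^n/S_n])$ for a Fano host $F$ of $C$. One small correction: for $n>2g-2$ the map $\phi^{(n)}:C^{(n)}\to J(C)$ is a $\PP^{n-g}$-bundle (not an isomorphism), and it is precisely this bundle structure that gives $R\phi^{(n)}_*\mathcal{O}_{C^{(n)}}=\mathcal{O}_{J(C)}$ and hence full faithfulness of $L\phi^{(n)*}$, which is the tool you correctly identify as the one needed.
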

\begin{proof}
Let $F$ be a smooth projective Fano host of $C.$ It is well-known that there is a surjection $\phi^{(n)} : C^{(n)} \to J(C)$ such that $R\phi^{(n)}_*\mathcal{O}_{C^{(n)}}=\mathcal{O}_{J(C)}$ for $n > 2g-2$ and we see that this surjection induces a fully faithful functor $D^b(J(C)) \to D^b(C^{(n)}).$ From the Lemma \ref{l.smoothcoarsemoduli} we see that there is a fully faithful functor $D^b(C^{(n)}) \to D^b([C^n/S_n]).$ Again from the Theorem \ref{t.KSP} we see that there is a fully faithful functor $D^b([C^n/S_n]) \to D^b([F^n/S_n]).$ Therefore we see that for every Jacobian of curve $J(C)$ there is an orbifold Fano host $[F^n/S_n].$
\end{proof}

See \cite{PV} for more details about the semiorthogonal decomposition of $D^b([C^n/S_n]).$

\section{Surfaces}\label{s6}

In this section we discuss the Fano visitor problem for surfaces. 
Surfaces in this section always mean normal projective surfaces.
Because Grothendieck groups of many interesting surfaces contain finite abelian groups as direct summands we also consider orbifold Fano hosts of them. 
Unfortunately, we do not have a uniform way to construct (orbifold) Fano hosts of surfaces so we raise more questions than give results. 
Let $Y$ be a surface and $\kappa$ denote its Kodaira dimension.

First, one can ask whether it is enough to consider the Fano visitor problem for minimal surfaces only. 

\begin{ques}\label{q6.1}
Let $Y$ be a smooth projective surface and $\tilde{Y}$ denote the blowup of $Y$ at a point. Is $\tilde{Y}$ a Fano visitor if $Y$ is a Fano visitor? More generally, is a variety birational to a Fano visitor a Fano visitor?
\end{ques}

Many algebraic surfaces have fibration structures. Therefore the following question makes sense.

\begin{ques}\label{q6.2}
Suppose that $Y$ is a fiber bundle over a variety $B$ with fiber $F$. Is $Y$ a Fano visitor if $B$ and $F$ are Fano visitors?
\end{ques}

When the fibration is trivial, the answer to this question is a direct consequence of  the following.

\begin{prop}\cite[Corollary 7.4]{Huy2}\label{p6.3}
Let $\Phi_K :D^b(A) \to D^b(X)$ and $\Phi_{K'} :D^b(A') \to D^b(X')$ be two fully faithful Fourier-Mukai transforms. Then
$$ \Phi_{K \boxtimes K'} : D^b( A \times A' ) \to D^b( X \times X') $$
is also fully faithful.
\end{prop}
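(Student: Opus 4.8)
The plan is to verify fully faithfulness through the Bondal--Orlov criterion (cf. \cite[Proposition 7.1]{Huy2}), which states that a Fourier--Mukai transform out of a smooth projective variety $M$ is fully faithful if and only if, for all closed points $m_1,m_2\in M$, the images of the skyscraper sheaves satisfy the orthonormality conditions: $\Hom(\Phi(k(m_1)),\Phi(k(m_2))[i])$ vanishes whenever $m_1\ne m_2$ (for every $i$) or whenever $i<0$ or $i>\dim M$, and equals $\CC$ when $m_1=m_2$ and $i=0$. Since $A\times A'$ is smooth projective, the skyscraper sheaves $k(a,a')\cong k(a)\boxtimes k(a')$ form a spanning class, so it suffices to check these conditions for $\Phi_{K\boxtimes K'}$.

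The first step is the structural lemma that $\Phi_{K\boxtimes K'}$ is computed factorwise on external products. After reordering factors via the canonical isomorphism $(A\times A')\times(X\times X')\cong(A\times X)\times(A'\times X')$, the kernel $K\boxtimes K'$ becomes the external product of $K\in D^b(A\times X)$ and $K'\in D^b(A'\times X')$, and a projection-formula together with flat-base-change computation yields $\Phi_{K\boxtimes K'}(M\boxtimes M')\cong\Phi_K(M)\boxtimes\Phi_{K'}(M')$ for all $M\in D^b(A)$ and $M'\in D^b(A')$. In particular $\Phi_{K\boxtimes K'}(k(a,a'))\cong\Phi_K(k(a))\boxtimes\Phi_{K'}(k(a'))$.

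The second step computes the relevant Hom groups by the K\"unneth formula:
$$\Hom\big(\Phi_K(k(a_1))\boxtimes\Phi_{K'}(k(a_1')),\,\Phi_K(k(a_2))\boxtimes\Phi_{K'}(k(a_2'))[i]\big)\cong\bigoplus_{p+q=i}\Hom(\Phi_K k(a_1),\Phi_K k(a_2)[p])\otimes\Hom(\Phi_{K'}k(a_1'),\Phi_{K'}k(a_2')[q]).$$
Now I invoke that $\Phi_K$ and $\Phi_{K'}$ are themselves fully faithful, so each tensor factor is governed by the same criterion. If $a_1\ne a_2$ every first factor vanishes (and likewise for the primed points), giving $0$; if $a_1=a_2$ and $a_1'=a_2'$, each first factor vanishes unless $0\le p\le\dim A$ and each second unless $0\le q\le\dim A'$, so the whole group vanishes for $i<0$ or $i>\dim A+\dim A'=\dim(A\times A')$, while for $i=0$ only the summand $p=q=0$ survives and yields $\CC\otimes\CC=\CC$. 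These are exactly the orthonormality conditions, so $\Phi_{K\boxtimes K'}$ is fully faithful.

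The main obstacle is the bookkeeping behind the two structural inputs --- the factorwise identity $\Phi_{K\boxtimes K'}(M\boxtimes M')\cong\Phi_K(M)\boxtimes\Phi_{K'}(M')$ and the K\"unneth isomorphism for $R\Hom$ on a product --- each of which requires careful tracking of pullbacks, pushforwards and the commutation of derived tensor products across the various projections; once these are in hand, the verification of the criterion is purely formal. An alternative route avoids points entirely: using the equivalence that $\Phi_K$ is fully faithful if and only if $\Phi_{K_R}\circ\Phi_K\cong\mathrm{id}$ (cf. Proposition \ref{p2.16}, Proposition \ref{p2.17}), one shows that the right-adjoint kernel of $K\boxtimes K'$ is $K_R\boxtimes K'_R$ and that convolution commutes with external product, whence the composite kernel is $(K_R*K)\boxtimes(K'_R*K')\cong\cO_{\Delta_A}\boxtimes\cO_{\Delta_{A'}}\cong\cO_{\Delta_{A\times A'}}$, giving the identity functor directly.
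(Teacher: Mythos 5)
Your argument is correct, and it is essentially the standard proof of this statement: the paper itself offers no proof but simply cites \cite[Corollary 7.4]{Huy2}, where the result is obtained exactly as you do, by combining the factorwise identity $\Phi_{K\boxtimes K'}(M\boxtimes M')\cong\Phi_K(M)\boxtimes\Phi_{K'}(M')$ with the K\"unneth formula and the Bondal--Orlov criterion applied to skyscraper sheaves. Your alternative route via adjoint kernels and convolution is also valid and matches the adjoint-based framework the paper uses elsewhere (Propositions \ref{p2.16} and \ref{p2.17}).
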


Therefore we have the following.
\begin{coro}\label{c6.4}
If $B$ and $F$ are Fano visitors then $B \times F$ is a Fano visitor.
\end{coro}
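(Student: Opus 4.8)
The statement to prove is Corollary \ref{c6.4}: if $B$ and $F$ are Fano visitors, then $B \times F$ is a Fano visitor.

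The plan is to directly apply Proposition \ref{p6.3} (the box-product of two fully faithful Fourier-Mukai transforms is fully faithful). First I would unwind the hypothesis that $B$ and $F$ are Fano visitors. By definition, this means there exist smooth projective Fano varieties $X_B$ and $X_F$ together with fully faithful exact embeddings $D^b(B) \to D^b(X_B)$ and $D^b(F) \to D^b(X_F)$. By Orlov's representability theorem (Theorem \ref{t2.12}, together with Remark \ref{r2.13}), each of these fully faithful functors is a Fourier-Mukai transform, so I may write them as $\Phi_K : D^b(B) \to D^b(X_B)$ and $\Phi_{K'} : D^b(F) \to D^b(X_F)$ for suitable kernels $K \in D^b(B \times X_B)$ and $K' \in D^b(F \times X_F)$.

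Next I would invoke Proposition \ref{p6.3} with $A = B$, $X = X_B$, $A' = F$, $X' = X_F$. This immediately yields that the box-product functor
$$ \Phi_{K \boxtimes K'} : D^b(B \times F) \to D^b(X_B \times X_F) $$
is fully faithful. It then remains only to check that the target $X_B \times X_F$ is itself a smooth projective Fano variety, so that it qualifies as a Fano host of $B \times F$. Smoothness and projectivity of a product of smooth projective varieties are automatic. For the Fano condition, I would use that the anticanonical bundle of a product is the external tensor product of the anticanonical bundles, $-K_{X_B \times X_F} = (-K_{X_B}) \boxtimes (-K_{X_F})$, which is ample because it is the box-product of two ample line bundles (the external tensor product of ample line bundles on projective varieties is ample). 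Hence $X_B \times X_F$ is Fano, and the fully faithful embedding above exhibits $B \times F$ as a Fano visitor with Fano host $X_B \times X_F$.

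There is essentially no serious obstacle here, since the corollary is a formal consequence of the quoted proposition; the only point requiring a moment of care is verifying that the product of the two Fano hosts remains Fano, but this follows from the standard fact that ampleness is preserved under external tensor product. I would present the argument concisely, noting that the dimension of the constructed Fano host is $\dim X_B + \dim X_F$, which also gives the bookkeeping bound $\mathrm{fd}(B \times F) \le \mathrm{fd}(B) + \mathrm{fd}(F)$ for the Fano dimension.
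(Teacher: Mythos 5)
Your proposal is correct and follows exactly the route the paper intends: Corollary \ref{c6.4} is stated as a direct consequence of Proposition \ref{p6.3}, with the embeddings realized as Fourier--Mukai transforms via Theorem \ref{t2.12} and Remark \ref{r2.13}, and the only remaining check being that the product of the two Fano hosts is again Fano. Your verification of that last point (the anticanonical bundle of the product is the external tensor product of ample line bundles, hence ample) and the resulting bound on the Fano dimension are exactly the standard completion of the argument.
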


\subsection{$\kappa = -\infty$ case}\label{s6.1}

If the answer to Question \ref{q6.1} is yes, then we may assume $Y$ is either $\PP^2$, a Hirzebruch surface or a ruled surface. If the answer to Question \ref{q6.2} is also yes, then the all surfaces with $\kappa = -\infty$ are Fano visitors by Remark \ref{Nar}.

Now let us provide several examples of ruled surfaces having Fano hosts.

\begin{prop}\label{t.ruled}
Let $C=s^{-1}(0)$ be a smooth projective variety which is defined by a regular section of a rank $r \geq 2$ vector bundle $E$ on $S$ and let $F$ be a rank 2 vector bundle on $S.$ Suppose that there are line bundles $H_1$ and $H_2$ such that $q^*(E \otimes H_1^{\vee}),$ $F \otimes H_2^{\vee},$ $K_S^\vee \otimes \det E^\vee \otimes det F^\vee \otimes H_1^{r-1} \otimes H_2^2$ are nef vector bundles and at least one of them is ample. Then $\PP(F^\vee|_C)$ is a Fano visitor.
\end{prop}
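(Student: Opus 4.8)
The plan is to realize $\PP(F^\vee|_C)$ itself as the output of Cayley's trick applied to an enlarged base, and then to run the weak-Fano argument behind Theorem \ref{t3.1}, but with the two independent twisting parameters $H_1,H_2$ in place of a single one. First I would set $S'=\PP F^\vee$ with projection $\pi\colon S'\to S$, put $\tilde E=\pi^*E$, and pull the section back to $\tilde s=\pi^*s\in H^0(S',\tilde E)$. Since $\pi$ is smooth and $s$ is a regular section cutting out the smooth $C$, the section $\tilde s$ is again regular of codimension $r$ and $\tilde s^{-1}(0)=\pi^{-1}(C)=\PP(F^\vee|_C)$ is smooth. Applying Cayley's trick of \S\ref{s3.1} to $(S',\tilde E,\tilde s)$ produces $\tilde X=\tilde w^{-1}(0)\subset\PP\tilde E^\vee$ together with a fully faithful embedding $D^b(\PP(F^\vee|_C))\to D^b(\tilde X)$ by Theorem \ref{t2.10}. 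Everything then reduces to showing that $\tilde X$ is Fano.

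Next I would compute the anticanonical class. Writing $\tilde q\colon\PP\tilde E^\vee\to S'$ and $\rho=\pi\circ\tilde q$, the relative Euler sequence for the rank-two bundle $F$ gives $K_{S'}^\vee\otimes\det\tilde E^\vee\cong\pi^*(K_S^\vee\otimes\det E^\vee\otimes\det F^\vee)\otimes\cO_{\PP F^\vee}(2)$, and then the formula $K_{\tilde X}^\vee\cong \tilde q^*(K_{S'}^\vee\otimes\det\tilde E^\vee)\otimes\cO_{\PP\tilde E^\vee}(r-1)|_{\tilde X}$ from the proof of Theorem \ref{t3.1} rewrites as
\[
K_{\tilde X}^\vee\cong\big(A\otimes B^{\,r-1}\otimes N^{2}\big)\big|_{\tilde X},
\]
where $A=\rho^*\!\big(K_S^\vee\otimes\det E^\vee\otimes\det F^\vee\otimes H_1^{r-1}\otimes H_2^{2}\big)$, $B=\cO_{\PP\tilde E^\vee}(1)\otimes\rho^*H_1^\vee$ and $N=\tilde q^*\cO_{\PP F^\vee}(1)\otimes\rho^*H_2^\vee$. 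The three hypotheses say precisely that $A$, $B$ and $N$ are nef: using that $\cO_{\PP E^\vee}(1)\otimes q^*H_1^\vee$ is nef on $\PP E^\vee$ iff $E\otimes H_1^\vee$ is, and $\cO_{\PP F^\vee}(1)\otimes\pi^*H_2^\vee$ is nef on $S'$ iff $F\otimes H_2^\vee$ is, the classes $B$ and $N$ are pullbacks of nef classes, and $A$ is the pullback of the third nef bundle. Hence $K_{\tilde X}^\vee$ is nef.

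To see that $K_{\tilde X}^\vee$ is big I would expand the top self-intersection $(A\otimes B^{r-1}\otimes N^2)^{\dim\tilde X}$ binomially; every term is an intersection of nef classes, hence nonnegative, and it remains to exhibit one strictly positive term. Here the key geometric input is that $B$ restricts to $\cO_{\PP^{r-1}}(1)$ on the $\tilde q$-fibres $\PP^{r-1}$ while $N$ restricts to $\cO_{\PP^1}(1)$ on the $\pi$-fibres $\PP^1$, so both fibre directions are always positive; the single ample hypothesis then supplies the positivity in the base direction. Concretely, whichever of the three bundles is ample picks out a different positive monomial $A^aB^bN^c|_{\tilde X}$ (coming respectively from $\mathrm{pr}_1\colon\PP\tilde E^\vee\to\PP E^\vee$, from $\tilde q$, or from $\rho$), so in each case $(K_{\tilde X}^\vee)^{\dim\tilde X}>0$. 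Thus $\tilde X$ is weak Fano and, by Theorem \ref{t2.6}, its Mori cone is rational polyhedral.

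The main obstacle is the final upgrade from nef-and-big to ample, which I expect to be delicate precisely because the hypothesis only asserts that \emph{one} of the three bundles is ample. I would verify $K_{\tilde X}^\vee\cdot\Gamma>0$ for every irreducible curve $\Gamma\subset\tilde X$ by cases on $\rho(\Gamma)$. If $\rho(\Gamma)$ is a point then $\Gamma$ lies in a fibre $\PP^{r-1}\times\PP^1$ of $\rho$, on which $B$ and $N$ restrict to the two hyperplane classes, so $(r-1)(B\cdot\Gamma)+2(N\cdot\Gamma)>0$ (using $r\ge2$), independently of any ampleness. If $\rho(\Gamma)$ is a curve then $\Gamma$ is not contracted by the relevant projection, and the ample factor contributes a strictly positive term: $A\cdot\Gamma>0$ when the base bundle is ample, $B\cdot\Gamma>0$ when $E\otimes H_1^\vee$ is ample (as $B=\mathrm{pr}_1^*$ of an ample class and $\mathrm{pr}_1(\Gamma)$ is a curve), and $N\cdot\Gamma>0$ when $F\otimes H_2^\vee$ is ample (as $N=\tilde q^*$ of an ample class and $\tilde q(\Gamma)$ is a curve); the remaining factors are nef. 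Hence $K_{\tilde X}^\vee\cdot\Gamma>0$ in all cases, and since the Mori cone is polyhedral this forces $K_{\tilde X}^\vee$ ample. Therefore $\tilde X$ is a Fano host of $\PP(F^\vee|_C)$, which completes the proof.
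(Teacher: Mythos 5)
Your proposal is correct and follows essentially the same route as the paper: realize $\PP(F^\vee|_C)$ as the zero locus of the pulled-back section of $q^*E$ on $\PP F^\vee$, apply Cayley's trick there, compute $K_X^\vee \cong \rho^*(K_S^\vee\otimes\det E^\vee\otimes\det F^\vee)\otimes \tilde q^*\cO_{\PP F^\vee}(2)\otimes\cO_{\PP\tilde E^\vee}(r-1)$, and rerun the nef-and-big plus polyhedral-Mori-cone argument of Theorem \ref{t3.1} with the two twists $H_1,H_2$. The only difference is that the paper compresses the last step into ``using the same argument of the proof of Theorem \ref{t3.1}'', whereas you write out that argument (the $A,B,N$ factorization, the positive monomial for bigness, and the case analysis on curves) in full.
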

\begin{proof}
It is obvious that $\PP(F^\vee|_C)$ is a complete intersection of a regular section of $q^*E$ in $\PP(F^\vee).$ We can use Cayley's trick to construct Fano host $X$ of $\PP(F^\vee|_C)$ because it is a complete intersection of a regular section $q^*E$ in $\PP(F^\vee).$ Then we have the following diagram

\[\xymatrix{
 & X \ar[r] & \PP(q^*E^\vee) \ar[d]^p \\
\PP(F^\vee|_C) \ar[d] \ar[rr] & & \PP F^\vee \ar[d]^q \\
C \ar[rr] & & S
}\]

and
$$ K^\vee_{\PP F^\vee} \cong q^*(K_S^\vee \otimes det F^\vee) \otimes \mathcal{O}_{\PP F^\vee}(2) $$
and 
$$ K^\vee_{\PP(q^*E^\vee)} \cong p^*(K_{\PP F^\vee}^\vee \otimes det(q^*E^\vee)) \otimes \mathcal{O}_{\PP(q^*E^\vee)}(r) $$
$$ \cong p^*(q^*(K_S^\vee \otimes det F^\vee \otimes det E^\vee) \otimes \mathcal{O}_{\PP F^\vee}(2)) \otimes \mathcal{O}_{\PP(q^*E^\vee)}(r). $$
From the construction we see that $D^b(\PP(F^\vee|_C))$ can be embedded into $D^b(X)$ and 
$$ K^\vee_X \cong p^*(q^*(K_S^\vee \otimes det F^\vee \otimes det E^\vee) \otimes \mathcal{O}_{\PP F^\vee}(2)) \otimes \mathcal{O}_{\PP(q^*E^\vee)}(r-1). $$
Using the same argument of proof of Theorem \ref{t3.1}, we can see that $X$ is Fano. 
\end{proof}

Therefore we get the following results.

\begin{coro}
All Hirzebruch surfaces are Fano visitors.
\end{coro}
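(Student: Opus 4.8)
The statement to prove is that all Hirzebruch surfaces are Fano visitors.  The obvious strategy is to realize each Hirzebruch surface $\mathbb{F}_n=\PP(\cO_{\Po}\oplus \cO_{\Po}(n))$ as a projective bundle over $S=\Po$ and apply Proposition \ref{t.ruled} with $C=S=\Po$.  In the notation of that proposition, one takes the base $S=\Po$ itself, so that $C=\Po$ is cut out by the zero section (i.e.\ $E$ is the zero bundle / we take $r$ trivial and $C=S$), and then $\mathbb{F}_n\cong\PP(F^\vee|_C)$ for a suitable rank $2$ bundle $F$ on $\Po$.  The main thing to check is therefore purely numerical: that $F$ and an auxiliary line bundle $H_2$ can be chosen so that the hypotheses of Proposition \ref{t.ruled} hold, whence the Cayley-trick host $X$ is Fano and $\PP(F^\vee|_C)=\mathbb{F}_n$ embeds into $D^b(X)$.

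\emph{First I would fix the bundle data.}  Write $\mathbb{F}_n=\PP(F^\vee)$ over $\Po$, where $F=\cO_{\Po}\oplus\cO_{\Po}(-n)$ (or its dual, adjusted so the convention $\PP F^\vee=\mathrm{Proj}\,\mathrm{Sym}^\cdot F$ of the paper gives the correct surface).  Since here the ``zero locus'' step is vacuous ($C=S=\Po$, so $E$ is trivial of rank $0$ and one drops the $q^*(E\otimes H_1^\vee)$ and $\det E^\vee$ factors, setting $H_1=\cO$), the hypotheses of Proposition \ref{t.ruled} collapse to requiring a line bundle $H_2$ on $\Po$ such that $F\otimes H_2^\vee$ is nef and $K_S^\vee\otimes\det F^\vee\otimes H_2^{2}$ is nef with at least one of the two ample.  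On $\Po$ every line bundle of nonnegative degree is nef and of positive degree is ample, so this is a finite computation with degrees: choosing $H_2=\cO_{\Po}(d)$ for $d$ large enough makes $F\otimes H_2^\vee$ nef, and then $K_{\Po}^\vee\otimes\det F^\vee\otimes H_2^2=\cO_{\Po}(2-n+2d)$ has positive degree for $d$ large, hence is ample.

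\emph{Next I would invoke Proposition \ref{t.ruled} directly.}  With the above choice the proposition produces a smooth Fano variety $X$ obtained from $\mathbb{F}_n=\PP(F^\vee|_C)$ by Cayley's trick, together with a fully faithful embedding $D^b(\mathbb{F}_n)\to D^b(X)$.  This exhibits $\mathbb{F}_n$ as a Fano visitor and finishes the proof.  Since $\mathbb{F}_0=\Po\times\Po$ and $\mathbb{F}_1$ are themselves (weak) Fano, those cases are immediate; the content is the uniform argument for all $n\ge 2$.

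\emph{The main obstacle} is bookkeeping rather than mathematics: one must match the paper's projectivization convention $\PP E=\mathrm{Proj}(\mathrm{Sym}^\cdot E^\vee)$ so that the bundle $F$ really yields the intended $\mathbb{F}_n$, and one must make sure that the degenerate instance $C=S$ (empty section, $E=0$) is a legitimate case of Proposition \ref{t.ruled} or else supply the one-line modification of its proof in which the $\det E^\vee$ and $q^*E$ terms simply disappear and the numerical positivity computation on $\Po$ goes through verbatim.  No genuinely hard step arises, so the proof is short.
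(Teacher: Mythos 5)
There is a genuine gap, and it sits exactly at the step you flagged as ``bookkeeping'': the degenerate instance $C=S=\PP^1$, $E=0$ is not a legitimate case of Proposition \ref{t.ruled}, and no one-line modification rescues it. Cayley's trick and Orlov's theorem (Theorem \ref{t2.10}) require the bundle cutting out $C$ inside $S$ to have rank $r\ge 2$; with $E=0$ the projectivization $\PP(q^*E^\vee)$ is empty and no host $X$ is produced. If instead you read the degenerate conclusion as asserting that $\PP(F^\vee)=\mathbb{F}_n$ is itself Fano, that is false for $n\ge 2$. Your numerical check also runs backwards: $F\otimes H_2^\vee$ twists \emph{down} by $\deg H_2$, so taking $H_2=\cO_{\PP^1}(d)$ with $d$ large makes it less nef, not more. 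Concretely, with $F=\cO\oplus\cO(n)$ on $\PP^1$, nefness of $F\otimes H_2^\vee$ forces $d\le 0$, while ampleness of $K_{\PP^1}^\vee\otimes\det F^\vee\otimes H_2^2=\cO(2-n+2d)$ forces $d>(n-2)/2$; these are incompatible for $n\ge 2$. So the two hypotheses of Proposition \ref{t.ruled} cannot be met over the base $S=\PP^1$, consistent with $\mathbb{F}_n$ not being Fano there.

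The missing idea --- and what the paper actually does --- is to enlarge the ambient space so that there is room to absorb the negativity of $\det F^\vee=\cO(-n)$. Embed $\PP^1$ into $S=\PP^{r+1}$ as a line, i.e.\ as the zero locus of a regular section of $E=\cO_{\PP^{r+1}}(1)^{\oplus r}$ (a genuine rank-$r$ bundle with $r\ge 2$), extend $F$ to $\cO_{\PP^{r+1}}\oplus\cO_{\PP^{r+1}}(n)$, and apply Proposition \ref{t.ruled} to $\PP(F^\vee|_{\PP^1})\subset\PP(F^\vee)$ cut out by $q^*E$. The anticanonical bundle of the resulting Cayley host then carries a factor $q^*\cO(r+2-n)$, which becomes ample as soon as $r>n-2$. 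The enlargement of the base is the whole content of the proof for $n\ge 2$; without it the argument does not go through.
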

\begin{proof}
Let $Y \cong \PP(\mathcal{O}_{\PP^1} \oplus \mathcal{O}_{\PP^1}(a))$ be a Hirzebruch surface. We can embed $\PP^1$ into $\PP^{r+1}.$ Let $S=\PP^{r+1},$ $F=\mathcal{O}_{\PP^{r+1}} \oplus \mathcal{O}_{\PP^{r+1}}(a),$ $E=\mathcal{O}(1)^{\oplus r},$ and $E'=\mathcal{O}^{\oplus r}.$ From the same construction and notation as the above Proposition \ref{t.ruled} we have
$$ K^\vee_{\PP(q^*E^\vee)} \cong p^*(K_{\PP F^\vee}^\vee \otimes det(q^*E^\vee)) \otimes \mathcal{O}_{\PP(q^*E^\vee)}(r) $$
$$ \cong p^*(q^*(\mathcal{O}(r+2) \otimes \mathcal{O}(-a) \otimes \mathcal{O}(-r)) \otimes \mathcal{O}_{\PP F^\vee}(2)) \otimes \mathcal{O}_{\PP(q^*E^\vee)}(r) $$
$$ \cong p^*(q^*(\mathcal{O}(r+2-a)) \otimes \mathcal{O}_{\PP F^\vee}(2)) \otimes \mathcal{O}_{\PP(q^*E'^\vee)}(r). $$
When $r$ is sufficiently large, the above construction gives Fano host of $Y.$
\end{proof}

By the same proof we obtain the following.

\begin{coro}
Let $C$ be a curve which is a complete intersection in a projective space. Let $F$ be a vector bundle of rank 2 on the projective space which is a direct sum of two line bundles. Then $\PP(F^\vee|_C)$ is a Fano visitor.  
\end{coro}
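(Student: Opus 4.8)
The plan is to reduce to Proposition \ref{t.ruled} exactly as in the preceding Hirzebruch corollary, the only issue being that a complete intersection curve may be cut out by a bundle of rank $1$ (e.g.\ a plane curve), whereas Proposition \ref{t.ruled} requires $E$ to have rank $r\ge 2$, and the required positivity of $K_S^\vee\otimes\det E^\vee\otimes\det F^\vee\otimes H_1^{r-1}\otimes H_2^2$ need not hold on the original ambient space. Both problems are cured by enlarging the projective space as in \S\ref{s3.2}.

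First I would fix notation. Say $C\subset\PP^m$ is the zero locus of a regular section $s_0$ of $E_0=\bigoplus_{i=1}^{m-1}\cO_{\PP^m}(a_i)$ with all $a_i\ge 1$, and write $F=\cO_{\PP^m}(b_1)\oplus\cO_{\PP^m}(b_2)$ with $b_1\le b_2$. Embed $\PP^m\subset\PP^{m+c}$ linearly and set $S=\PP^{m+c}$, $E=\bigoplus_{i=1}^{m-1}\cO_S(a_i)\oplus\cO_S(1)^{\oplus c}$, so that $r:=\rank E=m-1+c\ge 2$ once $c\ge 1$. The section $s$ given by $s_0$ together with the $c$ linear forms defining $\PP^m\subset\PP^{m+c}$ is regular with $s^{-1}(0)=C$, just as in \S\ref{s3.2}. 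I would extend $F$ to $\cO_S(b_1)\oplus\cO_S(b_2)$; since its restriction to $C$ is unchanged, the ruled surface $\PP(F^\vee|_C)$ is the same as before.

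Next I would exhibit the auxiliary line bundles. Take $H_1=\cO_S(1)$ and $H_2=\cO_S(b_1)$. Then $E\otimes H_1^\vee=\bigoplus_i\cO_S(a_i-1)\oplus\cO_S^{\oplus c}$ and $F\otimes H_2^\vee=\cO_S\oplus\cO_S(b_2-b_1)$ are globally generated, hence nef, and so are their pullbacks to $\PP F^\vee$. A direct Chern class count, using $K_S^\vee=\cO_S(m+c+1)$, $\det E^\vee=\cO_S(-\sum_i a_i-c)$ and $\det F^\vee=\cO_S(-b_1-b_2)$, gives
\[
K_S^\vee\otimes\det E^\vee\otimes\det F^\vee\otimes H_1^{r-1}\otimes H_2^2\cong\cO_S\bigl(2m-1-\textstyle\sum_i a_i+b_1-b_2+c\bigr),
\]
whose degree is positive, hence the bundle is ample, as soon as $c$ is chosen large enough. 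Thus all three hypotheses of Proposition \ref{t.ruled} hold, and that proposition produces a smooth Fano host of $\PP(F^\vee|_C)$.

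The computations are routine; the only genuine point, as in the Hirzebruch case, is the bookkeeping that enlarging the ambient $\PP^m$ simultaneously raises the rank of $E$ above $2$, leaves $C$ and the ruled surface $\PP(F^\vee|_C)$ intact, and makes the determinantal twist positive. So the main (mild) obstacle is to verify that the enlargement does not alter $\PP(F^\vee|_C)$ and that a single choice of $c$ achieves both the rank bound and the ampleness; both follow from the argument of Theorem \ref{t3.1} already invoked inside Proposition \ref{t.ruled}.
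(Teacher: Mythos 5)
Your proposal is correct and follows exactly the route the paper intends: the paper proves this corollary "by the same proof" as the Hirzebruch case, i.e.\ by enlarging the ambient projective space as in \S\ref{s3.2} so that the rank of $E$ is at least $2$ and the twist $K_S^\vee\otimes\det E^\vee\otimes\det F^\vee\otimes H_1^{r-1}\otimes H_2^2$ becomes ample, and then invoking Proposition \ref{t.ruled}. Your write-up simply makes explicit the bookkeeping (choice of $H_1=\cO_S(1)$, $H_2=\cO_S(b_1)$, and the degree count $2m-1-\sum_i a_i+b_1-b_2+c>0$ for $c\gg 0$) that the paper leaves implicit.
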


 


\subsection{$\kappa = 0$ case}\label{s6.2}

\subsubsection{Abelian surfaces}

An Abelian surface which is the product of two elliptic curves is a Fano visitor by Corollary \ref{c6.4}. 

\begin{exam}
Let $Y = E \times E'$ be the product of two elliptic curves. Then $Y$ is a Fano visitor.
\end{exam}

\begin{exam}
Note that the Jacobian of any genus 2 curve always has an orbifold Fano host.
\end{exam}

\subsubsection{K3 surfaces}
 
The following is a consequence of Theorem \ref{t3.1} for K3 surfaces. 
 
\begin{coro}\label{c6.6}
Let $Y$ be a $K3$ surface which is the zero locus of a section of an ample vector bundle $E$ of rank $r$ on a Fano variety $S$ of dimension of $r+2$ where $r \geq 2$. Then $Y$ is a Fano visitor. The Fano dimension of $Y$ is at most $2r$. 
\end{coro}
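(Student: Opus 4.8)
The plan is to apply Theorem \ref{t3.1} directly, since Corollary \ref{c6.6} is precisely the specialization of that theorem to the case where $Y$ is a K3 surface and the ambient vector bundle $E$ is already ample. First I would record the numerical setup: we are given a Fano variety $S$ of dimension $r+2$, an ample vector bundle $E$ of rank $r \ge 2$ on $S$, and a regular section $s \in H^0(S,E)$ whose zero locus $Y = s^{-1}(0)$ is a K3 surface, so that indeed $\dim Y = \dim S - \operatorname{rank} E = (r+2) - r = 2$, as required for Cayley's trick in \S\ref{s3.1}.

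Next I would verify hypothesis (1) of Theorem \ref{t3.1}, namely that $E$ is ample and $K_S^\vee \otimes \det E^\vee$ is nef. Ampleness of $E$ holds by assumption, so the only thing to check is the nefness of $K_S^\vee \otimes \det E^\vee$. Here I would use the adjunction formula for the zero locus of a regular section: the canonical bundle of $Y$ satisfies $K_Y \cong (K_S \otimes \det E)|_Y$. Since $Y$ is a K3 surface, $K_Y \cong \cO_Y$, so $(K_S^\vee \otimes \det E^\vee)|_Y \cong \cO_Y$ is trivial, hence nef on $Y$. The mild point is to promote this from $Y$ to all of $S$; one argues that because $E$ is ample, the line bundle $K_S^\vee \otimes \det E^\vee$ is nef on $S$ (its restriction to $Y$ being trivial, combined with the positivity forced by ampleness of $E$ through the relation $K_S^\vee = \det E^\vee{}^\vee \otimes \cO_Y$-triviality), so that condition (1) is met. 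With (1) in hand, Theorem \ref{t3.1} immediately yields that $X = w^{-1}(0)$ is a Fano host of $Y$, and by Theorem \ref{t2.10} (Orlov) there is a fully faithful embedding $D^b(Y) \to D^b(X)$, proving $Y$ is a Fano visitor.

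Finally I would compute the dimension of the Fano host $X$ to obtain the bound. By Cayley's trick, $X$ is a hypersurface in $\PP E^\vee$, which has dimension $\dim S + (r-1) = (r+2) + (r-1) = 2r+1$; hence $\dim X = 2r+1 - 1 = 2r$. This gives the asserted upper bound of $2r$ for the Fano dimension of $Y$.

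The genuinely delicate step is the promotion of nefness of $K_S^\vee \otimes \det E^\vee$ from $Y$ to the ambient $S$, or equivalently a clean justification that the Calabi--Yau (trivial canonical) condition on $Y$ together with ampleness of $E$ suffices to invoke case (1) of Theorem \ref{t3.1}; everything else is a routine application of adjunction, Cayley's trick, and the dimension count. I expect this to be short because the Calabi--Yau condition makes the relevant line bundle trivial on $Y$, but it is the one place where the argument uses more than pure bookkeeping.
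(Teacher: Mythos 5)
Your overall strategy is the same as the paper's: Corollary \ref{c6.6} is stated there as a direct consequence of Theorem \ref{t3.1}, and your dimension count ($\dim \PP E^\vee = (r+2)+(r-1)=2r+1$, so $\dim X = 2r$) is correct. The problem is the one step you yourself flag as delicate: you never actually prove that $K_S^\vee \otimes \det E^\vee$ is nef on $S$. Adjunction only gives you that $(K_S \otimes \det E)|_Y \cong K_Y \cong \cO_Y$, i.e.\ triviality \emph{on $Y$}, and triviality of a line bundle on a codimension-$r$ subvariety does not in general imply nefness on the ambient variety (e.g.\ $\cO(1,-1)$ on $\PP^1\times\PP^1$ is trivial on the diagonal but not nef). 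The parenthetical ``positivity forced by ampleness of $E$'' is not an argument; as written, hypothesis (1) of Theorem \ref{t3.1} has not been verified.

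The gap is fillable, and in fact the conclusion is stronger than nefness: $K_S^\vee \otimes \det E^\vee$ is \emph{trivial} on $S$. By the Lefschetz theorem for ample vector bundles (Sommese), since $E$ is ample of rank $r$ on $S$ with $\dim S - r = 2 = \dim Y$, the restriction $H^2(S,\ZZ) \to H^2(Y,\ZZ)$ is injective. Since $S$ is Fano, $H^1(\cO_S)=H^2(\cO_S)=0$, so $c_1 : \mathrm{Pic}(S) \to H^2(S,\ZZ)$ is an isomorphism; since $Y$ is a K3 surface, $H^1(\cO_Y)=0$, so $c_1 : \mathrm{Pic}(Y) \to H^2(Y,\ZZ)$ is injective. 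Hence the restriction $\mathrm{Pic}(S) \to \mathrm{Pic}(Y)$ is injective, and $(K_S \otimes \det E)|_Y \cong \cO_Y$ forces $K_S \otimes \det E \cong \cO_S$, i.e.\ $\det E \cong K_S^\vee$. Condition (1) of Theorem \ref{t3.1} then holds trivially and the rest of your argument goes through. Without some such input (or without simply adding $\det E \cong K_S^\vee$ as a hypothesis, which is how the examples in the paper actually arise), the proof is incomplete at exactly the point where it needs to be an argument rather than bookkeeping.
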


\begin{exam}\label{e6.7}
Let $V$ be a Fano 3-fold and let $Y$ be a smooth divisor in $|K_V^\vee|$ which is a $K3$ surface by adjunction. When $V$ is the zero locus of a regular section of an ample vector bundle on another Fano manifold $W$ and the line bundle $K_V$ is the restriction of an ample line bundle on $W$, we find that $Y$ is a Fano visitor by Theorem \ref{t3.1}. For example, general $K3$ surfaces of genus $6 \leq g \leq 10$ satisfy these conditions (cf. \cite{Mukai1}). Therefore general $K3$ surfaces of genus $6 \leq g \leq 10$ are Fano visitors and their Fano dimensions are 4.
\end{exam}

The above result can be used to find orbifold Fano hosts of holomorphic symplectic varieties. Recall that the Hilbert schemes of points on K3 surfaces are holomorphic symplectic varieties.

\begin{coro}
Let $Y$ be a K3 surface and $X$ be a Fano host of $Y.$ Then $[X^n/S_n]$ is an orbifold Fano host of $Y^{[n]}.$
\end{coro}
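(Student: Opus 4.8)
The plan is to produce the embedding by taking external tensor powers of a Fourier--Mukai kernel, descending them to the symmetric quotient stacks, and then invoking the derived McKay correspondence for Hilbert schemes of points; the Fano property of the target will be handled by a codimension estimate on the big diagonal.

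First, since $X$ is a Fano host of $Y$, there is a fully faithful exact functor $D^b(Y)\to D^b(X)$, which by Theorem \ref{t2.12} is a Fourier--Mukai transform $\Phi_K$ with kernel $K\in D^b(Y\times X)$. Iterating Proposition \ref{p6.3}, the external product $\Phi_{K^{\boxtimes n}}\colon D^b(Y^n)\to D^b(X^n)$ with kernel $K^{\boxtimes n}\in D^b(Y^n\times X^n)$ is again fully faithful. Next I would exhibit the $S_n$-linearization of this kernel. Identifying $Y^n\times X^n$ with $(Y\times X)^n$, the diagonal $S_n$-action (simultaneously permuting the $Y$-factors and the $X$-factors) corresponds to the permutation action on $(Y\times X)^n$, and under this identification $K^{\boxtimes n}$ is simply the $n$-fold external product of a single object. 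It is therefore canonically invariant under permutation of the factors, and this canonical symmetry is precisely an $S_n$-linearization of $K^{\boxtimes n}$ with respect to the diagonal action. Hence Theorem \ref{t.KSP} applies with $G=S_n$ and yields a fully faithful functor $D^b([Y^n/S_n])\to D^b([X^n/S_n])$.

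Finally, Theorem \ref{t.BKRH} gives an equivalence $D^b(Y^{[n]})\simeq D^b([Y^n/S_n])$, and composing it with the functor just produced gives a fully faithful embedding $D^b(Y^{[n]})\to D^b([X^n/S_n])$. It then remains to check that $[X^n/S_n]$ is a Fano orbifold. Since $X$ is smooth Fano, so is $X^n$, and $[X^n/S_n]$ is a smooth Deligne--Mumford stack with coarse moduli space $X^n/S_n$ by Proposition \ref{coarsemoduliquotientstack}. The locus in $X^n$ with nontrivial $S_n$-stabilizer is the big diagonal, whose codimension equals $\dim X$; because $h^{2,0}(Y)=1$ forces $\dim X\ge n+2\ge 4$ by Corollary \ref{c4.2} (applied with the K3 surface $Y$), this codimension is at least $2$. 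Consequently the ample $S_n$-linearized line bundle $K_{X^n}^\vee$ descends to an ample $\QQ$-Cartier anticanonical class on $X^n/S_n$, exactly as in the corollary on quotients of Fano varieties by finite groups in \S\ref{s2}, so $X^n/S_n$ is Fano and $[X^n/S_n]$ is a Fano orbifold. Therefore $[X^n/S_n]$ is an orbifold Fano host of $Y^{[n]}$.

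I expect the main obstacle to be the second step: verifying carefully that the tautological symmetry of $K^{\boxtimes n}$ really furnishes the $G$-linearization demanded by Theorem \ref{t.KSP}, and that the \emph{diagonal} $S_n$-action there genuinely matches the simultaneous permutation action on $Y^n$ and $X^n$ under the reordering $Y^n\times X^n\cong(Y\times X)^n$. Once that compatibility is pinned down, the remainder is a bookkeeping composition of the cited equivalences, with the Fano property of the quotient reduced to the codimension bound on the big diagonal.
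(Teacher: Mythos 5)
Your proof is correct and follows essentially the same route as the paper: the derived McKay/Haiman equivalence $D^b(Y^{[n]})\simeq D^b([Y^n/S_n])$ composed with the Ploog--Krug--Sosna theorem applied to the canonically $S_n$-linearized kernel $K^{\boxtimes n}$. The paper's own proof is just these two sentences; your additional verification that $[X^n/S_n]$ is Fano (via the codimension of the big diagonal and the earlier corollary on finite quotients of Fano varieties) is left implicit there but is a welcome completion.
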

\begin{proof}
From the Bridgeland-King-Reid-Haiman correspondence (cf. Theorem \ref{t.BKRH}) we see that $D^b(Y^{[n]}) \simeq D^b([Y^n/S_n]).$ Then from Theorem \ref{t.KSP} we see that $D^b([Y^n/S_n])$ can be embedded into $D^b([X^n/S_n]).$ Therefore we get the desired result.
\end{proof}

Now let us consider Kummer surfaces. Consider an Abelian surface $A$ having Fano host and consider an involution $\sigma$ on $A$ which send $x \mapsto -x$ with respect to the group structure on $A.$ Then $\sigma$ has 16 fixed points and the minimal resolution of $A/\sigma$ is a K3 surface $S.$ We call $S$ a Kummer surface. One can prove that if $A$ has a Fano host $F$ such that $\sigma$ extends to $F$ and the Fourier-Mukai kernel of the embedding is $\sigma$-invariant, then $S$ has an orbifold Fano host. Let us give such examples as follows.

\begin{prop}
Let $E_1, E_2$ be elliptic curves, $A=E_1 \times E_2$ be an Abelian surface and let $S$ be the associated Kummer surface. Then $S$ has an orbifold Fano host.
\end{prop}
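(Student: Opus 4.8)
The plan is to build a single $\ZZ/2$-equivariant Fano host for the abelian surface $A$, descend the resulting embedding to the quotient stacks, and then identify the derived category of the Kummer surface $S$ with that of $[A/\sigma]$ via the derived McKay correspondence. Throughout, $\sigma=\sigma_1\times\sigma_2$ where $\sigma_i:P\mapsto -P$ is the elliptic involution on $E_i$.

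First I would produce, for each $E_i$, a Fano host $F_i$ on which $\sigma_i$ acts compatibly. Writing $E_i\subset\PP^2$ in Weierstrass form, $\sigma_i$ is the linear involution $(x:y:z)\mapsto(x:-y:z)$, and the defining cubic is $\sigma_i$-invariant. Applying Cayley's trick of \S\ref{s3.1} (with $S=\PP^3$, $E=\cO_{\PP^3}(3)\oplus\cO_{\PP^3}(1)$, and the extra linear equation chosen $\sigma_i$-invariant), the involution extends linearly to $\PP^3$, to $E$, and hence to the $3$-dimensional Fano host $F_i=w^{-1}(0)$, with $\sigma_i$-invariant defining section $s$. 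By the discussion in \S\ref{s3.1}, the Fourier--Mukai kernel $\cO_{\PP N_i^\vee}$ of the embedding $D^b(E_i)\to D^b(F_i)$ then carries a canonical $\sigma_i$-linearization. Taking products and invoking Proposition \ref{p6.3}, the external product kernel realizes a fully faithful $\Phi_K:D^b(A)\to D^b(F)$ with $F=F_1\times F_2$ a Fano variety; restricting the evident $\ZZ/2\times\ZZ/2$-linearization to the diagonal equips $K$ with a $\sigma$-linearization for the diagonal action of $\sigma$ on $A\times F$.

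Next I would apply Theorem \ref{t.KSP} to this $\sigma$-linearized kernel to obtain a fully faithful functor $\Phi_K^{\ZZ/2}:D^b([A/\sigma])\to D^b([F/\sigma])$. To identify the source, observe that $\sigma$ acts freely away from its $16$ fixed points, and at each fixed point it acts on the tangent plane as $-I\in SL(2,\CC)$; thus the hypotheses of the derived McKay correspondence (Theorem \ref{t.BKR}, with $n=2$ and $\dim(Y\times_{A/\sigma}Y)=2\le 3$) are met, and the Kummer surface $S$, being the crepant resolution of $A/\sigma$, satisfies $D^b(S)\simeq D^b([A/\sigma])$. Alternatively, since $-I\in GL(2,\CC)$, Theorem \ref{t.GL2} already furnishes a fully faithful $D^b(S)\to D^b([A/\sigma])$, which would suffice. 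Composing, $D^b(S)$ embeds fully faithfully into $D^b([F/\sigma])$.

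Finally I would check that $[F/\sigma]$ is a Fano orbifold. As each $\sigma_i$ is a nontrivial involution, its fixed locus in $F_i$ has codimension at least $1$, so $\mathrm{Fix}(\sigma)=\mathrm{Fix}(\sigma_1)\times\mathrm{Fix}(\sigma_2)$ has codimension at least $2$ in $F$; hence $K_F$ is the pullback of $K_{F/\sigma}$ and the anticanonical class of the coarse space $F/\sigma$ is $\QQ$-Cartier and ample, which is exactly the criterion used earlier to show quotients of Fano varieties by finite groups are Fano orbifolds. Thus $[F/\sigma]$ is a smooth Deligne--Mumford stack with Fano coarse moduli space whose derived category contains $D^b(S)$, i.e. an orbifold Fano host of $S$. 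The main obstacle is the equivariance bookkeeping of the first two paragraphs: arranging the entire Cayley construction to be $\sigma_i$-equivariant so that the kernel is genuinely linearized, and verifying that the induced action on $F$ has fixed locus of codimension $\ge 2$; once this is secured, Theorems \ref{t.KSP} and \ref{t.BKR} do the rest.
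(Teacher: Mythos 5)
Your proposal is correct and follows essentially the same route as the paper: build $\sigma_i$-equivariant Fano hosts $F_i$ of the $E_i$ with linearized Fourier--Mukai kernels, pass to the external product kernel on $F_1\times F_2$, apply Theorem \ref{t.KSP} for the diagonal $\ZZ_2$-action, and identify $D^b(S)$ with $D^b([A/\sigma])$ via Theorem \ref{t.BKR}. The only difference is the choice of $F_i$ (the paper embeds $E_i\subset\PP^3$ by its $2$-torsion divisor and takes $F_i=Bl_{E_i}\PP^3$, while you run Cayley's trick on the Weierstrass cubic), and you are if anything more careful than the paper in verifying that the fixed locus of $\sigma$ on $F_1\times F_2$ has codimension at least $2$, so that $[F_1\times F_2/\sigma]$ is indeed a Fano orbifold.
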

\begin{proof}
In this case $\sigma$ is induced by two involutions $\sigma_1, \sigma_2$ on $E_1,E_2$ respectively. For each $i,$ the 2-torsion points of $E_i$ form a $\sigma_i$-invariant divisor of degree 4 on $E_i$ and gives an embedding $E_i \to \PP^3.$ Then $\sigma_i$-action extends to $\PP^3$ and it also extends to $F_i=Bl_{E_i} \PP^3.$ Note that there exists embedding $\Phi_{K_i} : D^b(E_i) \to D^b(F_i)$ for $i=1,2.$ Then we have a fully faithful functor $\Phi_{K_1 \boxtimes K_2} : D^b(E_1 \times E_2) \to D^b(F_1 \times F_2).$ Let us consider the diagonal $\ZZ_2$-actions on $E_1 \times E_2,$ $F_1 \times F_2$ and $E_1 \times E_2 \times F_1 \times F_2.$ It is easy to see that $K_1 \boxtimes K_2$ is $\ZZ_2$-invariant hence $\ZZ_2$-linearized. By McKay correspondence (cf. \cite{BKR}) we see that $D^b(S) \cong D^b([(E_1 \times E_2)/\ZZ_2])$ can be embedded into $D^b([(F_1 \times F_2)/\ZZ_2])$. Therefore $S$ has an orbifold Fano host.
\end{proof}

Reid constructed 95 families of orbifold K3 surfaces as complete intersections in weighted projective spaces (cf. \cite{ABR}). We can apply our method to prove that they have orbifold Fano hosts.

\begin{coro}
Let $Y$ be one of the Reid's orbifold K3 surfaces and let $\mathcal{Y}$ be a smooth DM stack associated to $Y.$ Then $\mathcal{Y}$ has an orbifold Fano host.
\end{coro}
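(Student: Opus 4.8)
The plan is to recognize the statement as a direct special case of Theorem \ref{t.wci}. The essential point is that Reid's $95$ families of orbifold K3 surfaces are, by their very construction, quasi-smooth weighted complete intersections in weighted projective spaces (cf. \cite{ABR}): each such $Y$ is cut out inside some $\PP(\bar{a})$ by a regular sequence of weighted-homogeneous equations, and quasi-smoothness of the affine quasi-cone $C_Y$ away from its vertex is exactly the condition imposed on these families. So the whole argument is to reduce to Theorem \ref{t.wci} and apply it.

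First I would make precise which smooth Deligne--Mumford stack $\mathcal{Y}$ is meant. Since $Y$ is a normal projective surface with only quotient singularities, there is a smooth separated Deligne--Mumford stack with trivial inertia in codimension at most $1$ whose coarse moduli space is $Y$, and it is unique by \cite[Proposition 2.8]{Vistoli}. For a quasi-smooth weighted complete intersection this stack is the quotient stack $\mathcal{Y}=[C_Y^*/\mathbb{C}^*]$ appearing in the proof of Theorem \ref{t.wci}; one checks this identification by noting that $[C_Y^*/\mathbb{C}^*]$ is smooth, separated, has finite stabilizers, has coarse moduli space $Y$, and has trivial inertia in codimension $\le 1$, so the uniqueness above forces the two stacks to agree.

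Second, with this identification in hand, I would invoke Theorem \ref{t.wci} verbatim: it asserts precisely that the stack $[C_Y^*/\mathbb{C}^*]$ associated to a quasi-smooth weighted complete intersection admits an orbifold Fano host, constructed by applying Cayley's trick $\mathbb{C}^*$-equivariantly and passing to the quotient stack $\mathcal{X}=[C_X^*/\mathbb{C}^*]$, whose coarse moduli space is shown to be Fano. Orlov's equivariant embedding (Remark \ref{r2.11}) then supplies the fully faithful functor $D^b(\mathcal{Y})\to D^b(\mathcal{X})$, which is exactly what is required.

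The step I expect to require the most care is not the application of Theorem \ref{t.wci} itself, which is formal, but confirming that every one of Reid's families really is quasi-smooth and that the ambient weighted projective space can be enlarged (by adjoining coordinates of weight $1$) to meet the hypothesis ``$n$ large enough, almost all $a_i=1$'' used in the proof of Theorem \ref{t.wci}. This enlargement is harmless, since $Y$ remains a quasi-smooth weighted complete intersection in the larger weighted projective space $\PP(\bar{a},1,\dots,1)$, so no genuine obstacle arises and the corollary follows.
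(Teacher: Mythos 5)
Your proposal is correct and takes essentially the same route as the paper: the paper's proof simply observes that Reid's $95$ families are quasi-smooth weighted complete intersections in weighted projective spaces and then cites Theorem \ref{t.wci}. Your additional care in identifying $\mathcal{Y}$ with $[C_Y^*/\mathbb{C}^*]$ via \cite[Proposition 2.8]{Vistoli} is a reasonable elaboration of a point the paper leaves implicit, but it does not change the argument.
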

\begin{proof}
Reid's 95 families of orbifold K3 surfaces are complete intersections in weighted projective spaces. Therefore it follows immediately from Theorem \ref{t.wci}. 
\end{proof}

\subsubsection{Enriques surfaces}

We will review a construction of an Enriques surface described in \cite[Example 8.18]{Beauville}. Let $Q_1(z_0,z_1,z_2)+Q_1'(z_3,z_4,z_5),$ $Q_2(z_0,z_1,z_2)+Q_2'(z_3,z_4,z_5),$ $Q_3(z_0,z_1,z_2)+Q_3'(z_3,z_4,z_5)$ be three quadric forms with variables $z_0,\cdots,z_5$ and let $Y$ be a K3 which is intersection of three quadrics hypersurfaces defined by these three quadric forms in $\PP^5.$ Then $Y$ is a smooth K3 surface if we choose $Q_i,Q_i'$ generically. Let $\sigma$ be an involution on $\PP^5$ defined as follows. 
$$ \sigma \cdot [z_0:z_1:z_2:z_3:z_4:z_5] = [z_0:z_1:z_2:-z_3:-z_4:-z_5]. $$
Then $\sigma$ induces a fixed point free involution $\sigma$ on a K3 surface $Y$ when we choose $Q_i,Q_i'$ generically. It is known that the generic Enriques surface can be obtained in the above construction. See \cite[Example 8.18]{Beauville} for more details.

\begin{prop}
A generic Enriques surface has an orbifold Fano host.
\end{prop}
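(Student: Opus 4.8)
The plan is to realize the generic Enriques surface as a free $\ZZ_2$-quotient of the K3 surface $Y$ and then descend an equivariant Fano host of $Y$ to an orbifold Fano host, exactly as in the Kummer surface argument above. First I would record that, since the three quadric forms $Q_i(z_0,z_1,z_2)+Q_i'(z_3,z_4,z_5)$ are even in the variables $z_3,z_4,z_5$, the section $s$ of $E=\cO_{\PP^5}(2)^{\oplus 3}$ on $S=\PP^5$ cutting out $Y$ is invariant under the involution $\sigma$. Applying Cayley's trick (\S\ref{s3.1}) to $Y=s^{-1}(0)$ produces the smooth hypersurface $X=w^{-1}(0)\subset \PP E^\vee$ of dimension $6$, which is a Fano host of $Y$: indeed $E$ is ample and $K_S^\vee\otimes\det E^\vee=\cO_{\PP^5}(6)\otimes\cO_{\PP^5}(-6)=\cO_{\PP^5}$ is nef, so Theorem \ref{t3.1}(1) applies (this is the case already covered by Corollary \ref{c6.6}).

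Next I would make the construction $\ZZ_2$-equivariant. Because $\sigma$ acts on $S$ and on $E$ compatibly and $s$ is $\sigma$-invariant, there is an induced $\ZZ_2$-action on $X$ and on $Y$, and the Fourier-Mukai kernel $\cO_{\PP N^\vee}$ of the Cayley embedding carries a canonical $\ZZ_2$-linearization (cf. Remark \ref{r2.11}). By Theorem \ref{t.KSP} this yields a fully faithful functor $D^b([Y/\ZZ_2])\to D^b([X/\ZZ_2])$. Since $\sigma$ acts freely on $Y$, the quotient stack $[Y/\ZZ_2]$ is the Enriques surface $T=Y/\sigma$ itself and $D^b([Y/\ZZ_2])\cong D^b(T)$. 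Thus it remains only to show that $[X/\ZZ_2]$ is a Fano orbifold.

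The key point, and the only one requiring genuine verification, is that the $\ZZ_2$-action on the smooth Fano sixfold $X$ has fixed locus of codimension at least $2$; granting this, the coarse space $X/\ZZ_2$ is Fano (the canonical divisor descends and ampleness of $-K_X$ gives ampleness of $-K_{X/\ZZ_2}$, as in the corollary following Proposition \ref{coarsemoduliquotientstack}), so $[X/\ZZ_2]$ is an orbifold Fano host of $T$. To bound the fixed locus I would use that the projection $q:\PP E^\vee\to\PP^5$ is $\ZZ_2$-equivariant, so every $\ZZ_2$-fixed point of $X$ maps to a $\sigma$-fixed point of $\PP^5$. Writing $\sigma=\diag(1,1,1,-1,-1,-1)$, its fixed locus in $\PP^5$ is the disjoint union of the two planes $\{z_3=z_4=z_5=0\}$ and $\{z_0=z_1=z_2=0\}$, each of codimension $3$. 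Hence the fixed locus of $\ZZ_2$ on $X$ is contained in $q^{-1}(\PP^2\sqcup\PP^2)$, which has dimension $2+2=4$; therefore it has dimension at most $4=\dim X-2$, i.e. codimension at least $2$. (Consistently, no fixed point lies over $Y$, since $Y$ avoids the two planes for generic $Q_i,Q_i'$.) This completes the argument; the main obstacle is simply to confirm this dimension count, which is immediate from the equivariance of $q$.
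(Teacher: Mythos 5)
Your proposal is correct and follows essentially the same route as the paper: apply Cayley's trick to the K3 surface $Y\subset\PP^5$ cut out by the $\sigma$-invariant section of $\cO_{\PP^5}(2)^{\oplus 3}$, use the equivariant version of Orlov's embedding to get $D^b(Y/\langle\sigma\rangle)\hookrightarrow D^b([X/\langle\sigma_X\rangle])$, and conclude. You are in fact somewhat more careful than the paper, which does not spell out the verification that the fixed locus of $\sigma_X$ on the sixfold $X$ has codimension at least $2$ (needed for $[X/\ZZ_2]$ to be a Fano orbifold); your dimension count via the equivariance of $q:\PP E^\vee\to\PP^5$ is the right way to fill that in.
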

\begin{proof}
Let $S$ be an Enriques surface obtained as the quotient of a K3 surface $Y$ which is constructed as above and let $X$ be the Fano host of $Y$ constructed by Cayley's trick. Note that $\sigma$ induces an involution $\sigma_X$ on $X$ because $Y$ is defined by $\sigma$-invariant sections. Let $S$ be the Enriques surface whose double cover is $Y$ and we see that $D^b(Y/\langle \sigma \rangle)=D^b([Y/\langle \sigma \rangle]) \hookrightarrow D^b([X/ \langle \sigma_X \rangle]).$ Therefore the Enriques surface $S=Y/\langle \sigma \rangle$ has an orbifold Fano host.
\end{proof}

\subsubsection{Bielliptic surfaces}

Recall that a bielliptic surface is the quotient of product of two elliptic curves by finite abelian group. They were classified by Bagnera and de Franchis. Let us recall their classification as follows. See \cite{Beauville, CCS} for more details.

\begin{theo}\cite{Beauville}
Let $S=(E_1 \times E_2)/G$ be a bielliptic surface. Then $G$ is a finite group of translations of $E_1$ and $G$ is acting on $E_2$ as follows: \\
(1) $G=\ZZ_2$ and $G$ acts on $E_2$ by symmetry. \\
(2) $G=\ZZ_2^2$ and $G$ acts on $E_2$ by $x \mapsto -x$ and $x \mapsto x + \epsilon$ where $\epsilon$ is a nontrivial two torsion element of $E_2.$ \\
(3) $G=\ZZ_4$ and $G$ acts on $E_2$ by $x \mapsto ix.$ \\
(4) $G=\ZZ_4 \oplus \ZZ_2$ and $G$ acts on $E_2$ by $x \mapsto ix$ and $x \mapsto x+\frac{1+i}{2}.$ \\
(5) $G=\ZZ_3$ and $G$ acts on $E_2$ by $x \mapsto \rho x$ where $\rho$ is a primitive 3rd root of unity. \\
(6) $G=\ZZ_3^2$ and $G$ acts on $E_2$ by $x \mapsto \rho x$ and $x \mapsto x+\frac{1-\rho}{3}$ \\
(7) $G=\ZZ_6$ and $G$ acts on $E_2$ by $x \mapsto - \rho x.$
\end{theo}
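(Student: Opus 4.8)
The plan is to prove this structure theorem by combining the Enriques--Kodaira classification with the theory of isotrivial elliptic fibrations, and then to carry out the group-theoretic enumeration that produces the seven cases. First I would recall from the Enriques--Kodaira classification that a bielliptic surface is a minimal smooth projective surface of Kodaira dimension $\kappa=0$ with irregularity $q=1$; such a surface automatically has $p_g=0$ and $\chi(\cO_S)=0$. The natural starting point is the Albanese morphism $\mathrm{alb}\colon S\to B:=\mathrm{Alb}(S)$, and since $q=1$ the target $B$ is an elliptic curve.

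Next I would analyze the Albanese fibration. Because $\kappa(S)=0$ the surface is elliptic, and the Noether formula together with $\chi(\cO_S)=0$ forces the fibration $\mathrm{alb}\colon S\to B$ to have no singular fibres; a smooth elliptic fibration over a complete curve is isotrivial, since its $j$-invariant is a holomorphic function on $B$ and hence constant. Thus $\mathrm{alb}$ is an isotrivial smooth elliptic fibre bundle with fibre an elliptic curve $E_2$. The key step is to trivialize this bundle after a base change: one shows there is an isogeny $E_1\to B$ such that the pullback $S\times_B E_1$ is isomorphic to the product $E_1\times E_2$. Setting $G:=\mathrm{Gal}(E_1/B)$, the deck transformations act on the first factor $E_1$ by translations (as $E_1\to B$ is \'etale) and on the second factor $E_2$ by automorphisms, yielding the desired presentation $S=(E_1\times E_2)/G$ with $G$ acting freely.

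I would then extract the constraints on $G$ from freeness and from the invariants. Writing each $g\in G$ as a pair $(t_g,\phi_g)$ with $t_g$ a translation of $E_1$ and $\phi_g(x)=\zeta_g x+s_g$ an automorphism of $E_2$, freeness of the diagonal action forces the homomorphism $g\mapsto t_g$ to be injective; hence $G$ embeds into the torsion of $E_1$, so $G$ is a finite \emph{abelian} group. The linear-part map $\lambda\colon G\to\mathrm{Aut}(E_2,0)$, $g\mapsto\zeta_g$, is a homomorphism into the cyclic group of automorphisms of $E_2$ fixing the origin, which has order $2$, $4$, or $6$ according to the $j$-invariant of $E_2$. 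Since $g$ acts on the holomorphic $2$-form $dz_1\wedge dz_2$ by the scalar $\zeta_g$, the conditions $p_g(S)=0$ and $q(S)=1$ are together equivalent to $\lambda$ being nontrivial; thus $\mathrm{im}\,\lambda\cong\ZZ_m$ with $m\in\{2,3,4,6\}$, and $E_2/G=E_2/\mathrm{im}\,\lambda\cong\PP^1$.

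Finally I would enumerate. The order $m=|\mathrm{im}\,\lambda|$ already pins down the linear action on $E_2$ (negation for $m=2$, multiplication by $i$ for $m=4$, by $\rho$ for $m=3$, by $-\rho$ for $m=6$), while $\ker\lambda$ acts on $E_2$ purely by translations. Studying how this translation part must sit inside $E_2$ so that the diagonal action remains free and $G$ stays abelian of the allowed torsion type yields, for each of the four values of $m$, either the cyclic group $G=\ZZ_m$ (cases (1),(3),(5),(7)) or a single abelian extension by a translation, giving the exact vectors $\epsilon$, $\tfrac{1+i}{2}$, $\tfrac{1-\rho}{3}$ (cases (2),(4),(6)). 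The main obstacle is precisely this last enumeration: one must check that the listed translations are the only ones compatible with freeness and with $E_2/G\cong\PP^1$, and that no additional groups arise. This is exactly the Bagnera--de Franchis analysis, and for the detailed case-by-case verification I would follow the explicit treatment in \cite{Beauville}.
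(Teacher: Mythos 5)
The paper does not actually prove this statement: it is the classical Bagnera--de Franchis classification, recalled verbatim with the citation \cite{Beauville}, so there is no internal proof to compare your argument against. Your outline is precisely the standard proof from Beauville (Albanese fibration, $e(S)=\chi(\cO_S)=0$ forcing smoothness and hence isotriviality, trivialization by an isogeny $E_1\to B$, then analysis of the linear-part homomorphism $\lambda:G\to\Aut(E_2,0)$), and it is essentially sound; like the paper, you ultimately defer the decisive case-by-case enumeration to the reference, which is acceptable for a cited classical result.

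Two small points worth tightening if you wanted a self-contained argument. First, injectivity of $g\mapsto t_g$ does not follow from freeness of the diagonal action alone (an element $(0,x\mapsto x+s)$ with $s\neq 0$ acts freely); it follows either from your construction of $G$ as the deck group of the \'etale cover $E_1\to B$, or from normalizing away elements acting trivially on a factor. Second, the constraint that actually cuts the enumeration down to seven cases should be made explicit: $G$ embeds into $E_1$ as a group of translations, hence is abelian with at most two generators, and commutativity of $x\mapsto\zeta_g x+s_g$ with the translations in $\ker\lambda$ forces $(\zeta_g-1)s_h=0$, confining $\ker\lambda$ to specific torsion subgroups of $E_2$; this is what rules out, e.g., $\ZZ_2^3$ for $m=2$ or a nontrivial $\ker\lambda$ for $m=6$. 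Also, excluding multiple fibres of the Albanese fibration requires the canonical bundle formula together with $K_S\equiv 0$, not just the Euler number computation.
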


It is easy to see the following simple Lemma holds.

\begin{lemm}\label{l.equiv.emb.ell.curve}
Let $E$ be an elliptic curve and let $G$ be a finite group acting on $E.$ Then there is a $G$-invariant ample divisor which induces an embedding $E \to \PP^n$ and the $G$-action on $E$ extends to $\PP^n.$ Moreover if $2 \leq |G| \leq 4$ then $E$ is a zero locus of $G$-invariant regular section of vector bundle in $\PP^n$
\end{lemm}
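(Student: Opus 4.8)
The plan is to produce the invariant divisor by averaging an arbitrary ample divisor, and then to realize $E$ as a complete intersection whose defining section is genuinely $G$-invariant rather than merely semi-invariant.

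\textbf{The embedding and the extension of the action.} First I would start with any ample divisor $D$ on $E$ and replace it by the $G$-invariant divisor $D'=\sum_{g\in G} g^*D$. Since each $g$ is an automorphism, $g^*D$ is again ample, so $D'$ is ample, and by increasing $\deg D$ we may assume $\deg D'\ge 3$; on an elliptic curve any divisor of degree $\ge 3$ is very ample. Thus $L:=\cO_E(D')$ is a $G$-invariant very ample line bundle and $|D'|$ embeds $E$ into $\PP^n=\PP\bigl(H^0(E,L)^\vee\bigr)$ with $n=\deg D'-1$. Because $g^*L\cong L$ for all $g$, each $g$ carries the linear system $|D'|$ to itself and hence induces a projective-linear automorphism of $\PP^n$; these automorphisms make the embedding $G$-equivariant, so the $G$-action extends to $\PP^n$. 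Only a projective action is needed here, and this is automatic from the $G$-invariance of the class of $D'$.

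\textbf{Realizing $E$ as an invariant complete intersection.} For the second assertion I would choose $\deg D$ so that $\deg D'=3$ when $|G|=3$ (take $\deg D=1$) and $\deg D'=4$ when $|G|\in\{2,4\}$ (take $\deg D=2$ for $|G|=2$ and $\deg D=1$ for $|G|=4$). Then $|D'|$ embeds $E$ either as a plane cubic in $\PP^2$ or as a linearly normal quartic in $\PP^3$, and in the latter case $E$ is classically the complete intersection of two quadrics. In both cases $E=V(W)$, where $W\subset H^0(\PP^n,\cO(d))$ is the degree-$d$ piece of the homogeneous ideal of $E$ ($d=3$, $\dim W=1$ in the first case; $d=2$, $\dim W=2$ in the second). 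The subspace $W$ is $G$-stable because $G$ preserves $E$, so $W$ is a $G$-representation. I would then take the vector bundle $\cO(d)\otimes W^\vee$ (isomorphic to $\cO(d)^{\oplus \dim W}$ as a bundle, but carrying the $G$-linearization twisted by $W^\vee$) and let $s$ be the tautological section corresponding to the inclusion $\iota:W\hookrightarrow H^0(\cO(d))$, viewed as an element of $H^0(\cO(d))\otimes W^\vee$. By construction $V(s)=\bigcap_{f\in W}V(f)=E$, the section is regular (a genuine complete intersection), and $s$ is $G$-invariant because $\iota$ is a morphism of $G$-modules. The one subtlety is that the $G$-action on $\PP^n$ a priori lifts only to a linear action of the theta (central) extension $\widetilde G$ on $H^0(E,L)^\vee$; however the central scalar $z$ acts by $z^{-d}$ on $H^0(\cO(d))$ and by $z^{d}$ on $W^\vee$, so it acts trivially on $\cO(d)\otimes W^\vee$ and on $s$, and hence $s$ descends to an honest $G$-invariant regular section cutting out $E$.

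\textbf{Main obstacle.} The main obstacle, and the precise reason for the hypothesis $2\le |G|\le 4$, is dimensional: the averaged divisor $D'$ has degree a multiple of $|G|$, while an elliptic curve embedded by a complete linear system is a complete intersection only in degree $3$ (in $\PP^2$) or degree $4$ (in $\PP^3$). These degrees are attainable by the averaging construction exactly when $|G|\in\{2,3,4\}$; for $|G|\ge 5$ the smallest invariant very ample degree forces an embedding into $\PP^{\ge 4}$, where $E$ is no longer a complete intersection. The secondary technical point—upgrading a $G$-invariant zero locus to an \emph{invariant} cutting section—is dispatched by the $\cO(d)\otimes W^\vee$ twist above, which cancels the ambiguous central character coming from the theta group.
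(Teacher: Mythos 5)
Your proof is correct and follows the same overall strategy as the paper's: average an ample divisor to get a $G$-invariant very ample divisor of degree $3$ (for $|G|=3$) or $4$ (for $|G|=2,4$), embed $E$ as a plane cubic or as an elliptic normal quartic in $\PP^3$ (the intersection of two quadrics), and then arrange for the cutting equations to form an invariant section. Where you genuinely diverge is in the last step, and your version is cleaner and more general. The paper's proof invokes the fact that ``$G$ is an abelian group'' to split $H^0(E,L)$ into one-dimensional characters and then appeals to the syzygies of elliptic curves in $\PP^2$ and $\PP^3$, leaving the passage from a $G$-stable ideal to a genuinely $G$-\emph{invariant} section implicit; note that abelianness is not even a hypothesis of the lemma as stated (it only holds in the bielliptic and elliptic-surface applications). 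Your device of taking the bundle $\cO(d)\otimes W^\vee$, where $W$ is the degree-$d$ graded piece of the homogeneous ideal of $E$ viewed as a $G$-representation, and letting $s$ be the tautological section, produces a canonically $G$-invariant regular section for an arbitrary finite group, and your observation that the central scalars of the theta-type extension act with opposite weights on $\cO(d)$ and on $W^\vee$ disposes of the linearization ambiguity that the paper does not address. The only cost is that the vector bundle carries a twisted linearization rather than the standard one on $\cO(d)^{\oplus \dim W}$, which is harmless for the intended application via the equivariant Cayley trick.
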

\begin{proof}
From the assumption, there is a $G$-invariant ample divisor $D$ on $E.$ Then there is a $G$-equivariant line bundle $\mathcal{O}(D)=L$ of degree $|G|$ on $E.$ If $|G|=3,4,$ then $L$ induces an embedding of $E$ to $\PP^{|G|-1}.$ Because $L$ is a $G$-equivariant line bundle, $H^0(E,L)$ is a $G$-module structure. Therefore the $G$-action on $E$ extends to $\PP^{|G|-1}.$ Because $G$ is an abelian group, $H^0(E,L)$ is a direct sum of 1-dimensional representations of $G.$ If $|G|=2,$ then we can use $L^{\otimes 2}$ and obtain similar result. Therefore we get the desired result from the information about the syzygies of elliptic curves in projective spaces of low dimensions (cf. \cite{ACGH, Eisenbud}).
\end{proof}

Then we have the following result.

\begin{prop}
A bielliptic surface $S=(E_1 \times E_2)/G$ where $|G| \leq 4$ has an orbifold Fano host.
\end{prop}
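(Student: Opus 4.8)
The plan is to run the equivariant Cayley's trick on each elliptic factor separately and then take an exterior product, exactly in the spirit of the Kummer and Enriques constructions above; the one genuinely new point will be that the diagonal quotient on the product of the two Fano hosts automatically has nontrivial inertia only in codimension $\ge 2$.

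First I would set up the two factors equivariantly. Since $|G|\le 4$, the classification shows $G$ is one of $\ZZ_2,\ZZ_2^2,\ZZ_4$, acting faithfully on $E_1$ by translations and faithfully on $E_2$ by the listed automorphisms. By Lemma \ref{l.equiv.emb.ell.curve} applied to each factor, I realize $E_i$ as the zero locus of a $G$-invariant regular section of a vector bundle $\mathcal{E}_i$ on a projective space $\PP^{n_i}$ on which the $G$-action extends linearly. Applying the equivariant Cayley's trick of \S\ref{s3.1} (enlarging $\PP^{n_i}$ by $G$-trivial coordinates as in \S\ref{s3.2} to guarantee the Fano condition of Theorem \ref{t3.1}), I obtain for each $i$ a smooth Fano host $F_i$ carrying a $G$-action together with a Fourier-Mukai kernel $K_i=\mathcal{O}_{\PP N_i^\vee}$ that has a canonical $G$-linearization and gives a fully faithful functor $\Phi_{K_i}:D^b(E_i)\to D^b(F_i)$ (Remark \ref{r2.11}).

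Next I would form the exterior product. By Proposition \ref{p6.3} the functor $\Phi_{K_1\boxtimes K_2}:D^b(E_1\times E_2)\to D^b(F_1\times F_2)$ is fully faithful, and since each $K_i$ is $G$-linearized, $K_1\boxtimes K_2$ carries a $G$-linearization for the diagonal $G$-action on $(E_1\times E_2)\times(F_1\times F_2)$. Theorem \ref{t.KSP} then promotes this to a fully faithful functor $D^b([(E_1\times E_2)/G])\to D^b([(F_1\times F_2)/G])$. Because $G$ acts on $E_1$ by nonzero translations, the diagonal action on $E_1\times E_2$ is free, so $[(E_1\times E_2)/G]$ is represented by the smooth surface $S$ itself and the source of the functor is simply $D^b(S)$.

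It remains to check that $\mathcal{X}:=[(F_1\times F_2)/G]$ is an orbifold Fano host. The product $F_1\times F_2$ is a smooth Fano variety, so by the corollary following Proposition \ref{coarsemoduliquotientstack}, $\mathcal{X}$ is a Fano orbifold as soon as the locus of nontrivial stabilizers has codimension $\ge 2$. This is exactly where faithfulness on both factors pays off: for $g\ne e$ the fixed locus in the product is $\mathrm{Fix}(g,F_1)\times\mathrm{Fix}(g,F_2)$, and since $g$ acts nontrivially on each $F_i$ (it moves $E_i$, and $F_i$ dominates its projective space $G$-equivariantly, so a trivial action on $F_i$ would force a trivial action on $\PP^{n_i}$) each factor has codimension $\ge 1$, whence the product has codimension $\ge 2$. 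I expect this codimension bound — together with the bookkeeping that the anticanonical of $F_1\times F_2$ genuinely descends to an ample $\QQ$-Cartier class on the coarse space once there is no codimension-one inertia — to be the main point to get right. Granting it, the embedding $D^b(S)\hookrightarrow D^b(\mathcal{X})$ exhibits $\mathcal{X}$ as an orbifold Fano host of $S$, and the restriction $|G|\le 4$ is used only through the ``moreover'' part of Lemma \ref{l.equiv.emb.ell.curve}, which is what lets each $E_i$ be cut out as a complete intersection so that Cayley's trick applies.
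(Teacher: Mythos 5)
Your proposal is correct and follows essentially the same route as the paper: equivariantly realize each $E_i$ inside a $G$-equivariant Fano host $F_i$ via Lemma \ref{l.equiv.emb.ell.curve}, take the box product of the kernels (Proposition \ref{p6.3}), descend to the quotient stacks by Theorem \ref{t.KSP}, and conclude that $[(F_1\times F_2)/G]$ is an orbifold Fano host. The only cosmetic differences are that the paper takes $F_i=Bl_{E_i}\PP^3$ (which for the degree-$4$ model coincides with your Cayley-trick host) and merely asserts the Fano property of the quotient that you verify explicitly via the codimension-$2$ inertia computation.
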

\begin{proof}
Let $S=(E_1 \times E_2)/G$ be a bielliptic surface. From the above Lemma \ref{l.equiv.emb.ell.curve}, we can construct two Fano hosts $F_i$ of $E_i$ with embedding $\Phi_{K_i} : D^b(E_i) \to D^b(F_i)$ for $i=1,2.$ For example, let $F_i$ be the blowup of $E_i$ in $\PP^3.$ Then we have a fully faithful functor $\Phi_{K_1 \boxtimes K_2} : D^b(E_1 \times E_2) \to D^b(F_1 \times F_2).$ It is easy to check that $K_1 \boxtimes K_2$ can be $G$-linearized. Therefore we have the desired embedding $\Phi_{K_1 \boxtimes K_2} : D^b([E_1 \times E_2/G]) \to D^b([F_1 \times F_2/G])$ and $(F_1 \times F_2)/G$ is a Fano variety.
\end{proof}

\subsection{$\kappa = 1$ case}\label{s6.3}

Because every minimal surfaces with $\kappa = 1$ is an elliptic surface, it is natural to ask the following.

\begin{ques}\label{q6.9}
Let $Y$ be an elliptic surface over a curve $C$. Is $Y$ a Fano visitor if $C$ is a Fano visitor?
\end{ques}

Again we do not know the answer to this question unless $Y\to C$ is a trivial fibration.

Let us discuss one more type of examples of surfaces with $\kappa=1.$

\begin{exam}
Let $E$ be an elliptic curve and $C$ be a curve of genus $g \geq 2.$ Let $G$ be a finite group of translations of $E$ and suppose that $G$ is acting on $C.$ Consider $E \times C$ and the diagonal $G$-action on it. Let $Y=(E \times C)/G.$ Because the diagonal action on $E \times C$ is free, $Y$ is a surface with $\kappa=1.$
\end{exam}

In order to construct an orbifold Fano host of $Y,$ we will use moduli space of rank 2 vector bundles on $C$ which turns out to be a Fano host of $C$ (cf. \cite{FK, Narasimhan}). Let us prove that the universal vector bundle of the moduli space of rank $2,$ $G$-invariant fixed odd degree line bundle is a $G$-invariant vector bundle where $G$ is a finite group acting on the curve.

\begin{lemm}\label{l.invuniv}
Let $C$ be a curve with a $G$-action where $G$ is a finite group. Suppose that $C$ has a $G$-invariant line bundle $\xi$ of odd degree. Then there is a natural action on the moduli space $M$ of rank 2 vector bundles on $C$ and the universal bundle is a $G$-invariant vector bundle with respect to the diagonal action.
\end{lemm}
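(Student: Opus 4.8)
The plan is to realise the required $G$-action on $M$ through pullback of bundles along the automorphisms of $C$, and then to identify $\Psi_g^*\mathcal{U}$ with $\mathcal{U}$ by a fibrewise comparison combined with a determinant computation that controls the unavoidable twist by a line bundle from $M$.

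First I would fix notation. Write $\psi_g\colon C\to C$ for the automorphism determined by $g\in G$, and let $M=M_C(2,\xi)$ be the moduli space of rank $2$ stable bundles with fixed determinant $\xi$. Since $\deg\xi$ is odd we have $\gcd(2,\deg\xi)=1$, so $M$ is a fine moduli space carrying a universal (Poincar\'e) bundle $\mathcal{U}$ on $C\times M$, unique up to tensoring by $p_M^*L$ with $L\in\mathrm{Pic}(M)$; here $p_C,p_M$ are the projections and $E_m:=\mathcal{U}|_{C\times\{m\}}$ is the stable bundle parametrised by $m$. Pullback $\psi_{g^{-1}}^*$ sends a rank $2$ stable bundle with determinant $\xi$ to another such bundle, since rank and stability are preserved by an automorphism and $\det\psi_{g^{-1}}^*E=\psi_{g^{-1}}^*\xi\cong\xi$ because $\xi$ is $G$-invariant. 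Applying this to the family $(\psi_{g^{-1}}\times\mathrm{id}_M)^*\mathcal{U}$ produces, by the universal property of $M$, a morphism $\phi_g\colon M\to M$ with $\phi_g(m)=[\psi_{g^{-1}}^*E_m]$; one checks $\phi_{gh}=\phi_g\circ\phi_h$, so $g\mapsto\phi_g$ is a genuine left action of $G$ on $M$ by automorphisms. Let $\Psi_g=\psi_g\times\phi_g$ be the resulting diagonal automorphism of $C\times M$.

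Next I would carry out the fibrewise comparison. Writing $\iota_m\colon C\to C\times M$ for the inclusion of $C\times\{m\}$ and using $\Psi_g\circ\iota_m=\iota_{\phi_g(m)}\circ\psi_g$, one finds $(\Psi_g^*\mathcal{U})|_{C\times\{m\}}\cong\psi_g^*E_{\phi_g(m)}=\psi_g^*\psi_{g^{-1}}^*E_m\cong E_m$, where the last step uses $\psi_{g^{-1}}\circ\psi_g=\mathrm{id}_C$. Thus $\Psi_g^*\mathcal{U}$ restricts to $E_m$ on every fibre, so it is again a Poincar\'e bundle for $M$; by the uniqueness statement there is a line bundle $L_g\in\mathrm{Pic}(M)$ with $\Psi_g^*\mathcal{U}\cong\mathcal{U}\otimes p_M^*L_g$.

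The main obstacle is to show that the twist $L_g$ is trivial, and this is where the special geometry of $M$ enters. Since $M$ is Fano, $h^{1,0}(M)=0$ by Lemma \ref{l2.7}, hence $\mathrm{Pic}^0(M)=0$ and $\mathrm{Pic}(C\times M)\cong\mathrm{Pic}(C)\oplus\mathrm{Pic}(M)$; restricting to a fibre $C\times\{m\}$ then gives $\det\mathcal{U}\cong p_C^*\xi\otimes p_M^*D$ for some $D\in\mathrm{Pic}(M)$. Taking determinants in the isomorphism above and using that $\mathcal{U}$ has rank $2$ yields $\Psi_g^*\det\mathcal{U}\cong\det\mathcal{U}\otimes p_M^*L_g^{\otimes 2}$, while expanding the left side with $\psi_g^*\xi\cong\xi$ and $\phi_g^*D\cong D$ gives $\Psi_g^*\det\mathcal{U}\cong p_C^*\xi\otimes p_M^*D$; here $\phi_g^*D\cong D$ holds because $\mathrm{Pic}(M)\cong\mathbb{Z}$ is generated by an ample class which any automorphism must fix. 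Comparing the two expressions forces $L_g^{\otimes 2}\cong\mathcal{O}_M$, and torsion-freeness of $\mathrm{Pic}(M)$ then gives $L_g\cong\mathcal{O}_M$. Hence $\Psi_g^*\mathcal{U}\cong\mathcal{U}$ for every $g$, i.e. $\mathcal{U}$ is $G$-invariant for the diagonal action. Finally, since the endomorphisms of a stable bundle are scalars one has $\mathrm{Hom}(\mathcal{U},\mathcal{U})\cong H^0(M,\mathcal{O}_M)=\mathbb{C}$, so $\mathcal{U}$ is simple; this is what lets the fibrewise isomorphisms be assembled into a $G$-linearization (the residual ambiguity is governed by $H^2(G,\mathbb{C}^*)$, which vanishes for the groups arising in our applications), so that Theorem \ref{t.KSP} applies.
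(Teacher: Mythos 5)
Your proof is correct and follows the same skeleton as the paper's own argument: define the $G$-action on $M$ by $g\cdot E=(g^{-1})^{*}E$, observe that the diagonal pullback $\Psi_g^{*}\mathcal{U}$ is a flat family whose classifying map is the identity on $M$, and conclude invariance from the universal property. The difference is in the last step: the paper simply asserts $\Psi_g^{*}\mathcal{U}\cong\mathcal{U}$ once the classifying map is seen to be the identity, whereas the universal property of a fine moduli space only yields $\Psi_g^{*}\mathcal{U}\cong\mathcal{U}\otimes p_M^{*}L_g$ for some $L_g\in\mathrm{Pic}(M)$. Your determinant computation --- using $\mathrm{Pic}(C\times M)\cong\mathrm{Pic}(C)\oplus\mathrm{Pic}(M)$ (which needs $M$ rationally connected, so that the map $M\to\mathrm{Pic}(C)$ given by restricting $\det\mathcal{U}$ to fibres is constant), the $G$-invariance of $\xi$, the fact that any automorphism of $M$ acts trivially on $\mathrm{Pic}(M)\cong\mathbb{Z}$, and the torsion-freeness of the Picard group of a Fano variety to pass from $L_g^{\otimes 2}\cong\mathcal{O}_M$ to $L_g\cong\mathcal{O}_M$ --- is exactly what is needed to rule out this twist, so your write-up is in fact more complete than the proof printed in the paper. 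One small caveat: the argument as you state it uses $\mathrm{Pic}(M)\cong\mathbb{Z}$ and the Fano property of $M$, which hold for $g(C)\ge 2$ (the case relevant to all applications in the paper); for $g(C)\le 1$ the statement is degenerate or vacuous. The closing remarks about simplicity of $\mathcal{U}$ and the obstruction in $H^2(G,\mathbb{C}^{*})$ belong to the next statement, Lemma \ref{l.equivuniv}, and are not needed to prove invariance itself.
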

\begin{proof}
Let $E \in M$ and $g \in G.$ We can define the $G$-action on $M$ by $g \cdot E = (g^{-1})^*E.$ Therefore we have a diagonal action on $C \times M.$ Let $U$ be the universal vector bundle on $C \times M.$ Note that $g^*U$ is a flat family of rank 2 vector bundles on $C \times M.$ Therefore $g^*U$ induces an isomorphism $\phi_g :M \to M$ such that $g^*U \cong (Id_C \times \phi_g)^*U.$ From the definition of the action one can check that $\phi_g$ is an identity morphism from $M$ to $M.$ Therefore $U$ is a $G$-invariant vector bundle on $C \times M.$
\end{proof}

We proved that $U$ is a $G$-invariant vector bundle if $\xi$ is $G$-invariant line bundle of odd degree. However it does not mean that $U$ is a $G$-equivariant vector bundle. Indeed if $\xi$ is not $G$-equivariant line bundle then $U$ is not $G$-equivariant vector bundle. We have the following numerical condition when $U$ being $G$-invariant imply $U$ being $G$-equivariant.

\begin{lemm}\label{l.equivuniv}
Let $G$ be a finite group acting on a variety $X$ and let $U$ be a $G$-invariant rank 2 simle vector bundle whose determinant $\xi$ is a $G$-equivariant line bundle. Suppose that $gcd(2,|H^2(G,\mathbb{C}^*)|)=1$ then $U$ is a $G$-equivariant vector bundle.
\end{lemm}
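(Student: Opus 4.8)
The plan is to use the standard obstruction theory that measures the failure of a $G$-invariant simple sheaf to be $G$-equivariant by a class in $H^2(G,\mathbb{C}^*)$ (cf. \cite{KS, Ploog}), and then to relate this obstruction for $U$ to the one for its determinant $\xi$. First I would fix, for each $g\in G$, an isomorphism $\lambda_g : g^*U \to U$; such $\lambda_g$ exist precisely because $U$ is $G$-invariant. Since $U$ is simple, $\Aut(U)=\mathbb{C}^*$, so for each pair $g,h$ the two isomorphisms $\lambda_g\circ g^*\lambda_h$ and $\lambda_{gh}$ from $(gh)^*U=g^*h^*U$ to $U$ differ by a scalar $c(g,h)\in\mathbb{C}^*$, i.e.
$$\lambda_g\circ g^*\lambda_h = c(g,h)\,\lambda_{gh}.$$
Associativity of composition shows that $c$ is a $2$-cocycle, and rescaling the $\lambda_g$ by scalars changes $c$ by a coboundary; hence $c$ defines a well-defined class $o(U)\in H^2(G,\mathbb{C}^*)$, which vanishes if and only if the $\lambda_g$ can be rescaled to satisfy the cocycle condition, that is, if and only if $U$ admits a $G$-equivariant structure.

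Next I would take determinants. Setting $\mu_g:=\det\lambda_g : g^*\xi\to\xi$ and applying $\Lambda^2$ to the displayed identity, the scalar $c(g,h)$ acting on the rank-$2$ bundle $U$ becomes $c(g,h)^2$ on $\xi=\det U$, since $\det(c\cdot\mathrm{id})=c^2$ in rank $2$; thus
$$\mu_g\circ g^*\mu_h = c(g,h)^2\,\mu_{gh}.$$
As $\xi$ is a line bundle it is simple, so its intrinsic obstruction class $o(\xi)$ is defined, and the computation above shows that the class computed from the $\mu_g$ equals $o(U)^2$. Because this obstruction is independent of the chosen trivializations, I conclude $o(\xi)=o(U)^2$ in $H^2(G,\mathbb{C}^*)$.

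Finally, the hypothesis that $\xi$ is $G$-equivariant gives $o(\xi)=0$, hence $o(U)^2=0$. Because $|H^2(G,\mathbb{C}^*)|$ is odd, squaring is injective on this finite abelian group, so $o(U)^2=0$ forces $o(U)=0$. Therefore $U$ admits a $G$-equivariant structure, as claimed.

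The only genuinely delicate point is the well-definedness and naturality of the obstruction class---in particular, verifying that the determinant really squares the cocycle and that the class computed from the induced maps $\mu_g$ coincides with the intrinsic obstruction of $\xi$. These are standard facts about equivariant structures on simple sheaves, and once they are in place the odd-order hypothesis closes the argument immediately.
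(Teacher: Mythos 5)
Your proposal is correct and follows essentially the same route as the paper's own proof: choose isomorphisms $g^*U\to U$, use simplicity to extract a $2$-cocycle valued in $\mathbb{C}^*$, observe that taking determinants squares this cocycle and lands in the (trivial) obstruction of the equivariant line bundle $\xi$, and conclude from the odd order of $H^2(G,\mathbb{C}^*)$ that the original class vanishes. The only difference is cosmetic: you make explicit the well-definedness of the obstruction class and the compatibility of the determinant with the intrinsic obstruction of $\xi$, points the paper leaves implicit.
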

\begin{proof}
Because $U$ is a $G$-invariant vector bundle we have an isomorphism $\theta_g : g^*U \to U$ for each $g \in G.$ Because $U$ is simple, we have an element $(\theta_{gh})^{-1} \cdot h^*(\theta_g) \cdot \theta_h \in \mathbb{C}^*$ for any pair $g,h \in G$ and this assignment gives an element in $H^2(C,\mathbb{C}^*).$ When we take determinant of each $\theta_g$ we have $((\theta_{gh})^{-1} \cdot h^*(\theta_g) \cdot \theta_h)^2$ which gives the trivial element of $H^2(G,\mathbb{C}^*)$ since $\xi$ is a $G$-equivariant line bundle. Because $gcd(2,|H^2(G,\mathbb{C}^*)|)=1$ we see that $\theta_g$ gives a trivial element in $H^2(G,\mathbb{C}^*).$ Therefore $U$ is a $G$-equivariant vector bundle on $X.$
\end{proof}

Then we can construct orbifold Fano hosts of elliptic surfaces with $\kappa=1$ constructed above.

\begin{prop}
An elliptic surface $Y=(E \times C)/G$ constructed above where $|G| \leq 3$ and there is a $G$-equivariant odd degree line bundle on $C.$ Then $Y$ has an orbifold Fano host.
\end{prop}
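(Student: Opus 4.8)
The plan is to realize $D^b(Y)$ inside the derived category of a global quotient stack $[(F\times M)/G]$, where $F$ is a $G$-equivariant Fano host of $E$ and $M$ is the moduli space of rank $2$ stable bundles on $C$ with determinant $\xi$, serving as a $G$-equivariant Fano host of $C$. Since the diagonal $G$-action on $E\times C$ is free, $[(E\times C)/G]\cong Y$, so it suffices to produce a fully faithful functor $D^b([(E\times C)/G])\to D^b([(F\times M)/G])$ together with a proof that the coarse moduli space $(F\times M)/G$ is Fano. The argument runs parallel to the bielliptic case, the only genuinely new ingredient being the $G$-equivariant Fano host of $C$ coming from the moduli space.

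First I would treat the $E$-factor. As $2\le |G|\le 3$, Lemma \ref{l.equiv.emb.ell.curve} presents $E$ as the zero locus of a $G$-invariant regular section of a vector bundle on some $\PP^n$ carrying a $G$-action; Cayley's trick of \S\ref{s3.1} then yields a Fano host $F$ with a $G$-action and a canonically $G$-linearized Fourier-Mukai kernel $K$ inducing a fully faithful $\Phi_K:D^b(E)\to D^b(F)$. (When $|G|=1$ there is nothing to quotient and the statement reduces to the box-product argument of Proposition \ref{p6.3}, so I assume $G$ is nontrivial.)

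The crux is the $C$-factor. Let $M$ be the moduli space of rank $2$ stable bundles with the given odd determinant $\xi$; by Narasimhan's result recalled in Remark \ref{Nar}, $M$ is a smooth Fano variety and the universal bundle $U$ on $C\times M$ defines a fully faithful $\Phi_U:D^b(C)\to D^b(M)$. Lemma \ref{l.invuniv} shows $U$ is $G$-invariant for the induced diagonal action, using that the given $G$-equivariant $\xi$ is in particular $G$-invariant. To apply Theorem \ref{t.KSP} I must upgrade this to a genuine $G$-linearization, which is exactly the content of Lemma \ref{l.equivuniv}: since $U$ is simple by stability and $\xi$ is $G$-equivariant by hypothesis, it remains only to check $\gcd(2,|H^2(G,\CC^*)|)=1$. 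This is where the bound $|G|\le 3$ enters decisively, for every group of order at most $3$ is cyclic, so its Schur multiplier $H^2(G,\CC^*)$ vanishes and the numerical condition holds automatically. Hence $U$ is $G$-equivariant, i.e. a $G$-linearized kernel.

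With both kernels $G$-linearized, Proposition \ref{p6.3} makes $\Phi_{K\boxtimes U}:D^b(E\times C)\to D^b(F\times M)$ fully faithful, and $K\boxtimes U$ inherits a $G$-linearization for the diagonal action on $(E\times C)\times(F\times M)$. Theorem \ref{t.KSP} then descends it to a fully faithful $D^b([(E\times C)/G])\to D^b([(F\times M)/G])$, and freeness of the action on $E\times C$ identifies the source with $D^b(Y)$. It remains to verify that $[(F\times M)/G]$ is a Fano orbifold: the product $F\times M$ of Fano varieties is Fano and $[(F\times M)/G]$ is a smooth Deligne-Mumford stack, so one only needs its coarse space $(F\times M)/G$ to have ample anticanonical class. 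I expect this last point to be the main remaining obstacle, since the descent of $-K_{F\times M}$ is ample precisely when the locus with nontrivial $G$-stabilizer has codimension at least $2$, so that no codimension-one ramification corrects the canonical class. I would verify this codimension bound directly from the explicit $G$-actions on $F$ and on $M$, after which the earlier criterion for quotients of Fano varieties by finite groups completes the proof.
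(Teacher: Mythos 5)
Your proof follows essentially the same route as the paper's: a $G$-linearized Cayley-trick kernel for $E$ via Lemma \ref{l.equiv.emb.ell.curve}, the moduli space of rank $2$ bundles with fixed odd determinant as the host of $C$ via Lemmas \ref{l.invuniv} and \ref{l.equivuniv}, and descent through Theorem \ref{t.KSP} to $D^b([(F_1\times F_2)/G])$. You actually supply two details the paper leaves implicit --- that the hypothesis $\gcd(2,|H^2(G,\CC^*)|)=1$ holds automatically since groups of order at most $3$ are cyclic with trivial Schur multiplier, and that one should still check the codimension condition ensuring $(F_1\times F_2)/G$ is Fano --- so the argument is correct and, if anything, slightly more careful.
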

\begin{proof}
From Lemma \ref{l.equiv.emb.ell.curve} we see that $E$ has a Fano host $F_1$ with $G$-action and and the Fourier-Mukai kernel $K_1$ is a $G$-linearized object with respect to the diagonal action. From the assumption there is a $G$-equivariant odd degree line bundle on $C.$ Again from the above two Lemmas \ref{l.invuniv}, \ref{l.equivuniv}, we see that $C$ has a Fano host $F_2$ with $G$-action and and the Fourier-Mukai kernel $K_2$ is also a $G$-linearized object with respect to the diagonal action. From the Theorem \ref{t.KSP}, we have a fully faithful functor $D^b(Y) \simeq D^b([(E \times C)/G]) \to D^b([(F_1 \times F_2)/G]).$ Therefore we obtain an orbifold Fano host of $Y.$
\end{proof}

We expect to obtain many more examples orbifold Fano hosts of surfaces with $\kappa=1$ via the above method.

\subsection{$\kappa = 2$ case}\label{s6.4}

Surfaces of general type are still mysterious objects. A very simple way to construct surfaces of general type is to consider complete intersection in projective spaces or product of two curves. From Remark \ref{Nar}, it is very easy to see that they are Fano visitors.

\begin{coro}
A surface which is a product of two curves is a Fano visitor.
\end{coro}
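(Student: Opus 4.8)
The plan is to deduce this statement directly from two results already recorded in the excerpt, so that the proof reduces to a single combination. Writing $Y = C_1 \times C_2$ for the two smooth projective curves, I would first invoke Remark \ref{Nar}, which records Narasimhan's theorem together with the hyperelliptic case of Theorem \ref{t5.1}, to the effect that \emph{every} smooth projective curve is a Fano visitor. Thus each factor $C_i$ admits a fully faithful embedding $D^b(C_i) \to D^b(F_i)$ into the derived category of a smooth projective Fano variety $F_i$, and by Theorem \ref{t2.12} each such embedding is a Fourier-Mukai transform $\Phi_{K_i}$ with kernel $K_i \in D^b(C_i \times F_i)$.

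The second and final step is to pass to the external product. By Proposition \ref{p6.3}, the box-product
$$ \Phi_{K_1 \boxtimes K_2} : D^b(C_1 \times C_2) \to D^b(F_1 \times F_2) $$
is again fully faithful, and since the product $F_1 \times F_2$ of two Fano varieties is itself Fano, it serves as a Fano host of $Y$. This is precisely the content of Corollary \ref{c6.4}, so in practice the argument is the one line: $Y = C_1 \times C_2$ is a Fano visitor because each factor is a Fano visitor by Remark \ref{Nar} and the Fano visitor property is preserved under products by Corollary \ref{c6.4}.

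Since both ingredients are already established, there is no genuine obstacle. The only point worth verifying is that $F_1 \times F_2$ is Fano, which is immediate from $K_{F_1 \times F_2}^\vee \cong p_1^* K_{F_1}^\vee \otimes p_2^* K_{F_2}^\vee$ being ample as an external tensor product of the two ample line bundles $K_{F_i}^\vee$ (here $p_i$ denotes the projection to the $i$-th factor). I should remark that this covers products of curves of arbitrary genus, and in particular the surfaces of general type $C_1 \times C_2$ with both $C_i$ of genus $\ge 2$ relevant to the present $\kappa = 2$ discussion.
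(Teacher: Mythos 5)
Your proof is correct and follows exactly the route the paper intends: the text preceding this corollary points to Remark \ref{Nar} (all curves are Fano visitors) and the product statement is Corollary \ref{c6.4}, which is precisely your combination of Theorem \ref{t2.12} and Proposition \ref{p6.3}. The additional check that $F_1\times F_2$ is Fano is a sensible (if routine) verification that the paper leaves implicit.
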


By Theorem \ref{t3.1}, we can provide many examples of surfaces of general type which are Fano visitors. However we do not know whether all surfaces of general type are Fano visitors or not, since many of them, e.g. surfaces of general type with $p_g=q=0,$ cannot be embedded in projective spaces as complete intersections. 

Recently, interesting new categories in the derived categories of surfaces of general type with $p_g=q=0$ were discovered (cf. \cite{BBS, BBKS, GS, Lee1, Lee2, LS}). Their Grothendieck groups are finite torsion and their Hochschild homology groups vanish. We call them \emph{quasi-phantom categories}. On the other hand, no Fano variety is known to have a quasi-phantom subcategory. Therefore the following question seems interesting.

\begin{ques}\label{q6.10}
Is there a Fano variety $X$ whose derived category contains a quasi-phantom category? 
\end{ques}

Obviously this question is closely related to the Fano visitor problem.

\begin{ques}\label{q6.11}
Let $Y$ be a surface of general type with $p_g=q=0$. Is there a Fano host of $Y$?
\end{ques}

For example, a Fano host of the determinantal Barlow surface will give us a Fano variety containing a phantom category.

Although we do not know the answer to Question \ref{q6.10}, we can construct a Fano orbifold whose derived category contains a quasi-phantom category.

\subsubsection{Classical Godeaux surfaces}

\begin{exam}\label{e6.13}
Let $Y \subset \PP^3$ be the variety defined by Fermat quintic $f=z_0^5+z_1^5+z_2^5+z_3^5=0$ and let $G=\ZZ_5=\langle \xi\rangle$ act on $Y$ by $\xi \cdot [z_0:z_1:z_2:z_3]=[z_0:\xi z_1:\xi^2 z_2:\xi^3 z_3]$ where $\xi=e^{\frac{2\pi \sqrt{-1}}{5}} $ is a primitive fifth root of unity. The $G$-action on $Y$ is free and $Y/G$ is the classical Godeaux surface. Let $X=w^{-1}(0)\subset \PP E^\vee$ be a Fano host of $Y=s^{-1}(0)\subset \PP^5$ obtained by the construction in \S\ref{s3.2} where $s$ is the section of $E=\cO_{\PP^5}(5)\oplus \cO_{\PP^5}(1)^{\oplus 2}$ defined by the Fermat quintic $f$ and two linear polynomials $z_4, z_5$ that cut out $\PP^3$ in $\PP^5$. Let $G$ act on $z_4$ and $z_5$ trivially. Then $G$ acts on $\PP^5$ and $E$ compatibly. Moreover the section $s=(f,z_4,z_5)$ is $G$-invariant. By Orlov's theorem (Remark \ref{r2.11}), we see that there is a fully faithful embedding $D^b(Y/G)\to D^b([X/G])$ of the derived category of the classical Godeaux surface into the derived category of the Fano orbifold $[X/G]$. Since the derived category of the classical Godeaux surface contains a quasi-phantom category (cf. \cite{BBS}), $D^b([X/G])$ also contains a quasi-phantom category. 
\end{exam}


\subsubsection{Product-quotient surfaces}

Let us briefly recall the definition of product-quotient surfaces.

\begin{defi}
An algebraic surface $S$ is called a product-quotient surface if there exist a fiinite group $G$ and two algebraic curves $C, D$ with $G$-action such that $S$ is isomorphic to the minimal resolution of $(C \times D)/G$ where $G$ acts on $C \times D$ diagonally.
\end{defi}

Product-quotient surfaces provide surprisingly many new examples of surfaces of general type and play an important role in the theory of algebraic surfaces (cf. \cite{BCGP}). Recently derived categories of some product-quotient surfaces were studied and it turns out that some of them have quasi-phantom categories in their derived categories(cf. \cite{GS,KKL,Lee1,Lee2,LS}). We can construct orbifold Fano hosts of some of product-quotient surfaces as follows.

\begin{prop}
Let $S$ be a product-quotient surface which is the minimal resolution of $(C \times D)/G.$ Suppose that $C, D$ have $G$-equivariant odd degree line bundles and $gcd(2,|H^2(G,\mathbb{C}^*)|)=1.$ Then $S$ has an orbifold Fano host.
\end{prop}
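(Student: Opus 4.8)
The plan is to follow verbatim the strategy already used for the $\kappa=1$ elliptic surfaces and for the bielliptic and Kummer surfaces: produce for each factor curve a $G$-equivariant Fano host together with a $G$-linearized Fourier--Mukai kernel, take the exterior box product of the two kernels, descend the resulting fully faithful functor to the quotient stacks by Theorem \ref{t.KSP}, and finally connect the minimal resolution $S$ to the quotient stack $[(C\times D)/G]$ through the derived McKay correspondence for $GL(2)$ (Theorem \ref{t.GL2}).

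First I would build the two hosts. Let $\xi_C$ (resp. $\xi_D$) be the given $G$-equivariant line bundle of odd degree on $C$ (resp. $D$), and let $M_C$ (resp. $M_D$) be the moduli space of rank $2$ stable bundles with fixed determinant $\xi_C$ (resp. $\xi_D$). By the result of Narasimhan recalled in Remark \ref{Nar}, $M_C$ and $M_D$ are smooth Fano varieties, and the universal bundles $U_C$ on $C\times M_C$ and $U_D$ on $D\times M_D$ induce fully faithful Fourier--Mukai functors $D^b(C)\to D^b(M_C)$ and $D^b(D)\to D^b(M_D)$. Letting $G$ act by $g\cdot E=(g^{-1})^*E$, Lemma \ref{l.invuniv} shows $U_C$ is $G$-invariant, and since $\xi_C$ is $G$-equivariant and $\gcd(2,|H^2(G,\CC^*)|)=1$, Lemma \ref{l.equivuniv} upgrades this to a $G$-equivariant (hence $G$-linearized) structure on $U_C$; the same applies to $U_D$.

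Next I would combine the hosts and descend. By Proposition \ref{p6.3} the functor $\Phi_{U_C\boxtimes U_D}\colon D^b(C\times D)\to D^b(M_C\times M_D)$ is fully faithful, and $U_C\boxtimes U_D$ inherits a $G$-linearization for the diagonal action on $(C\times D)\times(M_C\times M_D)$. Applying Theorem \ref{t.KSP} gives a fully faithful functor
$$D^b([(C\times D)/G])\longrightarrow D^b([(M_C\times M_D)/G]).$$
To bring in $S$ itself, note that $G$ acts on the smooth surface $C\times D$ with every stabilizer a finite subgroup of $GL(2,T_x(C\times D))$, so Theorem \ref{t.GL2} yields a fully faithful embedding $D^b(S)\to D^b([(C\times D)/G])$ of the minimal resolution. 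Composing the two functors produces a fully faithful embedding $D^b(S)\to D^b([(M_C\times M_D)/G])$.

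It then remains to check that $\mathcal{X}:=[(M_C\times M_D)/G]$ is a Fano orbifold. It is automatically a smooth Deligne--Mumford stack, and $M_C\times M_D$ is Fano as a product of Fano varieties. \textbf{The main obstacle} is to verify that the coarse moduli space $(M_C\times M_D)/G$ is Fano and that the inertia of $\mathcal{X}$ is trivial in codimension one; both follow once one shows the induced action $E\mapsto (g^{-1})^*E$ on $M_C\times M_D$ is free in codimension one, i.e. that for each nontrivial $g\in G$ the locus of ($g$-twisted) invariant bundles has codimension at least two. Granting this, the ramification divisor is empty, so $\pi^*K_{(M_C\times M_D)/G}=K_{M_C\times M_D}$ and the anticanonical class descends to an ample $\QQ$-Cartier divisor on the quotient, exactly as in the bielliptic and Kummer cases. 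Establishing this codimension bound, by analysing $g$-invariant bundles via the quotient curves $C/\langle g\rangle$ and $D/\langle g\rangle$, is the step I expect to require the most care.
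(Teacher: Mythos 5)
Your proposal follows essentially the same route as the paper's own proof: McKay correspondence (Theorem \ref{t.GL2}) to embed $D^b(S)$ into $D^b([(C\times D)/G])$, the moduli spaces of rank $2$ bundles with fixed odd equivariant determinant as hosts with $G$-linearized universal kernels via Lemmas \ref{l.invuniv} and \ref{l.equivuniv}, the box product, and descent by Theorem \ref{t.KSP}. The only difference is that you explicitly flag the verification that the action on $M_C\times M_D$ is free in codimension one so that the quotient is Fano, a point the paper simply asserts without comment; your treatment of that step is, if anything, more careful than the original.
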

\begin{proof}
Let $C, D$ be algebraic curve with $G$-action such that $S$ is a minimal resolution of $(C \times D)/G.$ Then $D^b(S)$ is embedded into $D^b([(C \times D)/G])$ by McKay correspondence (cf. Theorem \ref{t.GL2}). From Lemma \ref{l.equivuniv} we see that $C$ (resp. $D$) has a Fano host $F_1$ (resp. $F_2$) with $G$-action and and the Fourier-Mukai kernel $K_1$ (resp. $K_2$) is a $G$-linearized object with respect to the diagonal action. From the Theorem \ref{t.KSP}, we have a fully faithful functor $D^b([(C \times D)/G]) \to D^b([(F_1 \times F_2)/G]).$ Finally $[(F_1 \times F_2)/G]$ is a smooth Deligne-Mumford stack whose coarse moduli space $(F_1 \times F_2)/G$ is a Fano variety. Therefore we get the desired result.
\end{proof}

\begin{exam}
Let $S$ be a product-quotient surface where the order of $G$ is odd. Then $S$ satisfies the conditions of the above theorem. See \cite{BCGP, GS, Lee1} for examples of these surfaces.
\end{exam}

\begin{coro}
There are Fano orbifolds whose derived categories contain phantom categories.
\end{coro}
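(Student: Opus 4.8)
The plan is to deduce the corollary from the preceding Proposition by exhibiting a single product-quotient surface $S$ that simultaneously (i) carries a genuine phantom subcategory inside $D^b(S)$ and (ii) meets the two numerical hypotheses of that Proposition, after which the orbifold Fano host is produced immediately. Thus the statement is reduced to a compatibility check between the phantom-producing examples in the literature and the conditions under which our construction applies.

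First I would recall from \cite{GS, KKL, Lee1, Lee2, LS} that among product-quotient surfaces of general type with $p_g=q=0$ there are examples $S$, realized as the minimal resolution of a quotient $(C\times D)/G$, whose derived category admits a full exceptional collection of the expected maximal length; the right orthogonal complement $\cA$ then has vanishing Hochschild homology. The Grothendieck group $K_0(\cA)$ is identified with the torsion subgroup of $K_0(S)$, i.e.\ with the torsion of $H^\ast(S,\ZZ)$, so that $\cA$ is a \emph{quasi}-phantom in general and a \emph{genuine} phantom precisely when this torsion vanishes. I would therefore select from these references a surface $S$ whose integral cohomology is torsion-free, so that $\cA$ is an honest phantom category; the existence of such torsion-free product-quotient examples is the substantive input taken from the cited works.

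Next I would verify the hypotheses of the Proposition for this $S$. When the order of $G$ is odd, as in the Example preceding this corollary, the Schur multiplier $H^2(G,\CC^\ast)$ is a finite group of odd order, whence $\gcd(2,|H^2(G,\CC^\ast)|)=1$ holds automatically; one may thus restrict attention to examples with $|G|$ odd. It then remains to confirm that each of $C$ and $D$ carries a $G$-equivariant line bundle of odd degree, which one obtains by starting from a $G$-equivariant ample line bundle and adjusting its degree by equivariant twists, after which Lemma \ref{l.invuniv} and Lemma \ref{l.equivuniv} supply the $G$-equivariant universal bundles and hence the $G$-linearized Fourier--Mukai kernels used in the Proposition.

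With both conditions in place, the Proposition yields a fully faithful embedding $D^b(S)\hookrightarrow D^b([(F_1\times F_2)/G])$ into the derived category of a Fano orbifold, and since the phantom $\cA$ is a full triangulated subcategory of $D^b(S)$, it embeds as a full triangulated subcategory of $D^b([(F_1\times F_2)/G])$, proving the corollary exactly as in Example \ref{e6.13} but with quasi-phantom replaced by phantom. The main obstacle is the first step: one must pin down a concrete product-quotient surface for which the cohomological torsion vanishes, so that the quasi-phantom is upgraded to a true phantom, while keeping $|G|$ odd so that the Schur-multiplier condition is preserved. Reconciling the phantom (torsion-free) requirement with the equivariant numerical constraints of the Proposition for one explicit example is the delicate point; once such an example is fixed, the embedding and the conclusion are immediate.
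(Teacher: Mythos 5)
Your argument has a genuine gap at its very first step, and you have correctly identified where the trouble lies but not resolved it. You propose to find a single product-quotient surface $S$ whose quasi-phantom $\cA$ is an honest phantom, i.e.\ one with $K_0(\cA)=0$, which would require the torsion of $K_0(S)$ (equivalently of $H^*(S,\ZZ)$, controlled by $H_1(S,\ZZ)$) to vanish. No such example exists in the cited literature: every product-quotient surface with $p_g=q=0$ treated in \cite{GS, KKL, Lee1, Lee2, LS} has nontrivial finite $H_1$ (e.g.\ $(\ZZ/5)^{\oplus 3}$ for the Beauville surface, $3$-torsion for the $\ZZ_3^2$-examples), so the orthogonal complement of the exceptional collection is only a quasi-phantom, never a phantom. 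The one known source of genuine phantoms on a single surface at the time of writing is the determinantal Barlow surface of \cite{BBKS}, which is simply connected but is not a product-quotient surface and is not known to have any (orbifold) Fano host --- the paper explicitly leaves that as an open question. So the ``substantive input'' your proof requires cannot be supplied, and the reduction to the preceding Proposition for a single surface does not go through.

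The paper's proof takes a different and essential detour: it produces the phantom not on one surface but on a \emph{product} $S_1\times S_2$, where $S_1$ is the classical Godeaux surface (quasi-phantom with $5$-torsion, orbifold Fano host from Example \ref{e6.13}) and $S_2$ is a product-quotient surface with a free $\ZZ_3^2$-action (quasi-phantom with $3$-torsion, orbifold Fano host from the preceding Proposition). The theorem of Gorchinskiy and Orlov \cite{GO} then upgrades the external product of two quasi-phantoms with coprime torsion to a genuine phantom inside $D^b(S_1\times S_2)$, and the product $\mathcal{X}_1\times\mathcal{X}_2$ of the two orbifold Fano hosts hosts $S_1\times S_2$ by Proposition \ref{p6.3}. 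This use of \cite{GO} to manufacture a phantom out of two quasi-phantoms is the key idea missing from your proposal; without it, or without exhibiting a torsion-free example that the literature does not provide, the corollary is not proved.
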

\begin{proof}
Let $S_1$ be the classical Godeaux surface and $S_2$ be the project-quotient surface obtained by the quotient of product two genus 4 curves with free $\mathbb{Z}_3^2$-action.
Let $\mathcal{X}_1$ be an orbifold Fano host of $S_1$ and $\mathcal{X}_2$ be an orbifold Fano host of $S_2$ where we know the existence from the above discussion. Then $\mathcal{X}_1 \times \mathcal{X}_2$ is an orbifold Fano host of $S_1 \times S_2.$
 
It was proved that $D^b(S_1)$ contains a quasi-phantom category in \cite{BBS} and $D^b(S_2)$ contains a quasi-phantom category in \cite{Lee1}. Then $D^b(S_1 \times S_2)$ contains a phantom category by the result of \cite{GO}. Therefore we have an example of Fano orbifold whose derived category contains a phantom category. Indeed, we can find more examples of such Fano orbifolds from the results of \cite{GS, Lee1}.
\end{proof}

\section{Discussions}

\subsection{Grothendieck groups}

It is easy to see that when a triangulated category $\mathcal{T}$ has a semiorthogonal decomposition $\mathcal{T} = \langle \mathcal{A}, \mathcal{B} \rangle$ then $K_0(\mathcal{T})=K_0(\mathcal{A}) \oplus K_0(\mathcal{B}).$ Therefore if $D^b(X)$ has a semiorthogonal decomposition whose component is $D^b(Y)$ then $K_0(X)$ should contain $K_0(Y)$ as a direct summand. There are many smooth projective varieties whose Grothendieck group contain finite abelian groups as direct summands. However it seems that we do not know that whether there is a smooth projective Fano variety whose Grothendieck group contains a finite abelian group as a direct summand. We thank Alexander Kuznetsov for the following remark.

\begin{rema}
There is a smooth projective Fano variety $X$ such that $K_0(X)$ contains a torsion subgroup.
\end{rema}
\begin{proof}
The Grothendieck group $K_0(E)$ of an elliptic curve $E$ has a torsion subgroup. Because $E$ is a Fano visitor (cf, \cite{KKLL}), there is a smooth projective Fano variety $X$ such that $K_0(E)$ is a subgroup of $K_0(X).$ Therefore there is a Fano variety whose Grothendieck group contains a torsion subgroup.
\end{proof}

Note that in the above case, torsion subgroups are not direct summands of $K_0(E).$ Therefore it is natural to ask the following questions.

\begin{ques}
(1) Let $A$ be a finite Abelian group. Is there a smooth projective Fano variety whose Grothendieck group contains $A$ as a direct summand? \\
(2) Is there a smooth projective Fano variety whose Grothendieck group contains a finite Abelian group as a direct summand?
\end{ques}

Note that the negative answers to the above questions can be obstructions to the Bondal's original Question \ref{q1.1}.

\begin{rema}
If the answer to the 2nd question is negative, then we see that there exists a smooth projective variety which is not a Fano visitor. If the answer to the 1st question is negative for a finite Abelian group $A$ which can be contained in the Grothendieck group of a smooth projective variety as a direct summand, then we have the same conclusion.
\end{rema}

\subsection{Toric varieties}

It seems interesting to consider the Fano visitor problem for toric varieties. Because many interesting problems about toric varieties can be described and solved via combinatorics of fans or polytopes, we expect that there should be a combinatorial approach to this problem.

\begin{ques}
Let $Y$ be a smooth toric variety. Is there a Fano host of $Y$ constructed by an explicit combinatorial method? Can we compute its Fano dimension?
\end{ques}

One can ask the same question for spherical varieties.

\subsection{Phantom categories}

From the theorem of \cite{GO} we see that there are Fano orbifolds containing phantom categories. However we do not know any single example of smooth projective Fano variety whose derived category contains a (quasi-)phantom category. Recently several examples of surfaces whose derived categories containing (quasi-)phantom categories were constructed. Fano hosts of these surfaces will give us examples of smooth projective Fano varieties whose derived categories contain (quasi-)phantom categories.

\begin{ques}
(1) Is there a smooth projective Fano variety whose derived category contains a (quasi-)phantom category? \\
(2) Is there a smooth projective Fano variety (or a Fano orbifold) whose derived category contains derived category of a determinantal Barlow surface (cf. \cite{BBKS})? \\
(3) Is there a smooth projective Fano variety (or a Fano orbifold) whose derived category contains derived category of an elliptic surface construced by Cho and Lee (cf. \cite{CL})? 
\end{ques}

It will be very interesting if one can see these phantom categories in the Landau-Ginzburg mirror of (orbifold) Fano hosts.

\subsection{Noncommutative varieties}

There are many examples of noncommutative varieties in derived categories of Fano varieties. For example, Kuznetsov proved there are K3 categories not equivalent to derived cateogies of K3 surfaces inside derived categories of cubic 4-folds. These K3 categories provide a natural explanation why many holomorphic symplectic varieties arise from cubic 4-folds. Noncommutative varieties also appear in derived categories of cubic 3-folds and interesting applications of these noncommutative varieties were found (cf. \cite{LMS2}).

It will be an interesting question which noncommutative varieties can be embedded into derived categories of Fano orbifolds. It is also an interesting problem to find another geometric description of these noncommutative varieties (cf. \cite{LMS2}).

\subsection{Applications and perspectives}

Recently many interesting new applications of semiorthogonal decompositions were found and it seems that many new will appear in near future. As we mentioned in the introduction, derived categories of Fano varieties always have nontrivial semiorthogonal decompositions and moduli problems related to Fano varieties are very interesting.

Especially, it seems that semiorthogonal decompositions of derived categories can be useful to understand moduli space of ACM or Ulrich bunldes on Fano varieties since sometimes the original moduli problem reduces to another moduli problem about a (possibly noncommutative) variety of smaller dimension (cf. \cite{CKL, LMS1, LMS2}).

Moreover it seems that to study Fano varieties whose derived categories contain derived categories of K3 surfaces is very helpful to understand and construct holomorphic symplectic varieties. Note that there is always a nontrivial family of rational curves on a give Fano variety and the moduil spaces of rational curves on Fano varieties are very interesting objects. Semiorthogonal decompositions can be also very useful to understand rational curves on the Fano varieties.

Therefore we think that Fano visitor problem can give a systematic approach to study these moduli problems. It is also very interesting to find applications of Fano visitor problem to arithmetic geometry, birational geometry and (homological) mirror symmetry.

\bigskip

\bibliographystyle{amsplain}

\end{document}